\newenvironment{customthm}[1]
  {\innercustomthm}
  {\endinnercustomthm}
\newtheorem{theorem}{Theorem}[section]
\newtheorem{corollary}[theorem]{Corollary}
\newtheorem{lemma}[theorem]{Lemma}
\newtheorem{proposition}[theorem]{Proposition}
\newtheorem{question}[theorem]{Question}
\newtheorem{theoremalpha}{Theorem}
\newtheorem{corollaryalpha}{Corollary}
\theoremstyle{definition}
\newtheorem{definition}[theorem]{Definition}
\newtheorem{notation}{Notation}
\newtheorem{example}[theorem]{Example}
\newtheorem{remark}[theorem]{Remark}
\newtheorem{construction}[theorem]{Construction}
\newtheorem{claim}{Claim}
\newtheorem*{claim*}{Claim}
\newcommand{\s}{\mathfrak{s}}
\newcommand{\N}{\mathbb{N}}
\newcommand{\Z}{\mathbb{Z}}
\newcommand{\Q}{\mathbb{Q}}
\newcommand{\R}{\mathbb{R}}
\newcommand{\C}{\mathbb{C}}
\newcommand{\E}{\mathcal{E}}
\newcommand{\F}{\mathcal{F}}
\newcommand{\G}{\mathcal{G}}
\newcommand{\zs}{\operatorname{zs}}
\newcommand{\pr}{\operatorname{pr}}
\newcommand{\id}{\operatorname{Id}}
\newcommand{\sign}{\sigma}
\newcommand{\Hom}{\operatorname{Hom}}
\newcommand{\bAut}{\operatorname{bAut}}
\newcommand{\Aut}{\operatorname{Aut}}
\newcommand{\coker}{\operatorname{coker}}
\newcommand{\nd}{\operatorname{nd}}
\newcommand{\ev}{\operatorname{ev}}
\newcommand{\PD}{\operatorname{PD}}
\newcommand{\im}{\operatorname{Im}}
\newcommand{\Id}{\operatorname{Id}}
\newcommand{\Pin}{\operatorname{Pin}}
\def\ol{\overline}
\def\wt{\widetilde}
\newcommand{\BTOPSpin}{\operatorname{BTOPSpin}}
\newcommand{\Spin}{\operatorname{Spin}}
\newcommand{\TOPSpin}{\operatorname{TOPSpin}}
\newcommand{\STOP}{\operatorname{STOP}}
\newcommand{\BSTOP}{\operatorname{BSTOP}}
\newcommand{\SO}{\operatorname{SO}}
\newcommand{\bsm}{\left(\begin{smallmatrix}}
\newcommand{\esm}{\end{smallmatrix}\right)}
\newcommand{\sm}{\setminus}
\newcommand{\smfrac}[2]{\mbox{\footnotesize$\displaystyle\frac{#1}{#2}$}} 
\newcommand{\tmfrac}[2]{\mbox{\large$\frac{#1}{#2}$}} 
\definecolor{bettergreen}{rgb}{0.0, 0.5, 0.0}
\begin{document}
\title{Unknotting nonorientable surfaces}
\author[A.~Conway]{Anthony Conway}
\address{The University of Texas at Austin, Austin TX 78712}
\email{anthony.conway@austin.utexas.edu}

\author[P.~Orson]{Patrick Orson}
\address{California Polytechnic State University, San Luis Obispo, CA, United States}
\email{patrickorson@gmail.com}

\author[M.~Powell]{Mark Powell}
\address{School of Mathematics and Statistics, University of Glasgow, United Kingdom}
\email{mark.powell@glasgow.ac.uk}

\def\subjclassname{\textup{2020} Mathematics Subject Classification}
\expandafter\let\csname subjclassname@1991\endcsname=\subjclassname
\subjclass{
57K40, 
57N35, 
57R67. 
}

\begin{abstract}
Given a nonorientable, locally flatly embedded surface in the $4$-sphere of nonorientable genus $h$, Massey showed that the normal Euler number lies in $\lbrace -2h,-2h+4,\ldots,2h-4,2h \rbrace$.
We prove that every such surface with knot group of order two is topologically unknotted, provided that the normal Euler number is not one of the extremal values in Massey's range. When $h$ is~$1$,~$2$, or~$3$, the same holds even with extremal normal Euler number; the~$h=1$ case is due to Lawson.

We also study nonorientable embedded surfaces in the 4-ball with boundary a knot $K$ in the 3-sphere, again where the surface complement has fundamental group of order two and nonorientable genus $h$.
We prove that any two such surfaces with the same normal Euler number become topologically isotopic, rel.\ boundary, after adding a single tube to each.
If the determinant of $K$ is trivial, we show that any two such surfaces are isotopic, rel.\ boundary, again provided that they have non-extremal normal Euler number, or that $h$ is~$1$,~$2$, or~$3$.
 \end{abstract}

\maketitle

\section{Introduction}
\label{sec:Introduction}

There is an involution of $\C P^2$ given by complex conjugation $[z_0:z_1:z_2] \mapsto [\ol{z}_0:\ol{z}_1:\ol{z}_2]$. Remarkably, the quotient of $\C P^2$ by this involution is the 4-sphere~\cites{MR341511, Kuiper, MR971226}.
The fixed point set of the involution is the standard $\R P^2$ in $\C P^2$, and the image of this fixed set under the quotient map is called the \emph{standard positive embedding} of $\R P^2$ in $S^4$. The image of the standard positive embedding under a reflection of $S^4$ is the \emph{standard negative embedding}.   The embeddings are distinguished by their normal Euler number  (Definition~\ref{def:Eulernumber}), which is $-2$ for the positive embedding and $2$ for the negative one.
Additional details on the standard embeddings~$\R P^2\subseteq S^4$  were given by Lawson~\cite{LawsonSplitting,Lawson}; see also Example~\ref{example:standard embedding} for an alternative construction.

The \emph{nonorientable genus} of a compact, nonorientable surface $F$ with at most one boundary component is the dimension of the $\Z_2$-vector space $H_1(F;\Z_2)$.
Taking connected sums of $h$ standard embeddings of~$\R P^2$ realises every normal Euler number in the range $\lbrace -2h,-2h+4,\ldots,2h-4,2h \rbrace$. In fact, Massey~\cite{Massey} showed that for every locally flat
 embedding $F \subseteq S^4$ of a closed, nonorientable surface of nonorientable genus~$h$, the normal Euler number $e(F)$ must lie in this range.
Such a surface is said to be topologically \emph{unknotted} if it is topologically isotopic to a connected sum of standardly embedded projective planes.

Every unknotted surface $F\subseteq S^4$ has knot group $\pi_1(S^4 \sm F) \cong \Z_2$.
By definition a \emph{$\Z_2$-surface} in $S^4$ will refer to a locally flatly embedded surface $F$, as above, with knot group~$\Z_2$.
We consider the following question, which already appeared, for example, in~\cite[p.~181]{KawauchiSurvey},~\cite[p.~55]{KamadaBook}, and~\cite[p.~7]{Matrix}.

\begin{question}
\label{quest:UnknottingConjecture}
Is every $\Z_2$-surface in $S^4$ topologically unknotted?
\end{question}

Lawson~\cite{Lawson} proved that~$\Z_2$-projective planes are indeed unknotted.
Finashin, Kreck, and Viro~\cite{FinashinKreckViro} proved that once the nonorientable genus $h$ and normal Euler number $e$ are fixed, there are only finitely many topological isotopy types of smoothly embedded $\Z_2$-surfaces.
Kreck~\cite{KreckOnTheHomeomorphism} showed that if a smoothly embedded $\Z_2$-surface has $(h,e) \in \{(10,\pm 16),(2,0)\}$, then it is topologically unknotted.
These are all the prior instances of progress on Question~\ref{quest:UnknottingConjecture}.

We say $e(F)$ is \emph{extremal} if $|e(F)|=2h$ and \emph{non-extremal} otherwise.
Here is our main result.

\begin{theoremalpha}
\label{thm:TopUnknottingIntro}
Let $F \subseteq S^4$ be a $\Z_2$-surface of nonorientable genus $h$ and normal Euler number~$e$.
If $h \leq 3$ or if $e$ is non-extremal, then $F$ is unknotted.
\end{theoremalpha}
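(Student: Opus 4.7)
The strategy I would follow is to pass to the double cover $p\colon X \to S^4$ branched along $F$---which exists and is a closed, simply-connected topological $4$-manifold because $\pi_1(S^4 \sm F)\cong \Z_2$---and use Freedman's classification of simply-connected topological $4$-manifolds to identify $X$ with the corresponding branched cover $X_0$ of a standard unknotted model $F_0\subseteq S^4$ with the same $(h,e)$. The homeomorphism $X\cong X_0$ is then upgraded to one intertwining the two deck involutions, which descends to an ambient homeomorphism $(S^4,F)\cong (S^4,F_0)$ exhibiting $F$ as unknotted.

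The first step is the computation of topological invariants of $X$. An Euler characteristic count yields $b_2(X)=h$, and the $G$-signature theorem (as in Lawson's treatment of the $h=1$ case) gives $\sigma(X)=-e/2$. Setting $a := (h - e/2)/2$ and $b := (h + e/2)/2$, we have $b_2^+(X) = a$ and $b_2^-(X) = b$, with $a+b=h$ and $2(b-a)=e$. The next and more delicate step is to pin down the isometry type of the unimodular intersection form $Q_X$, in particular to verify that $Q_X$ is odd. When $h$ is odd this follows from a direct self-intersection computation on the class $[F]\subseteq X$, which has mod-$2$ self-intersection $e/2 \equiv h \pmod{2}$; when $h$ is even one needs a finer characteristic-class argument exploiting the deck involution $\tau$. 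Granted oddness, in the non-extremal case Serre's classification of indefinite odd unimodular integral forms yields $Q_X \cong a\langle 1\rangle\oplus b\langle -1\rangle$, while in the extremal case with $h\leq 3$ the only odd $\pm$-definite unimodular form of rank $\leq 3$ is the standard diagonal one, by classical lattice theory. Combined with the vanishing of the Kirby-Siebenmann invariant of $X$, Freedman's classification now gives $X\cong X_0 := \#_a \C P^2 \# \#_b \ol{\C P^2}$.

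The final and most substantial step is to promote the abstract homeomorphism $X\cong X_0$ to an equivariant one intertwining the deck involutions $\tau$ and $\tau_0$; this then descends to the desired ambient homeomorphism of pairs after extending over locally flat tubular neighborhoods, using uniqueness of topological normal disc bundles. I would attack this via Kreck's modified surgery applied equivariantly, comparing the normal $1$-types and the relevant bordism data of $(X,\tau)$ and $(X_0,\tau_0)$. An equivalent and perhaps more direct route is to study the exteriors $E := S^4\sm\nu(F)$ and $E_0 := S^4\sm\nu(F_0)$ as compact $4$-manifolds with $\pi_1 = \Z_2$ and identified boundary, classifying them rel boundary via their $\Z[\Z_2]$-valued hermitian intersection forms and invoking the Freedman-Quinn topological surgery sequence for the good group $\Z_2$. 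The main technical obstacle is precisely this equivariant (equivalently $\Z[\Z_2]$-valued) classification: non-extremality of $e$ ensures the relevant form is indefinite and so admits the hyperbolic summand needed for Witt-style cancellation of hermitian forms, while the hypothesis $h\leq 3$ keeps the forms of small enough rank to be analyzed directly (the $h=1$ case being Lawson's original theorem).
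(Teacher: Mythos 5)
Your first two steps (pass to the double branched cover $X=\Sigma_2(F)$, compute $\chi$, $\sigma$, oddness, and identify $Q_X\cong(1)^{\oplus a}\oplus(-1)^{\oplus b}$ via the classification of indefinite odd unimodular forms, resp.\ low-rank definite forms) match the paper's Propositions on the intersection form of the branched cover almost exactly, and the reduction to a rel-boundary classification of the exteriors $X_F$ as spin $4$-manifolds with $\pi_1\cong\Z_2$ is also the paper's architecture. But there is a genuine gap in what you yourself call ``the final and most substantial step.'' You defer it to ``Kreck's modified surgery applied equivariantly'' together with ``Witt-style cancellation of hermitian forms,'' and neither of these closes the argument. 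Witt cancellation is not the issue: under the stated hypotheses the $\Z[\Z_2]$-intersection forms of the two exteriors are already isometric (that is exactly what your identification of $Q_X$ buys). The obstruction to extending a boundary homeomorphism to a homeomorphism of the exteriors is the modified surgery obstruction, which lives in Kreck's monoid $\ell_5(\Z[\Z_2])$ (not a group, and not an $L$-group), and one must show it is \emph{elementary}. That requires: (i) locating the obstruction in the subset $\ell_5(v,v)$ with $v=(\Z[\Z_2]^h,(1-T)\theta)$ determined by the (nondegenerate quotient of the) intersection form of $\widetilde{X}_F$; (ii) a splitting lemma reducing to the $(-1)$-eigenspace, i.e.\ to $\ell_5(\Z^h,2\theta)$ over $\Z$; and (iii) the identification of $\ell_5(\Z^h,2\theta)$ with the orbit set $\Aut(\partial(2\theta))/\Aut(2\theta)\times\Aut(2\theta)$ of the boundary quadratic \emph{linking} form, which is then shown to be a single point via Nikulin's surjectivity theorem in the non-extremal case (this is the second, distinct, place where indefiniteness enters) and by explicit enumeration of isometries in the extremal cases $h=2,3$. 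None of this is a cancellation statement about the forms themselves; it is a statement about which automorphisms of the boundary linking form extend over the form, and it can fail (indeed it is not known to hold for extremal $e$ and large $h$). Your sketch gives no mechanism for this step, so the proof does not close.

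A secondary omission: even to set up the bordism over the normal $1$-type you must first produce a gluing $f\colon\partial X_{F_0}\to\partial X_{F_1}$ of the circle-bundle boundaries for which the closed union is \emph{spin} with fundamental group $\Z_2$ and which is $\nu$-extendable; this is arranged in the paper by matching Guillou--Marin/Kirby--Taylor $\Z_4$-quadratic refinements on the surfaces, using equality of the normal Euler numbers. This is more routine than the $\ell$-monoid analysis but is not automatic and is absent from your outline.
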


We know of no example where the answer to Question~\ref{quest:UnknottingConjecture} is negative.
After this article appeared as a preprint,  Pencovitch generalised the calculations from our appendix, and in so doing proved that Theorem~\ref{thm:TopUnknottingIntro} also holds when $e$ is extremal and $h=4,5$~\cite{Pencovitch}.
 The extremal cases for~$h \geq 6$ remain as interesting open questions.
In fact, Pencovitch showed that the last step in our proof does not go through when $e$ is extremal and~$h=6,7$, so a new proof method would be needed for these cases.

\begin{remark}
\label{rem:smooth}
One can ask a version of Question~\ref{quest:UnknottingConjecture} where one considers smoothly embedded surfaces up to smooth isotopy. In terms of positive answers to such a question, Bleiler and Scharlemann~\cite{BleilerScharlemann} showed that if a smoothly embedded~$\R P^2 \subseteq S^4$ has three critical points, then it is smoothly unknotted.

However, in general, this fully smooth version of Question~\ref{quest:UnknottingConjecture} is known to have a negative answer.
Finashin, Kreck, and Viro produced an infinite family of smoothly distinct, smoothly embedded $\Z_2$-surfaces in $S^4$ with nonorientable genus $h=10$ and normal Euler number~$e=\pm 16$~\cite{FinashinKreckViro}.
They achieved this by combining annulus surgery with Donaldson's theorem to construct infinitely many $\Z_2$-surfaces $\lbrace F_i\rbrace_{i \geq 1}$ with non-diffeomorphic double branched covers.
Kreck later proved that all the $F_i$
are topologically unknotted~\cite{KreckOnTheHomeomorphism}.
Finashin used a similar strategy to produce an infinite family of smoothly embedded, pairwise non-diffeomorphic $\Z_2$-surfaces with $(h,e)=(k,\pm (2k-4))$, for every $k \geq 6$~\cite[Theorem~B]{Finashin}.
More recently, for each $k=7,8,9,10$, Havens produced analogous smooth infinite families of irreducible $\Z_2$-surfaces~\cite[Theorem 1.3.1]{HavensThesis}.
Theorem~\ref{thm:TopUnknottingIntro} implies that Finashin's and Havens' examples are all topologically unknotted.
After this paper appeared
as a preprint,
Miyazawa~\cite{Miyazawa} posted a preprint proving the existence of a smooth infinite family of pairwise not smoothly isotopic $\Z_2$-projective planes (relying on Lawson's result~\cite{Lawson} for topological isotopy),  and Mati\'c-\"Ozt\"urk-Reyes-Stipsicz-Urz\'ua used Theorem~\ref{thm:TopUnknottingIntro} to give an example of a pair of topologically isotopic,  but non-diffeomorphic smoothly embedded $\Z_2$-surfaces with $(h,e)=(5,6)$~\cite{MaticOzturkReyesStipsiczUrzua}.
\end{remark}

\subsection{Surfaces with boundary}
\label{sub:WithBoundaryIntro}

Theorem~\ref{thm:TopUnknottingIntro} is deduced as a consequence of a uniqueness result for properly, locally flatly embedded, compact, nonorientable surfaces $F \subseteq D^4$ with boundary a fixed knot in~$S^3$.
Such an $F$ is called a \emph{$\Z_2$-surface} in $D^4$ if it has knot group $\pi_1(D^4 \sm F) \cong \Z_2$.

In order to state our main result in this setting, we start by recalling a fact about the normal Euler number of nonorientable surfaces in $D^4$.
First, if $F \subseteq D^4$ is a nonorientable surface, then as shown in \cite[Theorem 2 and Corollary 5]{GordonLitherland} and \cite[Theorem 6]{GilmerLivingston}, we have the equality $\sigma(K)=\sign(\Sigma_2(F))+\smfrac{1}{2}e(F)$, where $\Sigma_2(F)$ denotes the double branched cover of $F$ and $\sigma(K)$ denotes the signature of~$K$.
If~$F$ is a~$\Z_2$-surface of nonorientable genus $h$, then Yasuhara~\cite[Corollary~1.1.1]{YasuharaConnecting} showed that
\[ e(F) \in \lbrace -2h+2\sigma(K),-2h+4+2\sigma(K),\ldots, 2h-4+2\sigma(K),2h+2\sigma(K) \rbrace.\]
We say that $e(F)$ is \emph{extremal} if $|e(F)-2\sigma(K)|=2h$.

\begin{theoremalpha}
\label{thm:SurfacesWithBoundaryIntro}
Let~$F_0,F_1\subseteq D^4$ be
$\Z_2$-surfaces of the same nonorientable genus $h$, the same normal Euler number $e$ and the same boundary $K$.  Assume that~$|\det(K)|=1$.
If either~$h \leq 3$ or~$e$ is non-extremal,  then~$F_0$ and~$F_1$ are ambiently isotopic rel.\ boundary.
\end{theoremalpha}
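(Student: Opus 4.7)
The plan is to reduce the problem to a classification of the surface exteriors $N_i := D^4 \sm \nu(F_i)$ up to rel boundary homeomorphism, and then extend such a homeomorphism to an ambient isotopy of pairs. Since the knot group $\pi_1(N_i) \cong \Z_2$ is a good group in the sense of Freedman--Quinn, topological surgery is available and will be the main engine of the proof.

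First I would analyse the equivariant homotopy type of a single exterior $N$. Using Poincar\'e--Lefschetz duality and the long exact sequence of $(D^4,N)$, the assumption $|\det(K)|=1$ (which makes $\Sigma_2(K)$ an integer homology sphere) pins down the boundary homology and forces $H_*(N;\Z[\Z_2])$ to be concentrated in degree~$2$ as a free $\Z[\Z_2]$-module whose rank is determined by $h$. The invariants attached to $N$ are then the equivariant intersection form $\lambda_N$ on $H_2(N;\Z[\Z_2])$, an identification of $\partial N$ with a fixed 3-manifold (the exterior of $K$ in $S^3$ glued to the unit normal bundle of $F$, whose homeomorphism type is determined by $e$), and the tangential structure (in particular whether $N$ admits a $\Pin^-$ structure restricting correctly on the boundary). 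A direct calculation should show that all of these data depend only on $(h,e,K)$ and not on the particular $F$.

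Next comes the algebraic heart of the argument: I would show that when $e$ is non-extremal the form $\lambda_N$ splits off enough hyperbolic summands that stable isometry and cancellation over $\Z[\Z_2]$ identify it with a standard model, while when $h\leq 3$ a direct enumeration of isometry classes (including the extremal cases) suffices. I expect this classification of equivariant intersection forms to be the main obstacle of the proof; the restriction on $(h,e)$ in the theorem precisely reflects where this enumeration goes through, consistent with the remarks in the introduction that the method fails for larger $h$ with extremal $e$.

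With the $\lambda_{N_i}$ identified via a rel boundary isometry, topological surgery for the good group $\Z_2$, applied relative to the matched boundaries, upgrades the algebraic isometry to a homeomorphism $N_0 \to N_1$ that is the identity on $\partial N_0 = \partial N_1$. Topological uniqueness of tubular neighbourhoods of locally flat surfaces in $4$-manifolds, combined with the fact that normal bundles of a nonorientable surface in an oriented $4$-manifold are classified up to isomorphism by the normal Euler number, allows this homeomorphism of exteriors to extend to a homeomorphism of pairs $(D^4,F_0) \to (D^4,F_1)$ restricting to the identity on $(S^3,K)$. Finally, the topological Alexander trick converts this homeomorphism of pairs into the desired ambient isotopy rel boundary.
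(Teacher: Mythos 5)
Your high-level skeleton (classify the exteriors rel boundary, extend over the tubular neighbourhood, finish with the Alexander trick) matches the paper, but the middle of your argument contains a step that is not a theorem and, in fact, is essentially the open problem the hypotheses are designed to avoid. There is no result saying that two $4$-manifolds with fundamental group $\Z_2$, isometric equivariant intersection forms, and identified boundaries are homeomorphic rel boundary; classical surgery for good groups would first require a homotopy equivalence (or a degree-one normal map) rel boundary, which an isometry of forms does not supply. The paper instead runs Kreck's \emph{modified} surgery: one only needs the exteriors to be bordant over their normal $1$-type $\BTOPSpin\times B\Z_2$, but the price is that the obstruction lives in the monoid $\ell_5(\Z[\Z_2])$ and must be shown to be \emph{elementary}. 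Showing elementarity is where all the real work happens: one restricts to the subset $\ell_5(v,v)$ determined by the free Wall form $v=(\Z[\Z_2]^h,(1-T)\theta)$, reduces to the $(-1)$-eigenspace via Lemma~\ref{lem:InfamousLemma}, identifies $\ell_5(v_-,v_-)$ with a boundary automorphism set of a quadratic linking form (Crowley--Sixt), and then invokes Nikulin's theorem (non-extremal case) or explicit computations (the extremal $h\le 3$ cases in the appendix). Your restriction on $(h,e)$ cannot be recovered from ``an enumeration of isometry classes of $\lambda_N$'': Proposition~\ref{prop:IntersectionFormBranched} pins down the isometry type for \emph{all} $h\le 8$ and all non-extremal $e$, yet the theorem still fails to be proved for extremal $e$ and $h=6,7$ because $\ell_5(v_-,v_-)$ is nontrivial there. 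The obstruction is not the isometry classification.

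Two further concrete errors. First, your claim that when $e$ is non-extremal the equivariant form $\lambda_N$ splits off hyperbolic summands over $\Z[\Z_2]$ is false: the paper notes explicitly that the $\Z[\Z_2]$-intersection form does \emph{not} split off a hyperbolic summand (it has the shape $(1-T)\theta$ and a radical of rank $h-1$, since $H_2(X_F;\Z[\Z_2])\cong\Z_-\oplus\Z[\Z_2]^{h-1}$ is not free); only the integral quadratic form $\theta$ on the $(-1)$-eigenspace splits hyperbolics. Second, you have omitted the construction of the boundary identification $f\colon\partial X_{F_0}\to\partial X_{F_1}$ for which the union is spin and has fundamental group $\Z_2$. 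This is not automatic from $e(F_0)=e(F_1)$: it requires choosing a homeomorphism $F_0\to F_1$ inducing an isometry of Guillou--Marin $\Z_4$-quadratic forms and matching Kirby--Taylor refinements of spin structures on the circle bundles (Section~\ref{sec:Spin}). Without this, the modified surgery problem cannot even be set up, since the normal $1$-type requires a spin structure on the closed union and a single $\Z_2$ fundamental group.
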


We deduce Theorem~\ref{thm:TopUnknottingIntro} from  Theorem~\ref{thm:SurfacesWithBoundaryIntro} by puncturing the closed surfaces in question i.e.\ a given closed $\Z_2$-surface and the standard unknotted surface with the same nonorientable genus and  normal Euler number,  and applying Theorem~\ref{thm:SurfacesWithBoundaryIntro} with $K$ the unknot.
Theorem~\ref{thm:SurfacesWithBoundaryIntro} is proved in Section~\ref{sec:introproofs}.

As we explain in Section~\ref{sub:Outline}, our proof of Theorem~\ref{thm:SurfacesWithBoundaryIntro} uses Kreck's modified surgery~\cite{KreckSurgeryAndDuality}, and owes an intellectual debt to Kreck's article~\cite{KreckOnTheHomeomorphism}, while going further than that article.

\begin{remark}
Compared to the work in the smooth category discussed in Remark~\ref{rem:smooth}, there are fewer results for smooth surfaces with nonempty boundary. Gompf~\cite{GompfNuclei} exhibited  a smooth infinite family of pairwise non smoothly isotopic punctured Klein bottles in $D^4$ with knot group $\Z_2$ and Euler number $0$ (and appealed to Kreck~\cite{KreckOnTheHomeomorphism} to prove that these $\Z_2$-surfaces are topologically isotopic) and, as we will discuss in more detail in Section~\ref{sub:BandMoves} below,  Lipshitz-Sarkar~\cite{LipshitzSarkar} recently also found examples of exotic surfaces in the $4$-ball with $h=3$.
\end{remark}

\subsection{One internal stabilisation is enough}

Let~$F_0,F_1\subseteq D^4$ be~$\Z_2$-surfaces of the same nonorientable genus~$h$, the same normal Euler number~$e$, and the same boundary~$K$.
Baykur and Sunukjian~\cite[Theorem~1]{BaykurSunukjian} proved that $F_0$ and $F_1$ become isotopic after some finite, but \textit{a priori} arbitrarily large, number of internal stabilisations. Here, an \emph{internal stabilisation} of a connected, nonorientable surface $F\subseteq M$ in a 4-manifold  is obtained by adding a tube. That is, performing ambient surgery on $F$, along an embedded copy of $D^1 \times D^2 \subseteq M$ with $S^0 \times D^2 \subseteq F$ and~$(\mathring{D}^1 \times D^2) \cap F = \emptyset$. This increases the nonorientable genus by two.  The fact that homotopy implies isotopy for 1-complexes in 4-manifolds means that, for $F$ a $\Z_2$-surface,  every tube choice for internal stabilisation is a trivial tube. Therefore adding a tube is the same as taking the connected sum~$(D^4,F)\#(S^4,T^2)$, where $T^2 \subseteq S^4$ is the standard unknotted torus. Equivalently, one could take the connected sum with an unknotted Klein bottle in $S^4$ that has trivial normal Euler number; see e.g.~\cite[Lemma 4]{BaykurSunukjian}.

We refine Baykur and Sunukjian's result, in the case of $\Z_2$-surfaces, to show that in fact one internal stabilisation suffices.

\begin{theoremalpha}\label{thm:one-is-enough}
Let~$F_0,F_1\subseteq D^4$ be~$\Z_2$-surfaces of the same nonorientable genus~$h$, the same normal Euler number~$e$, and the same boundary~$K$.
Then~$F_0 \# T^2$ and~$F_1 \# T^2$ are ambiently isotopic rel.\ boundary.
\end{theoremalpha}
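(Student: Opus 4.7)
The plan is to adapt the Kreck modified surgery machinery (as used for Theorems~\ref{thm:TopUnknottingIntro} and~\ref{thm:SurfacesWithBoundaryIntro}) to the \emph{stabilised} exteriors $X_i^+ := D^4 \sm \nu(F_i \# T^2)$ for $i=0,1$, using the single added tube to supply a hyperbolic pair in $\pi_2$ that trivialises the middle-dimensional surgery obstruction unconditionally, so that neither the hypothesis $|\det(K)|=1$ nor any restriction on $(h,e)$ is needed.

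First I would set up the ambient data. Each $X_i^+$ is an oriented compact topological 4-manifold with $\pi_1 \cong \Z_2$ and classifying map to $B\Z_2 = \R P^\infty$, with boundary consisting of the knot exterior $E_K$ glued to the sphere bundle $S(\nu(F_i\#T^2))$. Since the two surfaces share nonorientable genus $h+2$, normal Euler number $e$, and boundary $K$, these boundary pieces carry canonical identifications. Using that the relevant normal bordism groups over $B\Z_2$ are understood in the ranges we need, I would then verify that $X_0^+$ and $X_1^+$ are normally bordant over $B\Z_2$ rel the identified boundary, obtaining a 5-dimensional bordism $W$ with the appropriate tangential structure.

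Next, I would perform Kreck's surgery on $W$ below the middle dimension to make both inclusions $X_i^+ \hookrightarrow W$ 2-connected. At that point the obstruction to $W$ being a topological $s$-cobordism rel boundary lies in a quotient of $L_5(\Z[\Z_2])$ (with appropriate orientation character), modulo the indeterminacy provided by self-equivalences that extend over each $X_i^+$. The crucial point is that each connect-sum with an unknotted $T^2$ adds a hyperbolic summand to the equivariant intersection form on $\pi_2(X_i^+)$; handles supported in this summand provide, inside $W$, exactly the flexibility needed to cancel the middle-dimensional obstruction in one step, without having to pay attention to $\det(K)$ or to extremality of $e$.

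Once the obstruction is killed, $W$ is a topological $s$-cobordism, and since $\Z_2$ is a good group in Freedman's sense the topological $s$-cobordism theorem produces a homeomorphism $X_0^+ \to X_1^+$ rel boundary. A standard extension argument, using the $D^2$-bundle structure on the tubular neighbourhoods of $F_i \# T^2$ and Alexander's trick in $D^4$ rel $\partial D^4$, upgrades this to an ambient homeomorphism $(D^4, F_0 \# T^2) \to (D^4, F_1 \# T^2)$ fixing $K$, and hence to the desired ambient isotopy rel boundary. The main obstacle is the explicit verification that a single tube stabilisation already kills the surgery obstruction: one must show that the Kreck obstruction group in this setting is generated by classes absorbed by a single hyperbolic $\Z[\Z_2]$-summand, whereas without stabilisation the obstruction can be nontrivial, as reflected in the extra hypotheses of Theorems~\ref{thm:TopUnknottingIntro} and~\ref{thm:SurfacesWithBoundaryIntro}.
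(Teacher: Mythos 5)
Your overall architecture (modified surgery on the stabilised exteriors, $s$-cobordism theorem for the good group $\Z_2$, then $\nu$-extendability and the Alexander trick) matches the paper's, but there are two genuine gaps where the real content of the theorem lives.

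First, before any surgery obstruction can be analysed, you must know that the two stabilised exteriors have isometric (more precisely, $0$-stably equivalent) free Wall forms: the obstruction lives in a subset $\ell_5(v_0,v_1)\subseteq\ell_5(\Z[\Z_2])$ determined by the induced forms, and elementariness is only on the table when $v_0\cong v_1$. In Theorem~\ref{thm:SurfacesWithBoundaryIntro} this isometry is extracted from the classification of nonsingular indefinite odd forms, which is exactly where the hypothesis $|\det(K)|=1$ enters; for general $K$ the forms $Q_{\Sigma_2(F_i)}$ are only nondegenerate and there is no a priori reason for them to be isometric, even after one stabilisation. The paper closes this gap by first invoking Baykur--Sunukjian (the surfaces become isotopic after some $n$ stabilisations, so the branched-cover forms become isometric after adding $n$ hyperbolic summands) and then applying Bass's cancellation theorem for indefinite forms to reduce from $n$ hyperbolics to one. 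Your proposal never addresses why the two Wall forms agree, and without that the surgery programme cannot start.

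Second, your claim that "handles supported in this summand provide exactly the flexibility needed to cancel the middle-dimensional obstruction in one step" is a heuristic, not an argument, and it does not reflect the actual mechanism. The obstruction is not killed by geometric handle manipulations in the hyperbolic summand; it is shown to be elementary by a purely algebraic route: one restricts to the $-1$-eigenspace form $2\theta^{\nd}\oplus 2H_+(\Z)$, identifies $\ell_5$ of that form with the orbit set $\bAut$ of isometries of its boundary quadratic linking form (Crowley--Sixt), and then applies Nikulin's extension theorem, whose hypotheses are satisfied precisely because the single added tube contributes a $2H_+(\Z)$ summand whose boundary linking form is the required $\partial(\Z^2,[\begin{smallmatrix}0&2\\0&0\end{smallmatrix}])$ splitting. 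Asserting that one hyperbolic summand suffices is the theorem's whole point, so it needs this (or an equivalent) verification, not an appeal to plausibility. You also gloss over the construction of the boundary homeomorphism making the glued-up closed $4$-manifold spin with $\pi_1\cong\Z_2$ (the normal $1$-type is $\BTOPSpin\times B\Z_2$, not $B\Z_2$), which in the paper requires the Guillou--Marin/Kirby--Taylor analysis of Section~\ref{sec:Spin}.
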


Theorem~\ref{thm:one-is-enough} is proved in Section~\ref{sec:introproofs}.
Note that stabilising in this way automatically puts us into the case of non-extremal normal Euler number, so the outcome of a single stabilisation when $F$ is closed, or when $\partial F=K$ with $|\det{K}|=1$, is already covered by Theorems~\ref{thm:TopUnknottingIntro} and~\ref{thm:SurfacesWithBoundaryIntro} respectively.
On the other hand,  in Theorem~\ref{thm:one-is-enough} there is no assumption on $\det(K)$.
Theorem~\ref{thm:one-is-enough} can also be compared with~\cite{AucklySadykov}, which considers a type of external stabilisation.

\subsection{Involutions of connected sums of projective planes}
\label{sub:Actions}

We describe an application of our results to the study of involutions.
The result of Massey, Kuiper, and Arnol'd~\cite{MR341511, Kuiper, MR971226}, mentioned at the start of the introduction, can be applied to each of the connected summands of $\#^a \C P^2 \#^b \overline{\C P^2}$, to show that the conjugation-induced action of $\Z_2$ on $\#^a \C P^2 \#^b \overline{\C P^2}$ has orbit set~$S^4$ and fixed set the unknotted~$\Z_2$-surface with nonorientable genus $h$ and normal Euler number~$e$, where $a+b=h$ and $2(a-b)=-e$.
We refer to this action as the \emph{standard involution.}
We apply Theorem~\ref{thm:TopUnknottingIntro} to obtain a criterion for an involution on $\#^a \C P^2 \#^b \overline{\C P^2}$ to be topologically conjugate to the standard involution.

\setcounter{corollaryalpha}{3}
\begin{corollaryalpha}
\label{cor:Action}
Let $\tau$ be an involution on $Z :=\#^a \C P^2 \#^b \ol{\C P}^2$ that acts locally linearly and semifreely with fixed point set a nonorientable\footnote{
In fact,  as~$Z$ is nonspin,  a result of Nagami~\cite[Theorem 1.1]{Nagami} implies that $\widetilde{F}$ is necessarily nonorientable.
}
 surface $\widetilde{F} \subseteq Z$ of nonorientable genus $a+b$.
Assume that~$Z \setminus \widetilde{F}$ is simply-connected and that the pair $(h,e):=(a+b,2(b-a))$ satisfies the conditions of Theorem~\ref{thm:TopUnknottingIntro}.
Then the involution $\tau$ is topologically conjugate to the standard involution on $Z$.
\end{corollaryalpha}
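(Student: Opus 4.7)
The plan is to pass to the quotient by $\tau$, recognise it as $S^4$, apply Theorem~\ref{thm:TopUnknottingIntro} to the image of $\widetilde{F}$, and then lift the resulting ambient isotopy back up equivariantly to obtain a topological conjugacy.

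Because $\tau$ acts locally linearly and semifreely with fixed set the codimension-$2$ submanifold $\widetilde{F}$, the orbit space $X:=Z/\tau$ is a closed topological $4$-manifold, and the projection $\pi \colon Z \to X$ is a double branched cover with branch locus the locally flat surface $F:=\pi(\widetilde{F})$. The hypothesis that $Z \sm \widetilde{F}$ is simply connected identifies $\pi|_{Z \sm \widetilde{F}}$ with the universal cover of $X \sm F$, so $\pi_1(X \sm F) \cong \Z_2$; a van Kampen argument, using that the meridian of $F$ generates $\pi_1(X\sm F)$ and bounds a normal disk in a tubular neighbourhood of $F$, then yields $\pi_1(X) = 1$.

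The next step is to pin down $X$. The multiplicative formula $\chi(Z) = 2\chi(X) - \chi(F)$ applied to $\chi(Z) = 2 + a + b$ and $\chi(F) = 2 - (a+b)$ forces $\chi(X) = 2$, and the $G$-signature formula $\sigma(Z) = 2\sigma(X) - \tfrac{1}{2}e(F)$ for a double cover branched along a nonorientable surface, combined with $\sigma(Z) = a - b$, determines both $\sigma(X) = 0$ and $e(F) = 2(b - a)$. Freedman's classification of closed, simply connected topological $4$-manifolds then gives $X \cong S^4$, and $F \subseteq S^4$ becomes a $\Z_2$-surface of nonorientable genus $h = a + b$ and normal Euler number $e = 2(b - a)$. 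By hypothesis $(h,e)$ satisfies the conditions of Theorem~\ref{thm:TopUnknottingIntro}, which therefore provides an ambient homeomorphism $\phi \colon S^4 \to S^4$ carrying $F$ to the standard unknotted surface $F_0$, the branch locus of the standard involution $\tau_0$ on $Z$.

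Finally, to promote $\phi$ to an equivariant homeomorphism, note that $\phi \circ \pi \colon Z \to S^4$ and the standard branched cover $\pi_0 \colon Z \to S^4$ both realise the unique double branched cover of $S^4$ along $F_0$ associated to the surjection $\pi_1(S^4 \sm F_0) \twoheadrightarrow \Z_2$. Topological uniqueness of double branched covers then produces a homeomorphism $\widetilde{\phi} \colon Z \to Z$ with $\pi_0 \circ \widetilde{\phi} = \phi \circ \pi$. A short diagram chase shows that $\widetilde{\phi} \circ \tau \circ \widetilde{\phi}^{-1}$ covers the identity along $\pi_0$ and so is a deck transformation; as it is nontrivial (since $\tau \ne \id$), it must equal $\tau_0$, giving the required conjugacy. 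The main obstacle will be the identification $X \cong S^4$: it relies on invoking local linearity to ensure that the quotient is a manifold, on the $G$-signature formula with the correct normalisation for nonorientable branching, and on Freedman's classification. Once this step is in place, Theorem~\ref{thm:TopUnknottingIntro} carries the essential content and the equivariant lift is a standard covering-space argument.
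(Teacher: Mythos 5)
Your proposal is correct and follows essentially the same route as the paper: pass to the quotient, identify it with $S^4$ via $\chi(X)=2$, simple connectivity, and Freedman, read off $e(F)$ from the signature relation for the double branched cover, apply Theorem~\ref{thm:TopUnknottingIntro}, and lift the resulting homeomorphism equivariantly using uniqueness of double branched covers. The one place you diverge is in proving $\pi_1(X)=1$: you assert that the meridian generates $\pi_1(X\sm F)\cong\Z_2$, which is true but needs its one-line justification (the normal circle in $Z$ double-covers the meridian downstairs, so the meridian does not lift to a loop in the simply-connected cover $Z\sm\widetilde{F}$ and is therefore nontrivial); the paper instead rules out triviality of the meridian by a contradiction argument producing a geometrically dual sphere and comparing $b_2^{\Z_2}(X)$ with the Euler characteristic computation. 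Your version is cleaner once that justification is supplied, and the remaining steps, including the deck-transformation diagram chase identifying $\widetilde{\phi}\circ\tau\circ\widetilde{\phi}^{-1}$ with $\tau_0$, match the paper's argument.
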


\begin{proof}
For brevity,
we write $Z_{\widetilde{F}}$ for the exterior of $\widetilde{F} \subseteq Z$, $X:=Z/\tau$ for the orbit space of the involution, $p \colon Z \to X$ for the quotient map and $F:=p(\widetilde{F}) \subseteq X$ for the branch locus.
We also write $X_F:=X \setminus \nu F $ so that, since the action is semifree,  the restriction $p| \colon Z_{\widetilde{F}} \to X_F$ is a $2$-fold covering map.
Note that $F$ and $\widetilde{F}$ both have nonorientable genus $h$.

We claim that $X$ is homeomorphic to $S^4$.
Since $Z_{\widetilde{F}}$ is simply-connected and $p| \colon Z_{\widetilde{F}} \to X_F$ is a~$2$-fold covering map, the long exact sequence of a fibration implies that $\pi_1(X_F)=\Z_2$.
A Seifert-van Kampen argument then shows that
$\pi_1(X)=\pi_1(X_F)/\langle \mu_F \rangle$ is a quotient of $\Z_2$, where~$\mu_F \subseteq X_F$ denotes a meridian to $F \subseteq X$.
Since the Euler characteristic of a closed $3$-manifold vanishes,  a calculation using the additivity of the Euler characteristic yields
\begin{align*}
2\chi(X)
&=2(\chi(X_F)+\chi(F))
=\chi(Z_{\widetilde{F}})+2\chi(F) \\
&=(\chi(Z)-\chi(\widetilde{F}))+2\chi(\widetilde{F})
=\chi(Z)+\chi(\widetilde{F})
=(2+h)+(2-h)
=4.
\end{align*}
We argue that $X$ is simply-connected, i.e.~that $\mu_F$ is nontrivial in $\pi_1(X_F)$.
If $\mu_F$ were trivial in~$\pi_1(X_F)$, it would bound an immersed disc, which when glued to the meridional disc for $F$,  would produce an immersed sphere in
$X$ that is geometrically dual to $F$ in $X$. This sphere and~$F$ then represent independent and homologically essential $\Z_2$-coefficient homology classes.
This would imply, computing with $\Z_2$ coefficients, that $b_2^{\Z_2}(X)\geq 2$ and therefore~$\chi(X) \geq 1+2+1 = 4$, contradicting the calculation above.

Since~$X$ is a closed simply-connected $4$-manifold with $\chi(X)=2$, the 4-dimensional Poincar\'e conjecture implies that $X \cong S^4$~\cite{Freedman}.
This establishes the claim.

The claim implies that $Z$ is homeomorphic to the 2-fold cover of $S^4$ branched along the $\Z_2$-surface $F \subseteq S^4$.
Thus,~\cite[Theorem 1]{GeskeKjuchukovaShaneson} implies that the normal Euler number of $F$ is
$$e(F)=-2\sigma(\Sigma_2(F))=-2\sigma(Z)=-2(a-b)=e.$$
We apply Theorem~\ref{thm:TopUnknottingIntro} to deduce that there a homeomorphism of pairs~$\varphi \colon (S^4,F) \cong (S^4,U_{h,e})$, where $U_{h,e}$ is the unknotted embedding with normal Euler number $e$ of the closed nonorientable surface of genus $h$.
This homeomorphism can be lifted to a~$\Z_2$-equivariant homeomorphism of the universal covers of the exteriors and extended to
a~$\Z_2$-equivariant homeomorphism~$\widetilde{\varphi}$ of the branched covers.
In other words,~$\widetilde{\varphi}$ conjugates the~$\Z_2$-action on~$\Sigma_2(F)$ with the standard one on~$\Sigma_2(U_{h,e})  \cong \#^ a \C P^2 \#^b \overline{\C P}^2$.
\end{proof}

\subsection{Surfaces obtained by band moves}
\label{sub:BandMoves}

Given a knot~$K$, we describe a natural source of~$\Z_2$-surfaces in the~$4$-ball with boundary~$K$.
Assume that~$h$ band moves, at least one of which is nonorientable,  transform~$K$ into a knot~$J$ that bounds a genus $g$ orientable surface~$\Sigma\subseteq D^4$ whose exterior~$N_{\Sigma}=D^4 \setminus \nu \Sigma$ has fundamental group~$\Z$; we call~$\Sigma$ a~\emph{$\Z$-surface.}
We obtain a nonorientable~$\Z_2$-surface~$F \subseteq D^4$ of nonorientable genus~$h+2g$
by using  the~$\Z$-surface~$\Sigma$ to cap off the nonorientable cobordism~$C \subseteq S^3 \times [0,1]$ arising from the band moves.
To see that this is a~$\Z_2$-surface use the decomposition, $X_F=X_C \cup_{X_J} -N_\Sigma$, where~$X_C$ denotes the exterior of~$C$ and~$X_J$ denotes the exterior of~$J$.
Since~$\pi_1(X_J) \to \pi_1(X_C)$ is surjective,~$\pi_1(X_F)$ must be a quotient of~$\pi_1(N_\Sigma) \cong \Z$. Then since~$F$ is nonorientable, this forces~$\pi_1(X_F) \cong \Z_2$.

Lipshitz and Sarkar recently used this construction to produce examples of exotic nonorientable surfaces $F_0,F_1 \subseteq D^4$~\cite{LipshitzSarkar}.
In one of their examples,  they perform 3 band moves on the knot $K=12^n_{309}$, to arrive at a specific knot $J\subseteq S^3$ that Hayden-Sundberg~\cite{HaydenSundberg} had shown bounds two smoothly distinct $\Z$-discs.
Lipshitz and Sarkar showed that the construction above results in two smoothly embedded $\Z_2$-surfaces of nonorientable genus $h=3$ and
Euler number $-6$, that are not smoothly isotopic.
Since~$\det(K)=1$ and $h=3$, Theorem~\ref{thm:SurfacesWithBoundaryIntro} shows that these two surfaces are topologically ambiently isotopic rel.~boundary.

An alternative route to this conclusion  was already available, using the fact that any two $\Z$-discs with the same boundary are ambiently isotopic~\cite{ConwayPowellDiscs}.
In future applications, one might wish to cap off by a $\Z$-surface of higher genus than a slice disc. While isotopy uniqueness of such surfaces was also considered previously by two of the authors~\cite{ConwayPowell}, we suggest that Theorem~\ref{thm:SurfacesWithBoundaryIntro} will likely be more easily applicable here, as it avoids some of the more delicate issues involving intersection pairings from that study.

\subsection{Outline of the proof of Theorem~\ref{thm:SurfacesWithBoundaryIntro}}
\label{sub:Outline}
Our proof of Theorem~\ref{thm:SurfacesWithBoundaryIntro} uses Kreck's modified surgery theory~\cite{KreckSurgeryAndDuality}, and as mentioned above our approach was inspired by~\cite{KreckOnTheHomeomorphism}.
We review modified surgery theory in Sections~\ref{sec:Monoid} and~\ref{sec:ModifiedSurgery},  but for this introduction we introduce some aspects of this theory and explain how it applies (in principle) to proving that two spin~$4$-manifolds~$M_0$ and~$M_1$, with fundamental group $\Z_2$ and (possibly empty) boundary, are homeomorphic.
While we delay detailed definitions to Sections~\ref{sec:Monoid} and~\ref{sec:ModifiedSurgery},  we hope that the broad picture will give the reader some idea of what our proof of Theorem~\ref{thm:SurfacesWithBoundaryIntro} entails.

The choice of a spin structure on the~$M_i$ and of an identification~$\pi_1(M_i) \cong \Z_2$ determine~$2$-connected maps $\overline{\nu}_i \colon M_i \to B$ where the~$B:=\BTOPSpin \times B\Z_2$.
There is a fibration~$\xi \colon B \to \BSTOP$, and in Kreck's terminology $(B,\xi)$ is called the \emph{normal~$1$-type} of the $M_i$, while the~$\overline{\nu}_i$ are called \emph{normal $1$-smoothings} if they lift the stable normal bundles~$M_i \to \BSTOP$ of the~$M_i$.

In order to show that $M_0$ and $M_1$ are homeomorphic, the first step in modified surgery is to fix a homeomorphism~$f \colon \partial M_0 \to \partial M_1$ that preserves the normal $1$-smoothings, then find a $5$-dimensional cobordism~$(W;M_0,M_1)$ rel.~boundary together with a normal $1$-smoothing~$\overline{\nu} \colon W \to B$ that extends the $\overline{\nu}_i$.
The bordism $W$ can be obtained by proving that $(M_0 \cup_f -M_1,\overline{\nu}_0 \cup - \overline{\nu}_1)$ vanishes in the bordism group~$\Omega_4(B,\xi)$.

To such a $(B,\xi)$-cobordism~$(W,\overline{\nu})$, modified surgery associates an obstruction $\Theta(W,\overline{\nu})$ in a monoid~$\ell_5(\Z[\Z_2])$.
If this obstruction takes a particularly simple form (i.e.~is \emph{elementary}), then~$(W,\overline{\nu})$ is $(B,\xi)$-bordant rel.\ boundary to an~$s$-cobordism.
Since $\Z_2$ is a \emph{good} group with trivial Whitehead torsion the $5$-dimensional $s$-cobordism theorem~\cite[Theorem 7.1A]{FreedmanQuinn} then implies that the homeomorphism $f \colon \partial M_0 \to \partial M_1$ extends to a homeomorphism~$M_0 \cong M_1$.
We refer to~\cite[Definition 12.12]{DETBook} for the definition of a good group and to~\cite[Chapter~19]{DETBook} for a survey of good groups, including the fact that finite groups are good.

\medbreak

Having described the modified surgery programme to decide whether two spin~$4$-manifolds with fundamental group $\Z_2$ are homeomorphic, we restrict our attention to the case where~$M_0=X_{F_0}$ and~$M_1=X_{F_1}$ are exteriors of $\Z_2$-surfaces~$F_0, F_1\subseteq D^4$.

\begin{definition}
\label{def:Extendable}
Suppose $F_0, F_1\subseteq D^4$ have the same connected boundary $K\subseteq S^3$ and write~$S(\nu F_i )=\partial \ol{\nu} F_i  \sm \nu K\subseteq D^4$ for the sphere bundle of the normal bundle to $F_i$, or equivalently the boundary of the closed tubular neighbourhood to $F_i$, minus $\nu K$.
  Suppose $f \colon S(\nu F_0 ) \to S(\nu F_1 )$ is a homeomorphism. We say $f$ is \emph{$\nu$-extendable} if it extends to a homeomorphism $\ol{\nu} F_0  \cong \ol{\nu} F_1 $ of the closed tubular neighbourhoods that sends $F_0$ to $F_1$.

Suppose the homeomorphism $f$ restricts to the identity map $S(\nu K ) \to S(\nu K )$. Then we say $f$ is \emph{$\nu$-extendable rel.~boundary} if there exists moreover an extension $\ol{\nu} F_0  \cong \ol{\nu} F_1 $ that restricts to the identity map on~$\ol{\nu} K$.
\end{definition}

A homeomorphism $S(\nu F_0) \to S(\nu F_1)$ that is $\nu$-extendable rel.\ boundary is a necessary condition for $F_0$ and $F_1$ to be ambiently isotopic rel.\ boundary.
Such a homeomorphism exists if and only if $F_0$ and $F_1$ have the same nonorientable genera and relative normal Euler numbers.
Note also that if~$f$ is~$\nu$-extendable rel.\ boundary, then it extends both to a homeomorphism $\partial X_{F_0} \to \partial X_{F_1}$, which restricts to the identity on the knot exterior,  and extends to a homeomorphism $\ol{\nu} F_0 \to \ol{\nu}F_1$.


Since, for $i=0,1$, the exterior $X_{F_i}$ is spin (it is a codimension zero submanifold of $S^4$ which is itself spin) and has fundamental group~$\Z_2$,  it comes with a normal~$1$-smoothing $\overline{\nu}_i \colon X_{F_i} \to B$.
The strategy underlying the proof of Theorem~\ref{thm:SurfacesWithBoundaryIntro} is now as follows.

\begin{enumerate}
\item We prove that~$(X_{F_0},\overline{\nu}_0)$ and~$(X_{F_1},\overline{\nu}_1)$ are $(B,\xi)$-cobordant.
The key step is in Section~\ref{sec:Spin}, where we use $\Z_4$-valued quadratic forms due to Guillou-Marin~\cite{GuillouMarin} and Kirby-Taylor~\cite{KirbyTaylor} to prove that there exists a homeomorphism $f \colon \partial X_{F_0} \to \partial X_{F_1}$ such that the closed~$4$-manifold~$M:=X_{F_0} \cup_{f} -X_{F_1}$ is spin, has $\pi_1(M) \cong \Z_2$, and such that $f$ restricts to a homeomorphism $S(\nu F_0 )\cong S(\nu F_1 )$ that is $\nu$-extendable rel.~boundary.
Here we use that~$F_0$ and $F_1$ have the same nonorientable genus $h$ and the same normal Euler number~$e$.
It is then straightforward to show that $M$ has zero signature, and an application of the Atiyah-Hirzebruch spectral sequence
to calculate $\Omega_4(B,\xi)\cong \Omega_4^{\TOPSpin}(\Z_2) \cong \Z$ then implies~$M$ is null-bordant over $(B,\xi)$.
\item We prove that the modified surgery obstruction~$\Theta(W,\overline{\nu}) \in \ell_5(\Z[\Z_2])$ is elementary.
Use~$Q_{\widetilde{X}_{F_i}}$ to denote the $\Z$-intersection form of the universal cover of $X_{F_i}$ and $Q_{\widetilde{X}_{F_i}}^{\nd}$ for the induced nondegenerate form on $\operatorname{coker} (H_2(\partial \widetilde{X}_{F_i}) \to H_2(\widetilde{X}_{F_i}))$.
The arguments from~\cite{KreckOnTheHomeomorphism}, reformulated in the framework from~\cite{CrowleySixt}, show that this obstruction lies in a subset $\ell_5(v_0,v_1) \subseteq \ell_5(\Z[\Z_2])$ whose definition involves the quadratic form~$v_i:=(\Z[\Z_2]^{h},(1-T)\theta_i)$.
Here $T$ stands for the generator of $\Z_2=\langle T \mid T^2=1 \rangle$ and $\theta_i := \theta_{\widetilde{X}_{F_i}}^{\nd}$ denotes the quadratic form on~$\operatorname{coker} (H_2(\partial \widetilde{X}_{F_i}) \to H_2(\widetilde{X}_{F_i})) \cong \Z^h$ whose underlying even form is~$Q_{\widetilde{X}_{F_i}}^{\nd}$.
If~$|\det(K)|=1$ and if either~$h \leq 8$ or~$e$ is non-extremal, then Proposition~\ref{prop:Calculate} shows that~$\theta_0 \cong \theta_1$ and therefore $v:=v_0\cong v_1$.
Using Lemma~\ref{lem:InfamousLemma}, the obstruction in~$\ell_5(v,v)$ further reduces to an obstruction in $\ell_5(v_-,v_-)$, where $v_- \cong 2\theta_i \colon \Z^h \times \Z^h \to \Z$ is obtained by restricting $v$ to the $-1$ eigenspace.
Work of Crowley-Sixt~\cite[Section 6.3]{CrowleySixt} implies that~$\ell_5(v_-,v_-)$ is isomorphic to the group $\operatorname{Aut}(\partial v_-)$ of isometries of the boundary quadratic linking form of~$v_-$ modulo the action of isometries $\Aut(v_-)$ of $v_-$.
We appeal to a theorem of Nikulin~\cite{Nikulin} to deduce that this orbit space consists of a single orbit when~$e$ is non-extremal, and therefore every element of $\ell_5(v_-,v_-)$, and hence of $\ell_5(v,v)$, has an elementary representative.
Here the key point is that when $e$ is non-extremal and $h \geq 3$,  Proposition~\ref{prop:Calculate} shows that the quadratic form $\theta_i$ splits off a hyperbolic summand (even though the $\Z[\Z_2]$-intersection form does not).
In the appendix we present detailed explicit computations which show, in the extremal cases with $h \leq 3$, that the relevant subset of the $\ell$-monoid, $\ell_5(v_-,v_-)$,  is trivial, and hence in these cases the modified surgery obstruction is also elementary.
As mentioned above, Pencovitch generalised these calculations to include $h=4,5$ and showed that~$\ell_5(v_-,v_-)$ is nontrivial in the extremal case for $h=6,7$~\cite{Pencovitch}.
\item Modified surgery theory, and more specifically~\cite[Theorem 4]{KreckSurgeryAndDuality}, then ensures that~$W$ is bordant rel.~boundary to an~$s$-cobordism and,  as mentioned above, the $s$-cobordism theorem~\cite[Theorem~7.1A]{FreedmanQuinn} then implies that the homeomorphism $f \colon \partial X_{F_0} \to \partial X_{F_1}$ extends to a homeomorphism $\Phi \colon X_{F_0} \to X_{F_1}$.
The $\nu$-extendability condition means that we can extend $\Phi$ to a rel.\ boundary homeomorphism $(D^4,F_0) \cong (D^4,F_1)$.
It follows from the Alexander trick that orientation-preserving homeomorphisms of
$D^4$ that restrict to the identity on $S^3$ are rel.\ boundary isotopic to the identity rel.~boundary. We conclude that~$F_0$ and~$F_1$ are ambiently isotopic  rel.\ boundary.
\end{enumerate}

\begin{remark}
The key ingredients in establishing that $v_0 \cong v_1$ are the fact that for~$i=0,1$, the quadratic form $v_i$ is determined by the intersection form $Q_{\Sigma_2(F_i)}$ of the $2$-fold branched cover,  and that if we let $a,b \in \Z$ be the unique integers such that $h=a+b$ and~$\sigma(Q_{\Sigma_2(F_i)})=a-b$, then the classification of nonsingular symmetric bilinear forms implies that
\begin{equation}
\label{eq:Diago}
Q_{\Sigma_2(F_i)} \cong (1)^{\oplus a} \oplus (-1)^{\oplus b},
\end{equation}
provided~$|\det(K)|=1$, and either~$h \leq 8$ or~$e$ is non-extremal.

Two questions are of interest if one is to make further progress on Question~\ref{quest:UnknottingConjecture}.
The first question is to determine whether~\eqref{eq:Diago} holds for every extremal $e$.
If $F \subseteq S^4$ is smoothly embedded, then this follows from Donaldson's theorem~\cite{Donaldson}, whereas a negative answer in the locally flat setting would lead to a negative answer to Question~\ref{quest:UnknottingConjecture}.
The second question is whether the subset~$\ell_5(v_-,v_-)$ of~$\ell_5(\Z)$ is trivial for the forms $v_-$ that arise in the extremal cases when~$h \geq 4$; see Proposition~\ref{prop:ApplyNikulin}.  As illustrated in the appendix, the complexity of the computations grows rapidly with $h$, making further computations by hand increasingly challenging.
If~\eqref{eq:Diago} holds and if~$\ell_5(v_-,v_-)$ is trivial for every~$h$, then an affirmative answer to  Question~\ref{quest:UnknottingConjecture} would follow.
\end{remark}

The first of these two questions can be made more explicit for $h=9$.
\begin{question}
\label{question:CounterExample}
Is there a $\Z_2$-surface $F \subseteq S^4$ with $(h,e)=(9,\pm 18)$ whose $2$-fold branched cover has intersection form $E_8 \oplus (1)$?
\end{question}

As explained above,  a positive answer to this question would give an example of a knotted $\Z_2$-surface in $S^4$ and therefore to a negative answer to Question~\ref{quest:UnknottingConjecture}.
As mentioned above,  Donaldson's theorem implies that the answer to Question~\ref{question:CounterExample} is negative if $F$ is smoothly embedded.

\subsection{Isotopy of embeddings}\label{subsection:isotopy-embeddings}

In our analysis thus far, we considered submanifolds of $S^4$ or $D^4$ up to ambient isotopy.  We can instead consider the closely related problem of ambient isotopy of embeddings.
Let $F$ be a nonorientable surface with zero or one boundary components, let $N$ denote $S^4$ with zero or one copies of $\mathring{D}^4$ removed respectively, and let $g_0,g_1 \colon F \hookrightarrow N$ be proper locally flat embeddings with the same normal Euler number and, in the case $\partial F \cong S^1$ and $N=D^4$, with $g_0|_{\partial F} =g_1|_{\partial F}$.  Suppose that $g_i(F)$ is a $\Z_2$-surface, for $i=0,1$.
We now consider whether there is an ambient isotopy rel.\ boundary between $g_0$ and $g_1$.

Given a proper locally flat embedding $g \colon F \hookrightarrow N$,
Guillou-Marin~\cite{GuillouMarin} defined a quadratic refinement $q_{GM} \colon H_1(g(F);\Z_2) \to \Z_4$ of the intersection pairing $Q_{g(F)} \colon H_1(g(F);\Z_2) \times H_1(g(F);\Z_2) \to \Z_2$, in terms of the embedding $g$, which we recall in Definition~\ref{def:GuillouMarin}.


\setcounter{theoremalpha}{4}
\begin{theoremalpha}\label{thm:isotopy-of-embeddings}
 Suppose that for $i=0,1$, the images $F_i := g_i(F)$
  satisfy the hypotheses of one of Theorems~\ref{thm:TopUnknottingIntro},~\ref{thm:SurfacesWithBoundaryIntro}, or~\ref{thm:one-is-enough} $($in the latter case, we assume that $g_i(F)$ are the stabilised surfaces$)$.
The embeddings~$g_0$  and~$g_1$ are ambiently isotopic rel.~boundary if and only if the homeomorphism
  \[\psi:= g_1 \circ (g_0)^{-1} \colon F_0 \to F_1\]
  induces an isometry of the Guillou-Marin forms.
\end{theoremalpha}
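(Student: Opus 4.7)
The \emph{only if} direction should be a naturality argument. If $\Phi_t \colon N \to N$ is an ambient isotopy rel.\ boundary realising $g_0 \simeq g_1$, then $\Phi_1$ is a self-homeomorphism of $N$, isotopic to the identity rel.\ boundary, with $\Phi_1 \circ g_0 = g_1$. In particular $\Phi_1$ is a homeomorphism of pairs $(N, F_0) \to (N, F_1)$ whose restriction to $F_0$ equals $\psi$. Because the Guillou-Marin form is defined only from the data of the embedding (Definition~\ref{def:GuillouMarin}), it is natural under such pair homeomorphisms, so $\psi$ induces an isometry of Guillou-Marin forms.

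For the \emph{if} direction, whichever of Theorems~\ref{thm:TopUnknottingIntro},~\ref{thm:SurfacesWithBoundaryIntro}, or~\ref{thm:one-is-enough} applies yields an ambient homeomorphism $\Phi \colon N \to N$, isotopic to the identity rel.\ boundary, with $\Phi(F_0) = F_1$. Setting $\phi := \Phi|_{F_0} \colon F_0 \to F_1$, the naturality argument just applied to $\Phi$ shows that $\phi$ is a GM isometry, and by hypothesis so is $\psi$. Hence
\[ \alpha := \psi \circ \phi^{-1} \colon F_1 \to F_1 \]
is a self-homeomorphism of $F_1 \subseteq N$ preserving its Guillou-Marin form. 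The proof reduces to the claim that every such $\alpha$ extends to an ambient homeomorphism $\tilde{\alpha} \colon N \to N$, isotopic to the identity rel.\ boundary, because then $\tilde{\alpha} \circ \Phi \circ g_0 = \alpha \circ \phi \circ g_0 = \psi \circ g_0 = g_1$, so $\tilde{\alpha} \circ \Phi$ realises the desired ambient isotopy between the embeddings.

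To establish the claim I would work with $F_1$ in standard position, which is justified by the applicable unknottedness theorem: it identifies $(N, F_1)$ with a standard model, namely an internal connected sum of standardly embedded projective planes (attached in the bounded case to a fixed standard piece containing $K$, and including a standard torus summand in the case of Theorem~\ref{thm:one-is-enough}). The plan is then to pass through Lickorish's generating set for the mapping class group of a nonorientable surface -- Dehn twists along two-sided curves together with crosscap ($Y$-) homeomorphisms -- and to realise each GM-preserving generator ambiently. In the standard model, permutations of crosscap summands come from ambient swaps of disjoint standard neighbourhoods in $N$; Dehn twists along two-sided curves bounding embedded discs in $N \setminus F_1$ extend to twists supported in a tubular disc-bundle neighbourhood of the disc; and analogous explicit moves handle the crosscap slides. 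The main obstacle is the combinatorics of organising such a verification across the GM-preserving subgroup as $h$ grows.

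A safety-net alternative I would develop in parallel is to build the prescribed $\psi$ directly into the modified-surgery construction behind Theorem~\ref{thm:SurfacesWithBoundaryIntro}. The $\nu$-extendable boundary identification $f \colon \partial X_{F_0} \to \partial X_{F_1}$ used there has an $\nu$-extension that determines a self-homeomorphism $F_0 \to F_1$; the quadratic-refinement calculations of Section~\ref{sec:Spin}, made with the Guillou-Marin and Kirby-Taylor refinements, identify the obstruction to choosing $f$ so that this induced map equals $\psi$ with the failure of $\psi$ to preserve $q_{GM}$. When $\psi$ is a GM isometry this obstruction vanishes, one can upgrade the $\nu$-extendable~$f$ to $\nu$-extend to~$\psi$, and the remainder of the surgery argument then produces an ambient isotopy rel.\ boundary whose induced parameterisation map is exactly $\psi$.
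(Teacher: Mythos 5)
Your \emph{only if} direction is exactly the paper's: naturality of the Guillou--Marin form under a homeomorphism of pairs. For the \emph{if} direction, your ``safety-net alternative'' is in substance the paper's actual proof, and it is the route you should promote to the main argument. One clarification there: no obstruction needs to be identified and shown to vanish. The point is simply that the homeomorphism $F_0\to F_1$ inducing a Guillou--Marin isometry, whose existence is supplied by Corollary~\ref{cor:realiseisometry} and which seeds the construction of the boundary homeomorphism $f$ in Proposition~\ref{prop:UnionSpin}, was an \emph{arbitrary} choice; under the hypothesis of Theorem~\ref{thm:isotopy-of-embeddings} one may simply take it to be $\psi$, and then the ambient isotopy produced by whichever of Theorems~\ref{thm:TopUnknottingIntro},~\ref{thm:SurfacesWithBoundaryIntro},~\ref{thm:one-is-enough} applies restricts on $F_0$ to $\psi$, which is precisely the statement that $g_0$ and $g_1$ are ambiently isotopic rel.\ boundary.

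Your primary plan for the \emph{if} direction, by contrast, has a genuine gap. After reducing to a Guillou--Marin-preserving self-homeomorphism $\alpha$ of $F_1$, you need that \emph{every} such $\alpha$ extends to an ambient homeomorphism of $N$ isotopic to the identity rel.\ boundary. This is a topological, relative-boundary version of Hirose's theorem; Hirose's result is smooth, is only for closed surfaces standardly embedded in $S^4$, and the paper explicitly notes that this route ``would not have applied automatically in the cases when $N=D^4$''. Realising the Lickorish generators (Dehn twists, crosscap slides, summand permutations) ambiently and rel.\ boundary in $D^4$, and checking that the GM-preserving subgroup is generated by ambiently realisable elements, is a substantial unproven programme, not a routine verification. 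Since the flexibility already built into Proposition~\ref{prop:UnionSpin} makes this unnecessary, you should discard the primary plan and run the argument through the choice of $\psi$ in Corollary~\ref{cor:realiseisometry}.
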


Theorem~\ref{thm:isotopy-of-embeddings} is proved in Section~\ref{sec:introproofs}.

\begin{remark}
Suppose $g\colon F\hookrightarrow S^4$ is a smooth embedding such that $g(F)$ is a connected sum of standardly embedded projective planes and that $\varphi \colon F \to F$ is a self-diffeomorphism.
Hirose~\cite{Hirose-nonorientable}  proved that $\varphi$ is induced by a self-diffeomorphism $\Phi\colon S^4\to S^4$ sending the image $g(F)$ to itself if and only if $g\circ\varphi \circ g^{-1}$ induces an isometry of the Guillou-Marin form on $g(F)$.
In the case $N=S^4$, it is possible to deduce the case of  Theorem~\ref{thm:isotopy-of-embeddings} corresponding to Theorem~\ref{thm:TopUnknottingIntro} by combining Theorem~\ref{thm:TopUnknottingIntro} with an application of Hirose's theorem.
 However, since this method would not have applied automatically in the cases when $N=D^4$, we gave the independent proof above, taking advantage of the flexibility in our method of proof (Section~\ref{sec:Spin} in particular) to choose our preferred Guillou-Marin form preserving homeomorphism.

Another view on Theorem~\ref{thm:isotopy-of-embeddings} is that it recovers a topological version of Hirose's theorem, by applying it in the special case that $g_0(F) \subseteq S^4$ is a connected sum of standardly embedded projective planes and $g_1=g_0\circ\varphi$ for some homeomorphism $\varphi\colon F\to F$.
From this point of view, we have obtained extensions of Hirose's theorem to surfaces with boundary in the cases where~$N=D^4$ covered by Theorems~\ref{thm:SurfacesWithBoundaryIntro} and~\ref{thm:one-is-enough}, but again only in the topological category. It would be interesting to know whether the smooth analogues also hold, but this cannot be addressed with the inherently topological methods of this paper.
\end{remark}

\subsection*{Organisation}

In Section~\ref{sec:EulerNumber}, we review the normal Euler number of an embedded surface.
In Section~\ref{sec:Forms}, we review hermitian and quadratic forms.
In Section~\ref{sec:AlgtopExterior},  we collect some facts about the algebraic topology of $\Z_2$-surface exteriors. Section~\ref{sub:IntersectionForms} is concerned with intersection forms.
In Section~\ref{sec:Spin}, we prove that if $F_0$ and $F_1$ have the same normal Euler number then it is possible to form the union~$M:=X_{F_0}\cup -X_{F_1}$ along a homeomorphism of the boundaries so that $M$ is a spin $4$-manifold and $\pi_1(M) \cong \Z_2$.
In Section~\ref{sec:Monoid}, we review quasi-formations and the definition of the monoid~$\ell_{2q+1}(R)$.
In Section~\ref{sec:ModifiedSurgery}, we recall the set-up of Kreck's modified surgery theory.
In Section~\ref{sec:PlusMinusForms}, we formulate a criterion for a class $\Theta \in \ell_5(\Z[\Z_2])$ to be elementary.
In Section~\ref{sec:Obstruction}, we analyse the surgery obstruction and prove that it is elementary.

 \subsection*{Conventions}

We work in the topological category with locally flat embeddings unless otherwise stated.
From now on, all manifolds are assumed to be compact, connected, and based; if a manifold has a nonempty boundary, then the basepoint is assumed to lie in the boundary.

Let $F$ denote a nonorientable, compact surface with connected, nonempty boundary and nonorientable genus $h$.
The \emph{standard cell structure} for $F$ consists of a 0-cell $e^0$ and a 1-cell~$e^1_\partial$ to construct~$\partial F$, then $h$ 1-cells $\gamma_1,\dots,\gamma_h$ attached to the 0-cell, followed by a 2-cell attached via~$e^1_{\partial}\cdot\gamma_1\cdot \gamma_1\cdots\gamma_h\gamma_h$.
Note that $F$ deformation retracts onto the subcomplex $e^0 \cup \bigcup_{i=1}^h \gamma_i \simeq \bigvee^h S^1$.

For a space $X$, we write $H_n(X)$ for the $n$-th singular homology group of $X$ with $\Z$ coefficients. Any other coefficient system will be included in the notation.
We write $\subseteq$ for inclusions of subsets and $\subset$ to indicate proper subsets.
The notation $\Sigma_2(-)$ is used exclusively for $2$-fold branched covers, typically of a knot in $S^3$ or a surface in $D^4$. 

If~$R$ is a ring with involution~$x \mapsto \overline{x}$ and~$M$ is a left~$R$-module then
$M^*$ denotes the dual of~$M$,~$\Hom_R(M, R)$,
which is given a left~$R$-module structure using the involution on~$R$ in the usual way.

\subsection*{Acknowledgments}
We thank Matthias Kreck for helpful discussions about \cite{KreckOnTheHomeomorphism}, and  Tom Mrowka for asking us about involutions, which led to Corollary~\ref{cor:Action}. 
We warmly thank the anonymous referee, whose comments led to improvements in the article.
AC was partially supported by the NSF grant DMS~2303674.
PO was partially supported by the SNSF Grant~181199. MP was partially supported by EPSRC New Investigator grant EP/T028335/2 and EPSRC New Horizons grant EP/V04821X/2.

\subsection*{Open Access}
For the purpose of open access, the authors have applied a CC-BY Creative Commons attribution license to this version, and will do the same to any author-accepted manuscript arising.

\section{The normal Euler number}
\label{sec:EulerNumber}

In this section, we discuss the normal Euler number of a properly embedded surface.
The normal Euler number, an integer,  is an isotopy invariant of $\Z_2$-surfaces, which can distinguish between those with the same nonorientable genus.

We start by recalling the general definition of Euler numbers with twisted coefficients.
Suppose~$(X,Y)$ is homotopy equivalent to a  CW pair, with both $X$ and $Y$ compact and connected.
Let~$\xi=(\R^n\to E\xrightarrow{\pi} X)$ be a rank $n$ vector bundle.
The example to keep in mind is that of the normal bundle of a properly embedded surface $F \subseteq D^4$.
Write~$D(\xi)$ and~$S(\xi)$ for the disc and sphere bundles of~$\xi$.
Consider the obstruction-theoretic problem of finding a section $s$ for the sphere bundle $S(\xi)$ extending a given section $s'$ on the restriction of $S(\xi)$ to $Y$:
\[
\begin{tikzcd}
Y\ar[rr,"s'"]\ar[d,hook]\ar[drr, hook]&& S(\xi)\ar[d,"S(\pi)"]\\
X\ar[rru,dashed,"{s}" near start]\ar[rr,"\Id_X"']&&X.
\end{tikzcd}
\]
The primary obstruction to this problem is the \emph{relative Euler class}, which takes values in the local coefficient system given by the homotopy groups of the fibres $\epsilon(\xi, s')\in H^n(X, Y;\{\pi_{n-1}(S(\xi)_x)\})$.
Write~$x_0\in Y$ for a basepoint and $\pi_1(X):=\pi_1(X,x_0)$.
The monodromy around a loop either preserves or reverses an orientation on $S(\xi)_{x_0}$, and this determines a homomorphism $\rho\colon \pi_1(X)\to \Aut(\pi_{n-1}(S(\xi)_{x_0}))$.
This homomorphism is used
to form a twisted coefficient system for~$X$, giving groups $H^n(X,Y;\pi_{n-1}(S(\xi)_{x_0}))$, canonically isomorphic to $H^n(X, Y;\{\pi_{n-1}(S(\xi)_x)\})$; see~\cite[page 288, Theorem 4.9]{WhiteheadElements}.

A cohomology class $w\in H^1(X;\Z_2)$ determines a homomorphism $\pi_1(X) \to \Aut(\Z)$ by assigning, to a loop $\gamma$, multiplication by $w([\gamma])\in\{\pm 1\}$. This in turn determines a twisted coefficient system~$\Z^w$. We may identify $\Z^{w_1(\xi)}$ with $S(\xi)_{x_0}$ as $\Z[\pi_1(X)]$-modules by choosing an isomorphism of groups $\Z\cong \pi_{n-1}(S(\xi)_{x_0})$.
Via either such choice of identification, we see that the monodromy-induced map $\rho$ and the homomorphism determined by $w_1(\xi)$ agree.
Thus, once such a choice is specified, we consider the relative Euler class $\epsilon(\xi, s')$ to lie in $H^n(X, Y;\Z^{w_1(\xi)})$.

Now suppose that $X$ is a compact $n$-manifold with (possibly empty) boundary $Y$.
There is an isomorphism $H_n(X,Y;\Z^{w_1(TX)})\cong \Z$ and a choice of generator $[X]$ is called a \emph{twisted $($relative$)$ fundamental class}.
Suppose the total space $E$ of $\xi$ is orientable, and fix a fundamental class for this total space.
A choice of a twisted fundamental class $[X]$ determines local orientations for~$X$.
Together with the orientation on the total space of $\xi$, we infer an orientation on the fibres of $\xi$ and hence an isomorphism $\Z\cong \pi_{n-1}(S(\xi)_{x_0})$.
Thus we obtain an Euler class $\epsilon(\xi,s')\in H^n(X,Y;\Z^{w_1(\xi)})$.
Noting that $w_1(TX)+w_1(\xi)=w_1(E)=0$,  there is a $\Z$-valued cap product
\[
-\cap-\colon H_n(X,Y;\Z^{w_1(TX)})\times H^n(X,Y;\Z^{w_1(\xi)})\to H_0(X;\Z^{w_1(TX)+w_1(\xi)}) = H_0(X) = \Z.
\]
We define the \emph{relative Euler number} $e(\xi, s')$ of $\xi$ to be the integer given by the cap product
$$e(\xi, s'):=[X]\cap \epsilon(\xi,s')\in H_0(X)=\Z.$$
\emph{A priori} this depends on a choice of~$[X]$ and a choice of orientation on $\xi$. A change in the choice of~$[X]$ switches the sign of $[X]$ and also switches the identification  $\Z\cong \pi_{n-1}(S(\xi)_{x_0}),$ thus overall these cancel in the computation of $e(\xi, s')$, and the relative Euler number is independent of the choice of $[X]$. Tracking a change in the choice of orientation on the total space of $\xi$ through the construction above, shows this will ultimately change $e(\xi, s')$ to $-e(\xi, s')$.

\begin{remark}\label{rem:intersectioncount} In practice, (relative) Euler numbers may be computed by looking at how generic sections of the bundle intersect the 0-section in the total space. Suppose $\xi$ is a vector bundle over a compact, connected $n$-manifold with boundary $(X,Y)$, and that the total space of $\xi$ is oriented. Fix a nonvanishing section $s'\colon Y\to E|_Y$.
Let~$s\colon X\to E$ be any section that restricts to a section isotopic to $s'$ on $Y$ and meets the image of the 0-section~$s_0(X)$ transversely. At the preimage in~$X$ of each intersection $x\in s_0(X)\pitchfork s(X)$, arbitrarily choose a local orientation for~$X$ (this corresponds to choosing one of the two twisted fundamental classes $[X]\in H_n(X,Y;\Z^{w_1(TX)})$). This induces an orientation on $T_xs_0(X)\oplus T_xs(X)$, and we assign a sign to~$x$ by comparing this induced orientation to the ambient orientation on $T_xE$. The signed count of these intersections is  independent of the local orientation choices because the choice appears in the orientation of both~$T_xs_0(X)$ and~$T_xs(X)$.

It is known that the Euler number~$e(\xi, s')\in\Z$ is equal to the signed count of these intersections.
This follows because the relative Euler number is equal to the algebraic self-intersection number of the $0$-section $s_0(X)$ in the total space.
In the level of generality we are working, this is a somewhat intricate exercise in elementary algebraic topology and we will not perform it here.
When $Y=\emptyset$, and for oriented base space and bundle, this procedure is carried out in~\cite[Proposition 12.8]{BredonTopology}, for example. By working with twisted Poincar\'{e} duality, and relative to the subspace $Y$, the method for the closed, oriented case can be adapted to the situation we are considering, where both the base space and bundle are nonorientable (and thus the total space of the bundle is orientable).
\end{remark}

\begin{definition}\label{def:Eulernumber}
The \emph{normal Euler number} $e(F) \in \Z$ of an embedded closed surface~$F\subseteq S^4$ is the Euler number of its normal bundle~$\nu F$, whose total space is oriented using the fixed, standard orientation on~$S^4$.

The \emph{normal Euler number} $e(F) \in \Z$ of a properly embedded compact surface~$F\subseteq D^4$ with connected nonempty boundary $K\subseteq S^3$ is the relative Euler number $e(\xi,s')\in \Z$ of its normal bundle~$\nu F$, whose total space is oriented using the fixed, standard orientation on~$D^4$, and where $s'\colon K\to\nu K $ denotes a  nonvanishing section (which, in this case, is equivalent to a longitude of $K$) corresponding to the Seifert framing of $K$.
\end{definition}

\begin{example}\label{example:standard embedding}
In the introduction,  we defined the standard embeddings $\R P^2 \subseteq S^4$ using the involution of $\C P^2$ given by complex conjugation.
An alternative construction for the positive/negative standard embedding~$\R P^2 \subseteq S^4$, up to ambient isotopy, is as follows. Take an unknotted M\"obius band in $S^3\subseteq S^4$, with one positive/negative half-twist, push its interior into the northern hemisphere of $S^4$, then cap it off with the standardly embedded $2$-disc $D$ in the other hemisphere.  With this description one can compute that the normal Euler number of the standard positive embedding is $-2$ and the normal Euler number of the standard negative embedding is~$+2$.
To compute this, first extend a nowhere zero section of the normal bundle of the pushed-in M\"{o}bius band to a generic section of the normal bundle of $D$. This gives a generic section of the embedded~$\R P^2$, where geometric intersections with the 0-section happen only on $D$. One can compute this intersection number between $D$ and its generic section by considering linking between their intersections with $S^3$.

For example, there is a nowhere zero section of the pushed-in M\"{o}bius band that restricts on the boundary unknot to be a pushoff in the outwards facing normal direction to $\mathcal{M}\subseteq S^3$. For the positive case, this is shown in Figure~\ref{fig}. This pushoff links +2 with the unknot, so that the generic section over $D$ in the southern hemisphere has intersection number $-2$ with the 0-section. Here the minus sign arises because the southern hemisphere uses the opposite orientation to the standard one.

\begin{figure}
\begin{tikzpicture}
     \node[inner sep=0pt] at (0,0)
    {\includegraphics[width=.3\textwidth]{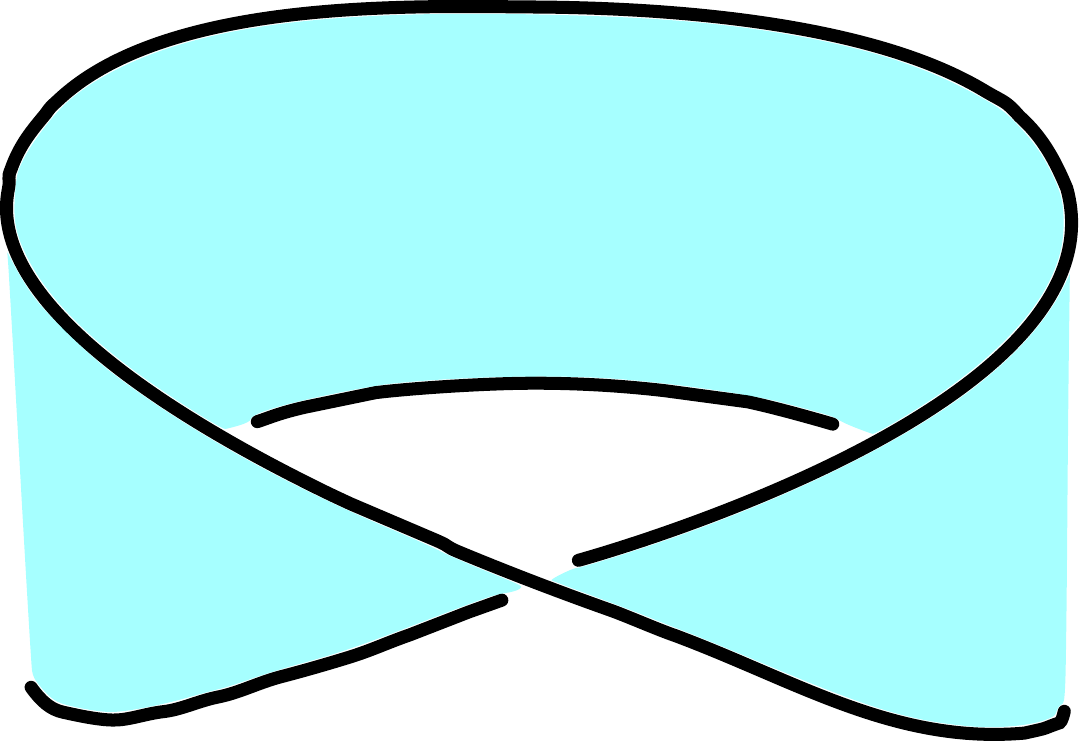}};
     \node[inner sep=0pt] at (6.5,0)
    {\includegraphics[width=.3\textwidth]{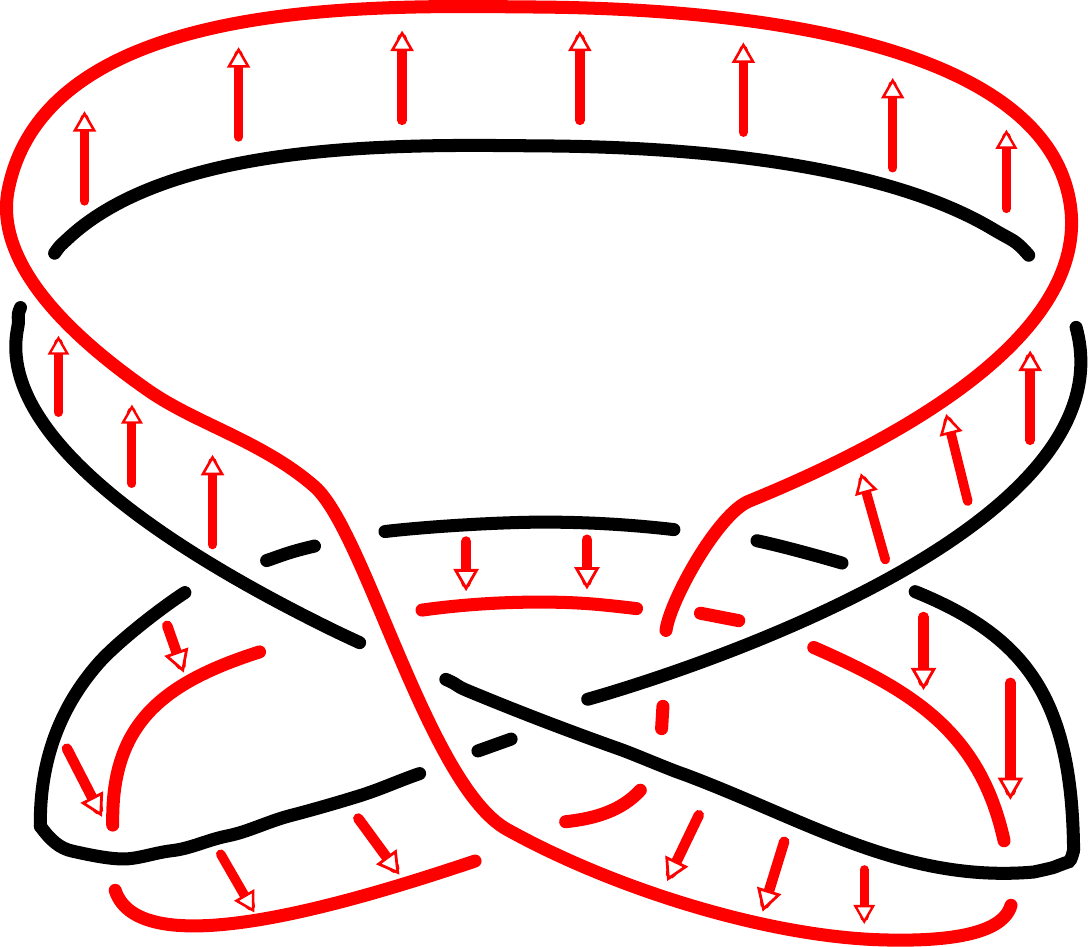}};
\end{tikzpicture}
\caption{Left: The unknotted, positively twisted M\"{o}bius band $\mathcal{M}\subseteq S^3$, before its interior is pushed into the northern hemisphere of $S^4$. Right: The red curve is the pushoff in the outwards-facing normal direction of $\mathcal{M}\subseteq S^3$. The pushoff direction is indicated by arrows. The pushoff has linking number~+2 with the boundary of the M\"{o}bius band. The pushoff is the restriction of a non-vanishing section of the normal bundle of the pushed-in M\"{o}bius band.}
\label{fig}
\end{figure}

It also follows from this description that the exteriors of these standard embeddings admit a handle decomposition with a single 0-handle and a single 1-handle,  and  therefore have fundamental group $\Z_2$.
This is the easiest way to see that all unknotted nonorientable surfaces, regardless of their nonorientable genus, are $\Z_2$-surfaces.
  \end{example}

Massey proved the following  theorem, as mentioned in the introduction. In particular it implies that the normal Euler number of a closed surface in $S^4$ is always even.

\begin{theorem}[Massey~{\cite{Massey}}]\label{thm:Massey}
  Let $F \subseteq S^4$ be an embedded, closed, nonorientable surface of nonorientable genus $h$. The normal Euler number of $F$ lies in the set $\{2h-4, 2h, 2h+4, \dots, 4-2h\}$. For each $h \geq 1$, this set is in bijective correspondence with the isotopy classes of the standard embeddings.
\end{theorem}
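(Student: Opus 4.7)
The strategy is to pass to the double branched cover $Y := \Sigma_2(S^4, F)$, which is a closed oriented $4$-manifold, and extract both the inequality and the mod $4$ congruence from its invariants. Two pieces of data come for free: by additivity of Euler characteristic under the branched covering, $\chi(Y) = 2\chi(S^4) - \chi(F) = 2 + h$. By puncturing $F$ at an interior point to obtain a surface $F^\circ \subseteq D^4$ bounding the unknot $U$ (with $\sigma(U)=0$), and applying the Gordon--Litherland formula $\sigma(K) = \sigma(\Sigma_2(F^\circ)) + \tfrac{1}{2}e(F^\circ)$ of \cite{GordonLitherland, GilmerLivingston} cited in the introduction, one deduces
$$\sigma(Y) = -\tfrac{1}{2}e(F).$$

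The key technical step is to prove that $b_2(Y;\Z_2) = h$. I would establish this via a Mayer--Vietoris argument for the decomposition $Y = \widetilde{X_F} \cup D(\eta)$, where $\widetilde{X_F}$ is the double cover of the exterior $X_F = S^4 \setminus \nu F$ corresponding to the orientation double cover of $\nu F$, and $D(\eta)$ is the ``square root'' disc bundle over $F$, combined with Smith theory for the covering involution, which has fixed set $F$. Granting this, since $b_2(Y;\Q) \leq b_2(Y;\Z_2) = h$ and Poincar\'e duality with $\chi(Y) = 2+h$ gives $b_2(Y;\Q) = h + 2b_1(Y;\Q)$, one forces $b_1(Y;\Q) = 0$ and $b_2(Y;\Q) = h$. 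The classical inequality $|\sigma(Y)| \leq b_2(Y;\Q)$ then yields $|e(F)| \leq 2h$. For the mod $4$ congruence, any closed oriented $4$-manifold satisfies $\sigma(Y) \equiv b_2(Y;\Q) \pmod 2$ (since $\sigma + b_2 = 2b_2^+$); combined with $b_2(Y;\Q) = h$, this gives $-e(F)/2 \equiv h \pmod 2$, equivalently $e(F) \equiv 2h \pmod 4$, placing $e(F)$ in the stated set.

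For the bijection claim, the standard embeddings obtained by taking $a$ connected summands of the positive and $b$ connected summands of the negative standard $\R P^2$ with $a + b = h$ realise normal Euler number $-2a + 2b = 2(b-a)$, covering every value in $\{-2h, -2h+4, \ldots, 2h-4, 2h\}$ exactly once as $(a,b)$ varies. Since the normal Euler number is an ambient isotopy invariant, as discussed earlier in this section, these $h+1$ embeddings are pairwise non-isotopic, and the bijection follows.

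The main obstacle is the calculation $b_2(Y;\Z_2) = h$: the Euler characteristic and Gordon--Litherland inputs only give $b_2(Y;\Q) \geq h$, so one must work harder to produce an upper bound, and the mod $2$ calculation must be done carefully enough to handle the case when $\pi_1(X_F)$ is not just $\Z_2$ (so $Y$ need not be simply connected a priori). Once this rank bound is obtained, the remaining steps are routine.
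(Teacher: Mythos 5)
The paper does not reprove this theorem: it cites Massey's original argument and only explains why that argument survives the passage from the smooth to the locally flat category (via the topological validity of the relation $\sigma(\Sigma_2(F))=-\tfrac12 e(F)$, or alternatively via the Guillou--Marin congruence of Theorem~\ref{thm:GM}). Your proposal is essentially a reconstruction of Massey's branched-cover proof, and its skeleton is sound: the Euler characteristic count $\chi(Y)=2+h$, the signature identity $\sigma(Y)=-\tfrac12 e(F)$, the deduction $b_2(Y;\Q)=h$ from an upper bound on $b_2(Y;\Z_2)$, and the extraction of both $|e(F)|\le 2h$ and $e(F)\equiv 2h\pmod 4$ from $|\sigma|\le b_2$ and $\sigma\equiv b_2\pmod 2$ are all correct, as is the bijection argument at the end.

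The genuine gap is exactly where you flag it, and the tools you name there do not obviously close it. The Smith inequality for the involution with fixed set $F$ reads $\sum_i\dim H_i(F;\Z_2)\le\sum_i\dim H_i(Y;\Z_2)$, which bounds the homology of $Y$ from \emph{below} and, combined with $\chi(Y)=2+h$, yields only the vacuous statement $4b_1(Y;\Z_2)\ge 0$. Likewise, Mayer--Vietoris for $Y=\widetilde{X_F}\cup D(\eta)$ requires control of $H_2(\widetilde{X_F};\Z_2)$, and without a $\pi_1$ hypothesis the Gysin sequence of the unbranched double cover only gives $b_2(\widetilde{X_F};\Z_2)\le 2h$, which after gluing is far too weak. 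The correct tool is the Smith--Gysin exact sequence of the branched double cover with $\Z_2$-coefficients,
\[
\cdots\to H_{n+1}(S^4;\Z_2)\to H_n(S^4,F;\Z_2)\to H_n(Y;\Z_2)\to H_n(S^4;\Z_2)\to\cdots,
\]
which immediately gives $H_1(Y;\Z_2)=0$ (since $H_1(S^4,F;\Z_2)=0$ for $F$ connected) and exhibits $H_2(Y;\Z_2)$ as a quotient of $H_2(S^4,F;\Z_2)\cong H_1(F;\Z_2)\cong(\Z_2)^h$, whence $b_2(Y;\Z_2)\le h$; equality then follows from your Euler characteristic identity. With that substitution your argument goes through. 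One further point of care, which the paper handles by citation: since the theorem concerns locally flat embeddings, you must ensure the inputs (existence of normal bundles, the topological Gordon--Litherland/$G$-signature identity) are available in the topological category; the paper's references to \cite{FreedmanQuinn} and \cite{GeskeKjuchukovaShaneson} are what license this.
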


Technically speaking, Massey proved Theorem~\ref{thm:Massey} in the smooth category, but his proof extends to the topological category.
For example,  one of the crucial points in the proof is the relation between the Euler number~$e(F)$ and $\sigma(\Sigma_2(F))$, the signature of the $2$-fold branched cover.
This relation also holds in the topological category~\cite[Theorem 1]{GeskeKjuchukovaShaneson}.
Alternatively,  since, as noted in~\cite[Corollary after Theorem 5.2]{Matsumoto}, Massey's theorem follows from a congruence of Guillou-Marin~\cite{GuillouMarin},  one can alternatively note that the Guillou-Marin congruence holds in the topological category; see Theorem~\ref{thm:GM}.

In fact,  Massey's theorem can be generalised to surfaces with boundary, once one takes into account the signature $\sigma(K)$ of the knot $K=\partial F$.
The outcome reads as follows.
\begin{theorem}
\label{thm:MasseyBoundary}
If~$F \subseteq D^4$ is a nonorientable surface with boundary a knot~$K$,
then
\begin{equation}
\label{eq:SignatureEuler}
  \sigma(K)=\sign(\Sigma_2(F))+\smfrac{1}{2}e(F),
  \end{equation}
and the Euler number lies in the following range, where $h$ denotes the nonorientable genus of $F$:
\[ e(F) \in \lbrace -2h+2\sigma(K),-2h+4+2\sigma(K),\ldots, 2h-4+2\sigma(K),2h+2\sigma(K)\rbrace.\]
\end{theorem}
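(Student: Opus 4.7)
I would obtain Theorem~\ref{thm:MasseyBoundary} by combining the signature formula~\eqref{eq:SignatureEuler}, already recorded in the paragraph preceding the statement as the Gordon-Litherland~\cite{GordonLitherland}--Gilmer-Livingston~\cite{GilmerLivingston} formula, with rank and parity bounds on $\sign(\Sigma_2(F))$. Rearranging the formula yields
\[
e(F) - 2\sigma(K) = -2\sign(\Sigma_2(F)),
\]
so the claimed range of $e(F)$ is equivalent to the two assertions $|\sign(\Sigma_2(F))| \leq h$ and $\sign(\Sigma_2(F)) \equiv h \pmod 2$.

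For the absolute value bound, I would use the Gordon-Litherland symmetric bilinear form $G_F$ on $H_1(F;\Z)$. Since $F$ is a nonorientable surface of nonorientable genus $h$ with a single boundary component, it deformation retracts onto a wedge of $h$ circles (per the conventions in Section~\ref{sec:Introduction}), so $H_1(F;\Z) \cong \Z^h$ is free of rank $h$. Consequently $|\sigma(G_F)| \leq h$, and the Gilmer-Livingston identification $\sigma(G_F) = \sign(\Sigma_2(F))$ completes this step.

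For the parity bound I would invoke the Guillou-Marin congruence (Theorem~\ref{thm:GM}), whose $\Z_4$-valued quadratic refinement of the mod $2$ intersection form of $F$ yields precisely $e(F) \equiv 2\sigma(K) + 2h \pmod 4$, which is equivalent to $\sign(\Sigma_2(F)) \equiv h \pmod 2$. A purely algebraic-topological alternative, valid when $\det(K) \neq 0$ so that $\Sigma_2(K)$ is a rational homology sphere, is to compute $\chi(\Sigma_2(F)) = 2\chi(D^4) - \chi(F) = 1 + h$, combine with Poincaré--Lefschetz duality to show $b_1(\Sigma_2(F);\Q) = b_3(\Sigma_2(F);\Q)$, and conclude $b_2(\Sigma_2(F);\Q) \equiv h \pmod 2$; as the intersection form on $H_2(\Sigma_2(F);\Q)$ is then nondegenerate, this gives $\sign(\Sigma_2(F)) \equiv b_2 \equiv h \pmod 2$.

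I expect the parity step to be the main technical subtlety, since it depends on the Brown-Kervaire/Guillou-Marin $\Z_4$-refinement machinery in the topological category rather than elementary rank considerations; the bound $|\sign(\Sigma_2(F))| \leq h$ is in comparison a direct citation plus a one-line rank estimate. The only bookkeeping issue is the reconciliation of sign and orientation conventions for $e(F)$, for $G_F$, and for the Guillou-Marin invariant between the various references, but this is notational rather than substantive.
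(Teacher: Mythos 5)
The parity half of your argument is fine: rearranging \eqref{eq:SignatureEuler} does reduce the stated range to the two claims $|\sign(\Sigma_2(F))|\leq h$ and $\sign(\Sigma_2(F))\equiv h \pmod 2$, and your Euler-characteristic route to the parity claim works (indeed $\det(K)$ is always odd, so $\Sigma_2(K)$ is always a rational homology sphere and the intersection form on $H_2(\Sigma_2(F);\Q)$ is nondegenerate, whence $\sign \equiv b_2 \equiv h \pmod 2$). The genuine gap is in the bound $|\sign(\Sigma_2(F))|\leq h$. The Gordon--Litherland form $G_F$ is defined for a \emph{spanning surface in $S^3$}, and the identity $\sigma(G_F)=\sign(\Sigma_2(F'))$ holds for the pushed-in copy $F'\subseteq D^4$ of such a surface. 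Theorem~\ref{thm:MasseyBoundary} concerns an arbitrary properly embedded nonorientable $F\subseteq D^4$, which need not be isotopic to a pushed-in spanning surface; for such an $F$ there is no Gordon--Litherland pairing on $H_1(F;\Z)$ computing $\sign(\Sigma_2(F))$, and what Gilmer--Livingston prove for surfaces in $D^4$ is exactly \eqref{eq:SignatureEuler}, not an identification with a form on $H_1(F)$. A rank count does not rescue the step either: one only gets $b_2(\Sigma_2(F);\Q)=h+2b_1(\Sigma_2(F);\Q)$, and since the theorem makes no hypothesis on $\pi_1(D^4\setminus F)$ there is no a priori reason for $b_1(\Sigma_2(F);\Q)$ to vanish (the computations of Section~\ref{sub:HomologyBranched} giving $H_2(\Sigma_2(F))\cong\Z^h$ use the $\Z_2$-surface hypothesis in an essential way). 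So the naive estimate only yields $|\sign(\Sigma_2(F))|\leq h+2b_1$.

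This missing inequality is precisely the content of Yasuhara's theorem (the relative version of Massey's conjecture), which is what the paper cites for the range of $e(F)$; like Massey's closed-case result it is not a rank estimate but requires input such as the $G$-signature theorem or Rokhlin--Guillou--Marin type congruences applied after a suitable closing-up construction. The paper's proof of Theorem~\ref{thm:MasseyBoundary} consists of the two citations (Gordon--Litherland/Gilmer--Livingston for \eqref{eq:SignatureEuler}, Yasuhara for the range); your reduction of the range to the two claims about $\sign(\Sigma_2(F))$ is a reasonable reformulation, but to complete it you must either cite Yasuhara's bound directly or supply a proof of $|\sign(\Sigma_2(F))|\leq h$ for arbitrary nonorientable $F\subseteq D^4$, which is substantially harder than the one-line estimate you propose.
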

\begin{proof}
The equality in~\eqref{eq:SignatureEuler} can be found in~\cite[Theorem 2 and Corollary 5]{GordonLitherland}; see also~\cite[Theorem 6]{GilmerLivingston}.
The range of $e(F)$ is due to Yasuhara~\cite[Corollary 1.1.1]{YasuharaConnecting}.
\end{proof}

\section{Hermitian and quadratic forms}
\label{sec:Forms}

In Subsection~\ref{sub:QuadraticForms} we recall terminology regarding hermitian and quadratic forms.
In Subsection~\ref{sub:ModulesOverZZ2}, we collect some important facts about $\Z[\Z_2]$-modules.

\subsection{Quadratic forms}
\label{sub:QuadraticForms}

We begin by recalling some notation and terminology regarding hermitian and quadratic forms, which we will use throughout the article.
References include~\cite{RanickiExact} and~\cite[Subsection 3.2]{CrowleySixt}.

\medbreak
Let $R$ be a ring with involution and let $\varepsilon = \pm 1$.
Given an $R$-module $P$, consider the involution
$$
\varepsilon \top \colon \Hom_R(P,P^*) \to \Hom_R(P,P^*), \phi \mapsto (x \mapsto (y \mapsto \varepsilon \overline{\phi(y)(x)})).$$
The quadratic and symmetric \emph{$Q$-groups} are then respectively defined as
\[Q_\varepsilon(P):=\operatorname{coker}(1-\varepsilon \top) \ \ \text{ and }  \  \  Q^\varepsilon(P):=\operatorname{ker}(1-\varepsilon \top).\]
We will often abbreviate $Q_{\pm}(P) := Q_{\pm 1}(P)$.
We also note that for $P=R$, it is customary to identify $Q_\varepsilon(R)$ with the quotient $R/\{ r-\varepsilon\overline{r} \mid r \in R\}.$

\begin{definition}\leavevmode
\label{def:QuadraticForm}
\begin{enumerate}[(i)]
\item An \emph{$\varepsilon$-quadratic form} is a pair $(P,\psi)$, where $P$ is a stably free $R$-module and~$\psi \in Q_\varepsilon(P)$.
    \item    An \emph{$\varepsilon$-hermitian form} is a pair $(P,\lambda)$, where $P$ is a stably free $R$-module and~$\lambda \in Q^\varepsilon(P)$.
    \item An $\varepsilon$-quadratic form is \emph{nonsingular} if its \emph{symmetrisation} $(1+\varepsilon \top)\psi$ is an isomorphism.
 \item An \emph{isometry} $h \colon (P,\psi) \to (P',\psi')$ between quadratic forms consists of an isomorphism $h \colon P \to P'$ such that $h^*\psi'h=\psi \in Q_\varepsilon(P)$. Isometries of hermitian forms are defined similarly.
\end{enumerate}
\end{definition}

\begin{remark}
\label{rem:FormsAreBetter}
As is customary in surgery theory, we will go back and forth between thinking of quadratic forms (resp.~hermitian forms) as (equivalence classes of) $R$-linear maps $P \to P^*$ on the one hand and pairings $P \times P \to Q_\varepsilon(R)$ (resp.\ pairings~$P \times P \to R$) on the other.
We will also frequently use the  fact that the data of a quadratic form $(P,\psi)$ is  equivalent to the data of a triple~$(P,\lambda,\mu)$ where $\lambda  \colon P \times P \to R$ is a hermitian form and $\mu \colon P \to Q_\varepsilon(R)$ is a quadratic refinement of $\lambda$, a notion whose definition we recall next; see e.g.~\cite[Remark 8.78]{LueckMacko}.
\end{remark}

\begin{definition}\label{defn:quadratic-refinement}
Let $q \colon R \to Q_{\varepsilon}(R)$ be the quotient map and let $s \colon Q_{\varepsilon}(R) \to R$ be a section of $q$.
 A function  $\mu \colon P \to Q_{\varepsilon}(R)$ is a \emph{quadratic refinement} of a hermitian form $\lambda \colon P \times P \to R$ if
 \begin{enumerate}[(i)]
   \item $\mu(x+y) - \mu(x) - \mu(y) = q(\lambda(x,y))$ for every $x,y \in P$,
   \item $\lambda(x,x) = s(\mu(x)) + \varepsilon\ol{s(\mu(x))}$ for every $x \in P$,
   \item $\mu(rx) = r \mu(x) \ol{r} \in Q_{\varepsilon}(R)$ for every $x \in P$ and $r \in R$.
 \end{enumerate}
 Note that the right hand side of (ii) does not depend on the choice of section~$s$.
\end{definition}

\begin{definition}\label{defn:hyperbolic-form}
For a stably free $R$-module~$L$, the \emph{standard hyperbolic $\varepsilon$-quadratic form} is
\[ H_\varepsilon(L)=\left( L \oplus L^*, \left[ \begin{pmatrix} 0 & 1 \\ 0 & 0 \end{pmatrix} \right] \right).\]
A quadratic form $(P, \psi)$ is called {\em hyperbolic} if it is isometric to $H_\varepsilon(L)$ for some $L$.
\end{definition}

\begin{definition}\label{defn:lagrangian}
A \emph{sublagrangian} of an $\varepsilon$-quadratic form $(P,\psi)$ is a direct summand $j \colon L \hookrightarrow P$ that satisfies~$j^*\psi j=0 \in Q_\varepsilon(L)$.
A sublagrangian $j \colon L \hookrightarrow P$ satisfies $j(L) \subseteq L^\perp$, where the module $L^\perp=\ker(j^*(1+\varepsilon \top)\psi)$ is called the \emph{annihilator} of $j \colon L \hookrightarrow P$.
A sublagrangian~$j \colon L \hookrightarrow P$ is a  \emph{lagrangian} if it satisfies $L=L^\perp$.
\end{definition}

\begin{remark}
When $P$ is only assumed to be projective, this definition requires the additional assumption that $j^*(1+\varepsilon \top)\psi$ is surjective; cf.~\cite[p.~65]{RanickiExact}.
This is relevant to the study of projective $L$-theory, see e.g.~\cite[page 260]{WallSurgeryOnCompact} and the references within.
As we are working in the setting that $P$ is stably free, this condition follows automatically.
\end{remark}

We conclude this subsection by discussing the relationship between quadratic and hermitian forms.
Given an $R$-module $P$, the map $1+\varepsilon \top$ induces an $R$-linear map
\begin{equation}
\label{eq:symmetrisation}
 1+\varepsilon \top  \colon Q_\varepsilon(P) \to Q^\varepsilon(P).
 \end{equation}

 \begin{lemma}\label{lem:projectivelemma}
 When $\varepsilon=1$ and $P$ is projective, the symmetrisation map~\eqref{eq:symmetrisation} is injective.
 \end{lemma}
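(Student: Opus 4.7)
The plan is to show that if $\phi\in\Hom_R(P,P^*)$ satisfies $\phi+\top\phi=0$ in $\Hom_R(P,P^*)$, then $\phi=\chi-\top\chi$ for some $\chi\in\Hom_R(P,P^*)$; this is what needs to be proved because $Q^+(P)$ is by construction a submodule of $\Hom_R(P,P^*)$, so the kernel of the symmetrisation consists precisely of classes $[\phi]$ admitting such a representative.

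First I would reduce to the case where $P$ is free of finite rank. Since $P$ is projective, pick $Q$ with $P\oplus Q\cong R^n$ and extend $\phi$ by zero to $\widetilde\phi:=\phi\oplus 0\in\Hom_R(R^n,(R^n)^*)$; unpacking $\top$ in block form shows $\top\widetilde\phi=(\top\phi)\oplus 0=-\widetilde\phi$. If the free case is in hand, I obtain $\widetilde\chi$ with $\widetilde\chi-\top\widetilde\chi=\widetilde\phi$, and then the $(P,P)$-block $\chi:=\widetilde\chi_{PP}\in\Hom_R(P,P^*)$ satisfies $\chi-\top\chi=\phi$ by reading off the $(P,P)$-component of that block identity.

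For the free case, fixing the standard basis identifies $\Hom_R(R^n,(R^n)^*)$ with $M_n(R)$ and the involution $\top$ with conjugate-transpose $A\mapsto\overline{A}^{\top}$. Given $A$ with $A+\overline{A}^{\top}=0$, I take $B$ to be strictly upper triangular with $B_{ij}=A_{ij}$ for $i<j$. Off the diagonal, $B-\overline{B}^{\top}$ matches $A$: for $i<j$ this is immediate from $B_{ji}=0$, and for $i>j$ the antisymmetry $\overline{A_{ji}}=-A_{ij}$ gives $(B-\overline{B}^{\top})_{ij}=-\overline{A_{ji}}=A_{ij}$. On the diagonal, however, $(B-\overline{B}^{\top})_{ii}=0$, so the computation closes up only provided $A_{ii}=0$.

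The main obstacle is therefore the diagonal step. The antisymmetry at $i=j$ gives $A_{ii}+\overline{A_{ii}}=0$, and to deduce $A_{ii}=0$ requires a hypothesis on $R$. In the paper's setting of interest $R=\Z[\Z_2]$ has the trivial involution $T\mapsto T^{-1}=T$, so this relation becomes $2A_{ii}=0$; since $\Z[\Z_2]$ is torsion-free as an abelian group we conclude $A_{ii}=0$, and the matrix computation is complete.
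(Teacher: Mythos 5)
Your proof is correct and follows essentially the same route as the paper's: reduce to the free case by extending $\phi$ by zero over a free complement $P\oplus Q\cong R^n$, then write the skew-hermitian matrix as $(1-\top)$ applied to its upper triangle. The one place you diverge is instructive: you correctly isolate the diagonal entries as the point where the bare matrix manipulation does not close up, whereas the paper's proof simply asserts that a skew-hermitian matrix equals $B^T-\overline{B}$ for $B$ its upper triangle, which silently presupposes that every $a\in R$ with $a+\overline a=0$ lies in $\{r-\overline r\}$. That presupposition is genuinely needed -- for instance over $R=\Z[i]$ with complex conjugation the element $i$ is skew but not of the form $r-\overline r$, so the symmetrisation $Q_+(R)\to Q^+(R)$ is not injective there -- so the lemma as literally stated for an arbitrary ring with involution requires this property of $R$. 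For the paper's purposes this is harmless: the only rings used are $\Z$ and $\Z[\Z_2]$, and more generally for any group ring $\Z[\pi]$ with the standard involution $\sum n_g g\mapsto\sum n_g g^{-1}$ one checks directly that $a=-\overline a$ forces $a=r-\overline r$ (the coefficient of $e$ vanishes, coefficients of involutive elements vanish, and the remaining terms pair off). Your verification for $\Z[\Z_2]$ is a special case of this, so your argument is complete in the setting where the lemma is actually applied, and arguably more careful than the printed proof.
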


 \begin{proof}
 Assume first that $P$ is moreover free. Then an element in the kernel of \eqref{eq:symmetrisation} is represented by a skew-hermitian matrix $A$ over $R$. A skew-hermitian matrix may be written $A=B^T-\overline{B}=(1-\top)B^T$, where $B$ is the upper triangle of $A$. Thus $A$ determines the 0 element of $Q_+(P)$. It follows that in this case, the map \eqref{eq:symmetrisation} is injective.

Now drop the assumption the $P$ is free. As $P$ is projective, there exists an $R$-module~$Q$ such that $P\oplus Q$ is free. Suppose $\psi\colon P\to P^*$ represents an element of $\ker( 1+ \top \colon Q_+(P)\to Q^+(P))$. Then $\psi\oplus 0$ represents an element of $\ker( 1+ \top \colon Q_+(P\oplus Q)\to Q^+(P\oplus Q))$. As $P\oplus Q$ is free, by the argument in the first paragraph, $\psi\oplus0=(1-\top)\eta$ for some $\eta\colon P\oplus Q\to (P\oplus Q)^*\cong P^*\oplus Q^*$. Writing $i\colon P\hookrightarrow P\oplus Q$ for the inclusion, we have
\[
\psi=i^*(\psi\oplus 0)i=i^*(1-\top)\eta i=i^*(1-\top)\eta i= (1-\top)(i^*\eta i),
\]
and so we have that $\psi$ determines the 0 element of~$Q_+(P)$.
 \end{proof}

We obtain the following consequences.

 \begin{corollary}\label{cor:niceconsequences}
If a $(+1)$-hermitian form $(P,\lambda)$ admits a quadratic refinement, then it admits a unique quadratic refinement. Moreover, if $L\subseteq P$ is a direct summand and $\lambda$ vanishes on $L$, then~$L$ is a sublagrangian for that unique quadratic form with symmetrisation $(P,\lambda)$.
 \end{corollary}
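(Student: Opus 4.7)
The plan is to deduce both statements directly from Lemma~\ref{lem:projectivelemma}, which says that the symmetrisation map $1+\top \colon Q_+(P) \to Q^+(P)$ is injective whenever $P$ is projective. Throughout, recall that a quadratic refinement of $(P,\lambda)$ is (by the identification recalled in Remark~\ref{rem:FormsAreBetter}) the same data as a class $\psi \in Q_+(P)$ with $(1+\top)\psi = \lambda \in Q^+(P)$, so the content of the corollary can be phrased entirely in terms of the symmetrisation map.

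For uniqueness, suppose $\psi_0, \psi_1 \in Q_+(P)$ are two quadratic refinements of $(P,\lambda)$. Then $(1+\top)(\psi_0 - \psi_1) = \lambda - \lambda = 0 \in Q^+(P)$, and since $P$ is stably free (hence projective) by the definition of a quadratic form, Lemma~\ref{lem:projectivelemma} forces $\psi_0 = \psi_1 \in Q_+(P)$.

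For the sublagrangian statement, let $j \colon L \hookrightarrow P$ be the inclusion of a direct summand, and let $\psi \in Q_+(P)$ be the unique quadratic refinement of $\lambda$. The hypothesis $\lambda|_{L} = 0$ unpacks to $j^*(1+\top)\psi\, j = 0 \in Q^+(L)$. Since $1+\top$ is natural with respect to the map $j$, we have $j^*(1+\top)\psi\, j = (1+\top)(j^* \psi j)$, so $(1+\top)(j^*\psi j) = 0 \in Q^+(L)$. As a direct summand of the projective module $P$, the module $L$ is itself projective, so Lemma~\ref{lem:projectivelemma} applies to $L$ and yields $j^*\psi j = 0 \in Q_+(L)$. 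This is exactly the condition in Definition~\ref{defn:lagrangian} for $j \colon L \hookrightarrow P$ to be a sublagrangian of $(P,\psi)$.

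The argument is essentially a one-line invocation of Lemma~\ref{lem:projectivelemma} in each case; the only subtlety worth flagging is that one must verify that the relevant module (either $P$ or the summand $L$) is projective so that the lemma applies, but both are immediate from the standing hypotheses.
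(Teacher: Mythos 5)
Your proposal is correct and follows essentially the same route as the paper: both the uniqueness and the sublagrangian statement are reduced to the injectivity of the symmetrisation map from Lemma~\ref{lem:projectivelemma}, applied to $P$ and to the projective summand $L$ respectively. No issues.
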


 \begin{proof}
 The first claim is immediate from Lemma~\ref{lem:projectivelemma}, as $P$ is stably free, therefore projective (indeed the claim holds for forms defined more generally on projective modules).
Indeed,  if two quadratic forms $(P,\psi_1),(P,\psi_2)$ admit $(P,\lambda) \in Q^+(P)$ as a common symmetrisation, then Lemma~\ref{lem:projectivelemma} implies that $(P,\psi_1)=(P,\psi_2)\in Q_+(P)$ and therefore the corresponding quadratic refinements agree; here we use Remark~\ref{rem:FormsAreBetter} to go back and forth between quadratic forms and quadratic refinements.

 Write $(P,\psi)$ for the unique quadratic form such that $ (1+ \top)\psi=\lambda$.
 The second claim comes from the fact that as~$L$ is a direct summand of~$P$, it is also projective. The vanishing of $\lambda$ on~$L$ is equivalent to saying $0=j^*(1+ \top)\psi j=(1+ \top)j^*\psi j\in Q^+(L)$.
 As $L$ is projective, Lemma~\ref{lem:projectivelemma} shows~$0=j^*\psi j\in Q_+(L)$, and so $L$ is a sublagrangian for $(P,\psi)$.
 \end{proof}

\subsection{Modules over \texorpdfstring{$\Z[\Z_2]$}{Z[Z2]}}
\label{sub:ModulesOverZZ2}

Since we will be soon describing the $\Z[\Z_2]$-module structure of various homology groups, we start by recalling some facts about $\Z[\Z_2]$-modules.
From now on we write~$\Z_2=\langle T \mid T^2=1 \rangle$.

\begin{lemma}
\label{lem:StablyFreeImpliesFree}
For a finitely generated $\Z[\Z_2]$-module $P$,  the following assertions are equivalent:
\begin{enumerate}
\item $P$ is free,
\item $P$ is stably free,
\item $P$ is projective.
\end{enumerate}
\end{lemma}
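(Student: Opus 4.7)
The implications $(1) \Rightarrow (2) \Rightarrow (3)$ are formal and hold over any ring: free modules are trivially stably free, and stably free modules are projective as direct summands of free modules. The content of the lemma is thus the implication $(3) \Rightarrow (1)$.

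My plan for the nontrivial implication is to invoke the integral representation theory of $\Z_2$. A projective $P$ is, in particular, a direct summand of some $\Z[\Z_2]^n$, hence finitely generated and $\Z$-torsion-free, i.e., a $\Z[\Z_2]$-lattice. By the classical classification of $\Z[\Z_2]$-lattices, originally due to Reiner, every such lattice decomposes as a direct sum of copies of three indecomposables: $\Z$ with trivial action, $\Z^{-}$ (the sign representation, with $T$ acting by $-1$), and the regular representation $\Z[\Z_2]$. It then suffices to verify that neither $\Z$ nor $\Z^{-}$ is projective over $\Z[\Z_2]$.

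This verification is a short direct calculation. A splitting of the augmentation $\varepsilon \colon \Z[\Z_2] \to \Z$ would carry $1$ to a $T$-invariant element of $\Z[\Z_2]$ with $\varepsilon$-image equal to $1$; however, the $T$-invariants in $\Z[\Z_2]$ are precisely the $\Z$-multiples of $1+T$, all of which have even augmentation, so no such splitting exists and $\Z$ is not projective. An analogous argument applied to the surjection $\Z[\Z_2] \to \Z^{-}$ sending $T \mapsto -1$ rules out $\Z^{-}$ (the $(-1)$-eigenspace consists of the $\Z$-multiples of $1-T$, again all with even image). Hence any projective lattice contains no summands other than $\Z[\Z_2]$ and is free.

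The main potential obstacle is the reliance on Reiner's classification of $\Z[\Z_2]$-lattices, which while classical is a nontrivial external input. A more self-contained alternative would be to compute $\widetilde{K}_0(\Z[\Z_2]) = 0$ via Milnor's Mayer--Vietoris sequence for the pullback square of rings with vertices $\Z[\Z_2]$, two copies of $\Z$ (corresponding to the augmentations $T \mapsto \pm 1$), and $\Z/2$, giving $(3) \Rightarrow (2)$; one would then need to prove cancellation to pass from stably free to free, which can be obtained using Bass stable range bounds for $\Z[\Z_2]$ or by direct analysis of unimodular rows of short length. I would favour the Reiner-based route as the more compact presentation.
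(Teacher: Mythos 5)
Your proof is correct, but it takes a genuinely different route from the paper. The paper splits the nontrivial direction into two independent steps: projective $\Rightarrow$ stably free is obtained from the vanishing of $\widetilde{K}_0(\Z[\Z_2])$, computed via Rim's theorem $\widetilde{K}_0(\Z[\Z_p]) \cong \widetilde{K}_0(\Z[\xi_p])$ together with Milnor's identification of the latter with the ideal class group (trivial for $p=2$ since $\Z[\xi_2]=\Z$); and stably free $\Rightarrow$ free is quoted from Jacobinski's cancellation theorem. You instead go directly from projective to free using the full strength of Reiner's classification for $p=2$ (every $\Z[\Z_2]$-lattice is a sum of copies of $\Z_+$, $\Z_-$, and $\Z[\Z_2]$), plus the elementary eigenspace computation showing $\Z_\pm$ are not projective — that computation is correct, and only the existence (not uniqueness) of the decomposition is needed for the summand argument, so there is no gap. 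Your route buys a single structural input in place of two $K$-theoretic ones and handles cancellation for free; note however that it uses a sharper form of Reiner's theorem than the paper does elsewhere (Lemma~\ref{lem:Reiner} only records the decomposition with a \emph{projective} summand, which is then refined using the present lemma), so if you were to fold your argument into the paper you would want to cite Reiner's classification in its full $p=2$ form rather than the weakened statement of Lemma~\ref{lem:Reiner}. The paper's route is more modular and indicates how the $\widetilde{K}_0$ computation would proceed for other primes; your alternative sketch via Milnor's Mayer--Vietoris square is also sound and is essentially a hands-on version of the paper's first step, with the cancellation step still to be supplied separately exactly as you say.
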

\begin{proof}
For finitely generated modules over any ring,  free implies stably free, which in turn implies projective.
A theorem of Jacobinski~\cite{Jacobinski} ensures that stably free $\Z[\Z_2]$-modules are moreover free; see also~\cite{Johnson}.
For any finite group $G$,
every finitely generated (f.g.)~projective $\Z[G]$-module is stably free if and only if the reduced projective class group~$\widetilde{K}_0(\Z[G])$ is trivial~\cite[Lemma~2.1, Definition~2.3, and Example~2.4]{WeibelKBook}.
Rim proved that for a prime~$p$,  the group~$\widetilde{K}_0(\Z[\Z_p])$ is isomorphic to~$\widetilde{K}_0(\Z[\xi_p])$, where~$\xi_p$ denotes the~$p$-th primitive root of unity~\cite{RimModules}.
A result of Milnor then implies that~$\widetilde{K}_0(\Z[\xi_p])$ is isomorphic to the ideal class group of~$\Z[\xi_p]$~\cite[Section~1, Corollary 1.11]{MilnorIntroduction}.
This latter group is trivial for~$p=2$ since~$\Z[\xi_2]=\Z$.
Thus a f.g.\ $\Z[\Z_2]$-module is projective if and only if it is stably free.
\end{proof}

\begin{lemma}
\label{lem:Reiner}
Every f.g.\ $\Z[\Z_2]$-module $H$ that is torsion-free as an abelian group decomposes as $P \oplus H_{+1} \oplus H_{-1}$, where~$P$ is~$\Z[\Z_2]$-free, $H_{+1}$ has the
$\Z[\Z_2]$-action where $T$ operates by~$+1$, and~$H_{-1}$ has the~$\Z[\Z_2]$-action where $T$ operates by~$-1$.
\end{lemma}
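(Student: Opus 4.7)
The plan is to present $H$ as an extension of two $\Z[\Z_2]$-modules with pure $\pm 1$ actions, and then diagonalise the extension class. Set $H^{-T} := \ker(T + 1) \subseteq H$, a $\Z[\Z_2]$-submodule on which $T$ acts by $-1$. Since $H/H^{-T}$ embeds into $H$ via the map induced by $T - 1$, it is $\Z$-torsion-free, so $H^{-T}$ is a $\Z$-summand of $H$. For every $x \in H$ one has $(T + 1)(Tx - x) = 0$, so $Tx - x \in H^{-T}$, showing that $T$ acts trivially on $L_+ := H/H^{-T}$. Writing $n_\pm$ for the ranks of the $\pm 1$-eigenspaces of $T$ acting on $H \otimes \Q$, I obtain isomorphisms of $\Z[\Z_2]$-modules $H^{-T} \cong \Z_-^{n_-}$ and $L_+ \cong \Z_+^{n_+}$, where $\Z_\pm$ denotes $\Z$ with $T$ acting by $\pm 1$. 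This furnishes a short exact sequence of $\Z[\Z_2]$-modules
\[
0 \to \Z_-^{n_-} \to H \to \Z_+^{n_+} \to 0.
\]

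Next I classify such extensions. Using the standard $2$-periodic free resolution of $\Z_+$ over $\Z[\Z_2]$, one computes $\Ext^1_{\Z[\Z_2]}(\Z_+, \Z_-) \cong H^1(\Z_2; \Z_-) \cong \Z_2$, with the generator realised by the extension $0 \to \Z_- \xrightarrow{1-T} \Z[\Z_2] \xrightarrow{\epsilon} \Z_+ \to 0$ coming from the regular representation. By additivity, the extension class of the short exact sequence above corresponds to a matrix in $\mathrm{Mat}_{n_- \times n_+}(\Z_2)$, and by functoriality the automorphism groups $\Aut_{\Z[\Z_2]}(\Z_\pm^{n_\pm}) = GL_{n_\pm}(\Z)$ act on this $\Ext^1$ by left and right multiplication via their reductions $GL_{n_\pm}(\Z) \to GL_{n_\pm}(\Z_2)$, which are surjective.

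Finally, row and column reduction over $\Z_2$ brings any such extension class to the normal form $\operatorname{diag}(1, \ldots, 1, 0, \ldots, 0)$ with $r$ ones, where $r$ is its $\Z_2$-rank. A corresponding change of basis decomposes the short exact sequence as a direct sum of $r$ copies of the regular extension together with $(n_+ - r)$ split extensions by $\Z_+$ and $(n_- - r)$ split extensions by $\Z_-$, exhibiting
\[
H \cong \Z[\Z_2]^r \oplus \Z_+^{n_+ - r} \oplus \Z_-^{n_- - r},
\]
which is the desired decomposition. The main technical point requiring care is the verification that the automorphism actions on $\Ext^1$ correspond to matrix multiplication, but this is a routine functoriality check; the remaining ingredients are standard homological computations over $\Z[\Z_2]$ and elementary linear algebra over $\Z_2$.
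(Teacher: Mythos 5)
Your proof is correct, but it takes a genuinely different route from the paper. The paper simply quotes Reiner's classification of $\Z$-torsion-free f.g.\ $\Z[\Z_2]$-modules, which gives the decomposition with $P$ merely projective, and then upgrades projective to free via Lemma~\ref{lem:StablyFreeImpliesFree} (Rim and Milnor identifying $\widetilde K_0(\Z[\Z_2])$ with the trivial class group of $\Z[\xi_2]=\Z$). You instead reprove Reiner's theorem from scratch for $p=2$: you present $H$ as an extension of $\Z_+^{n_+}$ by $\Z_-^{n_-}$, compute $\Ext^1_{\Z[\Z_2]}(\Z_+,\Z_-)\cong\Z_2$ with the regular representation as generator, and diagonalise the extension class over $\Z_2$ using surjectivity of $GL_n(\Z)\to GL_n(\Z_2)$. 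Your argument is longer but self-contained and more informative: the free summand $\Z[\Z_2]^r$ appears directly (with $r$ equal to the $\Z_2$-rank of the extension class), so no $K$-theoretic input about projectives being free is needed; the cost is the functoriality bookkeeping for the $\Ext^1$ action, which you correctly flag and which does check out. One small slip to fix: the quotient $H/H^{-T}$ embeds into $H$ via the map induced by $T+1$, not $T-1$ (the kernel of $T-1$ is the $+1$-fixed part, whereas $\ker(T+1)=H^{-T}$ is exactly what you need to quotient by); with that sign corrected, the torsion-freeness of $L_+$ and the rest of the argument go through as written.
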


\begin{proof}
Reiner~\cite{Reiner} proved that every f.g.\ $\Z[\Z_2]$-module $H$ that is torsion-free as an abelian group decomposes as $P \oplus H_{+1} \oplus H_{-1}$, where $H_{\pm 1}$ are as in the statement of the lemma and $P$ is~$\Z[\Z_2]$-projective.
Summands of f.g.\  modules are f.g.\  and Lemma~\ref{lem:StablyFreeImpliesFree} therefore implies that $P$ is in fact $\Z[\Z_2]$-free.
\end{proof}

\begin{remark}
\label{rem:pm}
In what follows,  $\Z_\pm$ denotes $\Z$ with the $(\Z[\Z_2],\Z[\Z_2])$-bimodule structure where both left and right module structures are determined by stipulating that $T$ induces multiplication by $\pm 1$.
One verifies for example the useful isomorphism $\Hom_{\Z[\Z_2]}(\Z_\pm,\Z[\Z_2]) \cong \Z_\pm$.
\end{remark}

\section{The algebraic topology of \texorpdfstring{$\Z_2$}{Z/2}-surface exteriors}
\label{sec:AlgtopExterior}

In this section we collect some facts about the algebraic topology of $\Z_2$-surface exteriors, which we will need in later proofs.
Subsections~\ref{sub:HomologyBoundary} and~\ref{sub:HomologyBoundarySurfaceExterior} are concerned with the homology of the boundary of~$\Z_2$-surface exteriors.
In Subsections~\ref{sub:HomologyExteriors} and~\ref{sub:HomologyBranched} we study the homology of $\Z_2$-surface exteriors and the corresponding branched covers.

\medbreak

Before getting started we record a result that we will use frequently.
If $\Z_2=\langle T \mid T^2=1 \rangle$ acts on a space $X$, its rational homology groups split into eigenspaces for the automorphism induced by $T$.
The eigenvalues are necessarily $\pm 1$ and we write $\mathcal{E}_\pm(H_i(X;\Q)) \subseteq H_i(X;\Q)$ for the corresponding eigenspaces.
Note that $H_i(X;\Q)=\mathcal{E}_+(H_i(X;\Q)) \oplus \mathcal{E}_-(H_i(X;\Q))$ because every~$x \in H_i(X;\Q)$ decomposes as $x=\tmfrac{1}{2}(x+Tx)+\tmfrac{1}{2}(x-Tx)$ with $\tmfrac{1}{2}(x\pm Tx) \in\mathcal{E}_\pm(H_i(X;\Q)).$

\begin{lemma}
\label{lem:Bredon}
Fix $i \geq 0$.
If $\Z_2$ acts on a space $X$, then $\mathcal{E}_+(H_i(X;\Q))\cong H_i(X/\Z_2;\Q)$ and therefore
\[H_i(X;\Q) \cong H_i(X/\Z_2;\Q) \oplus \mathcal{E}_-(H_i(X;\Q)).\]
\end{lemma}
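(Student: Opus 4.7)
The plan is to prove the first isomorphism $\mathcal{E}_+(H_i(X;\Q)) \cong H_i(X/\Z_2;\Q)$ directly; the displayed direct sum decomposition then follows from the orthogonal idempotent splitting $x = \tfrac{1}{2}(x + T_*x) + \tfrac{1}{2}(x - T_*x)$ already recorded in the paragraph preceding the statement.

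First I would observe that the quotient map $p \colon X \to X/\Z_2$ satisfies $p \circ T = p$, so on homology $p_*(T_*x) = p_*(x)$ for every class $x$. Consequently $p_*$ vanishes on $\mathcal{E}_-(H_i(X;\Q))$ and restricts to a homomorphism
\[ p_*|_+ \colon \mathcal{E}_+(H_i(X;\Q)) \longrightarrow H_i(X/\Z_2;\Q), \]
and it remains to show this restriction is an isomorphism.

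The key tool is the averaging idempotent $e = \tfrac{1}{2}(1 + T) \in \Q[\Z_2]$, which acts as a chain map on a suitable $\Z_2$-equivariant chain model $C_*(X;\Q)$. Since $2$ is a unit in $\Q$, the image $e \cdot C_*(X;\Q)$ is the invariant subcomplex whose degree $i$ homology is precisely $\mathcal{E}_+(H_i(X;\Q))$. Bredon's classical theorem identifying rational homology of an orbit space with the invariants of the action then identifies $H_i(e \cdot C_*(X;\Q))$ with $H_i(X/\Z_2;\Q)$ via the map induced by $p_*$, so $p_*|_+$ is the desired isomorphism.

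The main obstacle is that the $\Z_2$-action is not assumed to be free, so a naive covering-space transfer is unavailable in general (for instance the branching involution on $\Sigma_2(F)$ fixes $F$ pointwise). The averaging argument sidesteps this: $e$ is a well-defined chain endomorphism regardless of fixed points, and the availability of rational coefficients is exactly what makes $e$ an idempotent splitting off the invariants. Combining the resulting first isomorphism with the orthogonal decomposition from the opening paragraph yields the second displayed formula.
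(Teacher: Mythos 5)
Your proof is correct and takes essentially the same route as the paper: both arguments come down to citing Bredon's transfer theorem that $H_i(X/\Z_2;\Q)$ is the invariants $H_i(X;\Q)^{\Z_2}=\mathcal{E}_+(H_i(X;\Q))$, with the direct sum decomposition then following from the idempotent splitting $x=\tfrac12(x+Tx)+\tfrac12(x-Tx)$. The chain-level averaging discussion in your write-up is harmless but not needed once Bredon's theorem is invoked.
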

\begin{proof}
Consider the set~$H_{i}(X;\Q)^{\Z_2}$ of fixed points of the~$\Z_2$ action on~$H_{i}(X;\Q)$.
A transfer map argument~\cite[Chapter~III, Theorem~7.2]{BredonIntroduction} shows that fixed set of the induced action on the rational homology is isomorphic to the rational homology of the orbit space, i.e.~$H_{i}(X;\Q)^{\Z_2}\cong H_{i}(X/\Z_2;\Q)$.
We therefore obtain the following sequence of isomorphisms:
\[
\mathcal{E}_+(H_{i}(X;\Q))= H_{i}(X;\Q)^{\Z_2} \cong H_{i}(X/\Z_2;\Q).
\]
The final sentence now follows from the decomposition $H_i(X;\Q)=\mathcal{E}_+(H_i(X;\Q)) \oplus \mathcal{E}_-(H_{i}(X;\Q))$.
\end{proof}

\subsection{The homology of circle bundles over nonorientable surfaces}
\label{sub:HomologyBoundary}

Given a properly embedded nonorientable surface $F \subseteq D^4$ with boundary a knot $K \subseteq S^3$, the boundary of the exterior $X_F:=D^4 \setminus \nu F $ decomposes as~$\partial X_F=X_K \cup -\mathring{Y}$ where $X_K$ is the exterior of $K$ and $\mathring{Y}$ is an oriented $3$-manifold that has the structure of a circle bundle over $F$ (see Notation~\ref{notation:Y} for details).
We calculate the homology of $\mathring{Y}$ as well as the homology of circle bundles over closed nonorientable surfaces.

We first introduce some notation we will use throughout this section.
\begin{notation}
\label{notation:Y}
Let~$F$ be a closed, nonorientable surface of nonorientable genus $h$ and let~$\xi$ be a rank 2 vector bundle over~$F$ with oriented total space and Euler number $e$.
Decompose~${F}=D^2\cup_{S^1}\mathring{{F}}$ and write~$\mathring{\xi}$ for the restriction to~$\mathring{F}$.
Write~$Y$ and $\mathring{Y}$ for the respective total spaces of the associated~$S^1$-bundles~$S(\xi)$ and~$S(\mathring{\xi})$, and let $p \colon Y \to F$ and $\mathring{p} \colon \mathring{Y} \to \mathring{F}$ denote the projection maps.
Let $\mathring{s}$ be a section of $\mathring{\xi}$, which exists because $\mathring{\xi}$ is rank 2 and $\mathring{F}$ is homotopy equivalent to a 1-complex.
\end{notation}

\begin{proposition}
\label{prop:HomologyY}
We have a commuting diagram
\[\xymatrix{
0 \ar[r]& \Z_2 \ar[r]\ar[d]^= & H_1(\mathring{Y}) \ar@{->>}[d]^{\iota_Y} \ar@{->>}[r]^{\mathring{p}}& H_1(\mathring{F}) \ar[r]\ar@{->>}[d]^{\iota_F}\ar@/^1pc/[l]^{\mathring{s}_*}& 0 \\
0 \ar[r]&  \Z_2 \ar[r]& H_1(Y) \ar@{->>}[r]^{p}& H_1(F) \ar[r]\ar@/^1pc/@{-->}[l]^{\sigma}& 0
}\]
where the vertical maps are inclusion induced, the top row is split by $\mathring{s}_*$, and the bottom is split if and only if $e$ is even.
Moreover, we have the following homology calculations.
\begin{itemize}
\item If $e$ is even,
there exists a unique map $\sigma \colon H_1(F) \to H_1(Y)$ that splits the bottom row and satisfies $\iota_Y \circ \mathring{s}_*=\sigma \circ \iota_F$.
In particular,  there are isomorphisms
\[
H_1(Y)\cong \Z_2 \oplus H_1(F) \  \ \text{  and  } \  \ H_1(\mathring{Y})\cong \Z_2 \oplus H_1(\mathring{F}).
\]
In both cases the~$\Z_2$-summand is generated by an~$S^1$-fibre of~$S(\xi)$.
\item
If $e$ is odd,  then
\[
H_1(Y)\cong \Z_4 \oplus \Z^{h-1} \  \ \text{  and  } \  \ H_1(\mathring{Y})\cong \Z_2 \oplus H_1(\mathring{F}).
\]
The~$\Z_2$-summand is generated by an~$S^1$-fibre of~$S(\xi)$ whereas the $\Z_4$ summand fits into the nonsplit short exact sequence
$$ 0 \to \Z_2 \to  {\underbrace{ H_1(Y)}_{\cong\, \Z_4\oplus \Z^{h-1}}} \xrightarrow{p_*} {\underbrace{ H_1(F)}_{\cong\, \Z_2\oplus \Z^{h-1}}} \to 0$$
in which the left hand side $\Z_2$ is generated by an~$S^1$-fibre of~$S(\xi)$.
\end{itemize}
\end{proposition}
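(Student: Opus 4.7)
My plan is to compute $\pi_1(\mathring Y)$ and $\pi_1(Y)$ directly from CW presentations and then abelianize; all of the homological assertions (the short exact sequences, the identification of the $\Z_2$-summand with a fibre, and the existence or failure of a splitting) will fall out of these presentations. Equip $F$ with its standard CW structure: one $0$-cell $v$, $1$-cells $\gamma_1,\dots,\gamma_h$, and a single $2$-cell $D$ attached along $\gamma_1^2\gamma_2^2\cdots\gamma_h^2$; then $\mathring F$ is the $1$-skeleton. I write $\phi$ for a fibre loop in $\mathring Y$ and $\tilde\gamma_i:=\mathring s(\gamma_i)$.

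For $\mathring Y$, the bundle $\mathring\xi$ admits a section $\mathring s$ because $\mathring F$ is $1$-dimensional; the orientability of the total space $Y$ together with the fact that each $\gamma_i$ is $w_1$-nontrivial forces the monodromy of $\mathring\xi$ along each $\gamma_i$ to act on the fibre by $-1$. A van Kampen argument over the cell decomposition gives
\[
\pi_1(\mathring Y)=\langle\phi,\tilde\gamma_1,\dots,\tilde\gamma_h\mid\tilde\gamma_i\phi\tilde\gamma_i^{-1}=\phi^{-1}\text{ for all }i\rangle,
\]
and abelianizing produces $H_1(\mathring Y)\cong\Z_2\langle\phi\rangle\oplus\Z\langle\tilde\gamma_1\rangle\oplus\cdots\oplus\Z\langle\tilde\gamma_h\rangle$, with $\mathring p_*$ the quotient by $\phi$ and $\mathring s_*$ the obvious section. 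This is the top row of the diagram.

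For $Y$, I would add the $2$-cell $D$. The standard interpretation of the Euler number as the primary obstruction to extending a section identifies the difference between $\mathring s|_{\partial D}$ and any section of $\xi|_D$ with a map $\partial D\to S^1$ of degree $e$, so the $2$-cell of $Y$ obtained from such an extension is attached along the word $\tilde\gamma_1^2\cdots\tilde\gamma_h^2\phi^{-e}$. Hence
\[
\pi_1(Y)=\langle\phi,\tilde\gamma_1,\dots,\tilde\gamma_h\mid\tilde\gamma_i\phi\tilde\gamma_i^{-1}=\phi^{-1},\ \tilde\gamma_1^2\cdots\tilde\gamma_h^2=\phi^e\rangle.
\]
Abelianizing, $H_1(Y)$ is generated by $\phi,\tilde\gamma_1,\dots,\tilde\gamma_h$ subject to $2\phi=0$ and $2\sum_i\tilde\gamma_i=e\phi$. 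A short Smith-normal-form computation splits the analysis: if $e=2k$ is even, the substitution $\tilde\gamma_1':=\sum_i\tilde\gamma_i+k\phi$ diagonalises the presentation and yields $\Z_2\oplus\Z_2\oplus\Z^{h-1}\cong\Z_2\oplus H_1(F)$; if $e$ is odd, combining the two relations expresses $\phi=-2\sum_i\tilde\gamma_i$ and forces $4\sum_i\tilde\gamma_i=0$, producing $H_1(Y)\cong\Z_4\oplus\Z^{h-1}$ with $\phi$ the $2$-torsion element of the $\Z_4$-summand.

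The extension $0\to\Z_2\to H_1(Y)\to H_1(F)\to 0$ is evident from the presentation, and a splitting exists if and only if the $2$-torsion generator $\sum_i\gamma_i\in H_1(F)$ admits a $2$-torsion lift in $H_1(Y)$; since $2\sum_i\tilde\gamma_i=e\phi$, this is equivalent to $e$ being even. When $e$ is even, the splitting $\sigma$ with $\sigma\circ\iota_F=\iota_Y\circ\mathring s_*$ is unique because $\iota_F$ is surjective, and existence reduces to checking that the generator $2\sum_i\gamma_i$ of $\ker\iota_F$ maps to zero in $H_1(Y)$, which is exactly the identity $e\phi=0$. The one subtle point of the argument is correctly accounting for the coefficient $e$ in the attaching word for the $2$-cell of $Y$; after that, all the homological assertions and the naturality of the diagram follow mechanically from the abelian group presentation above.
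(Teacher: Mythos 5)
Your proof is correct, and it takes a genuinely different route from the paper's. The paper works entirely in homology: it forms the disc bundles $N$ and $\mathring{N}$, uses Poincar\'e duality and excision to compute $H_*(N,Y)\cong H_*(\mathring N,\mathring Y)$, and assembles a ladder of long exact sequences of pairs, with the decisive step being the computation of the boundary map $\partial\colon H_2(Y,\mathring Y)\to H_1(\mathring Y)$, $[\{1\}\times D^2]\mapsto(\ol e,2,\mathbf 0)$. You instead compute $\pi_1(\mathring Y)$ and $\pi_1(Y)$ by van Kampen, obtaining the standard Seifert-fibred presentation $\langle\phi,\tilde\gamma_i\mid\tilde\gamma_i\phi\tilde\gamma_i^{-1}=\phi^{-1},\ \tilde\gamma_1^2\cdots\tilde\gamma_h^2=\phi^{e}\rangle$, and abelianise. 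The two arguments hinge on exactly the same geometric input — the Euler number is the primary obstruction to extending $\mathring s$ over the $2$-cell, so the boundary of the $2$-cell wraps $\pm e$ times around the fibre — and your Smith-normal-form analysis of the relations $2\phi=0$, $2\sum_i\tilde\gamma_i=e\phi$ reproduces all the stated conclusions, including the uniqueness and existence of $\sigma$ via surjectivity of $\iota_F$ and the identity $e\phi=0$. What your route buys is the full fundamental group of $Y$ as a byproduct; what it costs is two extra verifications that the paper's purely homological ladder gets for free, both of which you handle: the monodromy claim (that each $\gamma_i$ inverts the fibre, which follows from $w_1(\xi)=w_1(TF)$ since the total space is orientable and each $\gamma_i$ is orientation-reversing) and the van Kampen bookkeeping for the gluing $Y=\mathring Y\cup_{T^2}(D^2\times S^1)$. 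The only cosmetic slip is the sign in the substitution $\tilde\gamma_1'=\sum_i\tilde\gamma_i+k\phi$ versus $-k\phi$, which is harmless since the relation subgroup is unchanged after adding multiples of $2\phi$.
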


\begin{proof}
Consider the total space $N$ of the disc bundle $D(\xi)$ over $F$.
We note that $H_1(N,Y)=0$ and $H_2(N,Y) \cong  \Z_2$ generated by a $D^2$-fibre of $D(\xi)$.
Indeed since $\partial N=Y$ and $N$ deformation retracts onto $F$, Poincar\'e duality implies that
$ H_i(N,Y) \cong H^{4-i}(N) \cong H^{4-i}(F)$
which vanishes for $i=1$ and is~$\Z_2$ for $i=2$.
In the punctured case,  write $\mathring{N}$ for the total space of $D(\mathring{\xi})$, so that excision gives~$H_i(\mathring{N},\mathring{Y}) \cong H_i(N,Y)$ for all $i \geq 0.$

Since $N$ (resp.\ $\mathring{N}$) deformation retracts via the bundle projection onto $F$ (resp.\ $\mathring{F}$),  we have $H_1(N) \cong H_1(F)$ (resp. $H_1(\mathring{N}) \cong H_1(\mathring{F})$).
Consider the following commutative diagram in which the rows (resp. columns) arise from the exact sequences of the pairs $(\mathring{N},\mathring{Y})$ and $(N,Y)$ (resp.~$(F,\mathring{F})$ and~$(Y,\mathring{Y})$):
\[\xymatrix{
& & H_2(Y,\mathring{Y}) \ar[r] \ar[d]^{\partial} & H_2(F,\mathring{F}) \ar[d]^{\partial} & \\
0 \ar[r]& \Z_2 \ar[r]\ar[d]^= & H_1(\mathring{Y}) \ar @{->>}[d]^{\iota_Y} \ar[r]^{\mathring{p}}& H_1(\mathring{F}) \ar[r]\ar@{->>}[d]^{\iota_F}\ar@/^1pc/[l]^{\mathring{s}_*}& 0 \\
0 \ar[r]&  \Z_2 \ar[r]& H_1(Y) \ar[r]^{p}& H_1(F) \ar[r]\ar@/^1pc/@{-->}[l]^{\sigma}& 0.
}\]
Here the map $\mathring{s}_*$ splits $\mathring{p}$ and is induced by the choice of  section~$\mathring{s}\colon \mathring{F} \to \mathring{Y}$.
It remains to prove the statements related to the splitting $\sigma$.

By excision, $H_2(F,\mathring{F}) \cong H_2(D^2,S^1) \cong \Z$ generated by a choice of relative fundamental class~$[D^2]$.
Using our standard cell structure for $\mathring{F}$, with $1$-cells $\gamma_1,\dots,\gamma_h$, we see that $H_1(\mathring{F})\cong\Z^h$. We choose a basis with first basis element $[\gamma_1\gamma_2\dots\gamma_h]$ to obtain a decomposition $H_1(\mathring{F}) \cong \Z \oplus \Z^{h-1}$, so that~$\partial([D^2]) = (2,\mathbf{0})$.
Using the splitting of the top sequence we write $H_1(\mathring{Y}) \cong \Z_2 \oplus H_1(\mathring{F}) \cong \Z_2 \oplus \Z \oplus \Z^{h-1}$.  Again by excision, we know that $H_2(Y,\mathring{Y}) \cong H_2(S^1 \times D^2,S^1 \times S^1) \cong \Z$, generated by $[\{1\} \times D^2]$.

We claim that $\partial([\{1\} \times D^2]) =
(\ol{e},2,\mathbf{0}) \in \Z_2 \oplus \Z \oplus \Z^{h-1}$, where $\ol{e}$ is the Euler number $e$ of~$\xi$ reduced modulo 2.
The last two entries are necessitated by commutativity of the top right square.
To see that the first entry is the Euler number mod 2, recall that the Euler number is the obstruction to extending the section $\mathring{s} \colon \mathring{F} \to \mathring{Y} \subseteq Y = S(\xi) \subseteq D(\xi)$ to a nonvanishing section on all of $F$.
Indeed, if we extend, the Euler number equals the number of transverse intersections of any section over all of~$F$ with the zero section.
 This means that the boundary of $\{1\} \times D^2 \subseteq S^1 \times D^2 \cong Y \setminus \mathring{Y}$ maps to $\pm e$ copies of the circle fibre.
This concludes the proof of the claim.

Assume $e$ is even.
A chase in the following diagram shows that $\sigma$ is well-defined:
\[\xymatrix{
& & \Z \ar[r] \ar[d]^{(0,2,\mathbf{0})} & \Z \ar[d]^{(2,\mathbf{0})} & \\
0 \ar[r]& \Z_2 \ar[r]\ar[d]^= & \Z_2 \oplus \Z \oplus \Z^{h-1} \ar @{->>}[d]^{\iota_Y} \ar[r]^-{\mathring{p}}&  \Z \oplus \Z^{h-1} \ar[r] \ar@{->>}[d]^{\iota_F} \ar@/^1pc/[l]^{\mathring{s}_*} & 0 \\
0 \ar[r]&  \Z_2 \ar[r]& H_1(Y) \ar[r]^{p}& H_1(F) \ar[r]\ar@/^1pc/@{-->}[l]^{\sigma}& 0.
}\]
In more detail, the map~$\sigma$ is defined to make the right hand square commute; i.e.\ $\sigma(x)=\iota_Y \circ \mathring{s}_*(\mathring{x})$ where $\iota_F(\mathring{x})=x$.
To check that $\sigma$ is indeed  a splitting, given $x \in H_1(F)$ and $\mathring{x} \in H_1(\mathring{F})$ such that~$\iota_F(\mathring{x} ) =x$. Then
$p \circ \sigma(x)
 = p  \circ  \sigma\circ \iota_F(\mathring{x} )
 = p  \circ  \iota_Y \circ \mathring{s}_*(\mathring{x} )
 = \iota_F  \circ  \mathring{p} \circ  \mathring{s}(\mathring{x} )
 = \iota_F(\mathring{x} ) = x.$
The uniqueness of $\sigma$ follows similarly: if $\sigma'$ satisfies $\iota_Y \circ \mathring{s}_*=\sigma' \circ \iota_F$, then for any $x \in H_1(F)$ and any~$\mathring{x}  \in H_1(\mathring{F})$ with~$\iota_F(\mathring{x})=x$, we have $\sigma(x)=\iota_Y\circ \mathring{s}_*(\mathring{x})=\sigma'(\iota_F(\mathring{x}))=\sigma'(x)$.

If $e$ is odd, then we deduce that  $H_1(Y)=\im(\iota_Y)$ is isomorphic to $\Z_2 \oplus \Z \oplus \Z^{h-1}/\langle (1,2,0)\rangle$, which is itself isomorphic to $\Z_4 \oplus \Z^{h-1}$.
The proposition follows.
\end{proof}

\begin{proposition}
\label{prop:HomologyGroupsmathringY}
The nontrivial integral homology groups of $\mathring{Y}$ are
\[
H_0(\mathring{Y})\cong\Z,\quad H_1(\mathring{Y})\cong \Z_2 \oplus \Z^h,\quad  \text{ and }\quad H_2(\mathring{Y})\cong\Z^{h-1},
\]
and the   nontrivial integral homology groups of~$Y$ are
\[H_0(Y)\cong \Z, \quad
H_1(Y)\cong\begin{cases}
\Z_4 \oplus \Z^{h-1} &\quad \text{ if~$e$ is odd}\\
\Z_2 \oplus \Z_2 \oplus \Z^{h-1} &\quad \text{ if~$e$ is even,}
\end{cases} \quad
H_2(Y)\cong \Z^{h-1}, \quad
 H_3(Y)\cong \Z.\]
\end{proposition}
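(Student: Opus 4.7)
The plan is to combine Proposition~\ref{prop:HomologyY}, which already supplies $H_1(Y)$ and $H_1(\mathring{Y})$, with the long exact sequences of the pairs $(N,Y)$ and $(\mathring{N},\mathring{Y})$, where $N = D(\xi)$ and $\mathring{N}=D(\mathring{\xi})$ are the associated disc bundles.

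First I would dispatch the easy entries. Since $Y$ and $\mathring{Y}$ are connected we have $H_0(Y) = H_0(\mathring{Y}) = \Z$. Because $\xi$ has oriented total space, $N$ is an orientable $4$-manifold, so its boundary $Y$ is a closed orientable $3$-manifold and $H_3(Y)\cong\Z$, while $\mathring{Y}$ is a $3$-manifold with nonempty boundary, so $H_i(\mathring{Y}) = 0$ for $i\geq 3$. The first homology calculations are just Proposition~\ref{prop:HomologyY} together with the standard facts $H_1(F)\cong\Z_2\oplus\Z^{h-1}$ and $H_1(\mathring{F})\cong\Z^{h}$ (using that $\mathring{F}$ is homotopy equivalent to a wedge of $h$ circles).

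The substantive step is the computation of $H_2(Y)$ and $H_2(\mathring{Y})$. I would apply Poincar\'e--Lefschetz duality to $(N,Y)$: since $N$ deformation retracts onto $F$, is orientable, and has $\partial N = Y$, one obtains
\[
H_i(N,Y)\ \cong\ H^{4-i}(N)\ \cong\ H^{4-i}(F).
\]
The cohomology of a closed nonorientable genus $h$ surface then gives $H_4(N,Y)\cong\Z$, $H_3(N,Y)\cong\Z^{h-1}$, $H_2(N,Y)\cong\Z_2$, and the remaining groups vanish. By excision, the same values hold for $H_i(\mathring{N},\mathring{Y})$.

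Plugging these into the long exact sequence of $(N,Y)$, the relevant segment reads
\[
0 = H_3(N) \to H_3(N,Y) \to H_2(Y) \to H_2(N) = 0,
\]
where $H_2(N)\cong H_2(F) = 0$ since $F$ is closed and nonorientable, and $H_3(F) = 0$ by dimension. Hence $H_2(Y) \cong H_3(N,Y) \cong \Z^{h-1}$. For $\mathring{Y}$, the same argument applies with $H_2(\mathring{N}) = H_3(\mathring{N}) = 0$, both holding because $\mathring{N}$ is homotopy equivalent to the $1$-complex $\mathring{F}$; this yields $H_2(\mathring{Y})\cong\Z^{h-1}$ as well. No serious obstacle is expected: the cocompletion of the statement is essentially bookkeeping once the Lefschetz duality identification of $H_\ast(N,Y)$ has been set up, and the remaining ambiguity in $H_1(Y)$ between a split or nonsplit extension has already been resolved in Proposition~\ref{prop:HomologyY}.
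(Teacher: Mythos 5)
Your computation is correct and reaches the same groups as the paper, but the key step—pinning down $H_2$—is done by a different (equally valid) route. The paper's proof of Proposition~\ref{prop:HomologyGroupsmathringY} does not rerun the long exact sequence of the disc-bundle pair: it notes that $\mathring{Y}$ is a compact $3$-manifold with torus boundary, hence $H_3(\mathring{Y})=0$ and $\chi(\mathring{Y})=0$; that Poincar\'e duality plus the universal coefficient theorem force $H_2(\mathring{Y})$ to be torsion-free; and then reads off $b_2(\mathring{Y})=h-1$ from $0=1-h+b_2(\mathring{Y})$, with an analogous Euler-characteristic count for the closed $Y$. You instead recycle the Lefschetz-duality identification $H_i(N,Y)\cong H^{4-i}(N)\cong H^{4-i}(F)$ already established in the proof of Proposition~\ref{prop:HomologyY}, and extract $H_2(Y)\cong H_3(N,Y)\cong H^1(F)\cong \Z^{h-1}$ from the exact sequence of the pair, handling the punctured case via the same excision isomorphism $H_i(\mathring{N},\mathring{Y})\cong H_i(N,Y)$ that the paper invokes there. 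Your version yields the isomorphism type of $H_2$ in one stroke, with no separate freeness argument; the paper's version is marginally shorter given that the Euler-characteristic and duality facts are reused elsewhere in the section. Both arguments are sound.
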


\begin{proof}
Since~$\mathring{Y}$ is connected~$H_0(\mathring{Y})\cong\Z$.  Proposition~\ref{prop:HomologyY} proves that~$H_1(\mathring{Y})\cong \Z_2 \oplus \Z^h$.
Since~$\mathring{Y}$ is a~$3$-manifold with torus boundary, we have~$H_3(\mathring{Y})=0$ and~$0=\chi(\mathring{Y})=1-h+b_2(\mathring{Y})$.
On the other hand, Poincar\'e duality and the universal coefficient theorem imply that~$H_2(\mathring{Y})$ is free.
Combining these facts shows that~$H_2(\mathring{Y}) \cong \Z^{h-1}$, as claimed.
An analogous argument gives the closed case.
\end{proof}

\begin{construction}
\label{cons:Covers}
Assume the Euler number $e$ of the bundle $\xi$ is even, so that Proposition~\ref{prop:HomologyY} gives
\[
H_1(Y)\cong \Z_2 \oplus H_1(F) \  \ \text{  and  } \  \ H_1(\mathring{Y})\cong \Z_2 \oplus H_1(\mathring{F}),
\]
where in both cases the~$\Z_2$-summand is generated by an~$S^1$-fibre of~$S(\xi)$.
Projecting to this~$\Z_2$-summand leads to surjections~$\varphi \colon \pi_1(Y) \twoheadrightarrow \Z_2$ and~$\varphi \colon \pi_1(\mathring{Y}) \twoheadrightarrow \Z_2$.
We consider the~$\Z_2$-covers~$Y^\varphi$ and~$\mathring{Y}^\varphi$ corresponding to the kernels of these epimorphisms.
From now on when we write~$H_i(Y;\Z[\Z_2])$ and~$H_i(\mathring{Y};\Z[\Z_2])$, we will mean the homology groups of these covers, viewed as $\Z[\Z_2]$-modules.
\end{construction}


\begin{lemma}\label{lem:newlemma}
When the Euler number $e$ of $\xi$ is even, it is twice that of the rank 2 bundle~$\xi^\varphi$ associated with~$Y^\varphi$.
\end{lemma}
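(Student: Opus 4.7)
The plan is to realise $\xi^\varphi$ explicitly as a rank 2 vector bundle and then to compare Euler numbers using the signed-zero-count formulation of Remark~\ref{rem:intersectioncount}.

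First, since $\varphi$ is defined to project onto the $\Z_2$-summand of $H_1(Y)$ generated by an $S^1$-fibre of $p \colon Y \to F$, the covering map $\pi \colon Y^\varphi \to Y$ restricts on each fibre of $p$ to the connected 2-fold cover $z \mapsto z^2$ of $S^1$. Hence the composite $Y^\varphi \to Y \to F$ is a circle bundle over $F$, which is the sphere bundle $S(\xi^\varphi)$ of a rank 2 real vector bundle $\xi^\varphi$, unique up to isomorphism. The fibrewise squaring map on $S^1$ extends to the branched 2-fold cover $z \mapsto z^2 \colon D^2 \to D^2$, and these assemble fibrewise into a branched double cover $\widetilde\pi \colon D(\xi^\varphi) \to D(\xi)$, branched along the zero section of $D(\xi)$ and restricting to $\pi$ on the sphere bundles. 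Since the fibrewise squaring map is orientation-preserving on $\C$, the total space of $\xi^\varphi$ inherits an orientation from that of $\xi$ via $\widetilde\pi$, and $e(\xi^\varphi) \in \Z$ is well defined.

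Next, I would take a generic section $s \colon F \to D(\xi^\varphi)$ with transverse zeros $p_1, \ldots, p_n$ of signs $\epsilon_1, \ldots, \epsilon_n \in \{\pm 1\}$, so that $e(\xi^\varphi) = \sum_i \epsilon_i$ by Remark~\ref{rem:intersectioncount}. The composite $\widetilde\pi \circ s$ is a section of $\xi$ vanishing at the same $p_i$, but non-transversely. By multiplicativity of local degrees, the local degree of $\widetilde\pi \circ s$ at $p_i$ (viewed in local trivialisations as a map from a neighbourhood of $p_i$ into the fibre of $\xi$) is the product of the local degree of $s$ at $p_i$, namely $\epsilon_i$, with the local degree of $z \mapsto z^2$ at the origin of $\C$, namely $+2$, hence $2\epsilon_i$. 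A small generic perturbation of $\widetilde\pi \circ s$ therefore resolves each $p_i$ into two transverse zeros of common sign $\epsilon_i$, and summing over $i$ gives $e(\xi) = 2 \sum_i \epsilon_i = 2 e(\xi^\varphi)$.

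The main subtlety is tracking orientations in the nonorientable setting. The intersection-theoretic description of the Euler number from Remark~\ref{rem:intersectioncount} is convenient here because local orientation choices cancel in the signed count, so the sign-doubling reduces to the purely local degree computation for $z \mapsto z^2$ on $\C$ and bypasses the global orientability issues that make nonorientable Euler numbers awkward to handle directly.
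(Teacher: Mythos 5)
Your proof is correct and follows essentially the same route as the paper: both arguments hinge on assembling the fibrewise connected double cover $Y^\varphi\to Y$ into a branched double cover of disc bundles, branched along the zero section, which is degree $2$ on each $D^2$-fibre. The only difference is bookkeeping of the factor of $2$ — the paper deduces it from a change-of-local-coefficients homomorphism on twisted $H^2$ (following Massey) sending $e(\nu\widetilde F)$ to $e(\nu F)$, whereas you compute it directly via local degrees of zeros of a pushed-forward section, which is an equally valid and arguably more self-contained implementation of the same idea.
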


\begin{proof}
Consider $N:=N_{e,h}$, the $D^2$-bundle over $F$ with Euler number $e$.
This bundle satisfies~$\partial N=Y$.
Note that $N\sm \nu F \cong Y\times [0,1]\simeq Y$, so that $\varphi$ determines a $2$-fold branched cover $p\colon N_2(F)\to N$, branched over $F\subseteq N$, and with $Y^\varphi=\partial N_2(F)$.
 Orient $N_2(F)$ so that~$p$ is degree~1.
%
Write $\widetilde{F} \subseteq N_2(F)$ for the branch set of this branched cover.
Write $e'$ for the Euler number of~$Y^\varphi$. This agrees with the normal Euler number of $\widetilde{F} \subseteq N_2(F)$.
The spaces $N_2(F)$ and $N$ have the structure of tubular neighbourhoods over $\widetilde{F}$ and $F$ respectively, and we choose these tubular neighbourhoods so that the double branched covering map $p$ is a fibre preserving $D^2$-bundle map of the normal bundles $p\colon \nu \widetilde{F} \to \nu F $.
Implicitly using the homeomorphism $p\colon \widetilde{F}\to F$ in the base, there is a change of local coefficients homomorphism
$p_{\#}\colon H^2(\widetilde{F};\{\pi_{1}(S(\xi)_x)\})\to H^2(F;\{\pi_{1}(S(\xi)_x)\})$ that sends~$e(\nu \widetilde{F})$ to~$e(\nu F )$; cf.~\cite[Lemma 1]{Massey}.
As $p$ restricts to a double cover on the $S^1$ fibres, this map is multiplication by $\pm 2$. The sign ambiguity of an Euler number depends only on the choice of orientation on the ambient $4$-manifolds (see Section~\ref{sec:EulerNumber}). The stipulation earlier that~$p\colon N_2(F)\to N$ is a degree 1 map, fixes the sign to be positive.
Thus, $2e'=2e(\widetilde{F})=e(F)=e$, as claimed.
\end{proof}

\begin{remark}\label{rem:newremark} The Euler number $e$ of a $D^2$-bundle $N$ over $F$ is, by definition, equal to both the Euler number of the associated sphere bundle, and the normal Euler number $e(F)$ of $F\subseteq N$, the~0-section of the $D^2$-bundle in the total space. Now assume $e$ is even and write $\widetilde{F} \subseteq N_2(F)$ for the branch set of the $2$-fold branched cover described in the proof of Lemma~\ref{lem:newlemma}. It follows that Lemma~\ref{lem:newlemma} is equivalent to the statement that the normal Euler numbers satisfy~$e(F)=2e(\widetilde{F})$.
\end{remark}

\begin{proposition}
\label{prop:HomologyYCover}
Assume the Euler number $e$ of the bundle $\xi$ is even.
The   nontrivial homology groups of the~$\Z_2$-cover~$\mathring{Y}^\varphi$ are
$$
H_0(\mathring{Y}^\varphi)\cong \Z, \quad
H_1(\mathring{Y}^\varphi)\cong\Z_2 \oplus H_1(\mathring{F}), \quad  \text{ and }\quad
H_2(\mathring{Y}^\varphi)\cong \Z^{h-1}.
$$
If we write $e=2x$,  then the   nontrivial homology groups of the~$\Z_2$-cover~$Y^\varphi$ are
$$
H_0(Y^\varphi)\cong \Z, \quad
H_1(Y^\varphi)\cong \begin{cases}
\Z_4 \oplus \Z^{h-1}&\quad \text{ if~$x$ is odd}\\
\Z_2 \oplus \Z_2 \oplus \Z^{h-1} &\quad \text{ if~$x$ is even,}
\end{cases} \quad
H_2(Y^\varphi)\cong\Z^{h-1}, \quad
 H_3(Y^\varphi)\cong \Z.
$$
\end{proposition}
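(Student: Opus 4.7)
The plan is to reduce this proposition to the already-proved Proposition~\ref{prop:HomologyGroupsmathringY} by identifying the cover $Y^\varphi$ with a sphere bundle over a surface homeomorphic to $F$, but with half the Euler number. Once this identification is set up, the formulas for the integral homology follow by simply invoking the earlier result with $\xi$ replaced by the covering bundle $\xi^\varphi$.

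More precisely, first I would use the construction in the proof of Lemma~\ref{lem:newlemma} to identify $Y^\varphi$ with the sphere bundle $S(\xi^\varphi)$. Recall from that proof that the total space $N$ of $D(\xi)$ admits a $2$-fold branched cover $p\colon N_2(F)\to N$ branched along the $0$-section $F\subseteq N$, and that $N_2(F)$ is itself the total space of a disc bundle over the branch surface $\widetilde{F}$, with $p$ a fibrewise double-branched cover of $D^2$-bundles. In particular $p$ restricts to a homeomorphism $\widetilde{F}\to F$ on base spaces, so the base of $\xi^\varphi$ is again a closed nonorientable surface of nonorientable genus $h$, and taking boundaries gives the desired identification $Y^\varphi=\partial N_2(F)\cong S(\xi^\varphi)$. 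By Lemma~\ref{lem:newlemma} the Euler number of $\xi^\varphi$ is $x=e/2$, and its total space is oriented (as it admits a degree-one map to the oriented space of $\xi$). Restricting this identification over $\mathring F\subseteq F$ yields $\mathring Y^\varphi\cong S(\mathring{\xi^\varphi})$, where $\mathring{\xi^\varphi}$ is the restriction of $\xi^\varphi$ to $\mathring F$.

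Next, I would apply Proposition~\ref{prop:HomologyGroupsmathringY} to the bundle $\xi^\varphi$ in place of $\xi$. For $Y^\varphi=S(\xi^\varphi)$ this immediately yields $H_0(Y^\varphi)\cong\Z$, $H_2(Y^\varphi)\cong\Z^{h-1}$, $H_3(Y^\varphi)\cong\Z$, and gives the case split for $H_1$ governed by the parity of the Euler number of $\xi^\varphi$, which is exactly $x$, matching the statement. For $\mathring Y^\varphi=S(\mathring{\xi^\varphi})$ the same proposition yields $H_0(\mathring Y^\varphi)\cong\Z$, $H_1(\mathring Y^\varphi)\cong\Z_2\oplus H_1(\mathring F)$, and $H_2(\mathring Y^\varphi)\cong\Z^{h-1}$, noting that in the punctured case the isomorphism type of $H_*$ does not depend on the Euler number.

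The only real content beyond invoking the earlier result is the identification of the bundle in the first step, so the main potential obstacle is verifying carefully that the fibrewise branched cover of a $D^2$-bundle over a nonorientable base can indeed be realized as another $D^2$-bundle over a homeomorphic base, with the stated orientation on the total space. This is local in the base and can be checked in a coordinate chart via the local model $z\mapsto z^2$; the even parity of $e$ is precisely what allows the local models to assemble into a global $D^2$-bundle, consistent with Construction~\ref{cons:Covers}. Once this is noted, the rest of the argument is a direct appeal to Proposition~\ref{prop:HomologyGroupsmathringY}.
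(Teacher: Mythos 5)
Your proposal is correct and follows essentially the same route as the paper: identify $Y^\varphi$ (resp.\ $\mathring{Y}^\varphi$) as the sphere bundle of a rank two bundle $\xi^\varphi$ over a nonorientable genus $h$ surface whose Euler number is $x=e/2$ by Lemma~\ref{lem:newlemma}, and then quote the homology computations of Propositions~\ref{prop:HomologyY} and~\ref{prop:HomologyGroupsmathringY} with $e$ replaced by $x$. Your extra care about assembling the fibrewise branched cover into a genuine $D^2$-bundle is a reasonable elaboration of a point the paper treats more briefly, but it is the same argument.
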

\begin{proof}
The definition of the surjection $\varphi \colon \pi_1(Y) \twoheadrightarrow \Z_2$ implies that~$Y^\varphi$ is again a circle bundle with base a surface of nonorientable genus~$h$, and is the sphere bundle of a rank two vector bundle~$\xi^{\varphi}$ over~$F$.
By Lemma~\ref{lem:newlemma}, the Euler number $e'$ of the rank 2 bundle $\xi^\varphi$ associated with~$Y^\varphi$  is half that of $\xi$. In other words, $e'=x$.
We now have that $Y^\varphi$ is an Euler number $x$ circle bundle with base a surface of nonorientable genus~$h$, so the result follows in this closed case from Propositions~\ref{prop:HomologyY} and~\ref{prop:HomologyGroupsmathringY}.
For the statement about $\mathring{Y}^{\varphi}$, the reasoning is analogous, but simpler, as the homology computed in the punctured cases of Propositions~\ref{prop:HomologyY} and~\ref{prop:HomologyGroupsmathringY} are the same, no matter the parity of the Euler number.
\end{proof}

\subsection{The homology of the boundary of a surface exterior}
\label{sub:HomologyBoundarySurfaceExterior}

We fix the following notation for the remainder of this section.

\begin{notation}
\label{not:SurfaceNotation}
Let~$F \subseteq D^4$ be a~$\Z_2$-surface of nonorientable genus~$h$ and normal Euler number $e$.
Write~$X_F$ for the exterior of $F$ and $K:=\partial F$ for its boundary.
\end{notation}

\begin{proposition}
\label{prop:HomologyBoundaryXF}
The   nontrivial homology groups of $\partial X_F$ are
$$
H_0(\partial X_F)\cong \Z, \quad
H_1(\partial X_F)\cong \Z_2 \oplus \Z_2 \oplus \Z^{h-1},  \quad
H_2(\partial X_F)\cong \Z^{h-1},
\quad  \text{ and }\quad
 H_3(\partial X_F) \cong \Z.
$$
\end{proposition}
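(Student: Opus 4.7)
My plan is to compute these homology groups via a Mayer--Vietoris argument applied to the decomposition $\partial X_F = X_K \cup_{T^2} \mathring{Y}$, where $T^2 = \partial \bar{\nu} K$ is the boundary torus, $X_K = S^3 \setminus \operatorname{int}(\bar{\nu} K)$ is the knot exterior and $\mathring{Y}$ is the circle bundle over $\mathring{F}$ from Notation~\ref{notation:Y}. Since $X_F$ is a codimension zero submanifold of $D^4$, the boundary $\partial X_F$ is a closed orientable $3$-manifold, so $H_0 \cong H_3 \cong \Z$ is immediate. For the remaining groups, I will plug into Mayer--Vietoris using $H_*(T^2)$ standard, $H_*(X_K)$ standard ($H_1(X_K) = \Z$ generated by a meridian, $H_2(X_K) = 0$), and Proposition~\ref{prop:HomologyGroupsmathringY} for $H_*(\mathring{Y})$.

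The first key step is to show that the connecting map $H_3(\partial X_F) \to H_2(T^2)$ is an isomorphism $\Z \cong \Z$: both the inclusion-induced map to $H_2(X_K) = 0$ and the inclusion-induced map to $H_2(\mathring{Y})$ are zero (in the latter case because $T^2$ bounds $\mathring{Y}$). This forces the Mayer--Vietoris sequence to split into two short exact sequences giving $H_2(\partial X_F)$ and $H_1(\partial X_F)$.

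The main computational step is the map
\[ \alpha_1 \colon H_1(T^2) \longrightarrow H_1(X_K) \oplus H_1(\mathring{Y}) = \Z \oplus (\Z_2 \oplus \Z^h). \]
I will use as a basis for $H_1(T^2)$ the meridian $\mu_F$ of $F$ (i.e.\ a fibre of the normal $S^1$-bundle, which equals $\pm\mu_K$) and the ``tangential'' longitude $\lambda_F$ (the boundary circle of the $S^1$-bundle base). By Remark~\ref{rem:intersectioncount} applied to a section extending a Seifert longitude, $\lambda_F = \lambda_K + e\mu_K$ in $H_1(T^2)$. Under inclusion, $\mu_F$ maps to $(1, 1, \mathbf{0})$ (meridian generator plus fibre generator of the $\Z_2$-summand of Proposition~\ref{prop:HomologyY}), while $\lambda_F$ maps to $(e, 0, 2\sum_i \gamma_i)$, the $\Z^h$-entry being the image of $[\partial\mathring{F}] = 2\sum_i[\gamma_i]$ under the section splitting $\mathring{s}_*$.

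From this description, $\ker\alpha_1 = 0$ (the $\Z_2$-component forces the coefficient of $\mu_F$ to be even, and then the $\Z^h$-component forces the coefficient of $\lambda_F$ to vanish), so the short exact sequence gives $H_2(\partial X_F) \cong H_2(\mathring{Y}) \cong \Z^{h-1}$. For $H_1(\partial X_F) = \coker \alpha_1$, I will first use the relation $\mu_F \mapsto (1,1,\mathbf{0})$ to eliminate the $\Z$-factor, which identifies $\coker\alpha_1$ with $(\Z_2 \oplus \Z^h) / \langle (\bar{e}, 2, 2, \ldots, 2)\rangle$. The critical input is Theorems~\ref{thm:Massey} and~\ref{thm:MasseyBoundary}, which guarantee $e$ is even, so $\bar e = 0$. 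After changing basis in $\Z^h$ so that $\sum_i \gamma_i$ is a basis vector, the quotient becomes $\Z_2 \oplus \Z_2 \oplus \Z^{h-1}$, giving the claimed formula for $H_1(\partial X_F)$. The main potential obstacle is pinning down the correct map on $H_1(T^2)$ with the right signs and framings, but this reduces to the standard geometric interpretation of the normal Euler number already used in Section~\ref{sec:EulerNumber}.
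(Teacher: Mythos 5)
Your proposal is correct and follows essentially the same route as the paper: a Mayer--Vietoris argument for $\partial X_F = X_K \cup_{T^2} \mathring{Y}$, using the homology of $\mathring{Y}$ from Proposition~\ref{prop:HomologyGroupsmathringY} and the evenness of $e$ from Theorem~\ref{thm:MasseyBoundary}. The only difference is one of packaging: the paper observes that since $H_*(X_K)$ agrees with that of the unknot exterior one may assume $K$ is unknotted and quote the closed circle-bundle computation for $Y$, whereas you carry out the boundary-map analysis on $H_1(T^2)$ directly, which amounts to redoing the $\partial([D^2])=(\bar{e},2,\mathbf{0})$ computation already present in the proof of Proposition~\ref{prop:HomologyY}.
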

\begin{proof}
Since~$H_1(X_K)=\Z$ and $H_i(X_K)=0$ for~$i>1$, the Mayer-Vietoris sequence for the decomposition~$\partial X_F=\mathring{Y} \cup X_K$ shows that the homology of $\partial X_F$ is independent of the knot~$K$.
In particular we can assume without loss of generality that~$K$ is the unknot.
This implies that the Mayer-Vietoris calculation for~$\partial X_F=\mathring{Y} \cup X_K$
 calculates the homology of the circle bundle~$Y=\mathring{Y} \cup (S^1 \times D^2)$.
Since $e$ is even (recall Theorem~\ref{thm:MasseyBoundary}) the result now follows from Proposition~\ref{prop:HomologyGroupsmathringY}.
\end{proof}

Let $\widetilde{X}_F$ denote the universal cover of $X_F$ and let $\Sigma_2(K)$ denote the $2$-fold branched cover of a knot $K$.

\begin{proposition}
\label{prop:HomologyBoundaryUniversalCover}
Write the normal Euler number of $F$ as~$e=2x$.
The nontrivial homology groups of the~$\Z_2$-cover~$\partial \widetilde{X}_F$ are
\begin{align*}
H_0(\partial \widetilde{X}_F) &\cong  \Z, \quad
H_2(\partial  \widetilde{X}_F) \cong \Z^{h-1}, \quad
 H_3(\partial  \widetilde{X}_F) \cong \Z, \text{ and} \\
H_1(\partial  \widetilde{X}_F) &\cong \begin{cases}
\Z_4 \oplus \Z^{h-1} \oplus H_1(\Sigma_2(K)) &\quad \text{ if~$x$ is  odd}\\
\Z_2 \oplus \Z_2 \oplus \Z^{h-1} \oplus H_1(\Sigma_2(K)) &\quad \text{ if~$x$ is  even.}
\end{cases}
\end{align*}

Additionally the inclusion~$\mathring{Y} \hookrightarrow \partial X_F$ induces a $\Z[\Z_2]$-isomorphism
$$ H_2(\mathring{Y},\Z[\Z_2]) \xrightarrow{} H_2(\partial X_F;\Z[\Z_2]).$$
\end{proposition}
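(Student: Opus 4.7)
The plan is to apply Mayer-Vietoris to the $\Z_2$-cover $\partial\widetilde X_F\to\partial X_F$, using the decomposition $\partial X_F=\mathring Y\cup_T X_K$ where $T=\partial\nu K$. I first identify the restrictions of the cover: over $\mathring Y$ it is precisely $\mathring Y^{\varphi}$ from Construction~\ref{cons:Covers}, because the generator of $\pi_1(X_F)\cong\Z_2$ is a meridian of $F$, represented in $\mathring Y$ by the $S^1$-fibre; over $X_K$ it is the two-fold cyclic cover $\widetilde{X_K}$ coming from $\pi_1(X_K)\twoheadrightarrow H_1(X_K)=\Z\twoheadrightarrow\Z_2$; and over $T$ it is the unique connected two-fold cover $\widetilde T\cong T^2$.

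The inputs for Mayer-Vietoris are $H_*(\mathring Y^{\varphi})$ from Proposition~\ref{prop:HomologyYCover}, the standard homology of the torus $\widetilde T$, and $H_*(\widetilde{X_K})$. For the latter I would exploit that the determinant of any knot in $S^3$ is odd, so $H_1(\Sigma_2(K))$ is finite and $H_2(\Sigma_2(K))=0$ by Poincar\'e duality. Applying Mayer-Vietoris to $\Sigma_2(K)=\widetilde{X_K}\cup_{\widetilde T}(S^1\times D^2)$ (with the meridional disc of the added solid torus capping off the class $\tilde\mu\in H_1(\widetilde T)$) yields $H_2(\widetilde{X_K})=0$ and $H_1(\widetilde{X_K})\cong\Z\tilde\mu_K\oplus H_1(\Sigma_2(K))$, where $\tilde\mu_K$, the image of $\tilde\mu$, has infinite order. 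Since the connecting map $\Z=H_3(\partial\widetilde X_F)\to H_2(\widetilde T)=\Z$ sends fundamental class to fundamental class and is therefore an isomorphism, the Mayer-Vietoris sequence for $\partial\widetilde X_F=\mathring Y^{\varphi}\cup_{\widetilde T}\widetilde{X_K}$ collapses into a short exact sequence
\[ 0\to H_2(\mathring Y^{\varphi})\to H_2(\partial\widetilde X_F)\to \ker\phi_1\to 0 \]
together with an identification $H_1(\partial\widetilde X_F)\cong\operatorname{coker}\phi_1$, where $\phi_1\colon H_1(\widetilde T)\to H_1(\mathring Y^{\varphi})\oplus H_1(\widetilde{X_K})$ is the difference of inclusion-induced maps.

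The second assertion therefore reduces to injectivity of $\phi_1$. Writing $H_1(\widetilde T)=\Z\tilde\mu\oplus\Z\tilde\lambda$, the image of $\tilde\mu$ in $H_1(\mathring Y^{\varphi})\cong\Z_2\oplus H_1(F)$ is the $\Z_2$-generator (the cover restricts to the double cover on each fibre, so $\tilde\mu$ becomes the new fibre class), while its image in $H_1(\widetilde{X_K})$ is $\tilde\mu_K$, which has infinite order; the image of $\tilde\lambda$ in $H_1(\mathring Y^{\varphi})$ comes from a section over $\partial F=K$ and, using the standard cellular description of $F$, has free part $[K]=-2\sum_i\gamma_i\in H_1(F)\cong\Z^h$, which is nonzero. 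A hypothetical relation $a\phi_1(\tilde\mu)+b\phi_1(\tilde\lambda)=0$ therefore forces $b=0$ on the free part of $H_1(\mathring Y^{\varphi})$, and then $a=0$ on the free part of $H_1(\widetilde{X_K})$. This proves $\phi_1$ injective, yielding the desired $\Z[\Z_2]$-isomorphism (the sequence being $\Z_2$-equivariant). The first assertion then reads off: $H_0=H_3=\Z$ by connectedness and orientability of the cover, $H_2\cong\Z^{h-1}$ from the isomorphism, and $H_1\cong\operatorname{coker}\phi_1$, where the parity-of-$x$ dichotomy in the torsion reflects the identification (Lemma~\ref{lem:newlemma}) of the Euler number of the cover $\mathring Y^{\varphi}\to F$ with $x=e/2$. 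The hardest part will be tracking the images of $\tilde\mu$ and $\tilde\lambda$ in $H_1(\mathring Y^{\varphi})$ carefully enough to compute $\operatorname{coker}\phi_1$ and recover the $\Z_4$-versus-$\Z_2\oplus\Z_2$ dichotomy along with the $H_1(\Sigma_2(K))$ summand.
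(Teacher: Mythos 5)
Your proposal is correct and follows essentially the same route as the paper: a Mayer--Vietoris argument for $\partial \widetilde{X}_F=\mathring{Y}^\varphi \cup_{\widetilde{T}} X_K^2$, with $H_*(X_K^2)$ computed from $\Sigma_2(K)$ and the degree-$2$ and degree-$3$ statements extracted exactly as you describe. The one step you flag as remaining --- computing $\operatorname{coker}\phi_1$ to get the $\Z_4$ versus $\Z_2\oplus\Z_2$ dichotomy --- is handled in the paper by noting that, after splitting off the $H_1(\Sigma_2(K))$ summand (using that $H_1(\widetilde{T})\to H_1(X_K^2)\to H_1(\Sigma_2(K))$ vanishes), the sequence coincides with the one computing $H_1(Y^\varphi)$ for the closed circle bundle of Euler number $x$, so Proposition~\ref{prop:HomologyYCover} gives the answer without tracking $\tilde\mu$ and $\tilde\lambda$ further.
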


\begin{proof}
Use~$X_J^2 \to X_J$ to denote the~$2$-fold cover of the exterior of a knot $J$.
Additionally using~$\Sigma_2(J)$ to denote the $2$-fold branched cover of $J$,  a Mayer-Vietoris argument shows that~$H_1(X_J^2)=H_1(\Sigma_2(J)) \oplus \Z$ and~$H_2(X_J^2)=0$.

Note that $\partial \widetilde{X}_F=\mathring{Y}^\varphi \cup X_K^2$.
This has the same homology as $Y^{\varphi}$ from Proposition~\ref{prop:HomologyYCover}, except in degree one.
In our setting,  if we assume that~$|\det(K)|=1$, we see that~$X_K^2$ has the same homology as~$X_U^2=S^1 \times D^2$ where~$U \subseteq S^3$ denotes the unknot.
In particular, the Mayer-Vietoris sequence for~$\partial \widetilde{X}_F=\mathring{Y}^\varphi \cup X_K^2$ calculates the homology of~$Y^\varphi=\mathring{Y} \cup (S^1 \times D^2)$.
The result then follows from Proposition~\ref{prop:HomologyYCover}.
In general, when $|\det(K)| \neq 1$, the Mayer-Vietoris computation shows that one obtains an additional summand of $H_1(\partial  \widetilde{X}_F)$ isomorphic to $H_1(\Sigma_2(K))$. To see this, the relevant part of the sequence is
\[H_1(\partial X^2_K) \to H_1(\mathring{Y}^{\varphi}) \oplus H_1(X^2_K) \to H_1(\partial \wt{X}_F) \to 0.\]
Since $H_1(X^2_K) \cong \Z \oplus H_1(\Sigma_2(K))$ and the map $H_1(\partial X^2_K) \to H_1(X^2_K) \to  H_1(\Sigma_2(K))$, given by inclusion then projection, is the zero map, it follows that $H_1(\partial \wt{X}_F) \cong H_1(Y^{\varphi}) \oplus H_1(\Sigma_2(K))$, as asserted.
\end{proof}

\subsection{The homology of exteriors of~$\Z_2$-surfaces}
\label{sub:HomologyExteriors}

We continue with the conventions from Notation~\ref{not:SurfaceNotation}.

\begin{proposition}
\label{prop:ExteriorHomology}
The   nontrivial integral homology groups of~$X_F$ are
\[
H_0(X_F)\cong\Z,\quad H_1(X_F)\cong\Z_2,\quad  \text{ and }\quad H_2(X_F)\cong\Z^{h-1}.
\]
\end{proposition}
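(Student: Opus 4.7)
The plan is to obtain $H_0$ and $H_1$ from standard general facts, and to compute $H_2$ via Mayer--Vietoris, invoking the circle-bundle homology calculation already carried out in Section~\ref{sub:HomologyBoundary}.

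Since $X_F$ is compact and connected, $H_0(X_F) \cong \Z$. The $\Z_2$-surface hypothesis gives $\pi_1(X_F) \cong \Z_2$, so Hurewicz and abelianisation immediately yield $H_1(X_F) \cong \Z_2$.

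For $H_2(X_F)$, I would apply the Mayer--Vietoris sequence to the decomposition $D^4 = X_F \cup \overline{\nu F}$, where $\overline{\nu F}$ is a closed tubular neighbourhood of $F$ in $D^4$. The intersection $X_F \cap \overline{\nu F}$ is exactly $\mathring{Y}$, the sphere bundle of the normal bundle $\nu F$, which is the three-manifold appearing in the decomposition $\partial X_F = \mathring{Y} \cup_{T^2} X_K$. Since $D^4$ is contractible and $\overline{\nu F}$ deformation retracts onto $F \simeq \bigvee^h S^1$, both $H_n(D^4)$ and $H_n(\overline{\nu F})$ vanish for $n \geq 2$. The Mayer--Vietoris sequence then collapses to the isomorphism $H_2(X_F) \cong H_2(\mathring{Y})$. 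Proposition~\ref{prop:HomologyGroupsmathringY} computes $H_2(\mathring{Y}) \cong \Z^{h-1}$, which completes the proof. (As a sanity check on $H_1$, the same sequence gives $H_1(\mathring{Y}) \cong H_1(X_F) \oplus H_1(\overline{\nu F})$; since $H_1(\mathring{Y}) \cong \Z_2 \oplus \Z^h$ and $H_1(\overline{\nu F}) \cong \Z^h$, the structure theorem for finitely generated abelian groups recovers $H_1(X_F) \cong \Z_2$.)

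There is no substantial obstacle here; the computation is bookkeeping once one has $H_*(\mathring{Y})$ in hand. The one minor point to check is that Proposition~\ref{prop:HomologyGroupsmathringY}, as stated in Section~\ref{sub:HomologyBoundary}, is phrased for $\mathring{Y}$ the sphere bundle over a closed surface with a disc removed, whereas here $F$ has boundary the knot $K$. However, the base surface in either case is a compact nonorientable surface of nonorientable genus $h$ with one boundary circle, so the two settings are diffeomorphic; moreover the Euler number plays no role in the homotopy type of a rank $2$ bundle over the $1$-complex $\bigvee^h S^1$, so the cited proposition applies verbatim.
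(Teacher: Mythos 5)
Your proof is correct. For $H_0$ and $H_1$ you argue exactly as the paper does (connectedness, plus Hurewicz applied to $\pi_1(X_F)\cong\Z_2$). For $H_2$ your route differs from the paper's: you run the Mayer--Vietoris sequence of $D^4 = X_F \cup \overline{\nu}F$ with intersection $\mathring{Y}$ and read off the isomorphism $H_2(X_F)\cong H_2(\mathring{Y})\cong\Z^{h-1}$ from Proposition~\ref{prop:HomologyGroupsmathringY}, whereas the paper first uses Poincar\'e--Lefschetz duality and the universal coefficient theorem (together with surjectivity of $\pi_1(\partial X_F)\to\pi_1(X_F)$) to show $H_3(X_F)=0$ and that $H_2(X_F)$ is free, and then pins down the rank by the Euler characteristic identity $1=\chi(X_F)+\chi(\overline{\nu}F)-\chi(\mathring{Y})$. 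Your version is the ``homological'' form of the same decomposition: it buys you freeness of $H_2(X_F)$ for free (inherited from $H_2(\mathring{Y})$) and avoids invoking duality at this point, at the cost of leaning on the full computation of $H_*(\mathring{Y})$ rather than just $\chi(\mathring{Y})=0$; since Proposition~\ref{prop:HomologyGroupsmathringY} is already established, that is a perfectly legitimate trade. Your remark that the cited proposition applies to the surface-with-boundary setting is also correct and worth having made explicit. One small omission: the statement implicitly asserts $H_3(X_F)=H_4(X_F)=0$, which you do not address; your own Mayer--Vietoris sequence gives this immediately in degrees $3$ and $4$ since $H_{\geq 3}(\mathring{Y})=H_{\geq 1}(D^4)=0$, so it is worth adding the one line.
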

\begin{proof}
Since~$X_F$ is connected we have~$H_0(X_F)=\Z$ and since~$X_F$ is a~$4$-manifold with boundary, we know that~$H_4(X_F)=0$.
The fact that~$X_F$ is a~$\Z_2$-surface implies that~$H_1(X_F)=\Z_2$ and that the inclusion induced map~$\pi_1(\partial X_F) \to \pi_1(X_F)$ is surjective.
We deduce that~$H_1(X_F,\partial X_F)=0$.
Poincar\'e duality and the universal coefficient theorem then imply that~$H_3(X_F)=0$ and that~$H_2(X_F)$ is free.
Since~$\mathring{Y}$ is a~$3$-manifold with torus boundary,  it has vanishing Euler characteristic.
We deduce that
$$1=\chi(D^4)=\chi(X_F)+\chi(\overline{\nu}F)-\chi(\mathring{Y})=(1+b_2(X_F))+(1-h)-0$$
from which it follows that~$b_2(X_F)=h-1$.
Thus~$H_2(X_F)\cong \Z^{h-1}$.
\end{proof}

We record a lemma that will be helpful when calculating the $\Z[\Z_2]$-homology of $X_F$.
Recall that $\Z_{\pm}$ denotes $\Z$ considered as a $\Z[\Z_2]$-module where $T$ acts as multiplication by $\pm 1$.

 \begin{lemma}~
\label{lem:homologicalalgebra}
For~$k \geq 1$, we have
$\operatorname{Ext}^k_{\Z[\Z_2]}(\Z_\pm,\Z[\Z_2])=0$.
\end{lemma}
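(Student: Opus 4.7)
The plan is to exhibit the standard 2-periodic free resolution of $\Z_\pm$ over $\Z[\Z_2]$, then dualise and read off the cohomology directly. Write $\Z[\Z_2]=\Z[T]/(T^2-1)$ and note that $(1-T)(1+T)=0$, while a direct calculation gives $(1-T)(a+bT)=(a-b)(1-T)$, so multiplication by $(1-T)$ has kernel exactly $(1+T)\Z[\Z_2]$; symmetrically, multiplication by $(1+T)$ has kernel $(1-T)\Z[\Z_2]$.

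For $\Z_+$, the augmentation $\varepsilon_+\colon\Z[\Z_2]\to\Z_+$ sending $T\mapsto 1$ has kernel $(1-T)\Z[\Z_2]$. Combining this with the kernel computations above produces the free resolution
\[
\cdots \xrightarrow{\,1+T\,}\Z[\Z_2]\xrightarrow{\,1-T\,}\Z[\Z_2]\xrightarrow{\,1+T\,}\Z[\Z_2]\xrightarrow{\,1-T\,}\Z[\Z_2]\xrightarrow{\varepsilon_+}\Z_+\to 0.
\]
For $\Z_-$ the analogous resolution has the roles of $1-T$ and $1+T$ swapped. Next I would apply $\Hom_{\Z[\Z_2]}(-,\Z[\Z_2])$. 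Because the standard involution $T\mapsto T^{-1}$ on $\Z[\Z_2]$ is trivial here, the dual of the map ``multiplication by $r$'' is again multiplication by $r$ under the canonical identification $\Hom_{\Z[\Z_2]}(\Z[\Z_2],\Z[\Z_2])\cong\Z[\Z_2]$. So the dual cochain complex in the $\Z_+$ case is
\[
0\to \Z[\Z_2]\xrightarrow{\,1-T\,}\Z[\Z_2]\xrightarrow{\,1+T\,}\Z[\Z_2]\xrightarrow{\,1-T\,}\cdots
\]

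To finish, I would read off the cohomology at each positive position using the same kernel/image descriptions: at every position the kernel of the outgoing map coincides exactly with the image of the incoming one (e.g.\ $\ker(1+T)=(1-T)\Z[\Z_2]=\mathrm{im}(1-T)$). Hence every positive cohomology group vanishes, giving $\Ext^k_{\Z[\Z_2]}(\Z_+,\Z[\Z_2])=0$ for $k\geq 1$. The calculation for $\Z_-$ is identical after swapping signs. There is really no hard step here; the only place to be careful is the dualisation, where one must check that the involution does not reverse the sign of the differentials, which it does not because $\Z_2$ is abelian of order two.
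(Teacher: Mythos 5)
Your proof is correct and follows exactly the same route as the paper: write down the $2$-periodic free resolution of $\Z_\pm$ by alternating multiplication by $1\mp T$ and $1\pm T$, dualise, and observe that the dual complex is exact in positive degrees since $\ker(1\pm T)=(1\mp T)\Z[\Z_2]=\operatorname{im}(1\mp T)$. The only difference is that you spell out the kernel/image computations and the (correct) observation that the involution is trivial, which the paper leaves implicit.
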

\begin{proof}
Consider the following projective resolution for $\Z_\pm$:
\begin{equation}
\label{eq:ProjectiveResolutionZpmbody}
\ldots \xrightarrow{1\pm T}\Z[\Z_2] \xrightarrow{1\mp T} \Z[\Z_2] \xrightarrow{1\pm T} \Z[\Z_2] \xrightarrow{1\mp T} \Z[\Z_2] \to \Z_\pm \to 0.
\end{equation}
Applying $\Hom_{\Z[\Z_2]}(-,\Z[\Z_2])$ and removing the rightmost term, we obtain the  chain complex
\[
\cdots\xleftarrow{1\pm T}\Z[\Z_2] \xleftarrow{1\mp T} \Z[\Z_2] \xleftarrow{1\pm T} \Z[\Z_2] \xleftarrow{1\mp T} \Z[\Z_2] \leftarrow 0.
\]
Taking the homology of this chain complex gives the result.
\end{proof}

\begin{proposition}
\label{prop:ZZ2HomologyExterior}
The nontrivial~$\Z[\Z_2]$-homology groups of~$X_F$ are
\[
H_0(X_F;\Z[\Z_2])\cong\Z_+\quad  \text{ and } \quad H_2(X_F;\Z[\Z_2])\cong\Z_- \oplus \Z[\Z_2]^{h-1}.
\]
\end{proposition}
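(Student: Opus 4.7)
The plan is to combine Reiner's theorem (Lemma~\ref{lem:Reiner}) for the abelian-group structure of $H_2(\widetilde{X}_F)$ with an analysis of the Cartan--Leray spectral sequence for the double cover $\widetilde{X}_F \to X_F$, using the homological inputs from Propositions~\ref{prop:ExteriorHomology} and~\ref{prop:HomologyBoundaryUniversalCover}.

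First I will pin down $H_*(\widetilde{X}_F)$ in every degree other than $2$. Since $\widetilde{X}_F$ is connected with trivial deck action on $H_0$, we have $H_0(X_F;\Z[\Z_2]) \cong \Z_+$. Since $\widetilde{X}_F$ is simply-connected, $H_1(X_F;\Z[\Z_2])=0$. As $X_F$ is a codimension-zero submanifold of the orientable manifold $D^4$, both $X_F$ and $\widetilde{X}_F$ are orientable, and Poincar\'e--Lefschetz duality with $\Z$-coefficients gives $H_4(\widetilde{X}_F)\cong H^0(\widetilde{X}_F,\partial\widetilde{X}_F)=0$ (using $\partial\widetilde{X}_F\neq\emptyset$). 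Analogously $H_3(\widetilde{X}_F)\cong H^1(\widetilde{X}_F,\partial\widetilde{X}_F)=0$, using the long exact sequence of the pair in cohomology and the vanishing of $H^1(\widetilde{X}_F)$ from universal coefficients (noting $H_0(\widetilde{X}_F)=\Z$ is torsion-free and $H_1(\widetilde{X}_F)=0$).

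Next, the Euler characteristic $\chi(\widetilde{X}_F)=2\chi(X_F)=2h$ (by Proposition~\ref{prop:ExteriorHomology}) forces $H_2(\widetilde{X}_F)$ to have $\Z$-rank $2h-1$. Combining Poincar\'e--Lefschetz duality, universal coefficients, and the long exact sequence of the pair - together with the fact that $H_2(\partial\widetilde{X}_F)\cong\Z^{h-1}$ is torsion-free (Proposition~\ref{prop:HomologyBoundaryUniversalCover}) - shows that $H_2(\widetilde{X}_F)$ is torsion-free as an abelian group. Reiner's theorem (Lemma~\ref{lem:Reiner}) then yields
\[H_2(X_F;\Z[\Z_2])\cong\Z[\Z_2]^r\oplus\Z_+^s\oplus\Z_-^t\]
for some non-negative integers $r,s,t$ satisfying $2r+s+t=2h-1$.

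The crux of the proof is to show $(r,s,t)=(h-1,0,1)$, which I would do using the Cartan--Leray spectral sequence
\[E^2_{p,q}=H_p(\Z_2;H_q(\widetilde{X}_F))\Longrightarrow H_{p+q}(X_F).\]
The standard computations $H_*(\Z_2;\Z_+)=(\Z,\Z_2,0,\Z_2,0,\ldots)$, $H_*(\Z_2;\Z_-)=(\Z_2,0,\Z_2,0,\ldots)$ and $H_*(\Z_2;\Z[\Z_2])=(\Z,0,0,\ldots)$ yield $E^2_{0,2}\cong\Z^{r+s}\oplus\Z_2^t$, while $E^2_{1,1}=E^2_{2,0}=0$, so $E^\infty_{0,2}$ is the entirety of $H_2(X_F)\cong\Z^{h-1}$ (Proposition~\ref{prop:ExteriorHomology}). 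The only potentially nonzero differential touching $E^r_{0,2}$ is the transgression $d_3\colon E^3_{3,0}=\Z_2\to E^3_{0,2}$; since $H_2(X_F)$ is torsion-free, this $d_3$ must be nonzero and must hit the $\Z_2^t$-summand, which forces $t=1$. The free-rank equality $r+s=h-1$ combined with the rank constraint then gives $r=h-1$ and $s=0$.

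The main obstacle is the spectral sequence bookkeeping in the final step: verifying that all other $E^\infty$-terms in total degree $2$ vanish, computing the relevant $E^2$-entries from the Reiner decomposition, and arguing that the only mechanism for removing $\Z_2$-torsion from $E^r_{0,2}$ is the single transgression $d_3$ from $E^3_{3,0}=\Z_2$.
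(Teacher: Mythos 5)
Your proof is correct, but it takes a genuinely different route from the paper's. The paper also begins with duality, the Euler characteristic count $b_2(\widetilde{X}_F)=2h-1$, and Reiner's decomposition $H_2(\widetilde{X}_F)\cong \Z[\Z_2]^r\oplus\Z_+^s\oplus\Z_-^t$; but it then pins down $(r,s,t)$ by (i) a transfer argument (Lemma~\ref{lem:Bredon}) identifying $\mathcal{E}_+(H_2(\widetilde{X}_F;\Q))$ with $H_2(X_F;\Q)\cong\Q^{h-1}$, and (ii) forming the double $D=X_F\cup_{\id}-X_F$, invoking Wall's lemma that $H_2(D;\Z[\Z_2])\cong\Z_-^2\oplus(\text{free})$, and proving via the universal coefficient spectral sequence and the fold-map retraction that $H_2(D;\Z[\Z_2])\cong H_2(X_F;\Z[\Z_2])\oplus H_2(X_F;\Z[\Z_2])^*$. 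Your Cartan--Leray argument replaces both external inputs: the equality $r+s=h-1$ extracted from $E^\infty_{0,2}$ carries the same information as the transfer argument, and the observation that only the single $\Z_2$ in $E^3_{3,0}$ can kill torsion in $E^2_{0,2}$ substitutes for Wall's lemma. Your route is more self-contained (standard group homology of $\Z_2$ and routine spectral sequence bookkeeping), at the cost of that bookkeeping; the paper's route avoids the spectral sequence but imports Wall's result on doubles.

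One small logical point: torsion-freeness of $H_2(X_F)$ together with the single $d_3$ only yields $t\le 1$ directly, not $t=1$ (if $t=0$ there is no torsion to kill and $d_3$ may vanish). The case $t=0$ is instead excluded by your two rank constraints: $2r+s+t=2h-1$ and $r+s=h-1$ give $r+t=h$, and since $r\le h-1$ this forces $t\ge 1$, hence $t=1$, $r=h-1$, $s=0$. You do invoke the free-rank equality at the end, so the argument closes, but the phrase ``which forces $t=1$'' should be attributed to the parity/rank count rather than to the differential alone.
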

\begin{proof}
Since $\widetilde{X}_F$ is connected, we certainly have~$H_0(X_F;\Z[\Z_2])=\Z_+$, and since $\widetilde{X}_F$ has nonempty boundary we have~$H_4(\widetilde{X}_F)=0$.
Since $\widetilde{X}_F$ is simply-connected, the fact that~$\pi_1(\partial X_F) \to \pi_1(X_F)$ is surjective implies that $H_1(\widetilde{X}_F,\partial \widetilde{X}_F)=0$.
Duality and the universal coefficient theorem now imply that $H_3(\widetilde{X}_F)=0$.
Our task is therefore to calculate~$H_2(X_F;\Z[\Z_2])$.
We first calculate~$H_2(X_F;\Z[\Z_2])=H_2(\widetilde{X}_F)$ as an abelian group.
Duality and the universal coefficient theorem imply that~$H_2(\widetilde{X}_F)$ is free abelian.
We deduce that~$1+b_2(\widetilde{X}_F)= \chi(\widetilde{X}_F)=2\chi(X_F)=2(1+b_2(X_F))$ and it follows that as an abelian group, we have
$$H_2(X_F;\Z[\Z_2])\cong \Z \oplus \Z^{2b_2(X_F)}=\Z \oplus \Z^{2(h-1)}.$$
We determine the~$\Z[\Z_2]$-module structure of~$H_2(X_F;\Z[\Z_2])$.

Next,  we deduce from a result of Wall~\cite[Lemma 5.2]{WallPoincare} that the double~$D:= X_F \cup_{\id} -X_F$ has~$H_2(D;\Z[\Z_2])\cong \Z_-\oplus \Z_- \oplus F$ with~$F$ free.
\begin{claim}
\label{claim:Double}
We have that
$H_2(D;\Z[\Z_2]) \cong H_2(X_F;\Z[\Z_2]) \oplus H_2(X_F;\Z[\Z_2])^*$.
\end{claim}
\begin{proof}

Using excision we have~$H_2(D,X_F;\Z[\Z_2])=H_2(X_F,\partial X_F;\Z[\Z_2])$.
Duality implies that $H_2(X_F,\partial X_F;\Z[\Z_2]) \cong H^2(X_F;\Z[\Z_2])$.
We now apply the universal coefficient spectral sequence (or UCSS for short) to $H^2(X_F;\Z[\Z_2])$.
We refer to~\cite[Theorem 2.3]{LevineKnotModules} for details about this spectral sequence but note that in this instance it takes the form
$$ E_2^{p,q}=\operatorname{Ext}^q_{\Z[\Z_2]}(H_p(X_F;\Z[\Z_2]),\Z[\Z_2]) \Rightarrow H^*(X_F;\Z[\Z_2])$$
with differentials of degree $(1-r,r).$
Using Lemma~\ref{lem:homologicalalgebra} and the UCSS, we deduce that $H^2(X_F;\Z[\Z_2])$ is isomorphic to~$H_2(X_F;\Z[\Z_2])^*$.
The long exact sequence of the pair~$(D,X_F)$ now gives
$$0 \to H_2(X_F;\Z[\Z_2]) \to H_2(D;\Z[\Z_2]) \to H_2(X_F;\Z[\Z_2])^* \to 0.$$
The fold map $D \to X_F$ is a retract and so its induced map on $H_2(-;\Z[\Z_2])$  splits this short exact sequence.
This concludes the proof of the claim.
\end{proof}

Using the fact that~$H_2(\widetilde{X}_F) \cong \Z \oplus \Z^{2(h-1)}$ is finitely generated as a~$\Z[\Z_2]$-module and torsion free as an abelian group, Lemma~\ref{lem:Reiner} implies
that as a~$\Z[\Z_2]$-module~$H_2(\widetilde{X}_F)$ decomposes as~$P \oplus P_{+1} \oplus P_{-1}$.
Here,~$P$ is~$\Z[\Z_2]$-free,~$P_{+1}$ has the~$\Z[\Z_2]$-action where $T$ operates by~$+1$, and~$P_{-1}$ has the~$\Z[\Z_2]$-action where~$T$ operates by~$-1$.
Since Lemma~\ref{lem:Bredon} gives~$\E_+(H_2(\widetilde{X}_F;\Q)) \cong H_2(X_F;\Q) \cong \Q_+^{h-1}$
and we know that~$H_2(D;\Z[\Z_2]) \cong \Z_-^2 \oplus F$ (with~$F$ free) decomposes as~$ H_2(X_F;\Z[\Z_2]) \oplus H_2(X_F;\Z[\Z_2])^*$
 (thanks to Claim~\ref{claim:Double}), this forces
$$H_2(X_F;\Z[\Z_2])\cong \Z_-\oplus \Z[\Z_2]^{h-1}.$$
Here we also used that $\Hom_{\Z[\Z_2]}(\Z_\pm,\Z[\Z_2]) \cong \Z_\pm$; recall Remark~\ref{rem:pm}.
This concludes the proof of the proposition.
\end{proof}


\subsection{The homology of branched covers of~$\Z_2$-surfaces}
\label{sub:HomologyBranched}

We continue with the conventions from Notation~\ref{not:SurfaceNotation}, and additionally we write~$\widetilde{F} \subseteq \Sigma_2(F)$ for the branch set in the $2$-fold branched cover.
The next proposition relates the homology of the branched and unbranched covers, and follows~\cite[p.~65]{KreckOnTheHomeomorphism}.

\begin{proposition}
\label{prop:BranchedUnbranchedv1}
There is an exact sequence of~$\Z[\Z_2]$-modules
\begin{equation}
\label{eq:BranchedUnbranchedSequencev1}
0\to H_2(\mathring{Y}^\varphi)  \to H_2(\widetilde{X}_F) \xrightarrow{j} H_2(\Sigma_2(F)) \xrightarrow{\partial}  \Z_2 \to  0.
\end{equation}
The~$\Z_2$ term has~$\Z[\Z_2]$-action given by extending the trivial~$\Z_2$-action linearly.
\end{proposition}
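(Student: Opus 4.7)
The plan is to apply a Mayer--Vietoris argument to the decomposition $\Sigma_2(F) = \widetilde{X}_F \cup \overline{\nu}\widetilde{F}$, where $\overline{\nu}\widetilde{F}$ is the closed tubular neighbourhood of the branch locus in $\Sigma_2(F)$, a $D^2$-bundle over $\widetilde{F}$ of Euler number $e/2$ by Lemma~\ref{lem:newlemma}. The intersection $\widetilde{X}_F \cap \overline{\nu}\widetilde{F}$ is the circle bundle $\mathring{Y}^\varphi$ appearing in Proposition~\ref{prop:HomologyBoundaryUniversalCover}. The covering involution~$\tau$ fixes $\widetilde{F}$ pointwise and acts compatibly on all three pieces, so the entire Mayer--Vietoris sequence is one of $\Z[\Z_2]$-modules.

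The input data are: $H_1(\overline{\nu}\widetilde{F}) \cong H_1(\widetilde{F}) \cong \Z^h$ and $H_k(\widetilde{F}) = 0$ for $k \geq 2$, using that $\widetilde{F}$ retracts onto the subcomplex $\bigvee^h S^1$ from its standard cell structure; $H_1(\widetilde{X}_F) = 0 = H_3(\widetilde{X}_F)$, which appears in the proof of Proposition~\ref{prop:ZZ2HomologyExterior}; and the vanishing $H_3(\Sigma_2(F)) = 0$. For this last point, I would first argue $\pi_1(\Sigma_2(F)) = 1$: since $\widetilde{X}_F$ is simply-connected and the map $\pi_1(\mathring{Y}^\varphi) \to \pi_1(\widetilde{F})$ arising from the long exact sequence of the circle bundle is surjective, the Seifert--van Kampen theorem yields $\pi_1(\Sigma_2(F)) = 1$. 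Poincar\'e--Lefschetz duality then gives $H_3(\Sigma_2(F)) \cong H^1(\Sigma_2(F), \Sigma_2(K))$, which vanishes via the long exact sequence of the pair $(\Sigma_2(F), \Sigma_2(K))$, since $H^1(\Sigma_2(F);\Z) \cong \Hom(H_1(\Sigma_2(F)), \Z) = 0$ and the map $H^0(\Sigma_2(F)) \to H^0(\Sigma_2(K))$ is an isomorphism.

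Substituting all this into the Mayer--Vietoris sequence, and using $H_2(\overline{\nu}\widetilde{F}) = 0 = H_1(\widetilde{X}_F)$, the relevant portion becomes
\[
0 \to H_2(\mathring{Y}^\varphi) \to H_2(\widetilde{X}_F) \to H_2(\Sigma_2(F)) \to H_1(\mathring{Y}^\varphi) \to H_1(\widetilde{F}).
\]
By Proposition~\ref{prop:HomologyY}, the rightmost map is surjective with kernel the $\Z_2$ generated by an $S^1$-fibre, which realises the terminal $\Z_2$ in the desired sequence. This fibre is preserved setwise by $\tau$ (which rotates each fibre by $180^\circ$), so $\tau$ acts trivially on this $\Z_2$; alternatively, any $\Z[\Z_2]$-module structure on $\Z_2$ is necessarily trivial since $-1 \equiv 1 \pmod 2$. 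I expect the subtlest ingredient to be the vanishing $H_3(\Sigma_2(F)) = 0$; once that is in hand, the rest is a routine diagram chase.
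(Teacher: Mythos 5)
Your proof is correct and follows essentially the same route as the paper's: the same Mayer--Vietoris decomposition $\Sigma_2(F)=\widetilde{X}_F\cup_{\mathring{Y}^\varphi}\overline{\nu}\widetilde{F}$, the same vanishing inputs (including $H_3(\Sigma_2(F))=0$ via simple-connectivity and duality), and the same identification of the terminal $\Z_2$ as the fibre class in $\ker\bigl(H_1(\mathring{Y}^\varphi)\to H_1(\overline{\nu}\widetilde{F})\bigr)$. The only cosmetic differences are that you quote Proposition~\ref{prop:HomologyY} where the paper rederives the kernel via the pair $(\overline{\nu}\widetilde{F},\mathring{Y}^\varphi)$ and excision, and your observation that $\operatorname{Aut}(\Z/2)$ is trivial is a clean shortcut for the action statement, which the paper instead verifies explicitly using the meridian $(1+T)\widetilde{\mu}$.
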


\begin{proof}
This proposition follows from a Mayer-Vietoris argument for~$\Sigma_2(F)$, as we now describe.
We have a decomposition~$\Sigma_2(F)=\widetilde{X}_F \cup_{\mathring{Y}^\varphi} \overline{\nu} \widetilde{F}$.
The gluing takes place along~$\mathring{Y}^\varphi\cong\partial \overline{\nu} \widetilde{F}$.
We assert that the Mayer-Vietoris sequence for~$\Sigma_2(F)=\widetilde{X}_F \cup_{\mathring{Y}^\varphi} \overline{\nu} \widetilde{F}$ reduces to
\begin{equation}
\label{eq:IntermediateMV}
0\to  H_2(\mathring{Y}^\varphi) \to H_2(\widetilde{X}_F) \to H_2(\Sigma_2(F)) \to H_1(\mathring{Y}^\varphi) \xrightarrow{\psi} H_1(\overline{\nu} \widetilde{F}) \to 0.
\end{equation}
For the leftmost side of the sequence, we used~$H_3(\Sigma_2(F)) \cong H^1(\Sigma_2(F),\partial \Sigma_2(F))=0$, where the latter holds because~$\Sigma_2(F)$ is simply-connected and $\Sigma_2(K)=\partial \Sigma_2(F)$ is connected.
For the rightmost zero,  we used that~$H_1(\partial \overline{\nu} \widetilde{F}) \cong H_1(\mathring{Y}^\varphi) \cong \Z_2 \oplus H_1(\widetilde{F})$ (recall Proposition~\ref{prop:HomologyYCover}) maps surjectively onto~$H_1(\overline{\nu} \widetilde{F})\cong H_1(\widetilde{F})$,  and we used  that~$H_1(\widetilde{X}_F)=0$.
The assertion now follows from the fact that~$H_2(\overline{\nu} \widetilde{F})\cong H_2(\widetilde{F})=0$.

Next we explain how to go from the exact sequence~\eqref{eq:IntermediateMV} to the short exact sequence~\eqref{eq:BranchedUnbranchedSequencev1} in the statement of the proposition.
To study the rightmost side of the sequence, we consider the following portion of the exact sequence of the pair $(\overline{\nu} \widetilde{F},\mathring{Y}^\varphi)$:
\[ \cdots \to H_2(\overline{\nu} \widetilde{F},\mathring{Y}^\varphi) \to H_1(\mathring{Y}^\varphi)  \xrightarrow{\psi} H_1(\overline{\nu} \widetilde{F})\to H_1(\overline{\nu} \widetilde{F},\mathring{Y}^\varphi) \to \cdots\]
Here $\overline{\nu} \widetilde{F}$ is what we called $\mathring{N}$ in the proof of Proposition~\ref{prop:HomologyY}.
In that proposition, using excision, we saw that $H_i(\mathring{N},\mathring{Y})$ vanishes for $i=1$ and that for $i=2$ we have that $H_2(\mathring{N},\mathring{Y})\cong\Z_2$, generated by the $D^2$-fibre of $\overline{\nu} \widetilde{F}$.
It follows that $\psi$ is surjective with $\ker(\psi)\cong\Z_2$ generated by the meridian to~$\widetilde{F} \subseteq \Sigma_2(F)$.

We have now established the exact sequence in~\eqref{eq:BranchedUnbranchedSequencev1}. We conclude by explaining the last assertion of the proposition: the~$\Z_2$ term in~\eqref{eq:BranchedUnbranchedSequencev1} has the trivial~$\Z_2$-action.
We have already argued that this~$\Z_2$ is generated  by the meridian to~$\widetilde{F} \subseteq \Sigma_2(F)$.
By definition of the branched cover, this meridian is~$(1+T)\widetilde{\mu}$, where~$\widetilde{\mu} \subseteq \widetilde{X}_F$ is a path lifting a meridian~$\mu \subseteq X_F$ to~$F \subseteq D^4$.
Since~$T(1+T)\widetilde{\mu} = (1+T)\widetilde{\mu}$, the generator is unchanged under the action of~$T$, so we obtain the required statement.
\end{proof}

Next, we use the exact sequence of Proposition~\ref{prop:BranchedUnbranchedv1} to deduce the homology of the~$2$-fold cover of~$D^4$ branched along a~$\Z_2$-surface~$F$.

\begin{proposition}
\label{prop:BranchedHomology}
The branched cover $\Sigma_2(F)$ is simply-connected and its   nontrivial homology groups are
\[
H_0(\Sigma_2(F))\cong \Z \quad \text{ and } \quad H_2(\Sigma_2(F))\cong \Z^h.
\]
\end{proposition}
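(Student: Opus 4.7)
The plan is to first establish simple connectivity via Seifert--van Kampen applied to the decomposition $\Sigma_2(F) = \widetilde{X}_F \cup_{\mathring{Y}^\varphi} \overline{\nu}\widetilde{F}$ used in Proposition~\ref{prop:BranchedUnbranchedv1}, and then compute $H_2(\Sigma_2(F))$ directly from the four-term exact sequence of that proposition, using as input the homology of $\mathring{Y}^\varphi$ from Proposition~\ref{prop:HomologyYCover} and the homology of $\widetilde{X}_F$ from Proposition~\ref{prop:ZZ2HomologyExterior}. The one step beyond a rank count will be ruling out $\Z_2$ torsion in $H_2(\Sigma_2(F))$, which I would handle with Poincar\'e--Lefschetz duality and the universal coefficient theorem.

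For simple connectivity, $\pi_1(\widetilde{X}_F) = 1$, so Seifert--van Kampen presents $\pi_1(\Sigma_2(F))$ as $\pi_1(\overline{\nu}\widetilde{F}) \cong \pi_1(\widetilde{F})$ modulo the normal closure of the image of $\pi_1(\mathring{Y}^\varphi)$. The inclusion $\mathring{Y}^\varphi \hookrightarrow \overline{\nu}\widetilde{F}$ factors as $\mathring{Y}^\varphi \to \mathring{\widetilde{F}} \hookrightarrow \widetilde{F}$, where the first map is the $S^1$-bundle projection (which admits a section since $\mathring{\widetilde{F}}$ is homotopy equivalent to a $1$-complex, hence is $\pi_1$-surjective) and the second map is a homotopy equivalence (collar removal). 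Hence $\pi_1(\mathring{Y}^\varphi) \to \pi_1(\widetilde{F})$ is surjective, so $\pi_1(\Sigma_2(F)) = 1$, yielding $H_0(\Sigma_2(F)) \cong \Z$ and $H_1(\Sigma_2(F)) = 0$.

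For $H_2$, Proposition~\ref{prop:HomologyYCover} gives $H_2(\mathring{Y}^\varphi) \cong \Z^{h-1}$ and Proposition~\ref{prop:ZZ2HomologyExterior} gives $H_2(\widetilde{X}_F) \cong \Z_- \oplus \Z[\Z_2]^{h-1}$, which is $\Z^{2h-1}$ as an abelian group. Plugging into the sequence
\[
0 \to H_2(\mathring{Y}^\varphi) \to H_2(\widetilde{X}_F) \to H_2(\Sigma_2(F)) \to \Z_2 \to 0
\]
of Proposition~\ref{prop:BranchedUnbranchedv1} and comparing ranks, the image of $H_2(\widetilde{X}_F) \to H_2(\Sigma_2(F))$ is a subgroup of rank $h$ in the free abelian cokernel of the injective map $H_2(\mathring{Y}^\varphi) \hookrightarrow H_2(\widetilde{X}_F)$, hence is isomorphic to $\Z^h$. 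Thus $H_2(\Sigma_2(F))$ sits in a short exact sequence $0 \to \Z^h \to H_2(\Sigma_2(F)) \to \Z_2 \to 0$ and so is isomorphic to either $\Z^h$ or $\Z^h \oplus \Z_2$.

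To rule out torsion and simultaneously handle the higher homology, I would appeal to $\Sigma_2(F)$ being a compact, orientable, simply-connected $4$-manifold with connected boundary $\Sigma_2(K)$. The long exact sequence of $(\Sigma_2(F), \Sigma_2(K))$, together with $H_1(\Sigma_2(F)) = 0$ and the isomorphism $H_0(\Sigma_2(K)) \to H_0(\Sigma_2(F))$, forces $H_0(\Sigma_2(F), \Sigma_2(K)) = 0 = H_1(\Sigma_2(F), \Sigma_2(K))$. Poincar\'e--Lefschetz duality then gives $H^3(\Sigma_2(F)) \cong H_1(\Sigma_2(F), \Sigma_2(K)) = 0$ and $H_3(\Sigma_2(F)) \cong H^1(\Sigma_2(F), \Sigma_2(K))$. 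Applying the universal coefficient theorem to each yields $\Ext_\Z(H_2(\Sigma_2(F)), \Z) = 0$ (so $H_2(\Sigma_2(F))$ is torsion-free) and $H_3(\Sigma_2(F)) = 0$; finally $H_4(\Sigma_2(F)) = 0$ because the boundary is nonempty. Combined with the previous paragraph, this gives $H_2(\Sigma_2(F)) \cong \Z^h$ and pins down all other groups. The only real content beyond assembly of the cited propositions is the torsion-freeness step, and even that reduces to duality plus UCT.
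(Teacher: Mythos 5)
Your proof is correct and follows essentially the same route as the paper: simple connectivity from the decomposition $\Sigma_2(F)=\widetilde{X}_F\cup_{\mathring{Y}^\varphi}\overline{\nu}\widetilde{F}$, the rank count $b_2=(2h-1)-(h-1)=h$ from the four-term exact sequence of Proposition~\ref{prop:BranchedUnbranchedv1}, and torsion-freeness of $H_2$ via Poincar\'e--Lefschetz duality and the universal coefficient theorem (which the paper invokes more tersely as ``simply-connected implies $H_2$ free''). The only cosmetic remark is that your intermediate assertion that the cokernel of $H_2(\mathring{Y}^\varphi)\hookrightarrow H_2(\widetilde{X}_F)$ is free is not justified at the point you state it, but it is also not needed, since your final duality argument gives torsion-freeness of $H_2(\Sigma_2(F))$ directly and the rank count then closes the argument.
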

\begin{proof}
The branched cover~$\Sigma_2(F)$ is simply-connected because $\widetilde{X}_F$ is simply-connected.
The fact that~$H_0(\Sigma_2(F))\cong\Z$ follows because~$\Sigma_2(F)$ is connected, and~$H_4(\Sigma_2(F))=0$ because~$\Sigma_2(F)$ has nonempty boundary.
 Since~$\Sigma_2(F)$ is simply-connected,~$H_1(\Sigma_2(F))=0$ and~$H_2(\Sigma_2(F))$ is free. It therefore suffices to prove that~$b_2(\Sigma_2(F))=h$. Recall from Proposition~\ref{prop:ExteriorHomology} that~$H_2(X_F;\Z[\Z_2])\cong\Z[\Z_2]^{h-1} \oplus \Z_-$ and therefore~$b_2(\widetilde{X}_F)=2(h-1)+1=2h-1$.
 Recall also from Proposition~\ref{prop:HomologyYCover} that~$H_2(\partial \overline{\nu} \widetilde{F}) \cong H_2(\mathring{Y}^\varphi)$ is free abelian of rank~$h-1$. The exact sequence in~\eqref{eq:BranchedUnbranchedSequencev1} now implies that~$b_2(\Sigma_2(F))=(2h-1) - (h-1) = h$, as claimed.
\end{proof}

Recall that the homology groups of~$\Sigma_2(F)$ are endowed with the structure of a~$\Z[\Z_2]$-module by the action of the deck transformation.
We compute the~$\Z[\Z_2]$-module structure of~$H_2(\Sigma_2(F))$.

\begin{proposition}
\label{prop:BranchedHomology-module-structure}
There is an isomorphism~$H_2(\Sigma_2(F)) \cong \Z^{h}_-$ of~$\Z[\Z_2]$-modules.
\end{proposition}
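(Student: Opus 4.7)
The plan is to deduce the $\Z[\Z_2]$-module structure from a short eigenvalue calculation on rational homology. By Proposition~\ref{prop:BranchedHomology}, the group $H_2(\Sigma_2(F))$ is free abelian of rank $h$, so it embeds in its rationalisation $H_2(\Sigma_2(F);\Q)\cong \Q^h$. Since the $\Z[\Z_2]$-module structure is determined by the action of the generator $T$, and this action extends to the rationalisation, it is enough to prove that $T$ acts as multiplication by $-1$ on $H_2(\Sigma_2(F);\Q)$.

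To show this, I would apply Lemma~\ref{lem:Bredon} to the $\Z_2$-action on $\Sigma_2(F)$ by deck transformations. The quotient of this action is the base of the branched cover, namely $\Sigma_2(F)/\Z_2 = D^4$, which is rationally acyclic. Thus
\[\mathcal{E}_+(H_2(\Sigma_2(F);\Q)) \cong H_2(D^4;\Q) = 0.\]
Combined with the eigenspace decomposition $H_2(\Sigma_2(F);\Q) = \mathcal{E}_+ \oplus \mathcal{E}_-$, this forces the entire rational homology to sit in the $(-1)$-eigenspace, so $T$ acts as $-1$ on $H_2(\Sigma_2(F);\Q)$. By the injectivity of $H_2(\Sigma_2(F)) \hookrightarrow H_2(\Sigma_2(F);\Q)$, the same holds integrally, and together with the rank calculation this yields $H_2(\Sigma_2(F))\cong \Z_-^h$ as $\Z[\Z_2]$-modules.

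I do not anticipate any real obstacle here; the argument is immediate once one notices that the branched cover quotient is contractible. The alternative route, feeding the exact sequence of Proposition~\ref{prop:BranchedUnbranchedv1} with the known $\Z[\Z_2]$-structure on $H_2(\widetilde{X}_F)\cong \Z_-\oplus \Z[\Z_2]^{h-1}$ from Proposition~\ref{prop:ZZ2HomologyExterior}, would additionally require pinning down the $\Z[\Z_2]$-structure of $H_2(\mathring{Y}^\varphi)$ and chasing through the sequence, which is strictly more work than the eigenvalue argument above.
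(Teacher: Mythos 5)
Your proof is correct and follows the same essential route as the paper: both arguments rest on Lemma~\ref{lem:Bredon} applied to the deck transformation action, using that the quotient $\Sigma_2(F)/\Z_2 = D^4$ is rationally acyclic to kill the $(+1)$-eigenspace and conclude that $T$ acts by $-1$ on $H_2(\Sigma_2(F);\Q)$. The only difference is the final integral upgrade: the paper invokes Reiner's decomposition (Lemma~\ref{lem:Reiner}) and then rules out the free and $(+1)$-summands, whereas you simply observe that the torsion-free integral homology embeds $T$-equivariantly into its rationalisation --- a mild but genuine simplification that bypasses Reiner's theorem entirely.
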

\begin{proof}
The proof is very similar to that of Proposition~\ref{prop:ZZ2HomologyExterior}.
Namely, we first use fixed points to study~$H_2(\Sigma_2(F);\Q)$ and then use Lemma~\ref{lem:Reiner} to conclude.
\begin{claim}
\label{claim:KrecknicalLemma}
There is an isomorphism~$H_2(\Sigma_2(F);\Q) \cong \Q^{h}_-$ of~$\Q[\Z_2]$-modules.
\end{claim}
\begin{proof}
We showed in Proposition~\ref{prop:BranchedHomology} that~$H_2(\Sigma_2(F);\Q) \cong \Q^{h}$ as a vector space over~$\Q$.
Lemma~\ref{lem:Bredon} then implies that
$$H_2(\Sigma_2(F);\Q) \cong H_2(D^4;\Q) \oplus \mathcal{E}_-(H_2(\Sigma_2(F);\Q)) \cong  \mathcal{E}_-(H_2(\Sigma_2(F);\Q)) .$$
It follows that $H_2(\Sigma_2(F);\Q) \cong \Q_-^{h},$ as claimed.
\end{proof}
We now conclude the proof of Proposition~\ref{prop:BranchedHomology-module-structure}.
Using the fact that~$H_2(\Sigma_2(F)) \cong \Z^{h}$ is finitely generated as a~$\Z[\Z_2]$-module and torsion free as an abelian group,
Lemma~\ref{lem:Reiner} implies that as a~$\Z[\Z_2]$-module~$H_2(\Sigma_2(F))$ decomposes as~$P \oplus P_{+1} \oplus P_{-1}$.
Here,~$P$ is~$\Z[\Z_2]$-free,~$P_{+1}$ has the~$\Z[\Z_2]$-action where $T$ operates by~$+1$,  and~$P_{-1}$ has the~$\Z[\Z_2]$-action where~$T$ operates by~$-1$.
Arguing as in the proof of Proposition~\ref{prop:ExteriorHomology},
Claim~\ref{claim:KrecknicalLemma} implies that~$P=0$ and~$P_{+1}=0$. Therefore~$H_2(\Sigma_2(F))\cong\Z_-^h$, as required.
\end{proof}

\section{Intersection forms}
\label{sub:IntersectionForms}

In this section, we describe some facts about the various types of intersection forms associated with a~$\Z_2$-surface.
We continue with the notation from Section~\ref{sub:HomologyBoundarySurfaceExterior}, which we recall briefly.

\begin{notation}
Let~$F \subseteq D^4$ be a~$\Z_2$-surface of nonorientable genus~$h$ and normal Euler number $e$.
Write~$X_F$ for the exterior of $F$ and $K:=\partial F \subseteq S^3$ for its boundary.
\end{notation}

\subsection{The~$\Z$-intersection form of the branched cover}

\begin{proposition}
\label{prop:BranchedNonsingular}
The intersection form $Q_{\Sigma_2(F)}$ is nondegenerate. If $|\det(K)|=1$, then $Q_{\Sigma_2(F)}$ is moreover nonsingular.
\end{proposition}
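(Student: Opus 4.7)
The plan is to use the standard duality arguments, analysing the long exact sequence of the pair $(\Sigma_2(F), \Sigma_2(K))$ together with Poincar\'e-Lefschetz duality and universal coefficients. I will freely use the facts from Section~\ref{sub:HomologyBranched} that $\Sigma_2(F)$ is simply-connected with $H_2(\Sigma_2(F)) \cong \Z^h$ free abelian, and that its boundary is $\Sigma_2(K)$.

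First I would observe that the adjoint $\widehat{Q}_{\Sigma_2(F)} \colon H_2(\Sigma_2(F)) \to \Hom(H_2(\Sigma_2(F)), \Z)$ of the intersection form fits into a standard diagram. By Poincar\'e-Lefschetz duality, $H_2(\Sigma_2(F), \Sigma_2(K)) \cong H^2(\Sigma_2(F))$, and by the universal coefficient theorem, the latter is isomorphic to $\Hom(H_2(\Sigma_2(F)), \Z)$ because $H_1(\Sigma_2(F)) = 0$ kills the Ext term. Under this identification, the map $H_2(\Sigma_2(F)) \to H_2(\Sigma_2(F), \Sigma_2(K))$ from the long exact sequence of the pair is precisely $\widehat{Q}_{\Sigma_2(F)}$.

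Next I would use that $\Sigma_2(K)$ is a closed orientable $3$-manifold with $H_1(\Sigma_2(K))$ finite of order $|\det(K)|$ (a standard fact, since $\det(K) \neq 0$ for any knot). Poincar\'e duality on $\Sigma_2(K)$ combined with universal coefficients gives $H_2(\Sigma_2(K)) \cong \Hom(H_1(\Sigma_2(K)), \Z) = 0$, as $H_1(\Sigma_2(K))$ is torsion. The relevant portion of the long exact sequence of the pair therefore reads
\[
0 \to H_2(\Sigma_2(F)) \xrightarrow{\widehat{Q}_{\Sigma_2(F)}} \Hom(H_2(\Sigma_2(F)), \Z) \to H_1(\Sigma_2(K)) \to 0,
\]
where exactness on the right uses $H_1(\Sigma_2(F)) = 0$. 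Injectivity of $\widehat{Q}_{\Sigma_2(F)}$ gives nondegeneracy.

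For the nonsingularity statement, I would read off from the same exact sequence that $\coker(\widehat{Q}_{\Sigma_2(F)}) \cong H_1(\Sigma_2(K))$, which has order $|\det(K)|$. So $\widehat{Q}_{\Sigma_2(F)}$ is an isomorphism precisely when $|\det(K)| = 1$, giving nonsingularity in that case. There is no real obstacle here since the main homological inputs are already in hand from the preceding subsection; the only point to verify carefully is the standard identification of the adjoint of the intersection form with the middle map in the exact sequence of the pair.
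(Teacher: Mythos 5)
Your proof is correct and follows essentially the same route as the paper's: both use that $\Sigma_2(K)$ is a rational homology sphere so $H_2(\Sigma_2(K))=0$, identify the adjoint of $Q_{\Sigma_2(F)}$ with the map $H_2(\Sigma_2(F))\to H_2(\Sigma_2(F),\Sigma_2(K))$ via duality and universal coefficients, and read off nondegeneracy from injectivity and nonsingularity from the vanishing of $H_1(\Sigma_2(K))$ when $|\det(K)|=1$. The paper's version is simply a more compressed statement of the same argument.
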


\begin{proof}
Since $\Sigma_2(K)=\partial \Sigma_2(F)$ is a rational homology 3-sphere,  we know that~$H_2(\Sigma_2(K))=0$.
Thus $H_2(\Sigma_2(F)) \to H_2(\Sigma_2(F),\Sigma_2(K))$ is injective, and we deduce that $Q_{\Sigma_2(F)}$ is nondegenerate.
If in addition $|\det(K)|=1$, then $H_1(\Sigma_2(K))=0$ and it follows that $Q_{\Sigma_2(F)}$ is nonsingular.
\end{proof}

%


Recall that we say $e:= e(F)$ is \emph{extremal} if $e \in \{-2h+2\sigma(K), 2h+2\sigma(K)\}$, i.e.\ $e$ is one of the two extremal values in Theorem~\ref{thm:MasseyBoundary}.

\begin{proposition}
\label{prop:Indefinite}
Assume that~$|\det(K)|=1$.
The intersection form $Q_{\Sigma_2(F)}$ is definite if and only if the normal Euler number $e$ is extremal.
\end{proposition}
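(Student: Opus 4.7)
The plan is to combine the nonsingularity from Proposition~\ref{prop:BranchedNonsingular} with the rank computation from Proposition~\ref{prop:BranchedHomology} and the signature-Euler number relation of Theorem~\ref{thm:MasseyBoundary}, and then observe that for a nonsingular integer form ``definite'' is equivalent to ``$|\sigma| = \operatorname{rank}$''.

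More precisely, I would first note that since $|\det(K)|=1$, Proposition~\ref{prop:BranchedNonsingular} tells us that $Q_{\Sigma_2(F)}$ is nonsingular, and Proposition~\ref{prop:BranchedHomology} says $H_2(\Sigma_2(F)) \cong \Z^h$, so the form has rank $h$. For a nonsingular symmetric bilinear form over $\Z$ of rank $h$, the form is definite if and only if $|\sigma(Q_{\Sigma_2(F)})| = h$, since the rank is $b_+ + b_-$ and the signature is $b_+ - b_-$.

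Next, I would invoke the Gordon--Litherland--type equation from Theorem~\ref{thm:MasseyBoundary}, namely $\sigma(K) = \sigma(\Sigma_2(F)) + \tfrac{1}{2}e(F)$, rewritten as
\[
2\sigma(\Sigma_2(F)) = 2\sigma(K) - e(F).
\]
Thus $|\sigma(\Sigma_2(F))| = h$ if and only if $|e - 2\sigma(K)| = 2h$, which is exactly the definition of extremality given just before the statement.

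Chaining the two equivalences yields the result. There is no real obstacle here: the content is entirely packaged in the prior propositions and the classical fact that the rank of a nonsingular integer form decomposes as $b_+ + b_-$ over $\R$. The only point worth stating carefully is that $\sigma(\Sigma_2(F))$ makes sense as the signature of $Q_{\Sigma_2(F)}$ (it is the signature of the $4$-manifold $\Sigma_2(F)$ whose boundary $\Sigma_2(K)$ is a rational homology sphere, so there is no ambiguity between signatures coming from forms with degeneracies).
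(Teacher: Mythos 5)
Your proposal is correct and follows exactly the paper's argument: nonsingularity from Proposition~\ref{prop:BranchedNonsingular}, rank $h$ from Proposition~\ref{prop:BranchedHomology}, the equivalence of definiteness with $|\sigma|=\operatorname{rank}$ for nonsingular forms, and the signature--Euler number relation of Theorem~\ref{thm:MasseyBoundary} to translate $|\sigma(\Sigma_2(F))|=h$ into extremality of $e$. No gaps.
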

\begin{proof}
A nonsingular symmetric bilinear form is definite if and only if its rank equals the absolute value of its signature.
By Proposition~\ref{prop:BranchedNonsingular}, we know that $Q_{\Sigma_2(F)}$ is nonsingular.
Thus $Q_{\Sigma_2(F)}$ is definite if and only if $\operatorname{rk}(H_2(\Sigma_2(F))=h$ equals $|\sigma(\Sigma_2(F))|=|\sigma(K)-\smfrac{1}{2}e(F)|$.
By Theorem~\ref{thm:MasseyBoundary} this is equivalent to~$e$ being extremal.
\end{proof}

\begin{proposition}
\label{prop:Odd}
The intersection form~$Q_{\Sigma_2(F)}$ is odd.
\end{proposition}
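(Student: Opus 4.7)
The plan is to exhibit a class $\alpha \in H_2(\Sigma_2(F))$ with $Q_{\Sigma_2(F)}(\alpha,\alpha)$ odd by pairing against the branch surface $\widetilde{F}$ using the covering involution. The exact sequence of Proposition~\ref{prop:BranchedUnbranchedv1} yields a surjection $\partial \colon H_2(\Sigma_2(F)) \twoheadrightarrow \Z_2$ whose codomain is generated by the meridian $\mu_{\widetilde{F}}$ of $\widetilde{F} \subseteq \Sigma_2(F)$. Unwinding the Mayer--Vietoris connecting homomorphism geometrically, one sees that $\partial$ sends the class of an embedded $2$-cycle $\Sigma$ transverse to $\widetilde{F}$ to the mod-2 count $\#(\Sigma \cap \widetilde{F})$, since near each intersection point $\Sigma$ contributes a small disc in a fibre of $\overline{\nu}\widetilde{F}$ whose boundary circle is a meridian. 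Pick $\alpha$ with $\partial\alpha = 1$, represented by such a $\Sigma$ meeting $\widetilde{F}$ transversely in an odd number of points.

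Let $T$ denote the covering involution of $\Sigma_2(F) \to D^4$. At any fixed point $p \in \widetilde{F}$, the involution acts as $+I$ on the tangent space $T_p\widetilde{F}$ and as $-I$ on $\nu_p\widetilde{F}$, so it is orientation-preserving. By Proposition~\ref{prop:BranchedHomology-module-structure} we have $T_*\alpha = -\alpha$, so after replacing $\Sigma$ by a generic perturbation making $\Sigma$ and $T\Sigma$ transverse, the geometric intersection number satisfies
\[ \Sigma \cdot T\Sigma \;=\; Q(\alpha,T\alpha) \;=\; -Q(\alpha,\alpha), \]
and therefore $Q(\alpha,\alpha) \equiv \Sigma \cdot T\Sigma \pmod{2}$. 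It remains to compare $\Sigma \cdot T\Sigma$ with $\#(\Sigma \cap \widetilde{F}) \pmod{2}$. Transverse intersection points off $\widetilde{F}$ appear in $T$-orbits $\{q, Tq\}$ of equal sign (since $T$ is orientation-preserving) and contribute an even count, while each fixed intersection point $p \in \Sigma \cap \widetilde{F}$ resolves, after a small perturbation, into a single transverse intersection of a fixed sign. Summing, $\Sigma \cdot T\Sigma \equiv \#(\Sigma \cap \widetilde{F}) \equiv 1 \pmod{2}$, so $Q(\alpha,\alpha)$ is odd.

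The most delicate step is the local analysis at a fixed intersection point. Transversality of $\Sigma$ and $\widetilde{F}$ forces $T_p\Sigma = \nu_p\widetilde{F}$, on which $T$ acts as $-I$ (orientation-preserving on the plane), so $\Sigma$ and $T\Sigma$ are tangent rather than transverse at $p$ and coincide as oriented planes there. This tangential coincidence must be resolved by an explicit perturbation in a local model $\R^4$ with coordinates $(x,y)\in\R^2\oplus\R^2$, $\widetilde{F}=\{y=0\}$, and $T(x,y)=(x,-y)$, where perturbing $\Sigma=\{x=0\}$ to $\{x=\epsilon y\}$ yields a single transverse intersection with $T\Sigma$ at the origin; one checks the resulting sign is the same at every such fixed intersection point.
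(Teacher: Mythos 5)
Your route is genuinely different from the paper's: the paper disposes of this proposition in one line by citing Edmonds' theorem that $\Sigma_2(F)$ is not spin (adapted to locally linear actions), whereas you give a self-contained geometric argument. Your overall strategy is sound, and the key identity you are after --- that $Q(\alpha,\alpha)\equiv Q(\alpha,[\widetilde{F}])\pmod 2$ for an anti-invariant class $\alpha$, combined with surjectivity of $\partial$ --- is correct and does prove the proposition. However, the local analysis at the fixed intersection points contains a genuine error.

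First, transversality of $\Sigma$ to $\widetilde{F}$ at $p$ does \emph{not} force $T_p\Sigma=\nu_p\widetilde{F}$; it only forces $T_p\Sigma$ to be \emph{some} complement of $T_p\widetilde{F}$, i.e.\ the graph of a linear map $A\colon\nu_p\widetilde{F}\to T_p\widetilde{F}$. Second, and more seriously, in the situation you actually set up --- $\Sigma$ coinciding with a fibre disc of $\overline{\nu}\widetilde{F}$ near $p$ --- the involution preserves that entire fibre disc, so $\Sigma$ and $T\Sigma$ coincide along a whole $2$-dimensional region, not merely at $p$. A perturbation supported near the origin, as you describe, leaves $\Sigma'$ and $T\Sigma'$ still coinciding on the remainder of the fibre disc (and on its boundary circle, where $T$ acts by the antipodal map), so the intersection is not yet a finite set and the count "$=1$'' is not justified as written. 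The clean fix is to avoid the fibre-disc model entirely: choose $\Sigma$ transverse to $\widetilde{F}$ with $T_p\Sigma=\operatorname{graph}(A)$ for $A$ \emph{invertible} at each $p\in\Sigma\cap\widetilde{F}$ (a generic condition). Then $dT_p(T_p\Sigma)=\operatorname{graph}(-A)$, and $\operatorname{graph}(A)\cap\operatorname{graph}(-A)=\ker A=0$, so $\Sigma$ and $T\Sigma$ are automatically transverse at $p$ with no perturbation needed; your parity count over free orbits (where your equal-sign observation is correct, since $dT$ preserves orientation and swapping two $2$-planes in a $4$-space preserves orientation) then goes through, using that $\Sigma\cap T\Sigma$ is always $T$-invariant and meets $\widetilde{F}$ exactly in $\Sigma\cap\widetilde{F}$. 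You should also note that, since this is the topological category, representing $\alpha$ by a surface and achieving all of these transversality statements requires the Freedman--Quinn transversality machinery that the paper invokes elsewhere.
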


\begin{proof}
Edmonds~\cite[Corollary~4]{Edmonds} proved that the branched cover~$\Sigma_2(F)$ is not spin.
We note that Edmonds states his proof in the smooth category, but that the proof may be adapted to work for a properly embedded surface in a $4$-manifold, replacing his discussions of smooth $\Z_2$ actions by locally linear $\Z_2$ actions.
Since~$\Sigma_2(F)$ is simply-connected, this implies that $Q_{\Sigma_2(F)}$ is odd.
\end{proof}

\begin{proposition}
\label{prop:IntersectionFormBranched}
Assume that $|\det(K)|=1$
and write $\sigma:=\sign(\Sigma_2(F)) = \sigma(K)-\smfrac{1}{2} e$.
If $h \leq 8$ or~$e$ is non-extremal, then
$$ Q_{\Sigma_2(F)} \cong (1)^a \oplus (-1)^b~$$
where $a,b \in \N_0$ are nonnegative integers such that~$h=a+b$ and~$\sigma=a-b$, which exist because~$h \equiv \sigma \mod{2}$ and $|\sigma| \leq h$.
\end{proposition}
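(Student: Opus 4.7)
The plan is to combine Propositions~\ref{prop:BranchedNonsingular},~\ref{prop:Indefinite}, and~\ref{prop:Odd} with classical classification results for symmetric bilinear forms over $\Z$. By Proposition~\ref{prop:BranchedNonsingular}, the hypothesis $|\det(K)|=1$ ensures that $Q_{\Sigma_2(F)}$ is nonsingular (i.e.~unimodular), and by Proposition~\ref{prop:Odd} it is odd. Its rank is $h = \operatorname{rk} H_2(\Sigma_2(F))$ (Proposition~\ref{prop:BranchedHomology}) and its signature is $\sigma$. The strategy is to split into the indefinite and definite cases and invoke the appropriate structure theorem in each.

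First I would handle the non-extremal case. Here, Proposition~\ref{prop:Indefinite} gives that $Q_{\Sigma_2(F)}$ is indefinite. I would then cite the classification of indefinite unimodular symmetric bilinear forms over $\Z$ (see e.g.\ Milnor--Husemoller or Serre): every odd, indefinite, nonsingular symmetric bilinear form over $\Z$ of rank $h$ and signature $\sigma$ is isometric to $(1)^a \oplus (-1)^b$, with $a$ and $b$ determined by $h = a+b$ and $\sigma = a-b$. The parity/inequality conditions $h \equiv \sigma \pmod 2$ and $|\sigma| \leq h$ follow from Theorem~\ref{thm:MasseyBoundary} together with the signature identity recalled there, and guarantee $a,b \in \N_0$.

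Next I would address the extremal case under the assumption $h \leq 8$. Here $Q_{\Sigma_2(F)}$ is definite by Proposition~\ref{prop:Indefinite}. I would invoke the classification of definite unimodular lattices of small rank: up to rank $8$, the only positive-definite odd unimodular lattice is the standard diagonal one $(1)^h$, and similarly $(-1)^h$ is the only negative-definite odd unimodular form of rank $\leq 8$. The key point here is that the first even positive-definite unimodular lattice, $E_8$, has rank $8$ and is \emph{even}, so in rank $\leq 8$ no exotic odd definite lattice can appear; the first nondiagonal odd positive-definite unimodular lattice is $E_8 \oplus (1)$ in rank $9$ (which is precisely the obstruction flagged in Question~\ref{question:CounterExample}). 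Since $e$ being extremal forces $\sigma = \pm h$, this gives $Q_{\Sigma_2(F)} \cong (1)^h$ or $(-1)^h$, agreeing with the desired form $(1)^a \oplus (-1)^b$.

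I do not foresee a major obstacle: the work has already been done in Section~\ref{sec:AlgtopExterior} and Propositions~\ref{prop:BranchedNonsingular}--\ref{prop:Odd}. The only subtlety is citing the correct form of the classification results and verifying that the parity and inequality constraints on $(h,\sigma)$ are compatible with the existence of $a,b \in \N_0$, which is immediate from Theorem~\ref{thm:MasseyBoundary}.
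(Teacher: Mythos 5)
Your proposal is correct and follows essentially the same route as the paper: both split into the indefinite (non-extremal) case, handled by the classification of odd indefinite unimodular forms, and the definite case for $h \leq 8$, handled by the low-rank classification of definite unimodular lattices (the paper cites Husemoller--Milnor, Chapter II, Lemma 6.2 and Remark 1, for the latter). Your remark that the first exotic odd definite lattice is $E_8 \oplus (1)$ in rank $9$ correctly identifies why the bound $h \leq 8$ is needed, exactly as flagged in Question~\ref{question:CounterExample}.
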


\begin{proof}
We know from Propositions~\ref{prop:BranchedNonsingular}, \ref{prop:Odd}, and~\ref{prop:BranchedHomology} that $Q_{\Sigma_2(F)}$ is nonsingular, odd and has rank~$h$.
Assume that~$h \leq 8$ and that $e$ is extremal.
By Proposition~\ref{prop:Indefinite}, $Q_{\Sigma_2(F)}$ is definite.
Since~$Q_{\Sigma_2(F)}$ is odd and definite,  the classification of (low rank) definite, nonsingular symmetric bilinear forms implies that~
$Q_{\Sigma_2(F)} \cong \operatorname{sgn}(\sigma)(1)^{\oplus h}$~\cite[Chapter II, Lemma 6.2 and Remark~1]{HusemollerMilnor}.
We now assume that~$e$ is non-extremal so that by Proposition~\ref{prop:Indefinite}, $Q_{\Sigma_2(F)}$ is indefinite.
Since~$Q_{\Sigma_2(F)}$ is odd and indefinite,  the classification of odd, indefinite, nonsingular,  symmetric, bilinear forms~\cite[Theorem~II.4.3]{HusemollerMilnor} in terms of their rank and signature implies that~
$$Q_{\Sigma_2(F)} \cong (1)^{\oplus a} \oplus (-1)^{\oplus b},$$ where~$a+b=h$ is the rank of $Q_{\Sigma_2(F)}$ and~$a-b=\sigma$ is the signature of~$Q_{\Sigma_2(F)}$.
\end{proof}

\subsection{The~$\Z$-intersection form of the universal cover}

This section is concerned with $Q_{\widetilde{X}_F}$,  the~$\Z$-intersection form of the universal cover of $X_F$.
In fact, we are particularly interested in the radical $\operatorname{rad}(Q_{\widetilde{X}_F})$ and the form induced on the quotient $H_2(\widetilde{X}_F)/\operatorname{rad}(Q_{\widetilde{X}_F})$.
Here,  recall that~$\operatorname{rad}(Q_{\widetilde{X}_F})$ consists of those $x \in H_2(\widetilde{X}_F)$ such that $Q_{\widetilde{X}_F}(x,y)=0$ for all $y \in H_2(\widetilde{X}_F).$
\begin{proposition}
\label{prop:RadicalFormUniversalCover} The inclusion induced map~$i_*\colon H_2(\partial \widetilde{X}_F)\to H_2(\widetilde{X}_F)$ is injective.
In particular, this implies that, as an abelian group,  the radical of~$Q_{\widetilde{X}_F}$ is
\[
\operatorname{rad}(Q_{\widetilde{X}_F})=\im(i_* \colon H_2(\partial \widetilde{X}_F)\to H_2(\widetilde{X}_F))\cong H_2(\partial \widetilde{X}_F)\cong\Z^{h-1}.
\]
Additionally, the~$\Z[\Z_2]$-module structure on~$H_2(\widetilde{X}_F)/\operatorname{rad}(Q_{\widetilde{X}_F})$ induced by deck transformations is extended linearly from~$T \in \Z_2$,  acting by multiplication by~$-1$.
\end{proposition}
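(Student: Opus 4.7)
The plan is to proceed in three steps, corresponding to the three assertions.

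\textbf{Step 1 (Injectivity of $i_*$).} I would extract injectivity from the long exact sequence of the pair $(\widetilde{X}_F, \partial \widetilde{X}_F)$. The obstruction to injectivity of $i_*$ is the image of $H_3(\widetilde{X}_F, \partial \widetilde{X}_F)$. By Poincaré--Lefschetz duality, this group is isomorphic to $H^1(\widetilde{X}_F)$, which vanishes because $\widetilde{X}_F$ is simply-connected. Thus $i_*$ is injective, and Proposition~\ref{prop:HomologyBoundaryUniversalCover} identifies its domain with $\Z^{h-1}$.

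\textbf{Step 2 (Identifying the radical).} I would use the standard factorisation of the intersection form as
\[
H_2(\widetilde{X}_F) \xrightarrow{j_*} H_2(\widetilde{X}_F, \partial \widetilde{X}_F) \xrightarrow{\mathrm{PD}} H^2(\widetilde{X}_F) \xrightarrow{\operatorname{ev}} \Hom(H_2(\widetilde{X}_F), \Z).
\]
Since $\widetilde{X}_F$ is simply-connected and $H_2(\widetilde{X}_F)$ is free abelian (see the proof of Proposition~\ref{prop:ZZ2HomologyExterior}), the universal coefficient theorem makes $\operatorname{ev}$ an isomorphism. Consequently $\operatorname{rad}(Q_{\widetilde{X}_F}) = \ker(j_*)$, which by exactness equals $\im(i_*)$. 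Combining with Step 1 yields $\operatorname{rad}(Q_{\widetilde{X}_F}) \cong H_2(\partial \widetilde{X}_F) \cong \Z^{h-1}$.

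\textbf{Step 3 (The $\Z[\Z_2]$-action on the quotient).} Here I would exploit the exact sequence from Proposition~\ref{prop:BranchedUnbranchedv1}. The inclusion $\mathring{Y}^\varphi \hookrightarrow \widetilde{X}_F$ factors through $\partial \widetilde{X}_F$, and by the last sentence of Proposition~\ref{prop:HomologyBoundaryUniversalCover}, the map $H_2(\mathring{Y}^\varphi) \to H_2(\partial \widetilde{X}_F)$ is a $\Z[\Z_2]$-isomorphism. Hence the image of $H_2(\mathring{Y}^\varphi) \to H_2(\widetilde{X}_F)$ coincides with $\im(i_*) = \operatorname{rad}(Q_{\widetilde{X}_F})$, and Proposition~\ref{prop:BranchedUnbranchedv1} gives a $\Z[\Z_2]$-embedding
\[
H_2(\widetilde{X}_F)/\operatorname{rad}(Q_{\widetilde{X}_F}) \hookrightarrow H_2(\Sigma_2(F)).
\]
By Proposition~\ref{prop:BranchedHomology-module-structure}, $H_2(\Sigma_2(F)) \cong \Z_-^h$, so $T$ acts as multiplication by $-1$ on every $\Z[\Z_2]$-submodule, and therefore also on the quotient $H_2(\widetilde{X}_F)/\operatorname{rad}(Q_{\widetilde{X}_F})$.

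I expect Step 2 to be the most delicate piece, not because of depth but because one must check that the standard description of the radical as the kernel of $j_*$ is valid in the twisted (or in this case, simply-connected cover) setting; the key input is the freeness of $H_2(\widetilde{X}_F)$, which ensures that $\operatorname{ev}$ is injective. The rest of the argument is a direct assembly of the homological input already computed in Sections~\ref{sub:HomologyBoundarySurfaceExterior}--\ref{sub:HomologyBranched}.
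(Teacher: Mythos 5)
Your proposal is correct and follows essentially the same route as the paper's proof: the radical is identified with $\ker(j_*)=\im(i_*)$ via the adjoint factorisation through Poincar\'e duality and evaluation, injectivity of $i_*$ comes from the vanishing of $H_3(\widetilde{X}_F,\partial\widetilde{X}_F)\cong H^1(\widetilde{X}_F)$, and the $\Z[\Z_2]$-action is read off by embedding the quotient into $H_2(\Sigma_2(F))\cong\Z_-^h$ using the exact sequence of Proposition~\ref{prop:BranchedUnbranchedv1}. Your Step 3 merely makes explicit the identification of $\im(H_2(\mathring{Y}^\varphi)\to H_2(\widetilde{X}_F))$ with the radical, which the paper leaves implicit.
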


\begin{proof}
The adjoint of $Q_{\widetilde{X}_F}$ is given by the composition
$$ H_2(\widetilde{X}_F) \xrightarrow{j_*} H_2(\widetilde{X}_F,\partial \widetilde{X}_F) \xrightarrow{\PD,\cong} H^2(\widetilde{X}_F) \xrightarrow{\ev,\cong} \Hom_\Z(H_2(\widetilde{X}_F),\Z).$$
Thus
the radical of~$Q_{\widetilde{X}_F}$ is equal to~$\ker(j_*)=\im(i_*\colon H_2(\partial \widetilde{X}_F) \to H_2( \widetilde{X}_F))$.
Using Poincar\'e duality and the universal coefficient theorem, we see that~$H_3(\widetilde{X}_F,\partial \widetilde{X}_F)\cong H^1(\widetilde{X}_F)=0$.
The long exact sequence of the pair~$(\widetilde{X}_F,\partial \widetilde{X}_F)$ implies that
$i_* \colon H_2(\partial \widetilde{X}_F) \to H_2( \widetilde{X}_F)$ is injective.
As~$i_*$ is injective, this image is isomorphic to~$H_2(\partial \widetilde{X}_F)$, which by Proposition~\ref{prop:HomologyBoundaryUniversalCover} is isomorphic to~$\Z^{h-1}$.

Finally, the exact sequence
$$0\to H_2(\mathring{Y}^\varphi)  \to H_2(\widetilde{X}_F) \xrightarrow{j} H_2(\Sigma_2(F)) \xrightarrow{\partial}  \Z_2 \to  0$$
 from Proposition~\ref{prop:BranchedUnbranchedv1} shows that we can identify~$H_2(\widetilde{X}_F)/\operatorname{rad}(Q_{\widetilde{X}_F})$ with a~$\Z[\Z_2]$-submodule of~$H_2(\Sigma_2(F))$,  of index $2$ as an abelian subgroup.
 By Proposition~\ref{prop:BranchedHomology-module-structure} we have~$H_2(\Sigma_2(F))\cong \Z_-^h$.
\end{proof}

\begin{remark}
  In fact, as a $\Z[\Z_2]$-module, we have that $\operatorname{rad}(Q_{\widetilde{X}_F}) \cong \Z^{h-1}_+$. We omit the proof since we shall not require this information later.
\end{remark}

Write $Q_{\widetilde{X}_F}^{\nd}$ for the nondegenerate form induced by~$Q_{\widetilde{X}_F}$ on~$H_2(\widetilde{X}_F)/\operatorname{rad}(Q_{\widetilde{X}_F})$.
The next proposition refines Proposition~\ref{prop:BranchedUnbranchedv1} to include information on the intersection forms. The statement was given (without proof) on \cite[p.~65]{KreckOnTheHomeomorphism}.

\begin{proposition}
\label{prop:BranchedUnbranched}
There is an exact sequence of~$\Z[\Z_2]$-modules
\begin{equation}
\label{eq:BranchedUnbranchedSequence}
0\to \operatorname{rad}(Q_{\widetilde{X}_F}) \to H_2(\widetilde{X}_F) \xrightarrow{j} H_2(\Sigma_2(F)) \xrightarrow{\partial}  \Z_2 \to  0.
\end{equation}
The inclusion-induced map~$j \colon H_2(\widetilde{X}_F) \to H_2(\Sigma_2(F))$ induces an isometry~$Q_{\widetilde{X}_F}^{\nd} \cong Q_{\Sigma_2(F)}|_{\im(j)}$, and the~$\Z_2$ term has~$\Z[\Z_2]$-action given by extending the trivial~$\Z_2$-action linearly.

In particular the form~$Q_{\widetilde{X}_F}$ splits as
$$Q_{\widetilde{X}_F} \cong (0)^{(h-1)} \oplus Q_{\Sigma_2(F)}|_{\im(j)},$$
where $\im(j)$ is free abelian of rank $h$.
\end{proposition}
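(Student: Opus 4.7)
The exact sequence itself is essentially Proposition~\ref{prop:BranchedUnbranchedv1}; the new ingredient is to identify the leftmost term with $\operatorname{rad}(Q_{\widetilde{X}_F})$ in place of $H_2(\mathring{Y}^\varphi)$. I would factor the inclusion-induced map $H_2(\mathring{Y}^\varphi) \to H_2(\widetilde{X}_F)$ through $H_2(\partial \widetilde{X}_F)$, invoke the $\Z[\Z_2]$-isomorphism $H_2(\mathring{Y};\Z[\Z_2]) \cong H_2(\partial X_F;\Z[\Z_2])$ from Proposition~\ref{prop:HomologyBoundaryUniversalCover}, and then apply Proposition~\ref{prop:RadicalFormUniversalCover}, which identifies $\operatorname{rad}(Q_{\widetilde{X}_F})$ with $\im(i_*\colon H_2(\partial \widetilde{X}_F) \to H_2(\widetilde{X}_F))$. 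The statement about the trivial $\Z_2$-action on the $\Z_2$-term is inherited directly from Proposition~\ref{prop:BranchedUnbranchedv1}.

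For the isometry $Q_{\widetilde{X}_F}^{\nd} \cong Q_{\Sigma_2(F)}|_{\im(j)}$, I would argue geometrically. Given $x,y \in H_2(\widetilde{X}_F)$, represent them by transverse immersed surfaces $\Sigma_x,\Sigma_y$ in the interior of $\widetilde{X}_F \subseteq \Sigma_2(F)$. Both $Q_{\widetilde{X}_F}(x,y)$ and $Q_{\Sigma_2(F)}(j(x),j(y))$ compute the signed count of points of $\Sigma_x \pitchfork \Sigma_y$, and this subset is the same whether one views the surfaces in $\widetilde{X}_F$ or in $\Sigma_2(F)$. Equivalently, this is naturality of the cap product under the inclusion $(\widetilde{X}_F,\partial \widetilde{X}_F) \hookrightarrow (\Sigma_2(F),\partial \Sigma_2(F))$, combined with compatibility of the Poincar\'e--Lefschetz dualities on these two pairs. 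Passing to the quotient by $\ker(j) = \operatorname{rad}(Q_{\widetilde{X}_F})$ then yields the isometry.

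For the final splitting, observe that from the exact sequence just established, $\im(j)$ is an index-$2$ subgroup of the free abelian group $H_2(\Sigma_2(F)) \cong \Z^h$ (Proposition~\ref{prop:BranchedHomology}), hence itself free abelian of rank $h$. Therefore the short exact sequence
\[ 0 \to \operatorname{rad}(Q_{\widetilde{X}_F}) \to H_2(\widetilde{X}_F) \to \im(j) \to 0 \]
splits as abelian groups. Choose any splitting $L \subseteq H_2(\widetilde{X}_F)$, so $H_2(\widetilde{X}_F) = \operatorname{rad}(Q_{\widetilde{X}_F}) \oplus L$ as abelian groups. The restriction of $Q_{\widetilde{X}_F}$ to the radical is the zero form $(0)^{(h-1)}$ by Proposition~\ref{prop:RadicalFormUniversalCover}, the restriction to $L$ is isometric to $Q_{\Sigma_2(F)}|_{\im(j)}$ by the previous paragraph, and the mixed pairing vanishes automatically because the radical annihilates everything. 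This gives the claimed orthogonal decomposition.

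The only delicate step is the middle paragraph, where one must make sure the Poincar\'e--Lefschetz dualities on the $4$-manifolds with boundary $\widetilde{X}_F$ and $\Sigma_2(F)$ are compatible with inclusion. The geometric intersection-number viewpoint sidesteps this subtlety cleanly, while the cohomological viewpoint reduces to a routine diagram chase.
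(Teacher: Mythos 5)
Your proposal is correct and follows essentially the same route as the paper: it upgrades the sequence from Proposition~\ref{prop:BranchedUnbranchedv1} by identifying the leftmost term with the radical via Propositions~\ref{prop:HomologyBoundaryUniversalCover} and~\ref{prop:RadicalFormUniversalCover}, and obtains the splitting from the freeness of $\im(j)\subseteq H_2(\Sigma_2(F))\cong\Z^h$ together with the fact that the radical pairs trivially with everything. The only difference is that you spell out the geometric/naturality justification for the isometry $Q_{\widetilde{X}_F}^{\nd}\cong Q_{\Sigma_2(F)}|_{\im(j)}$, which the paper dispatches in one line by noting that $j$ is inclusion-induced.
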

\begin{proof}
Proposition~\ref{prop:BranchedUnbranchedv1} established the exact sequence
$$
0\to H_2(\mathring{Y}^\varphi)  \to H_2(\widetilde{X}_F) \xrightarrow{j} H_2(\Sigma_2(F)) \xrightarrow{\partial}  \Z_2 \to  0.$$
We also proved in Proposition~\ref{prop:HomologyBoundaryUniversalCover} that the inclusion $\mathring{Y} \subseteq \partial X_F $ induces a $\Z[\Z_2]$-isomorphism $H_2(\mathring{Y}^\varphi) \to  H_2(\partial \widetilde{X}_F)$.
It now follows from Proposition~\ref{prop:RadicalFormUniversalCover} that
\[\im(H_2(\mathring{Y}^\varphi) \to H_2( \widetilde{X}_F))
=\im(H_2(\partial \widetilde{X}_F) \to H_2(\widetilde{X}_F))
=\operatorname{rad}(Q_{\widetilde{X}_F}).\]
This gives rise to the exact sequence~\eqref{eq:BranchedUnbranchedSequence}.
The sentence concerning the $\Z_2$ action was already proved in Proposition~\ref{prop:BranchedUnbranchedv1} and since $j$ is inclusion induced, it gives rise to an isometry $Q_{\widetilde{X}_F}^{\nd} \cong Q_{\Sigma_2(F)}|_{\im(j)}$, as asserted.
The fact that $\im(j) \cong \Z^h$ follows from~\eqref{eq:BranchedUnbranchedSequence} together with Proposition~\ref{prop:BranchedHomology}, according to which $H_2(\Sigma_2(F))\cong \Z^h$.

We conclude by proving the splitting of the intersection form.
Note that~$H_2(\Sigma(F))$ is free abelian, and therefore as a subgroup of this, so is~$H_2(\widetilde{X}_F)/\operatorname{rad}(Q_{\widetilde{X}_F})$. It follows that there is a splitting~$H_2(\wt{X}_F) \cong \operatorname{rad}(Q_{\widetilde{X}_F}) \oplus H_2(\wt{X}_F)/\operatorname{rad}(Q_{\widetilde{X}_F})$, and the form also splits since one summand is the radical.
\end{proof}

Next we describe~$Q_{\widetilde{X}_F}^{\nd}$ in terms of~$Q_{\Sigma_2(F)}$.
Recall that a properly embedded surface~$F \subseteq X$ in a~$4$-manifold~$X$ is \emph{characteristic} if~$[F] \in H_2(X,\partial X;\Z_2)$ is Poincar\'e dual to~$w_2(X) \in H^2(X;\Z_2)$.

\begin{lemma}
\label{lem:Characteristic}
If~$G \subseteq D^4$ is a properly embedded nonorientable surface, then the preimage $\widetilde{G} \subset \Sigma_2(G)$ of~$G$ under the branched covering map is characteristic.
The same assertion holds for closed nonorientable surfaces in $S^4$.
\end{lemma}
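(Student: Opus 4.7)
The plan is to establish the closed case ($G \subseteq S^4$) directly and then deduce the relative case ($G \subseteq D^4$) by a doubling argument. In the closed case, the branched cover $\Sigma_2(G)$ is a closed orientable $4$-manifold (orientability holds because the branch locus has codimension two in the orientable $S^4$), and Wu's formula characterises $w_2(\Sigma_2(G))$ as the unique class $v \in H^2(\Sigma_2(G);\Z_2)$ such that $v \cup \alpha = \alpha \cup \alpha$ for every $\alpha \in H^2(\Sigma_2(G);\Z_2)$. Representing $\alpha = \PD[S]$ by a closed embedded surface $S \subseteq \Sigma_2(G)$, it therefore suffices to show $S \cdot S \equiv S \cdot \widetilde{G} \pmod 2$ for every such $S$.

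For this, let $\tau$ denote the deck involution of $\pi \colon \Sigma_2(G) \to S^4$, which fixes $\widetilde{G}$ pointwise and acts freely on its complement. Since $H_2(S^4;\Z_2) = 0$, the image $\pi(S)$ bounds a $3$-chain $W \subseteq S^4$ which we may take transverse to $G$, and its preimage $\pi^{-1}(W) \subseteq \Sigma_2(G)$ is a $3$-chain whose $\Z_2$-boundary equals $S + \tau S$; hence $[S] = [\tau S]$ in $H_2(\Sigma_2(G);\Z_2)$ and $S \cdot S \equiv S \cdot \tau S \pmod 2$. To see that $S \cdot \tau S \equiv S \cdot \widetilde{G} \pmod 2$, work in local coordinates $(z,w) \in \R^2 \oplus \R^2$ near a transverse intersection point $p \in S \cap \widetilde{G}$ in which $\widetilde{G} = \{w = 0\}$, $S = \{z = 0\}$, and $\tau(z,w) = (z,-w)$. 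Since $\tau S = S$ locally, transversality requires perturbation: replacing $S$ by the graph $S_\varepsilon = \{(z(w), w)\}$ of a generic section $z$ with $z(0) = 0$ and $z'(0)$ invertible yields $\tau S_\varepsilon = \{(z(-w), w)\}$, and the equation $z(w) = z(-w)$ has a unique transverse solution near $p$ at $w = 0$, while any further solutions come in pairs exchanged by $w \mapsto -w$. Globally, any intersection points of $S_\varepsilon \cap \tau S_\varepsilon$ lying off $\widetilde{G}$ lie in free $\tau$-orbits of size two, so $|S_\varepsilon \cap \tau S_\varepsilon| \equiv |S \cap \widetilde{G}| \pmod 2$, which completes the argument.

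For the relative case, form the double $DG := G \cup_K (-G) \subseteq S^4 = D^4 \cup_{S^3} -D^4$, a closed nonorientable surface whose branched cover decomposes as $\Sigma_2(DG) = \Sigma_2(G) \cup_{\Sigma_2(K)} -\Sigma_2(G)$ with branch set $\widetilde{DG} = \widetilde{G} \cup_{\widetilde{K}} -\widetilde{G}$. Applying the closed case to $DG$ gives $\PD[\widetilde{DG}] = w_2(\Sigma_2(DG))$; pulling back under the inclusion $i \colon \Sigma_2(G) \hookrightarrow \Sigma_2(DG)$, using naturality of $w_2$ under restriction to a codimension-zero submanifold and the fact that the Thom class of $\widetilde{DG}$ restricts to the Lefschetz-Poincar\'e dual of $\widetilde{G}$, yields the relative identity $\PD[\widetilde{G}] = w_2(\Sigma_2(G))$. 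The main obstacle to making the above fully rigorous is the transition from the local perturbation near each intersection with $\widetilde{G}$ to a global perturbation making $S_\varepsilon$ everywhere transverse to $\tau S_\varepsilon$; this follows from standard $\tau$-equivariant transversality, but warrants care near the branch locus where the local model dictates the parity of the intersection count.
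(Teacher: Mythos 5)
Your proof is correct, but it takes a genuinely different route from the paper's. The paper's entire argument is two lines: a properly embedded surface in a compact oriented $4$-manifold is characteristic if and only if its exterior is spin (a folklore equivalence, cited to \cite{FriedlNagelOrsonPowell}*{Proposition 7.18}); the exterior of $\widetilde{G}\subseteq\Sigma_2(G)$ is the double cover $\widetilde{X}_G$ of $X_G$, which is spin because $X_G\subseteq D^4$ is spin and covers of spin manifolds are spin. This handles the $D^4$ case directly (and the $S^4$ case identically), with no need for Wu's formula, the deck involution, or doubling. Your argument is instead the classical intersection-theoretic proof that the branch locus upstairs is characteristic: reduce via Wu's formula to showing $x\cdot x\equiv x\cdot[\widetilde G]\pmod 2$, use the bounding-chain/transfer argument to get $[S]=[\tau S]$, and then the local model at the branch locus to get $S\cdot\tau S\equiv S\cdot\widetilde{G}\pmod 2$, with a doubling argument for the relative case. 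Both are valid; the paper's is shorter but offloads the content to an external equivalence (whose own proof is essentially your computation, or an obstruction-theoretic variant of it), while yours is self-contained and makes transparent that nonorientability of $G$ plays no role. Two places where your version needs standard topological-category inputs that you should cite rather than gesture at: Wu's formula $w_2=v_2$ for topological $4$-manifolds, and the representability of every class in $H_2(\Sigma_2(G);\Z_2)$ by a locally flat embedded surface together with topological transversality \cite{FreedmanQuinn}. With those in hand, the equivariant-transversality worry you raise at the end is genuinely only the local computation you already carried out: the set $S_\varepsilon\cap\tau S_\varepsilon$ is automatically $\tau$-invariant whether or not the perturbation is equivariant, so its points off $\widetilde{G}$ pair up under the free action and only the branch-point contributions, which your local model shows are odd, survive mod $2$.
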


\begin{proof}
By Proposition~\ref{prop:Odd}, the intersection form of $\Sigma_2(G)$ is odd, and so $\Sigma_2(G)$ is not spin.
It is a folklore result that a properly embedded surface in a
compact, oriented $4$-manifold is characteristic if and only if the exterior of the surface is spin; see e.g.~\cite[Proposition~7.18]{FriedlNagelOrsonPowell} for a proof.
Since~$G \subseteq D^4$, and~$D^4$ is spin, the exterior~$X_G$ is spin, and thus so is the universal cover~$\widetilde{X}_G$. But~$\widetilde{X}_G$ is the exterior of~$\widetilde{G} \subseteq \Sigma_2(G)$, so the lemma is proved.
\end{proof}

\begin{example}
As an example, if~$G \subseteq S^4$ is unknotted then~$\Sigma_2(G)\cong \#^a \C P^2 \#^b \ol{\C P}^2$ for some~$a,b\in\Z_{\geq 0}$, and~$[\widetilde{G}]\in H_2(\Sigma_2(G);\Z_2)$ is the sum of the~$a+b$ standard classes~$[\C P^1]$, one from each connect summand.
It is not difficult to compute that this $[\widetilde{G}]$ is characteristic by verifying that $Q_{\Sigma_2(G)}(x,x)=Q_{\Sigma_2(G)}(x,[\widetilde{G}])$ mod $2$ for every $x \in H_2(\Sigma_2(G);\Z_2)$.
\end{example}

We return to our fixed $\Z_2$-surface $F$.

\begin{proposition}
\label{prop:MinusFormCalculated}
The inclusion~$j \colon \widetilde{X}_F\hookrightarrow \Sigma_2(F)$ induces an isometric injection
$$j \colon (H_2(\widetilde{X}_F)/\operatorname{rad}(Q_{\widetilde{X}_F}),{Q_{\widetilde{X}_F}^{\nd}})  \hookrightarrow (H_2(\Sigma_2(F)),Q_{\Sigma_2(F)}).$$
Moreover
\[
\im(j)=\{x\in H_2(\Sigma_2(F))\,\,|\,\, Q_{\Sigma_2(F)}(x,x)\equiv 0\mod 2\} \cong \Z^h.
\]
\end{proposition}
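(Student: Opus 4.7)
The plan is to leverage Proposition~\ref{prop:BranchedUnbranched}, which already provides the isometric injection and the exact sequence
\[0\to \operatorname{rad}(Q_{\widetilde{X}_F}) \to H_2(\widetilde{X}_F) \xrightarrow{j} H_2(\Sigma_2(F)) \xrightarrow{\partial}  \Z_2 \to  0.\]
From this it is immediate that $\im(j)$ is a subgroup of index $2$ in $H_2(\Sigma_2(F)) \cong \Z^h$ (Proposition~\ref{prop:BranchedHomology}), and consequently $\im(j) \cong \Z^h$. So the isometry statement and the rank statement are already available; the only remaining task is to identify $\im(j)$ set-theoretically with the subgroup of elements of even self-intersection.

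Write $E := \{x\in H_2(\Sigma_2(F)) : Q_{\Sigma_2(F)}(x,x)\equiv 0 \pmod{2}\}$. First I would show that $E$ is a subgroup of index $2$ in $H_2(\Sigma_2(F))$. By Lemma~\ref{lem:Characteristic}, $\widetilde{F}$ is characteristic in $\Sigma_2(F)$, so $Q_{\Sigma_2(F)}(x,x) \equiv Q_{\Sigma_2(F)}(x,[\widetilde{F}]) \pmod{2}$ for every $x$. Hence $E$ is the kernel of the homomorphism $\rho \colon H_2(\Sigma_2(F)) \to \Z_2$ defined by $\rho(x) = Q_{\Sigma_2(F)}(x,[\widetilde{F}]) \bmod 2$. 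Since $Q_{\Sigma_2(F)}$ is odd by Proposition~\ref{prop:Odd}, there exists $x_0$ with $Q_{\Sigma_2(F)}(x_0,x_0)$ odd, hence $\rho(x_0) = 1$ and $\rho$ is surjective. Therefore $E$ has index exactly~$2$.

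Next I would show $\im(j) \subseteq E$. Any class $y \in H_2(\widetilde{X}_F)$ is represented by a cycle contained in $\widetilde{X}_F = \Sigma_2(F) \setminus \nu \widetilde{F}$, in particular disjoint from~$\widetilde{F}$, so $Q_{\Sigma_2(F)}(j(y),[\widetilde{F}]) = 0$. Combined with the characteristic property of $[\widetilde{F}]$ this gives $Q_{\Sigma_2(F)}(j(y), j(y)) \equiv 0 \pmod{2}$, i.e.\ $j(y) \in E$. Finally, since $\im(j)$ and $E$ are both index $2$ subgroups of $H_2(\Sigma_2(F))$ with $\im(j)\subseteq E$, they coincide.

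No real obstacle here beyond correctly invoking the two ingredients (characteristic property of $\widetilde{F}$ and oddness of $Q_{\Sigma_2(F)}$); both are already in hand from Lemma~\ref{lem:Characteristic} and Proposition~\ref{prop:Odd}.
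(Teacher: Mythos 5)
Your proof is correct, and it rests on the same two pillars as the paper's: the exact sequence and isometric injection from Proposition~\ref{prop:BranchedUnbranched}, and the characteristic property of $[\widetilde{F}]$ from Lemma~\ref{lem:Characteristic}. The only place you diverge is the final identification of $\im(j)$ with the even-square subgroup $E$. The paper identifies the Mayer--Vietoris connecting homomorphism explicitly as $\partial(x)=Q_{\Sigma_2(F)}(x,[\widetilde{F}])$, so that $\im(j)=\ker(\partial)=E$ follows in one line from the characteristic property. You instead prove only the containment $\im(j)\subseteq E$ (via the geometric observation that classes coming from $\widetilde{X}_F$ are represented by cycles disjoint from $\widetilde{F}$) and then close the gap by an index count, using Proposition~\ref{prop:Odd} to see that $E$ has index exactly $2$. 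This trades the need to unwind the connecting homomorphism for an extra appeal to the oddness of $Q_{\Sigma_2(F)}$; both routes are legitimate, and yours is arguably slightly more robust in that it never requires pinning down $\partial$ beyond its surjectivity, which is already recorded in the exact sequence.
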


\begin{proof}
The first assertion was already proved in Proposition~\ref{prop:BranchedUnbranched}, so  we now prove the statement relating the image of~$j$ to mod~$2$ intersections in~$\Sigma_2(F)$.
We know from Proposition~\ref{prop:BranchedUnbranched} that~$ \Z^h\cong \im(j)=\ker(\partial)$.
As a consequence, the assertion reduces to proving that~$\partial(x)\equiv Q_{\Sigma_2(F)}(x,x) \pmod 2$ for every~$x \in H_2(\Sigma_2(F))$.
The definition of the connecting homomorphism in the Mayer-Vietoris exact sequence for~$\Sigma_2(F) = \widetilde{X}_F \cup \ol{\nu}\widetilde{F}$ implies that~$\partial(x)=Q_{\Sigma_2(F)}(x,[\widetilde{F}])\in \Z_2$.
By Lemma~\ref{lem:Characteristic}, the class~$[\widetilde{F}]\in H_2(\Sigma_2(F);\Z_2)$ is Poincar\'{e} dual to~$w_2(\Sigma_2(F))$, and is thus a characteristic vector for~$Q_{\Sigma_2(F)}$ over~$\Z_2$.
Thus~$\partial(x)\equiv Q_{\Sigma_2(F)}(x, [\widetilde{F}]) \equiv Q_{\Sigma_2(F)}(x,x) \pmod2$, which concludes the proof.
 \end{proof}

The next proposition applies Proposition~\ref{prop:MinusFormCalculated} to calculate~$Q_{\widetilde{X}_F}^{\nd}$ and its corresponding quadratic form~$\theta_{\widetilde{X}_F}^{\nd}$.
 The answer will depend on the following quadratic form,  defined for every~$n \in \Z$:
$$
X_n= \operatorname{sgn}(n)\left[\begin{pmatrix}
2&2&2&\ldots &2 \\
0&1&1&\cdots &1\\
0&0&\ddots&\ddots &\vdots \\
\vdots&\ddots&\ddots&\ddots &1\\
0&\ldots&0&0 &1
\end{pmatrix}\right] \in Q_+(\Z^n).$$
The symmetrisation of this form is definite, which is relevant to the next proposition.

Recall that $h$ is the rank of $H_2(\Sigma_2(F))$ and $\sigma$ is the signature of $Q_{\Sigma_2(F)}$.
It follows that $|\sigma| \leq h$ and that $h \equiv \sigma \mod{2}$ when $Q_{\Sigma_2(F)}$ is nonsingular.
Thus $h-|\sigma|$ is nonnegative and even, and if~$\sigma=0$ then $h$ is even.

\begin{proposition}
\label{prop:Calculate}
Assume that $|\det(K)|=1$ and write $\sigma:=\sign(\Sigma_2(F))$.
If~$h \leq 8$ or if~$e$ is non-extremal, then
$$\theta_{\widetilde{X}_F}^{\nd} \cong
\begin{cases}
2H_+(\Z) \oplus H_+(\Z)^{\oplus \frac{h}{2}-1}  &\quad \text{ if } \sigma=0,   \\
X_\sigma \oplus H_+(\Z)^{\oplus \frac{1}{2}(h-|\sigma|)} &\quad \text{ if } \sigma\neq0.
\end{cases}
$$
\end{proposition}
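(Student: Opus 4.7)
The plan is to reduce the proposition to an explicit computation on the even sublattice of $(1)^{\oplus a} \oplus (-1)^{\oplus b}$, and then handle the resulting combinatorics by induction on $\min(a,b)$.

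By Proposition~\ref{prop:IntersectionFormBranched}, the hypotheses give $Q_{\Sigma_2(F)} \cong (1)^{\oplus a} \oplus (-1)^{\oplus b}$ with $a+b=h$ and $a-b=\sigma$. By Proposition~\ref{prop:MinusFormCalculated}, the map $j$ identifies the symmetric bilinear form underlying $\theta_{\widetilde{X}_F}^{\nd}$ with the restriction of $Q_{\Sigma_2(F)}$ to the even sublattice
\[L := \{x \in H_2(\Sigma_2(F)) : Q_{\Sigma_2(F)}(x,x) \in 2\Z\},\]
and the quadratic refinement is $\mu(x) = \tfrac{1}{2}Q_{\Sigma_2(F)}(x,x)$. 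By Corollary~\ref{cor:niceconsequences}, an even symmetric bilinear form admits a unique quadratic refinement, so it suffices to exhibit a basis of $L$ whose Gram matrix matches the symmetrisation of the claimed right hand side form.

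Write the standard basis of $(1)^{\oplus a} \oplus (-1)^{\oplus b}$ as $e_1, \ldots, e_a, f_1, \ldots, f_b$, so that $Q(e_i, e_i) = 1$ and $Q(f_j, f_j) = -1$. I proceed by induction on $\min(a,b)$. The base cases are: (i) when $b = 0$ and $a \geq 1$, the basis $\{2e_1, e_1 + e_2, \ldots, e_1 + e_a\}$ of $L$ has Gram entries $Q(2e_1,2e_1)=4$, $Q(2e_1,e_1+e_j)=2$, $Q(e_1+e_i,e_1+e_i)=2$, and $Q(e_1+e_i,e_1+e_j)=1$ for $2\leq i<j$, which is exactly the symmetrisation of $X_a$; (ii) symmetrically, $a = 0$ realises $X_{-b}$; and (iii) when $(a,b) = (1,1)$, the basis $\{e_1 + f_1,\, e_1 - f_1\}$ has Gram matrix $\left(\begin{smallmatrix} 0 & 2 \\ 2 & 0 \end{smallmatrix}\right)$, which is the symmetrisation of $2H_+(\Z)$.

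For the inductive step, assume $a,b \geq 1$ with $a + b \geq 3$; by the symmetry $Q \leftrightarrow -Q$ I may assume $a \geq 2$. The pair $u := e_1 + f_1$, $u' := e_2 - f_1$ lies in $L$, is isotropic, and satisfies $Q(u, u') = 1$, so it generates an orthogonal hyperbolic summand $H_+(\Z) \subseteq L$. A direct computation shows that the orthogonal complement of this summand in $L$ is spanned by $v_0 := e_1 - e_2 + f_1$ together with $e_3, \ldots, e_a, f_2, \ldots, f_b$: these are mutually orthogonal with $Q(v_0, v_0) = 1$, and the parity constraint inherited from $L$ precisely cuts out the even sublattice of the rank-$(a+b-2)$ lattice $(1)^{\oplus (a-1)} \oplus (-1)^{\oplus (b-1)}$. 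Iterating this step $\min(a,b)$ times when $a \neq b$ reduces to the base case $(a,b) = (|\sigma|, 0)$ or $(0, |\sigma|)$, yielding $X_\sigma \oplus H_+(\Z)^{\oplus (h-|\sigma|)/2}$; when $a = b$, iterating $a - 1$ times reduces to $(a,b) = (1,1)$, yielding $2H_+(\Z) \oplus H_+(\Z)^{\oplus h/2 - 1}$. The main obstacle is the bookkeeping of the inductive step, in particular verifying that the parity condition on $L$ restricts correctly to the even sublattice condition in the new basis $\{v_0, e_3, \ldots, e_a, f_2, \ldots, f_b\}$; this is elementary linear algebra but requires care.
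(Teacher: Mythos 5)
Your proof is correct, and the core lattice computation takes a genuinely different route from the paper's. Both arguments share the same reduction: Proposition~\ref{prop:MinusFormCalculated} identifies $\theta_{\widetilde{X}_F}^{\nd}$ with the (unique, by Corollary~\ref{cor:niceconsequences}) quadratic refinement of the restriction of $Q_{\Sigma_2(F)}$ to its even sublattice $L$, and Proposition~\ref{prop:IntersectionFormBranched} gives $Q_{\Sigma_2(F)} \cong (1)^{\oplus a}\oplus(-1)^{\oplus b}$. From there the paper invokes the classification of odd indefinite unimodular forms \cite[Theorem~II.4.3]{HusemollerMilnor} a second time, to rewrite $(1)^{\oplus a}\oplus(-1)^{\oplus b}$ as $\operatorname{sgn}(\sigma)I_{|\sigma|}\oplus H^+(\Z)^{\oplus\frac{1}{2}(h-|\sigma|)}$ (resp.\ $(1)\oplus(-1)\oplus H^+(\Z)^{\oplus \frac{h}{2}-1}$) \emph{before} restricting, so that the hyperbolic summands, being even, pass directly into $L$ and only the definite block needs the explicit basis $2e_1,e_1+e_2,\dots$. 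You instead peel hyperbolic planes off the even sublattice itself, via $u=e_1+f_1$, $u'=e_2-f_1$ and induction on $\min(a,b)$; this trades the second appeal to the classification theorem for explicit linear algebra, and the inductive step is sound: $\langle u,u'\rangle$ is even and unimodular, hence splits off $L$ orthogonally, its complement in the ambient lattice is the diagonal lattice on $v_0,e_3,\dots,e_a,f_2,\dots,f_b$, and $L$ meets that complement exactly in its even sublattice since the form is even on $\langle u,u'\rangle$. The one quibble is phrasing: the complement of $\langle u,u'\rangle$ in $L$ is not \emph{spanned} by $v_0,e_3,\dots,f_b$ — it is the index-two even sublattice of their span, as your next clause makes clear. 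Your base cases and the iteration count ($\min(a,b)$ steps, or $a-1$ steps when $\sigma=0$) match the claimed forms, so the argument goes through.
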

\begin{proof}
We calculate~$Q_{\widetilde{X}_F}^{\nd}$, as this entirely determines the quadratic form~$\theta_{\widetilde{X}_F}^{\nd}$.
We will use Proposition~\ref{prop:MinusFormCalculated}, which shows that the inclusion~$\widetilde{X}_F \subseteq \Sigma_2(F)$ induces an isometry between the form~$Q_{\widetilde{X}_F}^{\nd}$ and the restriction of~$Q_{\Sigma_2(F)}$ to
\begin{equation}\label{eqn:zero-mod-2}
  \im(j)=\lbrace x \in H_2(\Sigma_2(F)) \mid Q_{\Sigma_2(F)}(x,x)=0 \  \operatorname{ mod } \  2\rbrace.
\end{equation}
Assume $e$ is non-extremal.
Using Proposition~\ref{prop:IntersectionFormBranched}, and recalling that $a,b \in \N_0$ are defined to be the unique integers such that~$h=a+b$ and~$\sigma=a-b$,  the classification of odd indefinite nonsingular symmetric bilinear forms~\cite[Theorem~II.4.3]{HusemollerMilnor},  which says that such forms are isometric if and only if they have the same rank and signature, gives
\begin{equation}
\label{eq:BranchedAgain}
Q_{\Sigma_2(F)}\cong(1)^{\oplus a} \oplus (-1)^{\oplus b}
\cong
\begin{cases}
  \operatorname{sgn}(\sigma)I_{|\sigma|} \oplus H^+(\Z)^{\oplus \frac{1}{2}(h-|\sigma|)} & \quad \text{ if } \sigma \neq 0, \\
 (1) \oplus (-1) \oplus H^+(\Z)^{\frac{h}{2}-1} & \quad \text{ if } \sigma = 0.
 \end{cases}
\end{equation}
Indeed these forms have signature~$\sigma$, rank~$h$, and are indefinite and odd. Such symmetric bilinear forms are unique up to isometry.
When $e$ is extremal and $h \leq 8$ we proved in Proposition~\ref{prop:IntersectionFormBranched} that $Q_{\Sigma_2(F)}\cong\operatorname{sgn}(\sigma) (1)^{\oplus h}$ and, since $|\sigma|=h \neq 0$, the isomorphisms in~\eqref{eq:BranchedAgain} hold trivially.

We now calculate~$Q_{\Sigma_2(F)}|_{\im(j) \times \im(j)}$.
The hyperbolic summands of~$Q_{\Sigma_2(F)}$ lead to (the same number of) hyperbolic summands in~$Q_{\Sigma_2(F)}|_{\im(j) \times \im(j)}$. Using~\eqref{eqn:zero-mod-2},  this is because hyperbolics are even and therefore all squares are zero mod~$2$.

We first assume that~$\sigma=0$,  and analyse the restriction of~$B:=(1)\oplus (-1)$ to~$\im(j)$.
Use~$e_1,e_2$ to denote the canonical basis of~$\Z^2$. By~\eqref{eqn:zero-mod-2}, \[\im(j) = \{v_1e_1 + v_2e_2 \mid v_1 +v_2 \equiv 0 \mod{2}\}.\]
Whence observe that~$\im(j)$ admits~$2e_1,e_1+e_2$ as a basis.
Evaluating~$B$ on these basis vectors yields~$\bsm 4&2 \\ 2&0 \esm \cong 2H^+(\Z).$
Passing to quadratic forms yields the result in this case.

We conclude with the case~$\sigma \neq 0$.
We need to analyse the restriction of~$B':=\operatorname{sgn}(\sigma) I_{|\sigma|}$ to~$\im(j)$.
Use~$e_1,\ldots,e_{|\sigma|}$ to denote the canonical basis of~$\Z^{|\sigma|}$.  By~\eqref{eqn:zero-mod-2},
\[\im(j) = \Big\{\sum_{i=1}^{|\sigma|} v_ie_i \,\Big| \, \sum_{i=1}^{|\sigma|} v_i  \equiv 0 \mod{2}\Big\}.\]
  We can therefore compute  that~$\im(j)$ admits~$2e_1,e_1+e_2,e_1+e_3,\ldots,e_1+e_{|\sigma|}$ as a basis.
Evaluating~$B'$ on these basis vectors yields the matrix~$X_\sigma+X_\sigma^T$.
Passing to the quadratic forms yields the result in this case.
\end{proof}

\subsection{The~$\texorpdfstring{\Z[\Z_2]}{Z[Z/2]}$-intersection form of the exterior }

We describe the nondegenerate Hermitian form~$\lambda_{X_F}^{\nd}$ induced by the equivariant intersection~$\lambda_{X_F} \colon H_2(\widetilde{X}_F) \times H_2(\widetilde{X}_F) \to \Z[\Z_2]$ on the quotient~$H_2(\widetilde{X}_F)/\operatorname{rad}(\lambda_{X_F})$.

\begin{proposition}
\label{prop:InterectionFormExterior}
The radical of the equivariant intersection form~$\lambda_{X_F}$ of~$X_F$ equals that of the standard intersection form of~$\widetilde{X}_F$ under the isomorphism~$H_2(\wt{X}_F) \cong H_2(X_F;\Z[\Z_2])$. On the quotient~$H_2(\widetilde{X}_F)/\operatorname{rad}(\lambda_{X_F}) \cong \Z_-^h$ we have~$\lambda_{X_F}^{\nd}=(1-T)Q_{\widetilde{X}_F}^{\nd}$.
\end{proposition}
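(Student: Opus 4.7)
The plan is to extract both assertions directly from the standard formula relating the equivariant intersection form on $\widetilde{X}_F$ to the ordinary $\Z$-intersection form. Concretely, for $x,y \in H_2(\widetilde{X}_F)$ one has
\[
\lambda_{X_F}(x,y) = Q_{\widetilde{X}_F}(x,y)\cdot 1 + Q_{\widetilde{X}_F}(x,Ty)\cdot T \in \Z[\Z_2],
\]
where $T$ denotes the generator of the deck transformation group. This is the only structural input I would need; everything else is linear algebra combined with the $\Z[\Z_2]$-structure result from Proposition~\ref{prop:RadicalFormUniversalCover}.

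For the first claim, I would argue that an element $x$ lies in $\operatorname{rad}(\lambda_{X_F})$ if and only if $\lambda_{X_F}(x,y)=0$ in $\Z[\Z_2]$ for every $y$, which, reading off coefficients, is equivalent to $Q_{\widetilde{X}_F}(x,y)=0$ and $Q_{\widetilde{X}_F}(x,Ty)=0$ for every $y$. Since $y\mapsto Ty$ is a bijection of $H_2(\widetilde{X}_F)$, the second condition is implied by the first, and so $\operatorname{rad}(\lambda_{X_F})=\operatorname{rad}(Q_{\widetilde{X}_F})$. Under the identification $H_2(\widetilde{X}_F)\cong H_2(X_F;\Z[\Z_2])$ this gives the first assertion.

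For the second claim, I would invoke Proposition~\ref{prop:RadicalFormUniversalCover}, which identifies the quotient $H_2(\widetilde{X}_F)/\operatorname{rad}(Q_{\widetilde{X}_F})$ with $\Z_-^h$ as a $\Z[\Z_2]$-module; in particular, $Ty \equiv -y$ modulo the radical. Consequently, for any $x,y \in H_2(\widetilde{X}_F)$,
\[
Q_{\widetilde{X}_F}(x,Ty) = Q_{\widetilde{X}_F}(x,-y) = -Q_{\widetilde{X}_F}^{\nd}([x],[y]),
\]
where $[x],[y]$ denote classes in the quotient. Substituting into the boxed formula above yields
\[
\lambda_{X_F}^{\nd}([x],[y]) = Q_{\widetilde{X}_F}^{\nd}([x],[y])\cdot 1 - Q_{\widetilde{X}_F}^{\nd}([x],[y])\cdot T = (1-T)\,Q_{\widetilde{X}_F}^{\nd}([x],[y]),
\]
as required.

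There is no serious obstacle here; the statement is essentially a bookkeeping consequence of the observation that $T$ acts by $-1$ on the nondegenerate quotient, which was already established in Proposition~\ref{prop:RadicalFormUniversalCover}. The only point that merits care is verifying that $\lambda_{X_F}^{\nd}$ is well-defined on the quotient, which follows immediately once we know $\operatorname{rad}(\lambda_{X_F})=\operatorname{rad}(Q_{\widetilde{X}_F})$; this in turn justifies passing from $\lambda_{X_F}$ to $\lambda_{X_F}^{\nd}$ in the final equation.
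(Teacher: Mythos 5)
Your proposal is correct, and the second half (the computation of $\lambda_{X_F}^{\nd}$ on the quotient via $Ty\equiv -y$) is verbatim the paper's calculation. Where you diverge is in the first claim, the identification of the two radicals. The paper reruns the Poincar\'e-duality argument of Proposition~\ref{prop:RadicalFormUniversalCover} with $\Z[\Z_2]$ coefficients: using $H_3(X_F,\partial X_F;\Z[\Z_2])\cong H^1(X_F;\Z[\Z_2])=0$ it identifies $\operatorname{rad}(\lambda_{X_F})$ with $\im\bigl(H_2(\partial X_F;\Z[\Z_2])\to H_2(X_F;\Z[\Z_2])\bigr)$, which by Proposition~\ref{prop:RadicalFormUniversalCover} is exactly $\operatorname{rad}(Q_{\widetilde{X}_F})$. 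You instead read off the two $\Z$-coefficients of $\lambda_{X_F}(x,y)=Q_{\widetilde{X}_F}(x,y)+Q_{\widetilde{X}_F}(x,Ty)T$ and note that the second vanishing condition is redundant because $y\mapsto Ty$ is a bijection. Your argument is more elementary and entirely self-contained given the coefficient formula (which the paper itself uses in the final display of its proof, so it is available); the paper's route has the side benefit of exhibiting the radical geometrically as the image of the boundary homology with equivariant coefficients, but that extra information is not needed for the statement. Both arguments are sound, and your handling of well-definedness of $\lambda_{X_F}^{\nd}$ on the quotient is the right remark to make.
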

\begin{proof}
The same reasoning as in Proposition~\ref{prop:RadicalFormUniversalCover}, but now using that~$H_3(X_F,\partial X_F;\Z[\Z_2]) \cong H^1(X_F; \Z[\Z_2]) =0$, shows that the radical of~$\lambda_{X_F}$ is the image of~$H_2(\partial X_F;\Z[\Z_2])$ and this is precisely~$\operatorname{rad}(Q_{\widetilde{X}_F})$.
Use Proposition~\ref{prop:RadicalFormUniversalCover} again
 to observe that
\begin{equation}
\label{eq:ModOutTheSame}
H_2(X_F;\Z[\Z_2])/\operatorname{rad}(\lambda_{X_F})=
H_2(\widetilde{X}_F)/\operatorname{rad}(\lambda_{X_F})
=H_2(\widetilde{X}_F)/\operatorname{rad}(Q_{\widetilde{X}_F})
\cong \Z^h_-.
\end{equation}
The last part of the assertion now follows from a direct calculation using the definition of~$\lambda_{X_F}$. Indeed, for any pair of elements~$x,y \in H_2(\widetilde{X}_F)/\operatorname{rad}(\lambda_{X_F})$, we have
$$ \lambda_{X_F}^{\nd}(x,y)=Q_{\widetilde{X}_F}(x,y)+Q_{\widetilde{X}_F}(x,Ty)T = Q_{\widetilde{X}_F}(x,y)+Q_{\widetilde{X}_F}(x,-y)T =(1-T)Q_{\widetilde{X}_F}^{\nd}(x,y).$$
This concludes the proof of Proposition~\ref{prop:InterectionFormExterior}.
\end{proof}

\section{A spin union of surface exteriors}
\label{sec:Spin}

We will show that if two $\Z_2$-surfaces in $D^4$ with the same connected, nonempty boundary have the same nonorientable genus and normal Euler number, then it is possible to glue their exteriors together so that the union is both spin and has fundamental group $\Z_2$.

\begin{notation}
In this section, it will be necessary to be particularly careful with tubular neighbourhoods.
We will use the notation $\nu_{A\subseteq B}$ for the normal vector bundle of a submanifold $A\subseteq B$ (as opposed to $\nu A\subseteq B$, which is an open tubular neighbourhood).
We will assume that every surface~$F \subseteq D^4$ comes equipped with a choice of tubular neighbourhood, i.e.\ a choice of continuous map~$\nu_{F \subseteq D^4} \to D^4$ from the total space of the normal vector bundle of $F$ whose restriction to the unit disc bundle is a homeomorphism to its image:
$$ \alpha \colon (D(\nu_{F \subseteq D^4}),S(\nu_{F \subseteq D^4}))  \xrightarrow{\cong}  (\alpha(D(\nu_{F \subseteq D^4})), \alpha(S(\nu_{F \subseteq D^4}))), $$
and which when composed with  the zero section gives the original embedding of $F$.
We refer to the disc bundle $\alpha(D(\nu_{F \subseteq D^4}))$ as a \emph{tubular neighbourhood} of  $F \subseteq D^4$.

Moreover, for each knot $K\subseteq S^3$, we choose a tubular neighbourhood and assume that, if a surface $F\subseteq D^4$ has connected nonempty boundary $K\subseteq S^3$, then the tubular neighbourhood of $F$ extends the given one of $K$. In this way, if two such surfaces have the same connected boundary~$K$, their tubular neighbourhoods will agree precisely upon restriction to the boundary.
Throughout this section it will also be helpful to recall that~$\partial X_{F} \cong X_K \cup_{\partial \ol{\nu}K} \alpha(S(\nu_{F \subseteq D^4}))$.
Moreover, one should recall that if a manifold is orientable (resp.~spin), then so is every codimension~0 submanifold, and so is the boundary. In particular, as $D^4$ is spin so are $X_{F}$, the open tubular neighbourhood $\nu F$, and $\partial X_F$.
\color{black}
\end{notation}

The \emph{meridian} of a $\Z_2$-surface $F \subseteq D^4$ refers to the homotopy class $T \in \pi_1(X_F)$ of the boundary of any $D^2$-fibre of the tubular neighbourhood~$\alpha(D(\nu_{F \subseteq D^4}))$, together with a basing path.
The meridian of~$F$
\begin{itemize}
\item generates $\pi_1(X_F) \cong H_1(X_F) \cong \Z_2$;
\item lies in the image of the inclusion induced map $\iota \colon \pi_1(\partial X_F) \to \pi_1(X_F)$.
\end{itemize}
We will also refer to an element $\mu \in \pi_1(\partial X_F)$ as a meridian if it satisfies $\iota(\mu)=T$.
Given two nonorientable surfaces $F_0,F_1 \subseteq  D^4$, a homomorphism $\varphi \colon \pi_1(\partial X_{F_0})  \to \pi_1(\partial X_{F_1})$ is said to \emph{preserve meridians} if there exists a meridian $\mu_0 \in \pi_1(\partial X_{F_0})$ with $\varphi \circ \iota_1  (\mu_0)=T_1$.

\begin{definition}
\label{def:NiceSection}
A section~$s \colon F \to S(\nu_{F \subseteq D^4})$ is \emph{nice} if the composition
\[
 H_1(F) \xrightarrow{{s}_*} H_1(S(\nu_{F \subseteq D^4})) \xrightarrow{\alpha_*} H_1(\partial X_F)\xrightarrow{\iota_*} H_1(X_F),
 \]
is trivial, where~$\iota\colon \partial X_F\to X_F$ is the inclusion.
\end{definition}

Recall from Definition~\ref{def:Extendable} that a homeomorphism $f \colon S(\nu F_0 ) \to S(\nu F_1 )$ that restricts to the identity map on $S(\nu K)$ is \emph{$\nu$-extendable rel.~boundary}
 if it extends to a homeomorphism $\ol{\nu} F_0  \cong \ol{\nu} F_1 $ of the closed tubular neighbourhoods that sends $F_0$ to~$F_1$,  and restricts to the identity map on~$\nu K$.
The main result of this section is the following.

\begin{proposition}\label{prop:UnionSpin}
Let~$F_0, F_1\subseteq D^4$ be two~$\Z_2$-surfaces of nonorientable genus~$h$, with the same connected nonempty boundary $K\subseteq S^3$, and each with normal Euler number~$e$. Fix a spin structure~$\s\in \Spin (\partial X_{F_1})$ that extends to $X_{F_1}$.
Then there exists an orientation preserving homeomorphism~$f\colon \partial X_{F_0} \to \partial X_{F_1}$ such that:
\begin{enumerate}
\item\label{item:1spin} the spin structure~$f^*\s\in\Spin (\partial X_{F_0})$ extends to~$X_{F_0}$;
\item\label{item:2spin} on $X_K\subseteq X_{F_0}$, the map $f$ restricts to the identity map $\Id_{X_K}$;
\item\label{item:3spin} on $\partial \ol{\nu} F_0 \sm \nu K = \alpha_0(S(\nu_{F_0 \subseteq D^4}) \subseteq X_{F_0})$, the map $f$ restricts to a homeomorphism  from 
$\alpha_0(S(\nu_{F_0 \subseteq D^4}))$ to $\alpha_1(S(\nu_{F_1 \subseteq D^4}))$,  that is $\nu$-extendable rel.~boundary;
\item\label{item:4spin} the map~$f_*\colon \pi_1(\partial X_{F_0})\to \pi_1(\partial X_{F_1})$ preserves meridians;
\item\label{item:5spin} there exist nice sections $s_0 \colon F_0 \to S(\nu_{F_0 \subseteq D^4})$ and $s_1 \colon F_1 \to S(\nu_{F_1 \subseteq D^4})$ which are preserved by
 $f$, in the sense that $f(\alpha_0(s_0(F_0))) = \alpha_1(s_1(F_1)) \subseteq \alpha_1(S(\nu_{F_1 \subseteq D^4}))$.
 \color{black}
\end{enumerate}
\end{proposition}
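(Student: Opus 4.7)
The plan is to first construct an initial homeomorphism $f_0\colon \partial X_{F_0} \to \partial X_{F_1}$ satisfying the purely geometric conditions (2)--(5), and then to modify $f_0$ by self-homeomorphisms of $\partial X_{F_0}$ to achieve the spin condition (1) without disturbing (2)--(5). For the initial construction, observe that $F_0$ and $F_1$ are abstractly homeomorphic compact nonorientable surfaces of genus $h$ with $\partial F_i = K$, and their normal bundles $\nu_{F_i \subseteq D^4}$ are rank-$2$ bundles over them with the same Euler number $e$. Hence there is a homeomorphism of disc bundle pairs $\Phi\colon (\bar\nu_{F_0 \subseteq D^4},\bar\nu K) \to (\bar\nu_{F_1 \subseteq D^4},\bar\nu K)$ sending $F_0$ to $F_1$. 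After adjusting $\Phi$ by an isotopy of $\bar\nu_{F_1 \subseteq D^4}$ supported near $K$, we may arrange that $\Phi$ restricts to the identity on $\bar\nu K$. Combining $\Phi|_{S(\nu_{F_0 \subseteq D^4})}$ with $\Id_{X_K}$ along $\partial\bar\nu K$ yields $f_0$, which satisfies (2) and (3) tautologically, satisfies (4) because fibre-preserving bundle maps preserve meridians, and can be arranged to satisfy (5) by first choosing a nice section $s_1$ of $S(\nu_{F_1 \subseteq D^4})$ (these exist because $F_1$ deformation retracts onto a $1$-complex, allowing us to control the homology class of a section) and then taking $s_0 := \Phi^{-1}\circ s_1$.

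Next I would address (1). Since $X_{F_0}$ is spin, the restriction map $\Spin(X_{F_0})\to \Spin(\partial X_{F_0})$ has nonempty image, which is a coset in the $H^1(\partial X_{F_0};\Z_2)$-torsor $\Spin(\partial X_{F_0})$, and we must realize $f^*\s$ inside this coset. The candidate $f_0^*\s$ might not lie there. The natural modifications available are precompositions $f = g\circ f_0$ with a self-homeomorphism $g\colon \partial X_{F_0}\to \partial X_{F_0}$ that is the identity on $X_K$, restricts to a $\nu$-extendable rel.~boundary self-homeomorphism of $\alpha_0(S(\nu_{F_0\subseteq D^4}))$, preserves the chosen meridian, and preserves the nice section $\alpha_0(s_0(F_0))$ setwise. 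Explicit candidates for $g$ are products of Dehn twists along tori $\pi^{-1}(\gamma)\subseteq S(\nu_{F_0\subseteq D^4})$ performed in the fibre direction, for embedded circles $\gamma \subseteq F_0$ disjoint from $\partial F_0$ and from $s_0(F_0)$. These are $\nu$-extendable (they extend by the corresponding disc-fibre Dehn twist on $\bar\nu_{F_0\subseteq D^4}$), are supported away from $\bar\nu K$, and act on $\Spin(\partial X_{F_0})$ by a Poincar\'e-dual shift that is readable from the Guillou--Marin quadratic refinement.

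The main obstacle is Step~3: showing that this collection of fibre Dehn twists surjects onto the quotient group $H^1(\partial X_{F_0};\Z_2)/\im(H^1(X_{F_0};\Z_2))$ parametrising cosets of extendable spin structures, and, simultaneously, that among the resulting modifications there is one whose effect on $f_0^*\s$ is precisely the required shift. Both the parametrisation and the action are computed using the $\Z_4$-valued Guillou--Marin form $q_{GM}\colon H_1(F_i;\Z_2)\to \Z_4$ of Guillou--Marin~\cite{GuillouMarin} and the Kirby--Taylor framework~\cite{KirbyTaylor}, together with the Massey-type congruence of Theorem~\ref{thm:MasseyBoundary}. Crucially, because $F_0$ and $F_1$ share the nonorientable genus $h$, the normal Euler number $e$, and the boundary $K$, the extendable spin cosets on the two sides correspond under the bundle identification $\Phi$ modulo exactly the shifts realisable by the fibre Dehn twists above, so a suitable $g$ exists. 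Verifying that we can achieve this while also fixing the nice section (which is the most delicate of conditions (2)--(5) to preserve) is the crux, and is arranged by first choosing the circles $\gamma$ to miss $s_0(F_0)$ and then confirming that such curves still span the needed subgroup of $H_1(F_0;\Z_2)$.
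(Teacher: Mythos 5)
There is a genuine gap, and it sits exactly where you flag ``the crux.'' Your correction step asks for self-homeomorphisms $g$ of $\partial X_{F_0}$ that (a) shift the spin structure by an arbitrary prescribed element of the rank-$h$ part of $H^1(\partial X_{F_0};\Z_2)$ dual to $H_1(s_0(F_0);\Z_2)$, and (b) preserve the nice section $\alpha_0(s_0(F_0))$ setwise. These two demands are incompatible for fibre Dehn twists. The torus $\pi^{-1}(\gamma)$ contains the curve $s_0(\gamma)$, and more to the point, a fibre-direction twist along $\pi^{-1}(\gamma)$ changes the induced spin structure on a section curve $s_0(\delta)$ precisely when $\delta\cdot\gamma\neq 0$ in $H_1(F_0;\Z_2)$ --- but in that case the twist inserts a meridian into $s_0(\delta)$, so the image of the section is no longer (isotopic through sections to) $s_0(F_0)$, violating~(5). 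Conversely, any twist whose support genuinely misses $s_0(F_0)$ shifts the spin structure by a class vanishing on $H_1(s_0(F_0);\Z_2)$, i.e.\ it can only act in the meridian-dual direction; since the defect $q_{KT_0}(f_0^*\s)-\widehat q_{GM_0}$ is in general a nonzero linear functional on $H_1(s_0(F_0);\Z_2)$, such twists cannot correct it. So the family of modifications you allow does not surject onto the group you need, and the argument as written fails.

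The paper sidesteps the correction step entirely: it chooses the surface homeomorphism $\varphi\colon F_0\to F_1$ \emph{from the outset} to be an isometry of the Guillou--Marin $\Z_4$-forms (Corollary~\ref{cor:realiseisometry}: equal Euler number and equal boundary force equal Brown--Kervaire invariants by Theorem~\ref{thm:GM}, hence isometric forms, and the isometry is realised by a homeomorphism rel.\ $K$ by Gadgil--Pancholi). It then proves that a spin structure on $\mathring Y$ extends to $X_F$ if and only if its Kirby--Taylor refinement equals the exterior Guillou--Marin refinement (Lemma~\ref{lem:optimistic} and Corollary~\ref{cor:sumitup}), so that with $\varphi$ a GM-isometry the chain $\widehat q_{GM_0}=\widehat q_{GM_1}\circ\Phi_*=q_{KT_1}(\mathfrak t)\circ\Phi_*=q_{KT_0}(\Phi^*\mathfrak t)$ gives extendability of $f^*\s$ with no further modification. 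If you want to salvage your two-step structure, you would need to absorb the correction into the choice of the underlying homeomorphism $F_0\to F_1$ (equivalently, precompose $\varphi_0$ with a mapping class of $F_0$ realising the right isometry of $(H_1(F_0;\Z_2),Q_{F_0},q_{GM_0})$), which is essentially the paper's route.
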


Some preparation is required for the proof of this proposition, which is the goal of this section.
 But, assuming the proposition, we record the main consequence.

\begin{corollary}
\label{cor:UnionPi1Z2}
The union~$X_{F_0}\cup_f -X_{F_1}$ is spin and has fundamental group~$\Z_2$.
\end{corollary}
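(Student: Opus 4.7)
The plan is to treat the two assertions separately. For the spin structure on $M := X_{F_0}\cup_f -X_{F_1}$, the argument will be immediate: by property~\ref{item:1spin}, the pulled-back spin structure $f^*\s$ on $\partial X_{F_0}$ extends to a spin structure $\s_0$ on $X_{F_0}$, while by hypothesis $\s$ itself extends to a spin structure $\s_1$ on $X_{F_1}$. By construction, $\s_0$ and $\s_1$ restrict to corresponding spin structures on the glued boundary via $f$, and hence assemble into a single spin structure on $M$.

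For the computation of $\pi_1(M)$, I would apply Seifert--van Kampen (after thickening $X_{F_0}$ and $X_{F_1}$ slightly so that they form an open cover of $M$) to obtain
\[
\pi_1(M) \cong \pi_1(X_{F_0}) *_{\pi_1(\partial X_{F_0})} \pi_1(X_{F_1}),
\]
with $\pi_1(\partial X_{F_0})$ identified with $\pi_1(\partial X_{F_1})$ via $f_*$. Each $\pi_1(X_{F_i}) \cong \Z_2$ is generated by the meridian $T_i$ since $F_i$ is a $\Z_2$-surface, and the inclusion-induced map $\pi_1(\partial X_{F_i}) \twoheadrightarrow \Z_2$ is surjective.

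The key step is to check that this amalgamated product collapses to exactly $\Z_2$. By property~\ref{item:4spin}, there exists $\mu_0 \in \pi_1(\partial X_{F_0})$ that maps to $T_0$ via inclusion into $X_{F_0}$ and to $T_1$ via $f_*$ followed by inclusion into $X_{F_1}$. In the pushout this imposes $T_0 = T_1$, so $\pi_1(M)$ is a cyclic group of order at most two. To rule out that it is trivial, I would argue that the two composite homomorphisms $p,q\colon \pi_1(\partial X_{F_0})\to\Z_2$ (via $X_{F_0}$ and via $f_*$ followed by inclusion into $X_{F_1}$, respectively) actually coincide. Both factor through $H_1(\partial X_{F_0})$ and send a loop to its mod-$2$ linking number with $F_0$, respectively $F_1$; by property~\ref{item:3spin}, $f$ extends to a homeomorphism $\overline{\nu}F_0 \to \overline{\nu}F_1$ sending $F_0$ to $F_1$, so these mod-$2$ linking numbers are preserved and $p=q$. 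Consequently the remaining relations $p(c)=q(c)$ imposed by the amalgamation are tautological, and $\pi_1(M)\cong\Z_2$.

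The main obstacle will be this last equality $p=q$. The bare meridian-preservation condition~\ref{item:4spin} only identifies one element of the kernels of $p$ and $q$, which would not by itself prevent the pushout from collapsing to the trivial group; it is the stronger $\nu$-extendability of property~\ref{item:3spin}, preserving the full tubular-neighborhood pair, that forces full agreement of the linking-number homomorphisms and thus keeps $\pi_1(M)$ from becoming trivial.
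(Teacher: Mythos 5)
Your overall strategy for the fundamental group --- van Kampen plus showing that the two composites $p,q\colon \pi_1(\partial X_{F_0})\to \Z_2$ agree --- is the same as the paper's, and your treatment of the spin statement is fine. The gap is in your justification of $p=q$. You claim that $\nu$-extendability (property~\eqref{item:3spin}) forces $\operatorname{lk}_2(\gamma,F_0)=\operatorname{lk}_2(f(\gamma),F_1)$ because $f$ extends to a homeomorphism of pairs $(\ol{\nu} F_0,F_0)\cong(\ol{\nu} F_1,F_1)$. But the mod-$2$ linking number of a loop $\gamma\subseteq S(\nu F_i)$ with $F_i$ is computed from a $2$-chain in $D^4$ bounded by $\gamma$, and such a chain necessarily leaves the tubular neighbourhood (unless $\gamma$ is nullhomologous there); it is \emph{not} an invariant of the triple $(\ol{\nu} F_i,F_i,\gamma)$ up to homeomorphism. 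Concretely, over a loop $\delta\subseteq F_0$ on which $w_1$ is nontrivial, the two isotopy classes of sections of $S(\nu F_0)|_\delta$ differ in $H_1(\partial X_{F_0})$ by a meridian, hence have different mod-$2$ linking numbers with $F_0$; which of the two links trivially is extrinsic information about how $F_0$ sits in $D^4$, invisible to the tubular neighbourhood homeomorphism $\Phi$. Composing a given $\nu$-extendable, meridian-preserving $f$ with a bundle gauge transformation that swaps these two sections over $\delta$ produces another $f$ still satisfying properties~\eqref{item:2spin}--\eqref{item:4spin} but with $p\neq q$, for which the van Kampen pushout collapses to the trivial group. So the implication you rely on is false, and the "main obstacle" you identify is not overcome by property~\eqref{item:3spin}.

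This is precisely why Proposition~\ref{prop:UnionSpin} records the additional property~\eqref{item:5spin}: $f$ is constructed so as to carry a \emph{nice} section $s_0$ (Definition~\ref{def:NiceSection}, i.e.\ one whose image lies in $\ker(\iota_*)$) to a nice section $s_1$. With that, the generators $s_0(\gamma_1),\dots,s_0(\gamma_h)$ of the surface part of $\pi_1(\partial X_{F_0})$ die under both $p$ and $q$; the $X_K$ part is handled by property~\eqref{item:2spin} (both composites factor through $\pi_1(X_K)\to H_1(X_K)\cong\Z\to\Z_2$), and the meridian by property~\eqref{item:4spin}. Together these give $p=q$. Your argument is repaired by replacing the appeal to property~\eqref{item:3spin} at the crucial step with an appeal to property~\eqref{item:5spin}.
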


\begin{proof}[Proof of Corollary~\ref{cor:UnionPi1Z2}]
It follows immediately from Proposition~\ref{prop:UnionSpin} that the union is spin.
To see that the fundamental group is as claimed, we apply the Seifert--van-Kampen theorem, computing the push-out
 of the diagram
\[
\begin{tikzcd}
\langle T_1 \mid {T_1}^2=1\rangle = \pi_1(X_{F_1})
&& \ar[ll, "(\iota_1\circ f)_*"'] \pi_1(\partial X_{F_0})\ar[rr, "(\iota_0)_*"]
&& \pi_1(X_{F_0}) = \langle T_0 \mid {T_0}^2=1\rangle,
\end{tikzcd}
\]
where~$\iota_j\colon \partial X_{F_j}\to X_{F_j}$ denote the inclusion maps.
To understand this diagram we consider the decomposition $\partial X_{F_0} \cong X_K \cup_{\partial \ol{\nu}K} \alpha_0(S(\nu_{F_0 \subseteq D^4}))$, which computes the fundamental group as the amalgamated free product
\[\pi_1(\partial X_{F_0}) \cong \pi_1(X_K) \ast_{\pi_1(\partial \ol{\nu}K)} \pi_1(S(\nu_{F_0 \subseteq D^4})).\]
We claim that an element of $\pi_1(\partial X_{F_0})$ maps trivially under~$(\iota_0)_*$ if and only if it maps trivially under~$(\iota_1\circ f)_*$.
Let $s_0 \colon F_0 \to S(\nu_{F_0 \subseteq D^4})$ be a nice section.
Recalling the standard cell structure for~$F$ from our standing conventions at the end of the introduction, we have closed curves $\gamma_1,\dots,\gamma_h$ on $F_0$.
Together with a meridian $\mu_{F_0}$, we have a generating set $\lbrace \mu_{F_0},s_0(\gamma_1),\dots,s_0(\gamma_h)\rbrace$ for~$\pi_1(\partial X_{F_0})$.
Since $s_0$ is a nice section, and since $f$ preserves nice sections, we have
$$(\iota_0)_*(s_0(\gamma_i)) = 1 \in \pi_1(X_{F_0}) \quad  \text{ and } \quad (\iota_1 \circ f)_*(s_0(\gamma_i)) = 1 \in \pi_1(X_{F_1}).$$
The meridian $\mu_{F_0}$ lies in the image $\pi_1(\partial \ol{\nu} K) \to \pi_1(S(\nu_{F_0 \subseteq D^4}))$.
Thus an element $g \in \pi_1(\partial X_{F_0})$ can be expressed as a word  in elements of $\pi_1(X_K)$ and the $s_0(\gamma_i)$.
Consider the word $\wt{g} \in \pi_1(X_K)$ obtained by removing all the instances of $s_0(\gamma_i)$ from $g \in \pi_1(\partial X_{F_0})$.
Since $s_0$ and $s_1$ are nice sections and $f$ preserves nice sections, the images $(\iota_0)_*(g) \in \pi_1(X_{F_0}) =\Z_2$ and $(\iota_1 \circ f)_*(g) \in \pi_1(X_{F_1}) = \Z_2$ are both computed by taking the image of $\wt{g}$ under the composition $\pi_1(X_K) \to H_1(X_K) \cong \Z \to \Z_2$, given by the Hurewicz map and reduction modulo two.
Thus each element of $\pi_1(\partial X_{F_0})$ maps trivially under~$(\iota_0)_*$ if and only if it maps trivially under~$(\iota_1\circ f)_*$,  as claimed.

As~$f$ is meridian preserving, $\mu_{F_0} \in \pi_1(\partial X_{F_0})$ is such that~$(\iota_0)_*(\mu_{F_0})=T_0$ and ~$(\iota_1 \circ f)_*(\mu_{F_0})=T_1$.
Hence in the push-out we have \[\pi_1(X_{F_0}\cup_f X_{F_1})\cong\langle T_0,T_1 \mid {T_0}^2=1, {T_1}^2=1, T_0=T_1\rangle\cong\Z_2,\] as desired.
\end{proof}

The focus for the proof of Proposition~\ref{prop:UnionSpin} is on proving condition~\eqref{item:1spin}; conditions~\eqref{item:2spin},~\eqref{item:3spin},~\eqref{item:4spin}, and~\eqref{item:5spin} will be achieved along the way. The idea for showing~\eqref{item:1spin} is intuitively straightforward but the argument is a little long, so some more informal discussion is in order before we begin.

A~$\Z_2$-surface exterior~$X_F$ is spin, and there are exactly two choices of spin structure
because~$H^1(X_F;\Z_2)=\Z_2$. A spin structure on a manifold can be described by its restriction to the 1-skeleton. The inclusion of the sphere bundle of the normal bundle $\alpha(S(\nu_{F \subseteq D^4}))\subseteq \partial X_F$ induces an isomorphism on homology with $\Z_2$-coefficients, so in fact a spin structure on $\partial X_F$ can be described by just by its restriction to the 1-skeleton of~$\alpha(S(\nu_{F \subseteq D^4}))$; this 1-skeleton consists of the meridian to $F$ and a push-off of the 1-skeleton of $F$. The two spin structures on $X_F$ differ on the meridian, in particular showing these are distinct spin structures on restriction to $\partial X_F$. There are $|H^1(F;\Z_2)|=2^h$ possible spin structures on the push-off of the 1-skeleton of $F$. As we shall see, exactly one of these is the restriction of the two spin structures on $\partial X_F$ that extend to $X_F$.
Thus, intuitively, the task is to produce a meridian-preserving homeomorphism~$f\colon \partial X_{F_0}\to \partial X_{F_1}$ that also preserves this distinguished spin structure on the surface push-off.
Having equal normal Euler numbers is what guarantees the last condition is possible.

We need the following definition.

\begin{definition}
\label{def:QuadraticRefinement}
Given a symmetric bilinear form~$(P,\lambda)$ on a finite dimensional~$\Z_2$-vector space~$P$, a \emph{$\Z_4$-quadratic refinement} is a function~$q\colon P\to \Z_4$ such that~$q(x+y)=q(x)+q(y)+2\lambda(x,y)$, for all~$x,y\in P$. We call the triple~$(P,\lambda, q)$ a \emph{$\Z_4$-quadratic form over~$\Z_2$}.
\end{definition}

Even though $\lambda(x,y) \in \Z_2$, we can make sense of $2\lambda(x,y) \in \Z_4$.
One quickly deduces from the condition on $q$ that $q(0)=0$ and that $2q(x) = 2\lambda(x,x) \in \Z_4$.

Here is a more detailed proof structure for Proposition~\ref{prop:UnionSpin}~\eqref{item:1spin}.

\begin{enumerate}
\item\label{item:summary-spin-proof-1} Let $F\subseteq D^4$ be a compact,  properly embedded surface with boundary $K\subseteq S^3$ and write~$\mathring{Y}$ for the total space of the sphere bundle of the normal bundle, so that $\partial X_F\cong X_K\cup \mathring{Y}$.
We show that a choice of section~$s\colon F\to \mathring{Y}$ can be used to specify a 1:1 correspondence between the set of spin structures on~$\partial X_F$ and the set of~$\Z_4$-quadratic refinements of the~$\Z_2$-intersection form on~$s(F)$, up to isometry, together with the restricted spin structure on the meridian.
Here, the~$\Z_4$-refinement $q_{KT}(\mathfrak{s})$ associated to a spin structure $\mathfrak{s}$ is due to Kirby-Taylor~\cite{KirbyTaylor}.
\item\label{item:summary-spin-proof-2} An embedded surface~$F\subseteq D^4$ has a canonical~$\Z_4$-quadratic refinement of the~$\Z_2$-intersection form of~$F$, called the \emph{Guillou-Marin quadratic refinement}~\cite{GuillouMarin}, and denoted $q_{GM}$.
We prove (Corollary~\ref{cor:sumitup}) that a spin structure $\mathfrak{s}$ on $\partial X_F$ extends to $X_F$ if and only if the Kirby-Taylor refinement on $s(F)\subseteq \partial X_{F}$ corresponding to $\mathfrak{s}$ coincides with the Guillou-Marin refinement on~$F\subseteq D^4$.
\item\label{item:summary-spin-proof-3}
Guillou-Marin proved (Corollary~\ref{cor:realiseisometry})
that if two surfaces $F_0,F_1\subseteq D^4$ with $\partial F_0 =\partial F_1$ have equal nonorientable genus and normal Euler number,  then there exists a homeomorphism~$F_0\cong F_1$ inducing an isometry of Guillou-Marin $\Z_4$-quadratic forms.
\item\label{item:summary-spin-proof-4} To complete the proof, we let $\mathfrak{s}$ be a spin structure on $\partial X_{F_1}$ that extends to $X_{F_1}$.
We show how to use Guillou-Marin's homeomorphism $F_0\cong F_1$ to construct a homeomorphism~$f\colon \partial X_{F_0} \to \partial X_{F_1}$. We show that
\[ q_{GM_0}=q_{GM_1}\circ f_*  =q_{KT_1}(\mathfrak{s}) \circ f_*=q_{KT_0}(f^*\mathfrak{s}).
\]
The first equality uses~\eqref{item:summary-spin-proof-3}, and the second uses~\eqref{item:summary-spin-proof-2}. Applying the reverse implication of~\eqref{item:summary-spin-proof-2} to $\partial X_{F_0}$ implies that
$f^*\mathfrak{s}$ extends over $X_{F_0}$.
\end{enumerate}

\subsection{The Kirby-Taylor quadratic refinements}
\label{sub:KirbyTaylor}
Given an abstract compact surface, we recall a construction of Kirby-Taylor~\cite[Definition 3.5]{KirbyTaylor} that relates a certain set of spin structures to the set of~$\Z_4$-quadratic refinements of the~$\Z_2$-intersection form on the surface.

\medbreak

Let~$\Sigma$ be a compact, connected surface.
Let~$\zeta=(\R\to E \to \Sigma)$ be a line bundle with~$w_1(\zeta)=w_1(\Sigma)$. Note that $T\Sigma\oplus \zeta$ admits a spin structure. To see this, consider an immersion $\iota \colon \Sigma \looparrowright \R^3$. Note that $\iota^*(T\R^3) = T\Sigma \oplus\nu_{\iota(\Sigma) \subseteq \R^3}$.
By the Whitney sum formula $w_1(\nu_{\iota(\Sigma) \subseteq \R^3}) = w_1 (T\Sigma)=w_1(\zeta)$, so the normal bundle is isomorphic to $\zeta$.  We then compute that $w_1(T\Sigma \oplus \zeta) = w_1(T\Sigma) + w_1(\zeta) =0$ and
\[0 = \iota^* (w_2(T\R^3)) = w_2(\iota^*(T\R^3)) = w_2(T\Sigma \oplus \nu_{\iota(\Sigma) \subseteq \R^3}) = w_2(T\Sigma \oplus \zeta).\]
So $T\Sigma \oplus \zeta$ admits a spin structure as desired. Fix a spin structure~$\mathfrak{s}$ on~$T\Sigma\oplus \zeta$.
Using the~$0$-section, identify~$\Sigma$ with a subspace of~$E$.
The reader will verify that there is a canonical bundle isomorphism~$\zeta\cong \nu_{\Sigma\subseteq E}$.
\footnote{For the reader comparing with~\cite[p.~203]{KirbyTaylor}, we note that they stipulate a choice of~spin structure on~$TE$ at this point. We think this is a minor error in that article, and they should rather stipulate a choice of spin structure on~$TE|_\Sigma$, the tangent bundle of the total space restricted to $\Sigma$, as this is what corresponds to a $\Pin^{-}$ structure on $T\Sigma$; see the proof of Theorem~\ref{thm:KTslick} below. This is then equivalent to our stipulation of a~spin structure on~$T\Sigma\oplus \zeta$, by the observation that~$\zeta\cong \nu_{\Sigma\subseteq E}$ and the splitting of~$TE|_\Sigma\cong T \Sigma \oplus \nu_{\Sigma\subseteq E}$.}

Let~$\gamma \subseteq \Sigma$ be an embedded circle.
Make a choice of orientation on~$\gamma$. This choice determines a framing~$T\gamma \cong \gamma \times \R$.
We now describe two ways of framing the rank three vector bundle~$T\Sigma|_\gamma \oplus \nu_{\Sigma \subseteq E}|_\gamma$.
The framing~$\mathcal{F}$ is defined to be the one determined by the choice of spin structure on~$T\Sigma \oplus \zeta$,  via the following sequence of identifications:
\begin{align*}
\mathcal{F} \colon T\Sigma|_\gamma \oplus \nu_{\Sigma\subseteq E}|_\gamma & \cong T\Sigma|_\gamma \oplus \zeta|_\gamma  \cong (T\Sigma \oplus \zeta)|_\gamma \cong  \gamma \times \R^3.
\end{align*}
The framing~$\mathcal{G}_f$ is obtained by combining the orientation-induced framing~$T\gamma \cong \gamma \times \R$ with some additional choice of a framing~$f \colon \nu_{\gamma \subseteq E} \cong \gamma \times \R^2$ in the following identifications:
\begin{align*}
\mathcal{G}_f \colon T\Sigma|_\gamma \oplus \nu_{\Sigma\subseteq E}|_\gamma
& \cong T\gamma \oplus \nu_{\gamma \subseteq \Sigma} \oplus \nu_{\Sigma\subseteq E}|_\gamma
\cong T\gamma \oplus \nu_{\gamma \subseteq E}
\cong \gamma \times \R^3.
\end{align*}

\begin{definition}
The framing~$f\colon \nu_{\gamma \subseteq E}\xrightarrow{\cong} \gamma\times \R^2$, above, is called \emph{odd} (with respect to the chosen spin structure~$\mathfrak{s}$) if~$\mathcal{G}_f$ determines the same orientation on~$\nu_{\gamma \subseteq E}$ as~$\mathcal{F}$, but the framings~$\F$ and~$\G_f$ disagree up to homotopy.
\end{definition}

\begin{definition}
\label{def:KirbyTaylor}
Let~$\Sigma$ be a surface, let~$\zeta=(\R\to E \to \Sigma)$ be a line bundle over~$\Sigma$ with~$w_1(\zeta)=w_1(\Sigma)$ and choose a spin structure~$\mathfrak{s}$ for~$T\Sigma\oplus \zeta$.
Let~$\gamma \subseteq E$ be an embedded circle. Choose an orientation on~$\gamma$ and an odd framing of~$\nu_{\gamma \subseteq E}$ with respect to~$\mathfrak{s}$.
With respect to these data, define \emph{the Kirby-Taylor quadratic refinement of~$\gamma$} as
$$q_{KT}(\mathfrak{s})(\gamma):= \#
\left\{\begin{array}{cc}
 \text{right-handed half-twists made by }\nu_{\gamma\subseteq \Sigma}\subseteq\nu_{\gamma \subseteq E}
\\
 \text{as it completes a full rotation around}~\gamma \text{ in the positive direction}
 \end{array}\right\} \in\Z_4.$$
\end{definition}

Note that a count of right-handed half-twists only makes sense once an orientation on $\gamma$ and a framing~$f\colon \nu_{\gamma \subseteq E}\cong\gamma\times \R^2$ have been chosen.

The Kirby-Taylor quadratic refinement depends neither on the choice of orientation on~$\gamma$, nor on the particular choice of odd framing; see~\cite[p.~203]{KirbyTaylor}. The dependency on the spin structure~$\mathfrak{s}$ is witnessed by the concept of an odd framing.

\begin{theorem}[Kirby-Taylor{~\cite{KirbyTaylor}}]\label{thm:KTslick}
The triple~$(H_1(\Sigma;\Z_2),Q_\Sigma,q_{KT}(\mathfrak{s}))$ is a~$\Z_4$-quadratic form over~$\Z_2$.
The assignment described in Definition~\ref{def:KirbyTaylor} determines a bijection of sets
\[
q_{KT} \colon \Spin(T\Sigma\oplus \zeta)\xrightarrow{1:1} \left\{\begin{array}{cc}\text{$\Z_4$-quadratic refinements of}\\ \text{$(H_1(\Sigma;\Z_2),Q_{\Sigma})$}\end{array}\right\}.
\]
\end{theorem}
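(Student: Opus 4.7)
The plan has two components: first to establish that $q_{KT}(\mathfrak{s})$ really does define a $\Z_4$-quadratic refinement of $Q_{\Sigma}$, and second to show the assignment $\mathfrak{s} \mapsto q_{KT}(\mathfrak{s})$ is a bijection. I would proceed as follows.

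The first step is to show that $q_{KT}(\mathfrak{s})(\gamma)$ depends only on the homology class $[\gamma] \in H_1(\Sigma;\Z_2)$. Invariance of the half-twist count under ambient isotopy of $\gamma$ is essentially immediate from the definition, once one checks that an odd framing can be chosen to deform continuously with $\gamma$. The remaining invariance is under changing an embedded representative of a fixed homology class. I would reduce this to two basic moves: an isotopy (already handled), and a ``band sum with a small null-homologous loop''. For the second move, I would show that the Kirby-Taylor refinement of a small embedded disc-bounding circle is $0 \in \Z_4$, and that Kirby-Taylor is additive under disjoint union of circles together with a smoothing of one point of transverse intersection; combining these handles the case of null-homologous curves. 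These points rely on straightforward local geometric calculations of the number of half-twists undergone by $\nu_{\gamma \subseteq \Sigma}$ relative to the odd framing.

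Second, I would verify the quadratic refinement law $q_{KT}(\mathfrak{s})([\gamma_1]+[\gamma_2]) = q_{KT}(\mathfrak{s})([\gamma_1]) + q_{KT}(\mathfrak{s})([\gamma_2]) + 2 Q_\Sigma([\gamma_1],[\gamma_2])$. Putting $\gamma_1$ and $\gamma_2$ in transverse position, the class $[\gamma_1] + [\gamma_2]$ is represented by the embedded curve obtained by oriented smoothing at each intersection point. In a neighbourhood of each transverse intersection, I would compute directly that the smoothing changes the local contribution to the twist count by $2$ (modulo $4$), regardless of the local combinatorics of how orientations are chosen, precisely because $q_{KT}$ lives in $\Z_4$ and the contribution of a double point of $\gamma_1 \cup \gamma_2$ to $Q_\Sigma$ is $1 \in \Z_2$. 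Summing over the $|\gamma_1 \pitchfork \gamma_2|$ intersection points yields the required identity.

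Third, for the bijection, I would use a torsor argument. The set $\Spin(T\Sigma \oplus \zeta)$ is non-empty by the Stiefel--Whitney class calculation already given in the excerpt, and it is a torsor over $H^1(\Sigma;\Z_2)$. The set of $\Z_4$-quadratic refinements of $Q_\Sigma$ is likewise a torsor over $H^1(\Sigma;\Z_2) = \Hom(H_1(\Sigma;\Z_2), \Z_2)$: the difference of two refinements is a function $H_1(\Sigma;\Z_2) \to \Z_4$ which lands in $2\Z_4 \cong \Z_2$ and is linear thanks to the quadratic law. To finish, I would show that $q_{KT}$ is equivariant for these two torsor structures, i.e.\ that changing $\mathfrak{s}$ by a class $\alpha \in H^1(\Sigma;\Z_2)$ changes $q_{KT}(\mathfrak{s})$ by $2 \alpha$. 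This is again a local check: the spin-structure change by $\alpha$ flips what counts as an odd framing along any loop $\gamma$ with $\langle \alpha, [\gamma]\rangle = 1$, which shifts the half-twist count by $\pm 2 \in \Z_4$. Equivariance between two torsors over the same non-trivial group with non-empty domain forces the map to be a bijection.

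The main obstacle is the quadratic refinement identity in step two: the underlying $\Z_4$ calculation at each intersection point is sensitive to local orientation and framing conventions, and organising those conventions so that the contribution from each double point is genuinely $+2 \in \Z_4$ (rather than $\pm 2$ with signs that might conspire to cancel) is the most delicate part of the argument. Once the local model at a crossing is pinned down correctly, the rest of the proof is essentially formal.
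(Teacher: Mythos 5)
Your proposal is correct in outline, but it takes a genuinely different route from the paper. The paper does not reprove the Kirby--Taylor result at all: its ``proof'' is an assembly of citations, combining \cite[Theorem~3.2]{KirbyTaylor} (the bijection between $\Pin^-(T\Sigma)$ and $\Z_4$-quadratic refinements of $(H_1(\Sigma;\Z_2),Q_\Sigma)$) with \cite[Lemma~1.7]{KirbyTaylor} (the natural bijection $\Pin^-(T\Sigma)\cong\Spin(T\Sigma\oplus\det(T\Sigma))$) and the isomorphism $\det(T\Sigma)\cong\zeta$, then checking that the composite is exactly the assignment of Definition~\ref{def:KirbyTaylor}. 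What you propose is instead a direct re-derivation: well-definedness on $H_1(\Sigma;\Z_2)$, the quadratic law via a local $+2\in\Z_4$ computation at each transverse intersection, and a torsor-equivariance argument over $H^1(\Sigma;\Z_2)$ for the bijection. This is essentially the strategy of Kirby--Taylor's original proof (and of Guillou--Marin before them), so it buys self-containedness at the cost of redoing their local framing analysis; the paper's version buys brevity and shifts the burden onto the cited results, which is why it must also address the $\Pin^-$-versus-spin bookkeeping (including the footnoted correction about $TE$ versus $TE|_\Sigma$) that your approach avoids entirely. Your torsor argument for the bijection is clean and correct: both sides are nonempty torsors over $H^1(\Sigma;\Z_2)$ once the first two steps are done, and equivariance forces bijectivity.

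One step of your sketch deserves more care: descent to homology classes. Reducing ``two embedded curves in the same $\Z_2$-homology class'' to isotopy plus band sum with a small null-homologous loop is not obviously a complete set of moves; the standard argument instead first proves the smoothing/additivity identity for transverse embedded curves and then deduces well-definedness from the vanishing of $q_{KT}$ on curves bounding subsurfaces (or, equivalently, from invariance under the relation generated by $\gamma_1\cup\gamma_2\cup\gamma_3$ bounding). If you reorganise so that your step two is proved first and step one is deduced from it, the argument closes up without further input. Also note that in your torsor step the nonemptiness of the set of refinements is not independent input: it follows from steps one and two applied to any single spin structure, so you should say so explicitly rather than treat the two torsors as independently nonempty.
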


\begin{proof}
As Kirby-Taylor do not state the result in the way we have, we collect the components of their work which yield our statement. First,~\cite[Theorem~3.2]{KirbyTaylor} is the statement that there is a natural 1:1 correspondence
\begin{equation}\label{eq:bijection}
\Pin^-(T\Sigma)\xrightarrow{1:1} \left\{\begin{array}{cc}\text{$\Z_4$-quadratic refinements of}\\ \text{$(H_1(\Sigma;\Z_2),Q_{\Sigma})$}\end{array}\right\},
\end{equation}
where the reader is referred to that article for the definition of a~$\Pin^-$ structure.
The natural correspondence is described on~\cite[p.~203]{KirbyTaylor}, and begins by using~\cite[Lemma~1.7]{KirbyTaylor} in which there is described a natural bijection~$\Psi_{4k+1}(\xi)\colon\Pin^-(\xi)\to \Spin(\xi\oplus(4k+1)\det(\xi))$, for any vector bundle~$\xi$ and integer~$k\geq 0$. We use~$\xi=T\Sigma$, and~$k= 0$.
We then note that as~$w_1(T\Sigma)=w_1(\zeta)$ and~$T\Sigma$ is orientable,  we obtain $w_1(\det(T\Sigma))=w_1(\zeta)$ and so there is an isomorphism~$\det(T\Sigma)\cong\zeta$.
We thus obtain a bijection of sets
$$\Pin^-(T\Sigma)\to\Spin(T\Sigma\oplus\zeta)$$ (depending only on the choice of isomorphism~$\det(T\Sigma)\cong\zeta$). The remainder of the construction in~\cite[p.~203]{KirbyTaylor} is the definition of~$q_{KT}(\mathfrak{s})$ we have given above and so the bijection of line \eqref{eq:bijection} factors as
\[
\Pin^-(T\Sigma)\xrightarrow{1:1} \Spin(T\Sigma\oplus\zeta)\xrightarrow{q_{KT}} \left\{\begin{array}{cc}\text{$\Z_4$-quadratic refinements of}\\ \text{$(H_1(\Sigma;\Z_2),Q_{\Sigma})$}\end{array}\right\}.
\]
It follows that~$q_{KT}$ is a bijection as claimed.
\end{proof}

\begin{remark}
We remark at this point that Kirby-Taylor do not explicitly allow for the possibility that the surface $\Sigma$ has boundary. However, we do explicitly allow this possibility. To justify this, we note that their proofs that we cited in the proof of Theorem~\ref{thm:KTslick} remain valid (without modification) when~$\Sigma$ has boundary.
\end{remark}

The following is a standard lemma and we omit the proof.

\begin{lemma}\label{lem:spin1skeleton}
Let $\zeta$ be a spin vector bundle over a space $B$. Let $i\colon K\to B$ be a map of spaces inducing an isomorphism $H_1(K;\Z_2)\cong H_1(B;\Z_2)$. Then the induced map on the sets of spin structures $\Spin(\zeta)\to \Spin (i^*\zeta)$ is a bijection.
\end{lemma}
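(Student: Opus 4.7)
The plan is to reduce the statement to a standard torsor argument for spin structures. Recall that if $\zeta$ is a spin vector bundle over a space $B$, then the set $\Spin(\zeta)$ of spin structures on $\zeta$ is a torsor over $H^1(B;\Z_2)$: this follows because spin structures are lifts of the classifying map $B\to B\mathrm{SO}$ to $B\Spin$, and the fibre of $B\Spin\to B\mathrm{SO}$ is a $K(\Z_2,1)$. Pullback along $i\colon K\to B$ turns a spin structure on $\zeta$ into a spin structure on $i^*\zeta$, so the pullback map $\Spin(\zeta)\to \Spin(i^*\zeta)$ is well-defined and, importantly, it is equivariant with respect to the pullback map $i^*\colon H^1(B;\Z_2)\to H^1(K;\Z_2)$.

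Next, I would observe that since both torsors are nonempty (the hypothesis gives us a spin structure on $\zeta$, whose pullback is a spin structure on $i^*\zeta$), the pullback map on $\Spin$-sets is a bijection as soon as the pullback map on $H^1(-;\Z_2)$ is an isomorphism. Therefore it suffices to check that $i^*\colon H^1(B;\Z_2)\to H^1(K;\Z_2)$ is an isomorphism. This is a direct consequence of the hypothesis $i_*\colon H_1(K;\Z_2)\xrightarrow{\cong} H_1(B;\Z_2)$ combined with the universal coefficient theorem with $\Z_2$ coefficients: one has natural isomorphisms $H^1(-;\Z_2)\cong \Hom_{\Z_2}(H_1(-;\Z_2),\Z_2)$ (the $\operatorname{Ext}$ term vanishes for $\Z_2$-modules since $\Z_2$ is a field), and applying $\Hom_{\Z_2}(-,\Z_2)$ to the isomorphism $i_*$ produces the required isomorphism $i^*$.

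Combining these two observations, the pullback $\Spin(\zeta)\to \Spin(i^*\zeta)$ is an equivariant map between nonempty torsors over the isomorphic groups $H^1(B;\Z_2)$ and $H^1(K;\Z_2)$, hence a bijection. There is no real obstacle here; the only subtlety to be careful about is ensuring nonemptiness of the target torsor, which is immediate from pulling back the given spin structure.
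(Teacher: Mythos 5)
Your proof is correct, and it is the standard torsor/obstruction-theory argument that the paper has in mind when it declares the lemma standard and omits the proof: identify $\Spin(-)$ as a torsor over $H^1(-;\Z_2)$, note the pullback is equivariant and both torsors are nonempty, and convert the homological hypothesis into an isomorphism on $H^1(-;\Z_2)$ via universal coefficients over the field $\Z_2$. Nothing further is needed.
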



We use the previous theorem by Kirby-Taylor in the proof of the following.

\begin{proposition}\label{prop:characterisation}
Let~$F$, $F_1$, and $F_2$ be nonorientable surfaces, with connected
nonempty boundary.
\begin{enumerate}
\item
Let~$\xi=(\R^2\to \mathring{E}\to F)$ be a
vector bundle with orientable total space.
Write~$\mathring{Y}$ for the total space of the associated~$S^1$-bundle~$S(\xi)$.
Fix a choice of section~$s\colon F\to \mathring{Y}$ and a choice of~$S^1$-fibre~$\mu \subseteq \mathring{Y}$.
There is a bijection of sets
\[
\begin{array}{rcl}
\Psi_{s,\mu}\colon\Spin(\mathring{Y})&\xrightarrow{1:1}& \Spin(T\mathring{Y}|_\mu)\times\left\{\begin{array}{cc}\text{$\Z_4$-quadratic refinements of}\\ \text{$(H_1(s(F);\Z_2),Q_{s(F)})$}\end{array}\right\}\\
\s&\mapsto& (\s|_{\mu},q_{KT}(\s)).
\end{array}
\]
To define $q_{KT}(\s)$, we take $\zeta := \nu_{s(F) \subseteq \mathring{Y}}$ and use the spin structure on $\mathring{Y}$ to induce a spin structure on $Ts(F) \oplus \nu_{s(F) \subseteq \mathring{Y}} \cong T\mathring{Y}|_{s(F)}$.
\item Let~$\Phi\colon\xi_1\cong \xi_2$ be an isomorphism of nonorientable 2-plane vector bundles with orientable total space, covering a homeomorphism~$\varphi \colon F_1 \to F_2$.
Write~$\mathring{Y}_i$ for the total space of the associated~$S^1$-bundles~$S(\xi_i)$. Fix a choice of section~$s_1\colon F_1 \to \mathring{Y}_1|_{F_1}$ and a choice of~$S^1$-fibre~$\mu_1 \subseteq \mathring{Y}_1$.
Write~$s_2=\Phi \circ s_1 \circ \varphi^{-1}$ and $\mu_2:=\Phi(\mu_1)$.
Then there is a commutative diagram
\[
\begin{tikzcd}
\Spin(\mathring{Y}_2)\ar[rr, "\Psi_{s_2,\mu_2}"]\ar[d, "{\Phi^*}"]
&&\Spin(T\mathring{Y}_2|_{\mu_2})\times \left\{\parbox{4.5cm}{\centering~$\Z_4$-quadratic refinements of\\~$(H_1(s_2(F_2);\Z_2),Q_{s_2(F)})$}\right\} \ar[d, "{\widehat{\Phi}}"] \\
\Spin(\mathring{Y}_1)\ar[rr, "\Psi_{s_1,\mu_1}"]
&&\Spin(\mathring{Y}_1|_{\mu_1})\times \left\{\parbox{4.5cm}{\centering~$\Z_4$-quadratic refinements of\\~$(H_1(s_1(F_1);\Z_2),Q_{s_1(F)})$}\right\},
\end{tikzcd}
\]
where the~$\Psi_{s_i,\mu_i}$ are the bijections of sets from the first item, and
\[
{\widehat{\Phi}}(\mathfrak{t}, q):= ((\Phi|_{\mu_2})^*\mathfrak{t},  q \circ \Phi_*).
\]
\end{enumerate}
\end{proposition}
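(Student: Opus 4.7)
For Part (1), the plan is to exhibit $\Psi_{s,\mu}$ as a composition of three bijections: two successive restrictions of spin structures, followed by an application of Theorem~\ref{thm:KTslick}. First, since $\xi$ has orientable total space, the sphere bundle $\mathring{Y}=\partial D(\xi)$ is an orientable $3$-manifold and hence admits spin structures. Arranging that $\mu$ is the fibre over the basepoint $p$ of $F$, the intersection $s(F)\cap\mu=\{s(p)\}$ consists of a single point. The inclusion $i\colon s(F)\cup\mu\hookrightarrow \mathring{Y}$ then induces an isomorphism on $H_1(-;\Z_2)$, as can be read off from the $S^1$-bundle structure along the lines of Proposition~\ref{prop:HomologyY}: the map $s_*$ splits the projection $p_*\colon H_1(\mathring{Y};\Z_2)\to H_1(F;\Z_2)$, while $[\mu]$ generates the remaining $\Z_2$-summand. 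By Lemma~\ref{lem:spin1skeleton}, restriction therefore yields a bijection $\Spin(\mathring{Y})\xrightarrow{\cong}\Spin(T\mathring{Y}|_{s(F)\cup\mu})$.

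Next, since $s(F)$ and $\mu$ meet at a single point and spin structures on a bundle over a point are unique, the pair of restriction maps assembles into a bijection
\[
\Spin(T\mathring{Y}|_{s(F)\cup\mu})\xrightarrow{\cong}\Spin(T\mathring{Y}|_{s(F)})\times\Spin(T\mathring{Y}|_\mu).
\]
Finally, $T\mathring{Y}|_{s(F)}$ decomposes as $Ts(F)\oplus\nu_{s(F)\subseteq\mathring{Y}}$, and orientability of $T\mathring{Y}$ combined with the Whitney sum formula force $w_1(\nu_{s(F)\subseteq\mathring{Y}})=w_1(s(F))$. Applying Theorem~\ref{thm:KTslick} with $\zeta:=\nu_{s(F)\subseteq\mathring{Y}}$ identifies $\Spin(T\mathring{Y}|_{s(F)})$ with the set of $\Z_4$-quadratic refinements of $(H_1(s(F);\Z_2),Q_{s(F)})$. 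Composing these three bijections recovers the advertised map $\s\mapsto(\s|_\mu,q_{KT}(\s))$.

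For Part (2), the bundle isomorphism $\Phi$ covers $\varphi$, so it induces a homeomorphism of total spaces $\Phi\colon\mathring{Y}_1\to\mathring{Y}_2$ that by construction restricts to homeomorphisms $s_1(F_1)\to s_2(F_2)$ and $\mu_1\to\mu_2$. Commutativity in the first factor of the target reduces to the tautological identity $(\Phi^*\s)|_{\mu_1}=(\Phi|_{\mu_1})^*(\s|_{\mu_2})$ obtained from functoriality of pullback of spin structures. The substantive content is commutativity in the second factor, namely the naturality relation $q_{KT}(\Phi^*\s)=q_{KT}(\s)\circ\Phi_*$ for the Kirby-Taylor refinement. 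Unpacking Definition~\ref{def:KirbyTaylor}, one must verify that $\Phi$ carries odd framings of $\nu_{\gamma\subseteq\mathring{Y}_1}$ with respect to $\Phi^*\s$ to odd framings of $\nu_{\Phi(\gamma)\subseteq\mathring{Y}_2}$ with respect to $\s$, and preserves the signed count of right-handed half-twists made by $\nu_{\gamma\subseteq s_1(F_1)}$ inside $\nu_{\gamma\subseteq\mathring{Y}_1}$. I expect this to be the main obstacle: although each verification is essentially a tautological consequence of $\Phi$ being an isomorphism of triples $(F_i,\xi_i,s_i)$, executing it cleanly requires careful bookkeeping of the orientation, framing, and twist-count conventions appearing in the definition of $q_{KT}$.
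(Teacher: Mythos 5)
Your Part (1) is correct and is essentially the paper's argument: restrict spin structures to a $1$-complex carrying $H_1(-;\Z_2)$ via Lemma~\ref{lem:spin1skeleton}, split off the $\mu$-factor, check $w_1(\nu_{s(F)\subseteq\mathring{Y}})=w_1(Ts(F))$ from orientability of $\mathring{Y}$, and apply Theorem~\ref{thm:KTslick}. The only cosmetic difference is that the paper restricts to a bouquet of circles built from the cells $\mu, s(\gamma_1),\dots,s(\gamma_h)$ and then reassembles the $s(\gamma_i)$-factors into $\Spin(T\mathring{Y}|_{s(F)})$, whereas you restrict to $s(F)\cup\mu$ directly; both work.

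Part (2) is where your proposal stops short. You correctly isolate the substantive claim, namely the naturality $q_{KT}(\Phi^*\s)=q_{KT}(\s)\circ\Phi_*$, and you correctly identify that it comes down to showing $\Phi$ matches up odd framings and half-twist counts --- but you then defer exactly this verification, calling it ``the main obstacle.'' That is the content of the proof, and it does not require heavy bookkeeping: given an embedded $\gamma_1\subseteq F_1$ with image $\gamma_2=\varphi(\gamma_1)$, choose an odd framing $f_2\colon\nu_{s_2(\gamma_2)\subseteq\mathring{Y}_2}\cong s_2(\gamma_2)\times\R^2$ with respect to $\s$ and \emph{transport} it to $f_1:=(\Phi^{-1}\times\Id_{\R^2})\circ f_2\circ\Phi$ on $\nu_{s_1(\gamma_1)\subseteq\mathring{Y}_1}$. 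Because $\Phi$ is a bundle isomorphism covering $\varphi$ with $\Phi\circ s_1=s_2\circ\varphi$, it intertwines the two reference framings $\F$ and $\G_{f}$ entering the definition of oddness, so $f_1$ is odd with respect to $\Phi^*\s$; and $\Phi$ carries the sub-bundle $\nu_{s_1(\gamma_1)\subseteq s_1(F_1)}$ onto $\nu_{s_2(\gamma_2)\subseteq s_2(F_2)}$, so the counts of right-handed half-twists computed with respect to $f_1$ and $f_2$ coincide. Without this explicit transport of the odd framing, the equality of the two $\Z_4$-valued counts is asserted rather than proved, so as written the proposal has a genuine gap at the heart of Part (2).
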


\begin{proof}
We begin with the first item.
Take the standard cell structure for~$F$
(see the conventions at the end of the introduction),
with its basis of loops~$\gamma_1,\dots, \gamma_h$.
Let $A\subseteq \mathring{Y}$ be a connected 1-complex comprising a single $0$-cell, the~$S^1$-fibre loop~$\mu$, and the collection of loops~$s(\gamma_i)$.
Applying Lemma~\ref{lem:spin1skeleton} to the inclusion map of $A$, we obtain a bijection $\Spin(\mathring{Y})\to \Spin(T\mathring{Y}|_A)$.
Here we used Proposition~\ref{prop:HomologyY} and the universal coefficient theorem to compute the $\Z_2$-coefficient homology of $\mathring{Y}$.
Now, as~$A$ is a bouquet of circles, there is a bijection
\[
\Spin(T\mathring{Y}|_A)\xrightarrow{1:1} \Spin(T\mathring{Y}|_\mu)\times\prod_{i=1}^h\Spin(T\mathring{Y}|_{s(\gamma_i)}).
\]
Hence it now suffices to prove that the latter factor is in 1:1 correspondence with the~$\Z_4$-quadratic refinement factor from the statement of the proposition.

For this, first note that the collection of loops~$s(\gamma_i)$ form the~$1$-skeleton of~$s(F)$ and so, as~$s(F)$ is homotopy equivalent to this~$1$-complex, restriction determines a bijection
\[
\Spin(T\mathring{Y}|_{s(F)})\xrightarrow{1:1} \prod_{i=1}^h\Spin(T\mathring{Y}|_{s(\gamma_i)}).
\]
It is clear that there is a bijection between~$\Spin(T\mathring{Y}|_{s(F)})$ and~$\Spin(Ts(F)\oplus \nu_{s(F)\subseteq \mathring{Y}})$. We now wish to apply Theorem~\ref{thm:KTslick} using the surface~$\Sigma=s(F)$ and the line bundle~$\nu_{s(F)\subseteq \mathring{Y}}$. To see that this theorem is indeed applicable, observe that as~$\mathring{Y}$ is orientable, the product formula for Stiefel-Whitney classes implies that~$w_1(Ts(F))= w_1(\nu_{s(F)\subseteq \mathring{Y}})$.
Thus, by Theorem~\ref{thm:KTslick}, the Kirby-Taylor quadratic refinement determines a bijection of sets
\[
\Psi_{s,\mu} \colon \Spin(Ts(F)\oplus \nu_{s(F)\subseteq \mathring{Y}})\xrightarrow{1:1} \left\{\begin{array}{cc}\text{$\Z_4$-quadratic refinements of}\\ \text{$(H_1(s(F);\Z_2),Q_{s(F)})$}\end{array}\right\},
\]
and this completes the proof of the statement in the first item.

We will now prove the second claim, namely that~$\Psi_{s_1,\mu_1}\circ\Phi^*={\widehat{\Phi}}\circ \Psi_{s_2,\mu_2}$.
In other words, for all~$\s\in \Spin (\mathring{Y}_2)$, we will show
$
((\Phi^*\mathfrak{s})|_{\mu_1}, q_{KT_1}(\Phi^*\s)) = ((\Phi|_{\mu_2})^*(\s|_{\mu_2}), q_{KT_2}(\s)\circ \Phi_*).$
As~$\mu_2=\Phi(\mu_1)$, by definition, equality in the first argument is clear.
Given~$\s\in \Spin (\mathring{Y}_2)$,  we will now show the equality
\begin{equation}
\label{eq:EndSecondClaim}
q_{KT_1}(\Phi^*\s)=q_{KT_2}(\s)\circ \Phi_* \colon H_1(s_1(F_1)) \to \Z_4,
\end{equation}
which will complete the proof of the second claim.
Let $\gamma_1 \subseteq F_1$ be an embedded circle and let $\gamma_2=\varphi(\gamma_1)$ be the corresponding loop in $F_2$.
This way~$s_1(\gamma_1) \subseteq s_1(F_1)  \subseteq \mathring{Y}_1$ is an embedded circle and, by definition of $s_2=\Phi \circ s_1 \circ \varphi^{-1}$,  we deduce that~$\Phi(s_1(\gamma_1))=s_2(\gamma_2) \subseteq \mathring{Y}_2$ is similarly an embedded circle.
Pick a framing~$f_2 \colon \nu_{s_2(\gamma_2) \subseteq \mathring{Y}_2} \cong s_2(\gamma_2) \times \R^2$ that is odd with respect to the spin structure~$\mathfrak{s}$.
Since~$\Phi$ is a bundle isomorphism,  one checks that the framing defined by the composition
\[
\begin{tikzcd}
f_1 \colon  \nu_{s_1(\gamma_1) \subseteq \mathring{Y}_1}
\ar[r, "\Phi", "\cong"']
&\nu_{s_2(\gamma_2) \subseteq \mathring{Y}_2}
\ar[r, "f_2", "\cong"']
 &s_2(\gamma_2) \times \R^2
\ar[rr, "{\Phi^{-1}\times\Id_{\R^2}}","\cong"']
 &&s_1(\gamma_1) \times \R^2
 \end{tikzcd}
\]
is odd  with respect to the spin structure~$\Phi^*\mathfrak{s}$.
Choose an orientation on $\gamma_1$ and use the induced orientations on~$s_1(\gamma_1)$ and $s_2(\gamma_2)$. Now~$q_{KT_1}(\Phi^*\s)(s_1(\gamma_1))$ is calculated by counting right-handed half-twists of~$\nu_{s_1(\gamma_1)\subseteq s_1(F_1)} \subseteq \nu_{s_1(\gamma_1)\subseteq \mathring{Y}_1}$ with respect to the odd framing $f_1$. On the other hand, $q_{KT_2}(\s)\circ \Phi_*(s_1(\gamma_1))=q_{KT_2}(\s) (\Phi(s_1(\gamma_1)))$  is calculated
 by counting right-handed half-twists of~$\nu_{\Phi(s_1(\gamma))\subseteq s_2(F_2)}\subseteq\nu_{\Phi(s_1(\gamma)) \subseteq \mathring{Y}_2}$ with respect to the odd framing~$f_2$.
These counts agree thanks to our choice of framings. This establishes~\eqref{eq:EndSecondClaim} and therefore proves the second claim.
\end{proof}

\begin{remark}\label{rem:closedversion}
A similar statement to Proposition~\ref{prop:characterisation} holds when the surfaces are closed. To modify the statement to the case of $F$ closed, one must consider the punctured surface $\mathring{F}$ in order to choose the initial section $s\colon \mathring{F}\to \mathring{Y}|_{\mathring{F}}$ (similarly for $F_1$ and $F_2$). The remainder of the statement holds verbatim and the proof proceeds entirely similarly.
\end{remark}

\subsection{The Guillou-Marin quadratic refinement}
\label{sub:GuillouMarin}

Let $F \subseteq D^4$ be a properly embedded nonorientable surface with connected nonempty boundary.
We review the definition of the Guillou-Marin~\cite{GuillouMarin} quadratic refinement $H_1(F;\Z_2) \to \Z_4$, refining the~$\Z_2$-valued intersection form of~$F$.

Let~$\gamma$ be an embedded circle in the interior of~$F$ and let~$D\looparrowright D^4$ be an immersed disc with
$\partial D=\gamma$ and such that $D$ is \emph{transverse to $F$ along $\gamma$}; we refer the reader to the diagram on~\cite[p.15]{FreedmanQuinn} for the local model we are describing, where $D=B$, $\gamma=c$, and $F=A$, in their terminology.
Moreover, assume the interior of~$D$ meets $F$ transversely, and away from $\gamma$.
Choose an orientation on $\gamma$, and thus a positive tangent direction.
There are two possible choices of framing~$\nu_{D\subseteq D^4}\cong D\times \R^2$, one for each orientation of the fibres.
 Choose the framing with the property that, writing~$\R^2=\R\langle e_1, e_2\rangle$, the frame
\[
\text{(outwards-facing normal to $\gamma$ in $D$)}\times \text{(positive tangent to~$\gamma$)}\times e_1\times e_2\in \operatorname{Fr}(T_pD^4)
\]
agrees with the ambient orientation of $D^4$ at all points~$p\in\gamma$.

\begin{definition}\label{def:GuillouMarin}
Given an embedded loop~$\gamma\subseteq F$, choose an orientation on~$\gamma$ and a disc~$D$ as above. We then denote by~$W(D)$ the number of right-handed half-twists completed by the sub-bundle~$\nu_{\gamma\subseteq F}\subseteq \nu_{D\subseteq D^4}|_\gamma$ in one complete rotation around~$\gamma$.
Define\footnote{Guillou-Marin define $q_{GM}$ for closed surfaces in closed $4$-manifolds but their definition extends to properly embedded surfaces in $4$-manifolds with boundary, see e.g.~\cite{GilmerLivingston}.}
\[
q_{GM}(\gamma):=W(D)+2(D\cdot F)\in\Z_4.
\]
\end{definition}

\subsection{The exterior Guillou-Marin form, in the presence of a nice section}

In this section we show it is possible to compute the Guillou-Marin quadratic refinement of a $\Z_2$-surface using immersed discs whose interiors lie entirely in the complement of~$F$.
Before doing this,  we introduce a mechanism that will allow us to choose immersed discs so that their interiors, minus a boundary collar, miss the tubular neighbourhood.

\medbreak
Throughout this subsection, let~$F \subseteq D^4$ be a properly embedded surface with connected nonempty boundary $K\subseteq S^3$. Recall the standing convention that we have fixed a tubular neighbourhood~$\alpha\colon (D(\nu_{F \subseteq D^4}),S(\nu_{F \subseteq D^4}))\xrightarrow{\cong} (\alpha(D(\nu_{F \subseteq D^4})),\partial X_F \sm \mathring{X}_K)$, where~$\nu_{F \subseteq D^4}$ denotes the normal bundle of~$F$ and~$D(\nu_{F \subseteq D^4})$ and~$S(\nu_{F \subseteq D^4})$ respectively denote its disc and circle bundles.

\begin{lemma}
\label{lem:NiceExistence}
The bundle~$S(\nu_{F \subseteq D^4})$ admits a nice section.
\end{lemma}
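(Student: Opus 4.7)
The plan is to start with any section (which exists for dimensional reasons) and then modify it one generator at a time to kill the composition into $H_1(X_F)=\Z_2$.

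Since $F$ has nonempty boundary, it deformation retracts onto its standard $1$-skeleton $\bigvee^h S^1$ via the loops $\gamma_1,\ldots,\gamma_h$, so $H_1(F)\cong\Z^h$. The $S^1$-bundle $S(\nu_{F\subseteq D^4})\to F$ has Euler obstruction in $H^2(F;\Z^{w_1(F)})$, which vanishes on a $1$-complex. Hence a section $s_0\colon F\to S(\nu_{F\subseteq D^4})$ exists. Consider its induced map
\[
\phi_0:=\iota_*\circ\alpha_*\circ(s_0)_*\colon H_1(F)\longrightarrow H_1(X_F)=\Z_2,
\]
and let $\epsilon_i:=\phi_0([\gamma_i])\in\Z_2$.

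To modify $s_0$, fix disjoint closed tubular neighbourhoods $N_1,\ldots,N_h$ of the loops $\gamma_i$ in the interior of $F$, where the $N_i$ are disjoint away from the common basepoint $e^0$. Depending on whether $\gamma_i$ is orientation-preserving or orientation-reversing in $F$, the neighbourhood $N_i$ is either an annulus or a M\"obius band, and accordingly $S(\nu_{F\subseteq D^4})|_{N_i}$ is either a torus or Klein bottle fibration over $N_i$. In either case, over a small bicollar $\gamma_i\times(-\varepsilon,\varepsilon)\subseteq N_i$ (resp.\ its M\"obius-band analogue) the bundle trivialises as $N_i\times S^1$, and we alter $s_0$ inside this bicollar by adding one full twist in the $S^1$-fibre direction, supported away from $\partial N_i$. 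The resulting section $s$ agrees with $s_0$ outside the chosen subset of $N_i$, and satisfies
\[
[s(\gamma_i)]-[s_0(\gamma_i)]=[\mu]\in H_1(S(\nu_{F\subseteq D^4})),
\]
where $\mu$ is an $S^1$-fibre of $S(\nu_{F\subseteq D^4})$, while $[s(\gamma_j)]=[s_0(\gamma_j)]$ for $j\ne i$ since the modification is supported in $N_i$.

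Now by definition of a $\Z_2$-surface, the meridian $\mu$ maps to the generator of $\pi_1(X_F)=\Z_2$, and therefore to $1\in H_1(X_F)=\Z_2$. Consequently the modification over $N_i$ changes $\phi_0([\gamma_i])$ by $1$ and leaves $\phi_0([\gamma_j])$ unchanged for $j\ne i$. Performing this local twist once for each index $i$ with $\epsilon_i=1$ produces a section whose induced map $H_1(F)\to H_1(X_F)$ vanishes on every basis element, hence is trivial, yielding a nice section.

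The only mild subtlety is verifying the local twisting construction in the M\"obius-band case, where $S(\nu_{F\subseteq D^4})|_{N_i}$ is a Klein bottle fibration rather than a torus fibration; however, in both cases the restricted $S^1$-bundle over the $1$-complex $\gamma_i$ admits a $\Z$-torsor of homotopy classes of sections differing by multiples of the fibre class $[\mu]$, so the modification goes through identically. The rest of the argument is then homological bookkeeping.
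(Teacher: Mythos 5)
Your proof is essentially the paper's argument: start with an arbitrary section (which exists because $F$ retracts onto a $1$-complex), and over each generator $\gamma_i$ where the composite $H_1(F)\to H_1(X_F)=\Z_2$ is nontrivial, alter the section so that $[s(\gamma_i)]$ changes by the fibre class $[\mu]$, which maps to the generator of $\Z_2$. The paper implements this by switching to the other isotopy class of section of the nonorientable circle bundle over $\gamma_i$, pulled back along the deformation retraction $F\simeq\bigvee^h S^1$; your local-twisting-in-a-tubular-neighbourhood version accomplishes the same thing.

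One side remark in your last paragraph is false, though harmless here: when $\gamma_i$ is orientation-reversing (which, for the standard cell structure, is the case for \emph{every} $\gamma_i$, since $w_1(F)(\gamma_i)\neq 0$), the restricted bundle is the Klein bottle and its sections up to homotopy form a torsor over $H^1(S^1;\Z^{w})\cong\Z_2$, not $\Z$; correspondingly $[\mu]$ is $2$-torsion in $H_1$ of the Klein bottle and in $H_1(S(\nu_{F\subseteq D^4}))$ by Proposition~\ref{prop:HomologyY}. Since you only ever add a single twist, and a single twist still changes the class by $[\mu]\neq 0$, the construction goes through; but the claimed $\Z$-torsor structure is not the correct justification in the nonorientable case.
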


\begin{proof}
Write~$h$ for the nonorientable genus of~$F$, write $\bigvee_{i=1}^h\gamma_i \subseteq F$ for the $1$-subcomplex that is a deformation retract of $F$, using the standard cell decomposition of $F$, and let $b_i:=[\gamma_i]\in H_1(F)$ be the corresponding homology generators.
Note that $w_1(F)(b_i) \neq 0$ for all $i=1,\dots,h$.
Choose a section~$s\colon F \to {S}(\nu_{F \subseteq D^4})$
and consider the composition
\[
 \Z\langle b_1,\dots,b_h\rangle\cong H_1(F) \xrightarrow{{s}_*} H_1(S(\nu_{F \subseteq D^4})) \xrightarrow{\alpha_*} H_1(\partial X_F)\xrightarrow{\iota_*}H_1(X_F),
 \]
 where~$\iota\colon \partial X_F\to X_F$ is the inclusion.
 By Proposition~\ref{prop:HomologyY} we may write this map as
 \begin{equation}\label{equation:composition-of-maps}
 \begin{tikzcd}
\Z\langle b_1,\dots,b_h\rangle\ar[r, "s_*"]
&\Z_2\langle \mu\rangle\oplus \Z\langle s_*(b_1),\dots,s_*(b_h)\rangle\ar[rr, "{\iota_*\circ\alpha_*}"]
&&\Z_2.
\end{tikzcd}
 \end{equation}
Here,~$\mu$ is a generator of the first homology of a fibre of~$S(\nu_{F \subseteq D^4})$.
 We now intend to define a new section~$s'\colon F\to S(\nu_{F \subseteq D^4})$, which will be the promised nice section.
 Because~$F$ is homotopy equivalent to a wedge of circles, it is sufficient to describe our new section individually on the 1-cells~$\gamma_i$.
  Recall that a nonorientable circle bundle~$\xi$ over a circle has exactly two sections up to isotopy.
The total space~$E(\xi)$ is a Klein bottle with homology~$H_1(E(\xi))\cong \Z\oplus\Z_2$, and with respect to some such choice of isomorphism, the two possible sections represent the classes~$(1,0)$ and~$(1,1)$. In our case, the choices of~$\gamma_i$ and~$\mu$ fix such an isomorphism and so the image in~$H_1(S(\nu_{F \subseteq D^4}))$ of the two possible sections of~$S(\nu_{F \subseteq D^4})|_{\gamma_i}$ are~$s_*(b_i)$ and~$s_*(b_i)+\mu$.
  For each~$i$ such that the composition in \eqref{equation:composition-of-maps}~$\iota_*\circ\alpha_*\circ s_*(b_i)$ is nontrivial, modify~$s|_{\gamma_i}$ to~$s'|_{\gamma_i}$ by switching to the other section of this nontrivial circle bundle.
  On all other~$\gamma_i$, i.e.\ where $\iota_*\circ\alpha_*\circ s_*(b_i)$ is trivial, set~$s|_{\gamma_i}=s'|_{\gamma_i}$. This determines the new section~$s'\colon F\to S(\nu_{F \subseteq D^4})$.

We now verify that~$s'$ is nice.
   Note that for~$i$ such that~$\iota_*\circ\alpha_*\circ s_*(b_i)$ is nontrivial, we have that~$s'_*(b_i)=s_*(b_i)+\mu\in H_1(S(\nu_{F \subseteq D^4}))$.
Thus~$\iota_*\circ\alpha_*\circ s'_*(b_i)$ is now trivial for all~$i$, and so~$s'$ is a nice section, as desired.
\end{proof}

Now let $F \subseteq D^4$ be a $\Z_2$-surface and suppose that we have chosen a tubular neighbourhood~$\alpha$ for~$F\subseteq D^4$, together with a nice section~$s\colon F\to S(\nu_{F \subseteq D^4})$.
For any embedded circle~$\gamma\subseteq s(F)$,  the facts that~$s$ is a nice section and that $F$ is a $\Z_2$-surface, imply~$\alpha(\gamma)$
is null-homotopic in~$X_F$. So there exists a properly immersed disc~$D'\looparrowright X_F$ that is transverse to $s(F)$ along $\partial D'=\alpha(\gamma)$.


\begin{definition}\label{def:Dprimeconvention}
Let~$\alpha$ be a tubular neighbourhood for~$F\subseteq D^4$, together with a nice section~$s\colon F\to S(\nu_{F \subseteq D^4})$.
Let~$D'\looparrowright X_F$ be a properly immersed disc that is transverse to~$s(F)$ along $\partial D'=\alpha(\gamma)$.
for some embedded loop~$\gamma\subseteq s(F)$.
Choose an orientation on~$\gamma$ and choose a framing~$\nu_{D'\subseteq X_F}|_{\alpha(\gamma)} \cong \alpha(\gamma) \times\R\langle e_1, e_2\rangle$ such that the frame
\[
\text{(outwards facing normal to $\alpha(\gamma)$ in~$D'$)}\times \text{(positive tangent to~$\alpha(\gamma)$)}\times e_1\times e_2\in \operatorname{Fr}(T_pX_F)
\]
agrees with the ambient orientation of~$X_F$ at all points~$p\in\alpha(\gamma)$.
Define~$W(D')$ as the number of right-handed half-twists of~$\nu_{\alpha(\gamma)\subseteq \alpha(s(F))}\subseteq \nu_{D'\subseteq X_F}|_{\alpha(\gamma)}$ in one complete loop around~$\gamma$ in the positive direction.
Define
\[
\widehat{q}_{GM}(\gamma):=W(D').
\]
As there is a basis of~$H_1(s(F);\Z_2)$ by oriented embedded loops, this extends linearly to a function $\widehat{q}_{GM}\colon H_1(s(F);\Z_2)\to\Z_4$, which we call the \emph{exterior} Guillou-Marin function.
The proof that this is well defined on $H_1(s(F);\Z_2)$ is similar to the one for the Guillou-Marin form; see Theorem~\ref{thm:GM}.
\end{definition}

The next proposition relates $\widehat{q}_{GM}$ to the Guillou-Marin form, and in particular shows that its isometry type does not depend on the choice of the tubular neighbourhood $\alpha$ of $F$.


\begin{proposition}\label{prop:GMarethesame}
Let~$F \subseteq D^4$ be a~$\Z_2$-surface,  and fix a nice section~$s\colon F\to S(\nu_{F \subseteq D^4})$.
Then the exterior Guillou-Marin quadratic function is a~$\Z_4$-quadratic refinement of the $\Z_2$-intersection form~$(H_1(s(F);\Z_2),Q_{s(F)})$.

Moreover, the isometry of $\Z_2$-intersection forms~$s_*\colon H_1(F;\Z_2)\to H_1(s(F);\Z_2)$ induces an isometry between the Guillou-Marin quadratic form and the exterior Guillou-Marin quadratic form.
\end{proposition}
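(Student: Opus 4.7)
The plan is to establish the single identity
\[
q_{GM}([\gamma'])=\widehat{q}_{GM}(s_*[\gamma'])\in\Z_4
\]
for every embedded loop $\gamma'\subseteq F$, from which the full proposition follows. The reason is that $s\colon F\to s(F)$ is a homeomorphism and therefore induces an isometry of $\Z_2$-intersection forms $s_*\colon (H_1(F;\Z_2),Q_F)\to(H_1(s(F);\Z_2),Q_{s(F)})$; combined with Theorem~\ref{thm:GM}, which asserts that $q_{GM}$ is a well-defined $\Z_4$-quadratic refinement of $Q_F$, the identity transports the refinement structure to $\widehat{q}_{GM}$ and simultaneously supplies the claimed isometry. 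In particular, well-definedness of $\widehat{q}_{GM}$ on $H_1(s(F);\Z_2)$ is itself contained in the identity, since it asserts that $W(D')$ depends only on $s_*[\gamma']$.

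To prove the identity, fix $\gamma'\subseteq F$ and choose any immersed disc $D'\looparrowright X_F$ with $\partial D'=\alpha(s(\gamma'))$; such a $D'$ exists because $s$ is nice, so $\alpha(s(\gamma'))$ is null-homotopic in $X_F$. From $D'$ I would build a specific Guillou-Marin disc $D$ for $\gamma'$ by attaching the radial annulus $A\subseteq\overline{\nu F}$ given by $A(t,\theta):=\alpha(t\cdot s(\gamma'(\theta)))$, $t\in[0,1]$: set $D:=D'\cup_{\alpha(s(\gamma'))} A$. Then $D$ is an immersed disc with $\partial D=\gamma'$, it is transverse to $F$ along $\gamma'$ (the outward direction being $-s$), and $\mathrm{int}(D)\cap F=\emptyset$, since $\mathrm{int}(A)\subseteq\overline{\nu F}\setminus F$ and $\mathrm{int}(D')\subseteq X_F$ both avoid $F$. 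Hence $D\cdot F=0$, so $q_{GM}([\gamma'])=W(D)$, and the problem reduces to the twist-count equality $W(D)=W(D')$ in $\Z_4$.

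For this last step I would use the annulus $A$ together with the tubular parametrisation $\overline{\nu F}\cong D(\nu_{F\subseteq D^4})$ to recognise $\nu_{D\subseteq D^4}|_A$ as a canonical rank-two bundle on $[0,1]\times\gamma'$ along which the line sub-bundles $\nu_{\gamma'\subseteq F}\subseteq\nu_D|_{\gamma'}$ at $t=0$ and $\nu_{\alpha(s(\gamma'))\subseteq \alpha(s(F))}\subseteq\nu_{D'}|_{\alpha(s(\gamma'))}$ at $t=1$ are identified. One then verifies that the Guillou-Marin framing conventions at the two boundary circles of $A$ match under parallel transport along $A$: at both boundaries the outward normal within the ambient disc equals $-s$ (at $\gamma'$ because $D=A$ proceeds in the $+s$ radial direction, and at $\alpha(s(\gamma'))$ because $D'$ proceeds into $X_F$, i.e.\ away from $\overline{\nu F}$), the tangent vectors correspond via $(\alpha\circ s)_*$, and the ambient orientations of $D^4$ and $X_F$ agree on the overlap $\overline{\nu F}\cap X_F$. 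Consequently the two half-twist counts coincide modulo~$4$.

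The main obstacle is precisely this orientation bookkeeping: one must rule out the possibility that parallel transport of the Guillou-Marin frame along $A$ introduces a spurious half-twist due to a sign disagreement between the two notions of "outward". The key geometric observation is that at $\alpha(s(\gamma'))$ the disc $D'\subseteq X_F$ lies on the \emph{opposite} radial side of $\partial\overline{\nu F}$ compared with the interior of $A$, which is exactly what forces both outward normals to equal $-s$ rather than differ in sign, so that the two Guillou-Marin frames match rather than differing by a half-twist.
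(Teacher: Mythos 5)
Your proposal is correct and follows essentially the same route as the paper's proof: glue the radial annulus $A \subseteq \ol{\nu}F$ joining $\gamma'$ to $\alpha(s(\gamma'))$ onto the exterior disc $D'$, observe that the resulting Guillou--Marin disc has interior disjoint from $F$ so the intersection term vanishes, and match the framing/orientation conventions along $A$ to conclude $W(D)=W(D')$. The paper states the last step more tersely, but your orientation bookkeeping is the same content.
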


\begin{proof}
Let~$\alpha$ be a tubular neighbourhood for~$F\subseteq D^4$, let $\delta \subseteq F$ be an embedded loop, and let~$D'\looparrowright X_F$ be a properly immersed disc that is transverse to $s(F)$ along~$\partial D'=\alpha(s(\delta))$.
For each~$p\in\delta$, write~$I_p$ for the radial line between~$p$ and~$s(p)$ in the fibre~$\nu_{F \subseteq D^4}|_p$. The union~$A:=\coprod_{p\in\delta} I_p$ is an embedded annulus in~$\nu_{F \subseteq D^4}$.
The union~$D:=D'\cup_{\alpha(s(\delta))}\alpha(A)$ may be used to compute~$q_{GM}(\delta)$.
The interior of the disc~$D$ does not intersect~$F$ and therefore~$q_{GM}(\delta)=W(D)$.
As~$\alpha(A)$ is an embedded annulus with trivial normal bundle in~$D^4$, and the framing conventions for computing~$W(D)$ and~$W(D')$ are the same, we also have~$W(D)=W(D')$. Thus,~$q_{GM}(\delta)=\widehat{q}_{GM}(s(\delta))$. This shows both that $\widehat{q}_{GM}$ is a~$\Z_4$-quadratic refinement of the $\Z_2$-intersection form on~$s(F)$ and that $s_*$ is indeed an isometry of~$\Z_4$-quadratic forms.
\end{proof}

There is an isometry between the (exterior) Guillou-Marin form for $F$ and the particular Kirby-Taylor forms that correspond to the spin structures on $\partial X_F$ that extend to $X_F$:

\begin{lemma}\label{lem:optimistic}
Let~$F\subseteq D^4$ be a~$\Z_2$-surface, and let~$s\colon  F\to S(\nu_{F \subseteq D^4})$ be a nice section. Write~$\mathring{Y}\subseteq \partial X_F$ for the boundary of the tubular neighbourhood of $F$.
Choose a spin structure~$\mathfrak{s}$ for~$\mathring{Y}$ that extends to a spin structure on~$X_F$.
Then there is an equality of~$\Z_4$-quadratic forms
\[
 ((H_1(s( F);\Z_2),Q_{s( F)},\widehat{q}_{GM}) = ((H_1(s( F);\Z_2),Q_{s( F)},q_{KT}(\mathfrak{s})),
\]
where the latter form~$q_{KT}(\mathfrak{s})$ is furnished by Proposition~\ref{prop:characterisation}.
\end{lemma}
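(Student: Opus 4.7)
The plan is to verify the equality of the two $\Z_4$-quadratic refinements by evaluating on embedded loops. Since both $\widehat{q}_{GM}$ and $q_{KT}(\mathfrak{s})$ are $\Z_4$-quadratic refinements of the same $\Z_2$-intersection form $Q_{s(F)}$, they agree if and only if $\widehat{q}_{GM}(\gamma) = q_{KT}(\mathfrak{s})(\gamma) \in \Z_4$ for every class $\gamma$ represented by an oriented embedded loop in $s(F)$.

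Fix such a loop $\gamma \subseteq s(F)$. Because $s$ is nice and $F$ is a $\Z_2$-surface, $\alpha(\gamma) \subseteq X_F$ is null-homotopic, so I would pick a properly immersed disc $D' \looparrowright X_F$ with $\partial D' = \alpha(\gamma)$, transverse to $\alpha(s(F))$ along its boundary. After a small boundary isotopy I may arrange that $D'$ meets $\partial X_F$ orthogonally along $\partial D'$; then the outward normal to $\partial D'$ in $D'$ coincides with the inward normal $\vec{n}$ of $\partial X_F \subseteq X_F$, and $\nu_{D' \subseteq X_F}|_{\partial D'}$ is identified with $\nu_{\alpha(\gamma) \subseteq \mathring{Y}}$ as subbundles of $TX_F|_{\alpha(\gamma)}$ (both being the unique rank-two subbundle orthogonal to $T\alpha(\gamma)$ and to $\R\vec{n}$). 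The framing $(e_1,e_2)$ supplied by Definition~\ref{def:Dprimeconvention} then transfers to a framing $f$ of $\nu_{\alpha(\gamma) \subseteq \mathring{Y}}$.

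The central step is to show that $f$ is odd with respect to $\mathfrak{s}$. For this, I would compare two framings of $TX_F|_{\partial D'}$: the disc framing $(\vec{n}, t, e_1, e_2)$, which manifestly extends over $D'$, and the framing induced by the extension $\mathfrak{s}_X$ of $\mathfrak{s}$ to $X_F$, which also extends over $D'$. Since $\pi_1(\mathrm{SO}(4)) = \Z_2$, any two framings of $TX_F|_{\partial D'}$ that extend over the disc lie in the same homotopy class, so the two framings are homotopic. Splitting off $\vec{n}$ via the decomposition $TX_F|_{\partial D'} = T\mathring{Y}|_{\partial D'} \oplus \R\vec{n}$, and carefully tracking the odd permutation required to reorder the factors together with the boundary-orientation convention on $\mathring{Y} \subseteq \partial X_F$, one sees that the framing $\mathcal{G}_f$ built from the orientation of $\alpha(\gamma)$ and $f$ agrees with the Kirby-Taylor framing $\mathcal{F}$ in orientation but not in homotopy class; thus $f$ is odd.

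Granted the oddness of $f$, the two half-twist counts are equal on the nose. Indeed, $q_{KT}(\mathfrak{s})(\gamma)$ counts the right-handed half-twists of $\nu_{\gamma \subseteq s(F)}$ inside $\nu_{\gamma \subseteq \mathring{Y}}$ computed via $f$, while $\widehat{q}_{GM}(\gamma) = W(D')$ counts the same for $\nu_{\alpha(\gamma) \subseteq \alpha(s(F))}$ inside $\nu_{D' \subseteq X_F}|_{\partial D'}$; the homeomorphism $\alpha$ identifies $\nu_{\gamma \subseteq s(F)}$ with $\nu_{\alpha(\gamma) \subseteq \alpha(s(F))}$, and the ambient rank-two bundles and their framings have already been identified, so the two counts agree in $\Z_4$. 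The main obstacle is the oddness verification: the various orientation conventions (inward versus outward normal to $\partial X_F$ in $X_F$, and the boundary orientation on $\mathring{Y}$) must be tracked with care so that the correct parity of permutation appears and the homotopy-class discrepancy is isolated to the framing of the single line factor corresponding to $f$ versus the spin framing.
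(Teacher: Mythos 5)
Your overall strategy is the paper's: reduce to oriented embedded loops, cap $\alpha(\gamma)$ with a properly immersed disc $D'$ in the exterior, identify $\nu_{D'\subseteq X_F}|_{\partial D'}$ with $\nu_{\alpha(\gamma)\subseteq \mathring{Y}}$, show the resulting framing $f$ is odd, and then observe that the two half-twist counts coincide. The gap is in the oddness verification. You assert that the disc framing $(\vec{n},t,e_1,e_2)$ of $TX_F|_{\partial D'}$ ``manifestly extends over $D'$'' and conclude that it is homotopic to the framing coming from the spin structure. The $(e_1,e_2)$ part does extend, being a framing of the trivial bundle $\nu_{D'\subseteq X_F}$; but the $(\vec{n},t)$ part is a framing of $TD'|_{\partial D'}$ by the outward normal and the tangent, i.e.\ the Lie framing of a $2$-plane bundle over the circle. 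It represents the generator of $\pi_1(\SO(2))$ relative to the bounding framing and does \emph{not} extend over $D'$. Consequently the disc framing and the spin framing of $TX_F|_{\partial D'}$ are not homotopic: they differ by exactly the image of that generator, and it is this twist --- not any ``odd permutation'' of the frame vectors --- that makes $f$ odd. A constant reordering of factors is a constant loop in $\mathrm{O}(4)$; it can only affect orientations, never the $\pi_1$-class of a framing. As written, your argument would in fact show the two framings are homotopic, i.e.\ that $f$ is \emph{even}, which is the opposite of what is required.

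The repair is the paper's argument: stabilise $\mathcal{F}$ and $\mathcal{G}_f$ by the outward normal to framings $\mathcal{F}'$ and $\mathcal{G}'_f$ of $TX_F|_{\partial D'}$. Then $\mathcal{F}'$ extends over $D'$ because the spin structure $\mathfrak{s}$ extends over $X_F$, whereas $\mathcal{G}'_f$ splits as the framing $f$ of $\nu_{D'\subseteq X_F}|_{\partial D'}$ (which extends) together with the Lie framing $(\vec{n},t)$ of $TD'|_{\partial D'}$ (which does not); hence $\mathcal{G}'_f$ does not extend, the two framings differ, and $f$ is odd. Your orientation bookkeeping via the outward-normal-first convention is the correct way to handle the remaining (orientation) half of the oddness condition.
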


\begin{proof}
To ease notation, in this proof we will write~$X:=X_F$ and $\Sigma:=s( F)$, and we will henceforth identify $\Sigma$ with $\alpha(\Sigma)$. Let~$\gamma\subseteq \Sigma$ be an oriented, embedded loop.
We unravel the definition of~$q_{KT}(\mathfrak{s})(\gamma)$ from Proposition~\ref{prop:characterisation}. In the notation of that section, set $\zeta=\nu_{\Sigma\subseteq \mathring{Y}}$.
The spin structure~$\mathfrak{s}$ on~$\mathring{Y}$ gives a framing of
\[
T \mathring{Y}|_{\Sigma}\cong T\Sigma\oplus \nu_{\Sigma\subseteq  \mathring{Y}}.
\]
This, together with the choice of orientation on~$\gamma$, is used to define the concept of an odd framing for the 2-plane bundle
\[
\nu_{\gamma\subseteq \Sigma}\oplus \nu_{\Sigma\subseteq  \mathring{Y}}\cong \nu_{\gamma\subseteq  \mathring{Y}}.
\]
Then~$q_{KT}(\mathfrak{s})(\gamma)$ is the number of right-hand half twists modulo~$4$ made by~$\nu_{\gamma\subseteq \Sigma} \subseteq  \nu_{\gamma\subseteq  \mathring{Y}}$ as it rotates around~$\gamma$, with respect to a choice of odd framing on this ambient~$2$-plane bundle.

We compare with the process for computing the exterior Guillou-Marin form.
Choose a properly immersed disc~$D'\looparrowright X_{F}$ for computing~$\widehat{q}_{GM}(\gamma)$.
Write
\begin{equation}
\label{eq:DeffForKT=GM}
f\colon \nu_{D'\subseteq X}|_{\gamma} \cong \gamma\times\R^2
\end{equation}
for the framing coming from restricting the framing on~$\nu_{D'\subseteq X}$ determined by the convention described in Definition~\ref{def:Dprimeconvention}.
Then~$\widehat{q}_{GM}(\gamma)$ is the number of right-hand half-twists modulo~$4$ in~$\nu_{\gamma\subseteq \Sigma} \subseteq \nu_{D'\subseteq X}|_\gamma$ with respect to~$f$.
Since $D'$ is transverse to~$s(F)$ along $\gamma$,  there is a bundle isomorphism~$\nu_{\gamma\subseteq  \mathring{Y}}\cong \nu_{D'\subseteq X}|_{\gamma}$.

As both $q_{KT}(\mathfrak{s})(\gamma)$ and~$\widehat{q}_{GM}(\gamma)$ are counts of right-handed half twists of~$\nu_{\gamma\subseteq \Sigma} \subseteq \nu_{D'\subseteq X}|_\gamma$, to show they are equal it suffices to prove that~$f$ is an odd framing.
In other words, we must show that the following two framings induce the same orientation, but disagree as framings:
\[
\F \colon T \mathring{Y}|_{\gamma} \cong \gamma \times \R^3,\qquad\text{and}\qquad \G_f \colon T \mathring{Y}|_{\gamma}  \cong \gamma\times\R^3,
\]
where~$\F$ is obtained by using the spin structure~$\mathfrak{s}$ on~$\mathring{Y}$ and, recall,~$\G_f$ is defined as the sequence of bundle isomorphisms
\[
T\mathring{Y}|_{\gamma} \cong T\gamma \oplus \nu_{\gamma \subseteq \mathring{Y}}
\cong T\gamma \oplus \nu_{D'\subseteq X}|_{\gamma}
 \cong \gamma \times \R^3.
\]

Writing~$n$ for the outwards-facing normal to $\gamma$ in~$D'$,~$t$ for the tangent to~$\gamma$, and~$\langle e_1, e_2\rangle$ for the frame specified by~$f$, the frame~$n\times t\times e_1\times e_2$ agrees with the ambient orientation of~$X$, by definition of~$f$. Using the ``outwards normal first'' convention, the orientation on~$X$ induces an orientation on~$\mathring{Y}$. Our frame includes~$n$ as the first vector already, so the frame~$t\times e_1\times e_2$, (i.e.~the frame specified by~$\G_f$) induces this ambient orientation on~$\mathring{Y}$. Now consider that~$\F$ comes from a spin structure on~$\mathring{Y}$, and this spin structure extends to a spin structure on~$X$, by hypothesis. It must be that the orientations underlying these spin structures agree with the orientations on~$X$ and~$ \mathring{Y}$. Hence the orientation underlying~$\F$ is the ambient orientation on~$\mathring{Y}$, and the orientations coming from~$\F$ and~$\G_f$ agree, as claimed.

Next, we show that ~$\F$ and~$\G_f$ disagree, as framings.
Using the outwards-facing normal~$n$, complete each of~$\F$ and~$\G_f$ to a framing of
\[
TX|_\gamma\cong\nu_{\gamma\subseteq D'}\oplus T \mathring{Y}|_\gamma.
\]
We refer to the completed framings as~$\F'$ and $\G_f'$, respectively.
We now show that these two framings disagree, as this will imply that~$\F$ and $\G_{f}$ also disagree.

The framing~$\F'$ extends to a framing of~$TX|_{D'}$ because the spin structure on~$\mathring{Y}$ used to define~$\alpha$ extends to~$X$ (this uses the ``outwards-facing normal first'' convention, again). The framing~$\G_f'$ splits as a framing of~$TD'|_\gamma\cong \nu_{\gamma\subseteq D'}\oplus T\gamma$ and of~$\nu_{D'\subseteq X}|_\gamma$. The latter of these framings is~$f$ and extends to~$\nu_{D'\subseteq X}$, by definition; recall~\eqref{eq:DeffForKT=GM}.
The former of these framings is given by~$n\times t$. An ordered pair of framings of a~$2$-plane bundle over a circle, which correspond to the same orientation, determines an element of~$\pi_1(\SO(2))$. Fix the (unique) framing that extends to a framing of~$\nu_{D'\subseteq X}$ as our reference framing; this framing determines the trivial class in $\pi_1(\SO(2))$. On the other hand, the framing~$n\times t$ determines a generator of~$\pi_1(\SO(2))\cong\Z$, and so determines the Lie framing.
 Therefore this latter framing does not extend to a framing of~$\nu_{D'\subseteq X}$.
Thus, overall,~$\G_f'$ does not extend to~$TX|_{D'}$. This proves that~$\F'$ and~$\G'_f$ are different framings of~$TX|_\gamma$.

But the framings~$\F'$ and~$\G'_f$ were obtained from~$\F$ and~$\G_f$ by the addition of the same line bundle (given by the outwards facing normal), from which we deduce that~$\F$ and~$\G_f$ are different framings.  This completes the proof that $f$ is an odd framing, which completes the proof of Lemma~\ref{lem:optimistic}.
\end{proof}

\begin{corollary}\label{cor:sumitup}
Let~$F \subseteq D^4$ be a~$\Z_2$-surface, and write $\mathring{Y}\subseteq \partial X_F$ for the boundary of the tubular neighbourhood of $F$.
Let $\s\in\Spin(\mathring{Y})$ be a spin structure and let $s$ be a nice section.
Then $\s$ extends to $X_F$ if and only if the quadratic refinement~$q_{KT}(\mathfrak{s})$ associated to the spin structure $\mathfrak{s}$ by Proposition~\ref{prop:characterisation} agrees with the exterior Guillou-Marin quadratic refinement.
\end{corollary}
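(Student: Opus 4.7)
The plan is to observe that Lemma~\ref{lem:optimistic} already establishes the ``only if'' direction: if $\mathfrak{s}\in\Spin(\mathring{Y})$ extends to $X_F$, then $q_{KT}(\mathfrak{s})=\widehat{q}_{GM}$ as $\Z_4$-quadratic forms on $(H_1(s(F);\Z_2),Q_{s(F)})$. So the real work is the converse, which I would prove by a counting argument comparing the two finite sets of spin structures in question.

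First I would count the spin structures on $X_F$. Since $X_F$ is spin and connected, the set $\Spin(X_F)$ is a torsor over $H^1(X_F;\Z_2)$, which by Proposition~\ref{prop:ExteriorHomology} is isomorphic to $\Z_2$, so $|\Spin(X_F)|=2$. Next, I would argue that the restriction map $r\colon \Spin(X_F)\to\Spin(\mathring{Y})$ is injective. This follows from the fact that its ``difference'' is controlled by the kernel of the restriction $H^1(X_F;\Z_2)\to H^1(\mathring{Y};\Z_2)$; since the nonzero element of $H^1(X_F;\Z_2)\cong \Z_2$ evaluates nontrivially on the meridian class in $H_1(\mathring{Y};\Z_2)$ (the latter nontriviality being visible in Proposition~\ref{prop:HomologyGroupsmathringY}), the restriction is injective, and hence so is $r$. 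Therefore exactly two distinct spin structures on $\mathring{Y}$ extend to $X_F$.

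Now I would count how many spin structures on $\mathring{Y}$ satisfy $q_{KT}(\mathfrak{s})=\widehat{q}_{GM}$. The bijection $\Psi_{s,\mu}\colon \Spin(\mathring{Y}) \xrightarrow{1:1} \Spin(T\mathring{Y}|_\mu) \times \{\Z_4\text{-quadratic refinements of }(H_1(s(F);\Z_2),Q_{s(F)})\}$ from Proposition~\ref{prop:characterisation} shows that the fibre over any fixed refinement is in bijection with $\Spin(T\mathring{Y}|_\mu)$. Since $\mu$ is a circle and $T\mathring{Y}|_\mu$ is a rank-$3$ (trivial) bundle over $S^1$, $|\Spin(T\mathring{Y}|_\mu)|=|H^1(S^1;\Z_2)|=2$. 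Hence the set $\{\mathfrak{s}\in\Spin(\mathring{Y}) : q_{KT}(\mathfrak{s})=\widehat{q}_{GM}\}$ has cardinality $2$.

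To conclude, by Lemma~\ref{lem:optimistic} the two extending spin structures on $\mathring{Y}$ lie inside this cardinality-$2$ set of ``Guillou-Marin'' spin structures. Two subsets of the same finite set, one contained in the other, both of cardinality~$2$, must be equal. Therefore $\mathfrak{s}$ extends to $X_F$ if and only if $q_{KT}(\mathfrak{s})=\widehat{q}_{GM}$, as required. The only subtle point—really the crux—is the injectivity of $r$, and this is where the hypothesis that $F$ is a $\Z_2$-surface (so that $H^1(X_F;\Z_2)=\Z_2$ is detected by the meridian) is essential; everything else is a clean dimension count using Proposition~\ref{prop:characterisation} together with Lemma~\ref{lem:optimistic}.
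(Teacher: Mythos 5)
Your proposal is correct and follows essentially the same route as the paper: both use Lemma~\ref{lem:optimistic} for the containment of the extending spin structures inside the ``Guillou--Marin'' ones, count the latter set as having two elements via the bijection of Proposition~\ref{prop:characterisation}, and show the former set also has two elements because the two spin structures on $X_F$ differ on the meridian, which is nontrivial in $H_1(\mathring{Y};\Z_2)$. Your phrasing of that last step via injectivity of $H^1(X_F;\Z_2)\to H^1(\mathring{Y};\Z_2)$ and torsor naturality is just a mild repackaging of the paper's argument.
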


\begin{proof}
Write $S$ for the set of spin structures on $\mathring{Y}$ that extend to $X_F$.
Write $T$ for the set of~$\mathfrak{t}\in \Spin(\mathring{Y})$ such that $q_{KT}(\mathfrak{t})(\gamma) = \widehat{q}_{GM}(\gamma)$ for every $\gamma \in H_1(s(F);\Z_2)$. The corollary is equivalent to the statement $S=T$.
%
  Lemma~\ref{lem:optimistic} shows that if $\mathfrak{t}$ extends to $X_F$ then we have an identification between $\widehat{q}_{GM}$ and $q_{KT}(\mathfrak{t})$, and so $S\subseteq T$.

 Now note that by Proposition~\ref{prop:characterisation}~(1), exactly two spin structures on $\mathring{Y}$ correspond to the Guillou-Marin quadratic refinement, so we have $|T|=2$. The corollary will be proved if we can show $|S|=2$.
 We know $|S| \leq |T| =2$ because $S \subseteq T$.
  As noted earlier there are two spin structures on $X_F$ and these induce at most two spin structures on $\mathring{Y}$.  We write $S=\{\s_1,\s_2\}$ where~$\s_1$ and~$\s_2$ are the restrictions of the two distinct spin structures on $X_F$ to $\mathring{Y}$.
 These spin structures on $X_F$ differ on their restriction to a meridian $\mu\subseteq X_F$, which is nontrivial in $H_1(\mathring{Y};\Z_2)$.
 Thus by Proposition~\ref{prop:characterisation}~(1) we have $\s_1\neq \s_2$ and so $|S|=2$ as required.
\end{proof}

A $\Z_4$-quadratic form $(P,Q,q)$ over $\Z_2$ has a Brown-Kervaire invariant $\beta(P,Q,q) \in \Z_8$; see e.g.~\cite{Brown,KirbyMelvin}.
We will need to use the following theorem.

\begin{theorem}[Guillou-Marin{~\cite{GuillouMarin}}, Kirby-Taylor{~\cite{KirbyTaylor}}, Gilmer-Livingston{~\cite{GilmerLivingston}}]
\label{thm:GM}
The Guillou-Marin function~$q_{GM}$ is well defined on homology and gives rise to a~$\Z_4$-quadratic refinement of the~$\Z_2$-intersection form~$(H_1(F;\Z_2);Q_F)$.
Moreover, if~$F$ has normal Euler number $e$ $($which is necessarily even by Theorem~\ref{thm:MasseyBoundary}$)$, then
\[
\beta(H_1(F;\Z_2),Q_F,q_{GM})\equiv -\frac{1}{2}e +4\operatorname{Arf}(\partial F)\pmod 8.
\]
\end{theorem}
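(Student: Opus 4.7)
The plan is to prove the three assertions—well-definedness of $q_{GM}$ on $H_1(F;\Z_2)$, the quadratic refinement identity, and the Brown-Kervaire formula—broadly following the cited references. The first two claims are local and would follow the explicit immersed-disc arguments of~\cite{GuillouMarin}; the Brown-Kervaire formula is the main content and I would prove it by reducing to the closed case via a Seifert surface.

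For well-definedness, given an embedded loop $\gamma \subseteq F$ and two admissible immersed discs $D_1, D_2 \looparrowright D^4$ bounding $\gamma$, I would glue them to form an immersed $2$-sphere $S := D_1 \cup_\gamma (-D_2) \subseteq D^4$. By construction $(D_1 \cdot F) - (D_2 \cdot F) = S \cdot F \pmod 2$, and the framing difference $W(D_1) - W(D_2)$ equals, up to sign, the normal Euler number $e(\nu_S)$ of the immersed sphere. A Rokhlin-type argument, using that $D^4 \setminus F$ is spin and that $[F]$ is Poincar\'e dual modulo $2$ to the second Stiefel-Whitney class of a suitable relative double cover, then yields $e(\nu_S) \equiv -2(S\cdot F) \pmod 4$. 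Combining these gives $W(D_1) + 2(D_1 \cdot F) \equiv W(D_2) + 2(D_2 \cdot F) \pmod 4$, so $q_{GM}(\gamma)$ is independent of the disc choice; invariance under a homology of $\gamma$ in $F$ is obtained by the same sphere argument applied across a cobounding annulus.

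For the quadratic refinement identity, I would take transverse embedded loops $\gamma_1, \gamma_2 \subseteq F$ and resolve each of their transverse intersections by a small band in $F$ to produce an embedded representative of $\gamma_1 + \gamma_2 \in H_1(F;\Z_2)$. Thickening the bands in $D^4$ and using them to join discs $D_1, D_2$ bounding $\gamma_1, \gamma_2$ yields an immersed disc bounding the resolved curve. The half-twist counts $W$ add directly, and each intersection of $\gamma_1$ with $\gamma_2$ on $F$ produces exactly one additional transverse intersection of the joined disc with $F$ (from the local band switch across $F$), contributing the $2\,Q_F(\gamma_1,\gamma_2)$ correction modulo $4$.

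The main work, and the step I expect to be the real difficulty, is the Brown-Kervaire formula. I would pick a Seifert surface $\Sigma \subseteq S^3$ for $K = \partial F$, push its interior into the complement of $F$ in a second copy of $D^4$, and form the closed nonorientable surface $\widehat F := F \cup_K \Sigma \subseteq S^4 = D^4 \cup_{S^3} D^4$. Orientability of $\Sigma$ gives $e(\widehat F) = e(F)$. The closed case of the theorem, due to~\cite{GuillouMarin} and reproved topologically via~\cite{KirbyTaylor}, supplies
\[\beta(H_1(\widehat F;\Z_2), Q_{\widehat F}, q_{GM}^{\widehat F}) \equiv -\tfrac{1}{2}\, e(\widehat F) \pmod 8.\]
Using the Mayer-Vietoris splitting $H_1(\widehat F;\Z_2) \cong H_1(F;\Z_2) \oplus_{\Z_2} H_1(\Sigma;\Z_2)$, I would isolate the $\Sigma$-contribution to $\beta$: choosing immersed discs for loops in $\Sigma$ using the hemisphere opposite to $F$, the restriction of $q_{GM}^{\widehat F}$ to $H_1(\Sigma;\Z_2)$ reduces modulo $2$ to the classical Seifert form quadratic refinement, whose Arf invariant is $\operatorname{Arf}(K)$, contributing $4\operatorname{Arf}(K) \pmod 8$ under the Brown-Kervaire lift. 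Subtracting this contribution, and checking that the Mayer-Vietoris cross pairing between $H_1(F)$ and $H_1(\Sigma)$ vanishes under $Q_{\widehat F}$ so that the two summands split orthogonally in the Brown-Kervaire calculation, yields the claimed formula. The main obstacle is precisely this orthogonal splitting verification together with the matching between the Seifert-form Arf invariant and the restriction of $q_{GM}^{\widehat F}$; this is the technical heart of Gilmer-Livingston's extension of the closed Guillou-Marin theorem to surfaces with boundary.
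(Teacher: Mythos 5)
Your outline is correct and, at the level of mathematical content, it follows the same route as the paper: the paper's proof of Theorem~\ref{thm:GM} is essentially a citation-assembly (Guillou--Marin for well-definedness and the quadratic property, Kirby--Taylor for the closed-case congruence in the topological category, Gilmer--Livingston \S 2.2.5 for the reduction of the boundary case to the closed case), and your capping-off argument --- push in a Seifert surface $\Sigma$ for $K$, use that $H_1(\widehat F;\Z_2)$ splits orthogonally as $H_1(F;\Z_2)\oplus H_1(\Sigma;\Z_2)$, identify the restriction of $q_{GM}$ to $H_1(\Sigma;\Z_2)$ with twice the Seifert-form quadratic refinement so that it contributes $4\operatorname{Arf}(K)$ to the Brown--Kervaire invariant, and note $e(\widehat F)=e(F)$ because the reference section in Definition~\ref{def:Eulernumber} is the Seifert framing --- is precisely the Gilmer--Livingston reduction that the paper invokes. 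So the two differ mainly in that you re-derive what the paper cites.

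Two points deserve attention. First, the theorem is asserted for locally flat surfaces, and your sketch never confronts the topological category: the well-definedness argument needs topological transversality and the existence of normal bundles (Freedman--Quinn), and, more substantively, the closed-case congruence you import from Kirby--Taylor is stated for their $\Pin^-$-structure refinement, not for $q_{GM}$ directly. Identifying the two refinements is not free --- the paper spends Proposition~\ref{prop:GMarethesame} and Lemma~\ref{lem:optimistic} on exactly this identification, and without it the invocation of the topological closed-case formula has a gap. Second, in your well-definedness step the glued sphere $S=D_1\cup_\gamma(-D_2)$ meets $F$ non-transversally along $\gamma$ itself, so ``$S\cdot F$'' is not directly a homological intersection number and the passage from $e(\nu_S)\equiv 0 \pmod 2$ (which follows from $[S]=0$) to the needed mod~$4$ congruence $e(\nu_S)\equiv -2(S\cdot F)$ is exactly where the characteristic/spin input must be used carefully; as phrased, your ``Rokhlin-type argument'' names the right ingredients but does not yet constitute a proof. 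Neither issue is a wrong turn --- both are resolved in the cited sources --- but they are the genuine technical content hiding behind your sketch.
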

\begin{proof}
None of the sources given above state the theorem exactly as we do, so we now discuss how this version arises from the literature.
Guillou-Marin~\cite{GuillouMarin} proved the theorem for closed nonorientable manifolds, only in the smooth category. Gilmer-Livingston~\cite[Theorem 7]{GilmerLivingston} proved the theorem, as stated above, but again only in the smooth category. The arguments of Guillou-Marin that their form is well-defined and gives rise to a~$\Z_4$-quadratic enhancement extend, without changes, to the topological setting, and the case that the surface has non-empty boundary, using the existence of normal bundles~\cite[Theorem~9.3A]{FreedmanQuinn} and topological transversality in dimension~4~\cite[\textsection9.5]{FreedmanQuinn}.

To extend the stated congruence to the topological category, begin by recalling that the Guillou-Marin formula for smooth \emph{closed} surfaces was explicitly generalised in~\cite[Corollary~9.3]{KirbyTaylor} to the topological category. To state the generalisation for our purposes, let $\Sigma\subseteq S^4$ be a closed surface and define a \emph{characteristic structure} (\cite[Definition~6.1]{KirbyTaylor}) to be a $\Pin^-$-structure on~$S^4\sm \Sigma$ that does not extend over $\Sigma$. Such structures are in 1:1 correspondence with~$H^1(S^4;\Z_2)=0$, so there is a unique such structure, in this case coming from the unique spin structure on $S^4$. This determines a~$\Pin^-$ structure on $\Sigma$ in a certain way described in \cite[Lemma~6.2]{KirbyTaylor}, and hence a $\Z_4$-quadratic refinement of~$(H_1(\Sigma;\Z_2),Q_{\Sigma})$, using~\eqref{eq:bijection}. Proposition~\ref{prop:GMarethesame} and Lemma~\ref{lem:optimistic} combine to show that this Kirby-Taylor refinement agrees with the Guillou-Marin refinement (this proposition and lemma are technically for surfaces with boundary, but can be applied to $(S^4,\Sigma)$, by first removing a ball pair~$(D^4,D^2)$). Given the discussion above, the result~\cite[Corollary~9.3]{KirbyTaylor} reads as
\begin{equation}\label{eq:KTcorrect}
2\cdot \beta(H_1(\Sigma;\Z_2),Q_\Sigma,q_{GM})\equiv -e(\Sigma) \pmod {16},
\end{equation}
which concludes the proof in the closed case.\footnote{In our statement of the congruence~\eqref{eq:KTcorrect}, we have corrected what we think is a sign error in Kirby-Taylor; our statement has the same signs as in Guillou-Marin and in Gilmer-Livingston. We think the sign error in~\cite[Theorem~9.3]{KirbyTaylor} arises from their computation in~\cite[Theorem~6.3]{KirbyTaylor}. We note that if the Kirby-Taylor sign is in fact correct, and~\eqref{eq:KTcorrect} has the wrong sign, this is a global change and will not affect our only application of Theorem~\ref{thm:GM}, in Corollary~\ref{cor:realiseisometry}, below. Indeed, we use it to show that if two $\Z_2$-surfaces have the same knot as boundary and same normal Euler number then they have the same Brown invariant, and this deduction is valid regardless of which sign is correct overall in this congruence.}

Finally, to extend to surfaces with boundary, the discussion of Gilmer-Livingston in~\cite[\textsection2.2.5]{GilmerLivingston} applies equally well in the topological category to adjust the formula from the closed case to the case of a properly embedded surface with boundary $F\subseteq D^4$. This concludes the proof.
\end{proof}

\begin{corollary}
\label{cor:realiseisometry}
If~$F_1,F_2\subseteq D^4$ have the same nonorientable genus, connected, nonempty boundary $K\subseteq S^3$, and normal Euler number, then there exists a homeomorphism~$f\colon F_1\to F_2$, restricting to the identity map on $K$, and inducing an isometry between the Guillou-Marin forms
\[
f_*\colon (H_1(F_1;\Z_2),Q_{F_1},q_{GM_1})\xrightarrow{\cong} (H_1(F_2;\Z_2),Q_{F_2},q_{GM_2}).
\]
\end{corollary}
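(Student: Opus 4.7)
The plan is to produce the desired homeomorphism in two steps: first, pick any homeomorphism $\psi \colon F_1 \to F_2$ that restricts to $\id_K$, and then adjust it by a self-homeomorphism of $F_1$ fixing $K$ pointwise so that the Guillou-Marin forms match. The first step exists immediately from the classification of compact surfaces, since~$F_1$ and~$F_2$ share the same nonorientable genus and the same boundary $K$.

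Given such a~$\psi$, the pullback~$\psi^* q_{GM_2}$ is a~$\Z_4$-quadratic refinement of the intersection pairing~$Q_{F_1}$ on~$H_1(F_1;\Z_2)$. By Theorem~\ref{thm:GM}, we have
\[
\beta(H_1(F_i;\Z_2),Q_{F_i},q_{GM_i}) \equiv -\smfrac{1}{2}e + 4\operatorname{Arf}(K) \pmod{8},
\]
for both $i=1,2$, because $F_1$ and $F_2$ have identical normal Euler number and identical boundary knot. Since Brown-Kervaire invariants are preserved by isometry,~$q_{GM_1}$ and~$\psi^*q_{GM_2}$ have the same Brown-Kervaire invariant as~$\Z_4$-quadratic refinements of~$Q_{F_1}$.

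The essential input is then the classification statement that, on a fixed nonorientable compact surface~$F$ with connected nonempty boundary, two~$\Z_4$-quadratic refinements of the~$\Z_2$-intersection form lie in the same orbit of the mapping class group~$\operatorname{Mod}(F,\partial F)$ (homeomorphisms fixing~$\partial F$ pointwise, modulo isotopy rel.~boundary) if and only if they share the same Brown-Kervaire invariant. This is essentially Guillou-Marin's classification of~$\Z_4$-quadratic refinements; the relative case can be deduced from the closed case either by capping off~$K$ with a disc (after verifying the chosen mapping classes are supported away from the cap) or by a direct surgery-on-surfaces argument using the fact that the Brown-Kervaire invariant is additive under connect sum and that certain elementary self-homeomorphisms realise prescribed changes of~$q$ on a given homology basis.

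Granting this classification, there is a self-homeomorphism~$\varphi \colon F_1 \to F_1$ fixing~$K$ pointwise with~$\varphi^*(\psi^* q_{GM_2}) = q_{GM_1}$. Setting~$f := \psi \circ \varphi$, one obtains a homeomorphism~$f \colon F_1 \to F_2$ restricting to~$\id_K$ and satisfying~$f^*q_{GM_2} = q_{GM_1}$; since~$f_*$ already intertwines the intersection forms (being induced by a homeomorphism), this is the sought-after isometry of Guillou-Marin forms. The main obstacle will be to nail down a precise reference (or proof) for the relative classification in the third paragraph; everything else is direct bookkeeping with the invariants already established in Theorem~\ref{thm:GM}.
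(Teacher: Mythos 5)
Your proposal is correct in outline and follows essentially the same strategy as the paper: both arguments reduce the problem, via Theorem~\ref{thm:GM}, to the fact that the two Guillou-Marin forms have equal Brown-Kervaire invariants, and then invoke a realisation statement to upgrade this to a homeomorphism. The one step you leave open --- the ``relative classification'' asserting that the mapping class group of $F$ rel.\ boundary acts transitively on $\Z_4$-quadratic refinements with a given Brown-Kervaire invariant --- is exactly where the paper does its work, and it closes that gap by factoring the statement into two cited ingredients rather than by capping off or by elementary surgeries. First, Guillou-Marin's classification of abstract nondegenerate $\Z_4$-quadratic forms over $\Z_2$ (the final remark of Section~II of their paper) shows that equal rank and equal Brown-Kervaire invariant force an \emph{abstract} isometry of the triples $(H_1(F_i;\Z_2),Q_{F_i},q_{GM_i})$; the hypothesis that $\partial F_1=\partial F_2$ is connected is what guarantees nondegeneracy here, so you should not drop it when formulating your classification. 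Second, a theorem of Gadgil--Pancholi states that every isometry of the $\Z_2$-intersection forms of compact nonorientable surfaces with connected nonempty boundary is induced by a homeomorphism fixing the boundary pointwise; applying this to the isometry just produced yields the desired $f$ directly, with no need to first choose an auxiliary $\psi$ and then correct it. Your two-step ``choose $\psi$, then adjust by $\varphi$'' scheme is equivalent, since the transitivity statement you need is precisely the conjunction of these two facts, but as written your proof is incomplete until you supply one of them; the capping-off route you sketch would additionally require checking that the relevant mapping classes of the closed surface are supported away from the cap, which is avoided by quoting the relative realisation theorem.
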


\begin{proof}
Since both~$F_1$ and~$F_2$ have the same nonorientable genus, there is an isometry of intersection forms~$(H_1(F_1;\Z_2),Q_{F_1})\cong(H_1(F_2;\Z_2),Q_{F_2})$.
As the surfaces have the same normal Euler number and connected boundary $K$, Theorem~\ref{thm:GM} implies that their Guillou-Marin forms have the same Brown-Kervaire invariant.
As mentioned in~\cite[final remark of Section II]{GuillouMarin}, this implies there is an isometry of~$\Z_4$-quadratic forms~$(H_1(F_1;\Z_2),Q_{F_1},q_{GM_1})\cong (H_1(F_2;\Z_2),Q_{F_2},q_{GM_2})$; this remark concerns abstract non-degenerate quadratic forms and therefore applies since $\partial F_1=\partial F_2$ is connected.
Finally, it is shown in~\cite[Theorem 6.1]{GadgilPancholi}
that every isometry $(H_1(F_1;\Z_2),Q_{F_1})\cong (H_1(F_2;\Z_2),Q_{F_2})$ of intersection forms of compact,  nonorientable surfaces with connected nonempty boundary is induced by some homeomorphism between the surfaces that fixes the boundary pointwise.
\end{proof}

\subsection{The proof of Proposition~\ref{prop:UnionSpin}}
\label{subsec:unionisspin}

We begin with a lemma that will allow us to align choices of nice sections. We then move on to the proof of Proposition~\ref{prop:UnionSpin}.

Given a surface $\Sigma$ with connected, nonempty boundary and a bundle $\mathring{\xi}=(\R^2 \to \mathring{N} \to \Sigma)$, we write $\mathring{N}|_{\partial \Sigma}$ for the total space of $\mathring{\xi}|_{\partial \Sigma}$.
Since $[\partial \Sigma]=0\in H_1(\Sigma;\Z/2)$, it follows that $w_1(\mathring{\xi})([\partial \Sigma]) =0$, and therefore $\mathring{\xi}|_{\partial \Sigma}$ is a trivial bundle.

\begin{lemma}
\label{lem:NewNiceUniqueness}
   For $i=0,1$, let $F_i\subseteq D^4$ be nonorientable surfaces with the same nonorientable genus, and the same connected boundary $K\subseteq S^3$.
For brevity, write $\mathring{\xi_i}=(\R^2\to \mathring{N_i}\to F_i)$ for the respective normal bundles.
Suppose~$s_0,s_1 \colon F_i\to \mathring{N_i}$ are nonvanishing sections.
\begin{enumerate}
\item\label{item:constructiso} Given a homeomorphism $\varphi\colon F_0\to F_1$, restricting to the identity map on $K$, there exists a vector bundle isomorphism~$\Phi \colon \mathring{\xi_0}\to \mathring{\xi_1}$ covering~$\varphi$ and sending~$s_0$ to~$s_1$.
\item\label{item:modifyiso}  Suppose moreover that $s_0|_{K}$ and $s_1|_{K}$ are isotopic as sections of $\nu_{K\subseteq S^3}$. Then, after an isotopy of $s_1$  supported in a collar of $K\subseteq F_0$, the isomorphism $\Phi$ may be assumed to restrict to the identity on $\nu_{K\subseteq S^3}$.
\end{enumerate}
\end{lemma}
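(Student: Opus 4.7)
The plan is to prove parts \eqref{item:constructiso} and \eqref{item:modifyiso} separately, with the latter building on the former by first modifying $s_1$ and then modifying $\Phi$.

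For part \eqref{item:constructiso}, I would first show that $\mathring{\xi_0}$ and $\varphi^*\mathring{\xi_1}$ are isomorphic as rank 2 bundles over $F_0$. Since $TD^4$ is trivial, the Whitney sum formula gives $w_1(\mathring{\xi_i})=w_1(F_i)$, and as $\varphi$ is a homeomorphism, $w_1(\varphi^*\mathring{\xi_1})=w_1(F_0)$ as well. Both total spaces are orientable, being codimension-zero submanifolds of $D^4$. Since $F_0$ has nonempty boundary it retracts onto a $1$-complex, so rank 2 vector bundles over $F_0$ are classified by their first Stiefel-Whitney class; this yields a bundle isomorphism $\mathring{\xi_0}\cong\varphi^*\mathring{\xi_1}$ and hence, by composition with the canonical map $\varphi^*\mathring{\xi_1}\to\mathring{\xi_1}$, a bundle isomorphism $\Phi_0\colon\mathring{\xi_0}\to\mathring{\xi_1}$ covering $\varphi$. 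To arrange $\Phi_0\circ s_0=s_1\circ\varphi$, I would equip $\mathring{\xi_1}$ with a Riemannian metric and define a bundle automorphism $\Psi$ of $\mathring{\xi_1}$ by letting $\Psi_p$ be the unique element of $\R_{>0}\cdot SO(\mathring{\xi_1}_p)$ taking $(\Phi_0\circ s_0)(p)$ to $(s_1\circ\varphi)(p)$. This is globally well-defined without any choice of orientation, since both positive scalars and $SO(2)\subseteq O(2)$ are intrinsic subgroups of $GL(2,\R)$. Then $\Phi:=\Psi\circ\Phi_0$ is as required.

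For part \eqref{item:modifyiso}, first observe that by a collaring argument, the normal bundle $\mathring{\xi_i}|_K$ is canonically identified with $\nu_{K\subseteq S^3}$. Use the hypothesised isotopy $s_0|_K\simeq s_1|_K$ in $\nu_{K\subseteq S^3}$ together with a collar $K\times[0,1]\subseteq F_1$ (over which $\mathring{\xi_1}$ is canonically pulled back from $\mathring{\xi_1}|_K$) to isotope $s_1$ so that the new section satisfies $s_1|_K=s_0|_K$. After this replacement, apply part \eqref{item:constructiso} to obtain a $\Phi$ with $\Phi\circ s_0=s_1\circ\varphi$. The restriction $\Phi|_K$ is then a bundle automorphism of $\nu_{K\subseteq S^3}$ covering $\id_K$ that fixes $s_0|_K=s_1|_K$. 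The goal becomes to construct a bundle automorphism $\Xi$ of $\mathring{\xi_1}$ over $F_1$ fixing $s_1$ with $\Xi|_K=\Phi|_K^{-1}$; then $\Xi\circ\Phi$ serves as the new $\Phi$ and restricts to the identity on $\nu_{K\subseteq S^3}$.

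To construct $\Xi$, set $L:=\mathring{\xi_1}/\langle s_1\rangle$ and choose a complement (e.g.\ the metric orthogonal complement) so that $\mathring{\xi_1}\cong\langle s_1\rangle\oplus L$. Bundle automorphisms fixing $s_1$ fit into a split short exact sequence
\[1\to L^*\to\operatorname{Stab}(s_1)\to\Aut(L)\to 1.\]
Since $[K]=[\partial F_1]=0\in H_1(F_1;\Z_2)$, the restriction $L|_K$ is trivial, and $\Aut(L)$ is canonically trivialised by $\operatorname{End}(L)\cong\R$. Thus $\Phi|_K^{-1}$ decomposes into a nowhere-zero function $K\to\R^\times$ (of constant sign as $K$ is connected) and a section of $L^*|_K$. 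Extend the diagonal component by a constant $\pm 1$ function on $F_1$ of the same sign, and extend the section of $L^*$ across the standard CW decomposition of $F_1$ cell by cell. The main obstacle is this last step: a priori, sections of a nontrivial line bundle need not extend from a subcomplex. The key observation defusing it is that we need only extend a \emph{continuous} section of $L^*$, not a nonvanishing one, so the contractibility of the fibre $\R$ renders all extension obstructions trivial. Without the preliminary isotopy of $s_1$, the restriction $\Phi|_K$ would not lie in the stabiliser of $s_1|_K$ and this approach would break down, explaining the role of the hypothesis in part \eqref{item:modifyiso}.
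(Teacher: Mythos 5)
Your proof is correct, and it takes a genuinely different route from the paper's in both halves. For part~\eqref{item:constructiso} the paper splits each normal bundle as $\zeta_i\oplus\zeta_i^{\perp}$, where $\zeta_i$ is the line bundle spanned by $s_i$, sends $\zeta_0\to\zeta_1$ by the tautological map determined by the sections, and sends $\zeta_0^{\perp}\to\zeta_1^{\perp}$ by an abstract isomorphism of line bundles classified by $w_1$; you instead produce any isomorphism $\Phi_0$ of the rank-2 bundles (again via $w_1$-classification over a space homotopy equivalent to a $1$-complex) and then correct it fibrewise by the unique conformal, determinant-positive map carrying $\Phi_0(s_0(p))$ to $s_1(\varphi(p))$ --- a valid and arguably slicker normalisation, since $\R_{>0}\cdot\SO(2)$ acts simply transitively on nonzero vectors and is intrinsic in $GL(2,\R)$. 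For part~\eqref{item:modifyiso} the divergence is more substantial: the paper runs an isotopy of $\Phi|_{\nu_{K\subseteq S^3}}$ induced by the isotopy of the sections and of the perpendicular line subbundles and then tapers it over a collar, whereas you first normalise $s_1|_K=s_0|_K$, re-run part~\eqref{item:constructiso}, and then kill $\Phi|_K$ by post-composing with an element $\Xi$ of the stabiliser of $s_1$, built via the extension $1\to (\text{sections of } L^*)\to\operatorname{Stab}(s_1)\to(\text{nonvanishing functions})\to 1$ and the surjectivity of restriction to $K$ in each factor. This obstruction-theoretic construction buys you a cleaner justification than the paper's rather terse claim that the isotoped automorphism over $K$ ``is in fact the identity''. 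One small imprecision: ``extend the diagonal component by a constant $\pm1$ function of the same sign'' does not literally restrict to $d_K$ on $K$; you should first interpolate from $d_K$ to the constant over a collar (possible because $\R_{>0}$, respectively $\R_{<0}$, is contractible) and only then extend. With that wording repaired, the argument is complete and delivers exactly what the application in Proposition~\ref{prop:UnionSpin} requires, since nothing downstream needs the modified $\Phi$ to be isotopic to the original one.
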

\begin{proof}
We prove the first item.
For $i=0,1$, the section~$s_i$ determines a (trivial) line bundle~$\zeta_i=(\R\to E_i\xrightarrow{\pi_i} F_i)$.
Namely~$\zeta_i$ is the subbundle of~$\mathring{\xi}_i$ with fibre over~$x \in  F_i$ given by~$\R\langle s_i(x) \rangle$.
Every~$v \in E_i$ can be written uniquely as $v=\lambda_v^i s_i(\pi_i(v)) \in (E_i)_{\pi_i(v)}$ for some~$\lambda_v^i \in \R$.
It follows that the assignment~$v \mapsto \lambda_v^0 s_1(\varphi(\pi_0(v)))$ defines
a bundle isomorphism $\alpha\colon \zeta_0\cong \zeta_1$, that covers $\varphi$ and
satisfies~$\alpha \circ s_0=s_1 \circ \varphi$.

We now extend $\alpha$ to a bundle isomorphism $\Phi\colon  \mathring{\xi}_0 \cong  \mathring{\xi}_1$, using orthogonal complements.
Choose Riemannian metrics on~$\mathring{\xi}_0$ and $\mathring{\xi}_1$ which agree over $K$, and consider the perpendicular bundles~$\zeta_0^\perp$ and~$\zeta_1^\perp$.
Given an
embedded loop $\gamma \subseteq  F_i$,
the line bundle
$\zeta_i^\perp|_\gamma$ is orientable or nonorientable according to whether~$w_1( \mathring{\xi}_i)$ is trivial on~$\gamma$ or not.  Since $\varphi$ is a homeomorphism, for $\gamma \subseteq F_0$ we have that $w_1(F_0)(\gamma) = w_1(F_1)(\varphi(\gamma))$. Therefore since the total space of $\mathring{\xi}_i$ is orientable, we have that $w_1(\mathring{\xi}_0)(\gamma) = w_1(\mathring{\xi}_1)(\varphi(\gamma))$.
Thus in both cases there is a bundle isomorphism~$\zeta_0^\perp|_\gamma \cong \zeta_1^\perp|_{\varphi(\gamma)}$.
Because line bundles are classified by $w_1$,
this implies there exists a bundle isomorphism~$\beta\colon \zeta_0^\perp\to \zeta_1^\perp$ covering $\varphi$.
Combining~$\alpha$ and $\beta$, we obtain a bundle isomorphism
\[
\Phi\colon  \mathring{\xi}_0 \xrightarrow{\cong} \zeta_0\oplus \zeta_0^\perp\xrightarrow{\alpha\oplus\beta} \zeta_1\oplus \zeta_1^\perp\xrightarrow{\cong} \mathring{\xi}_1,
\]
which covers~$\varphi$
and satisfies~$\Phi \circ s_0=s_1 \circ \varphi$.

We prove the second item.
The normal bundles $\mathring{\xi}_0$ and $\mathring{\xi}_1$ both restrict to $\nu_{K\subseteq S^3}$ over $K$, and recall that our choices of Riemannian metric for $\mathring{\xi}_0$ and $\mathring{\xi}_1$ were constructed to agree over~$K$.
An isotopy of $s_1$ over $K$ that makes it agree with $s_0$ thus determines an isotopy between the line bundles $\zeta_1|_K$ and $\zeta_0|_K$, as subbundles of $\nu_{K\subseteq S^3}$.
Also, because the metrics agree over the boundary, this isotopy induces an isotopy between $\zeta_1^\perp|_K$ and $\zeta_0^\perp|_K$ as subbundles of $\nu_{K\subseteq S^3}$.
This means we obtain an isotopy from $\Phi|_{\nu_{K \subseteq S^3}}$ to a bundle automorphism of $\nu_{K \subseteq S^3}$ that sends $\zeta_0|_K$ to itself and~$\zeta_0^\perp|_K$ to itself.
Since $\nu_{K \subseteq S^3} \cong \zeta_0|_K \oplus \zeta_0^\perp|_K$,  it follows that this automorphism is in fact the identity.
Taper the isotopies of $\Phi|_{\nu_{K \subseteq S^3}}$ and of $s_1$ over a collar neighbourhood of $K$ in $F_0$ and $F_1$, to obtain the desired isotopy of $\Phi$ to a new bundle isomorphism $\Phi' \colon \mathring{\xi}_0 \to \mathring{\xi}_1$ that is the identity over~$K$, and to obtain the claimed isotopy of $s_1$.
\end{proof}

\begin{lemma}\label{lem:NENcontribution}
    For $i=0,1$, let $F_i\subseteq D^4$ be nonorientable surfaces with the same nonorientable genus, the same connected boundary $K\subseteq S^3$ and each with normal Euler number $e$. Let $s_i\colon F_i\to \nu_{F_i\subseteq D^4}$ be nonvanishing sections. Then $s_0|_K$ and $s_1|_K$ are isotopic.
\end{lemma}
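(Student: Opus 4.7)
The plan is to use the relative Euler number to calculate the framing of each boundary section explicitly in terms of $e$, and then invoke the classification of nonvanishing sections of a trivial rank $2$ bundle over $S^1$.

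First I would observe that $\nu_{K\subseteq S^3}$ is a trivial $\mathbb{R}^2$-bundle over $K\cong S^1$, so nonvanishing sections correspond to maps $K\to S^1$, and their isotopy classes are detected by the integer-valued framing number, i.e.\ the linking number with $K$ of the pushoff of $K$ determined by the section. Hence the lemma reduces to showing that the framing numbers of $s_0|_K$ and $s_1|_K$ agree.

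Next I would use Remark \ref{rem:intersectioncount} to interpret the relative Euler number as a signed intersection count of any generic section (extending a given boundary section) with the $0$-section. Since $s_i$ is a nonvanishing extension of $s_i|_K$ over all of $F_i$, there are no such intersections, and hence $e(\nu_{F_i\subseteq D^4},s_i|_K)=0$.

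The key computation I would then carry out is to relate this to the normal Euler number. Let $\ell_i\in\Z$ denote the framing number of $s_i|_K$, and let $s'$ denote the Seifert-framing section of $\nu_{K\subseteq S^3}$, whose framing number is by definition $0$. Writing $e(F_i)=e(\nu_{F_i\subseteq D^4},s')$, a standard calculation (by comparing two generic extensions and noting that they differ over the boundary by a map $K\to S^1$ of degree $\ell_i$) gives
\[
e(\nu_{F_i\subseteq D^4},s')-e(\nu_{F_i\subseteq D^4},s_i|_K)=\ell_i,
\]
and so $\ell_i=e(F_i)=e$. Hence $\ell_0=\ell_1=e$, and the first step finishes the proof.

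The only mildly subtle point is fixing the sign convention in the displayed formula above, but this is a direct consequence of Definition \ref{def:Eulernumber} together with the intersection-count description in Remark \ref{rem:intersectioncount}: altering the boundary section by a degree-$d$ self-map of $S^1$ introduces $d$ additional signed transverse intersections with the $0$-section in any generic extension across a collar of $K$. Other than pinning down this sign, the proof is essentially immediate from the definitions.
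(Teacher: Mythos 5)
Your proposal is correct and follows essentially the same route as the paper's proof: both reduce to comparing the integer framing of $s_i|_K$ with the Seifert framing, use Remark~\ref{rem:intersectioncount} together with the fact that $s_i$ extends without zeros to conclude that this difference equals $e$ for each $i$, and then observe that equal framings give isotopic sections. As you note, the precise sign in the comparison formula is immaterial here, since the same formula applies to both $i=0$ and $i=1$.
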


\begin{proof}
Recall that, by convention in this section, the tubular neighbourhood of any surface with boundary a given $K\subseteq S^3$ extends a given tubular neighbourhood for $K$. So there are assumed identifications $\nu_{F_i\subseteq D^4}|_K=\nu_{K\subseteq S^3}$ for $i=0,1$. Isotopy classes of nonvanishing sections of $\nu_{K\subseteq S^3}$ are in one-to-one correspondence with framings of $\nu_{K\subseteq S^3}\cong S^1\times\R^2$, and we will identify the two, using the same notation for a section and the corresponding framing.
Write $\tau\colon K\to \nu_{K\subseteq S^3}$ for a nonvanishing section corresponding to the Seifert framing of $K$. The set of framings is in affine correspondence with $\Z$, so for $i=0,1$, the difference between framings determines an integer $d(\tau,s_i|_K)\in\Z$. We claim that for $i=0,1$ there is an equality $d(\tau,s_i|_K)=e$. To see this, consider that if we add~$-k_i$ full turns to~$\tau$ to make the difference 0, and thus to make $\tau$ isotopic to~$s_i|_K$, this introduces~$-k_i$ signed intersections between a generic extension of~$\tau$ to~$F_i$ and the 0-section of~$\nu_{F_i\subseteq D^4}$.  As~$s_i|_K$ extends to a nonvanishing section, i.e.~$e(\nu_{F_i\subseteq D^4},s_i|_K)$=0, this implies that~$d(\tau,s_i|_K)=k_i=e(\nu_{F_i\subseteq D^4},\tau)=:e$ (using the interpretation of~$e(\nu_{F_i\subseteq D^4},\tau)$ as a signed intersection count from Remark~\ref{rem:intersectioncount}).

We thus have that
$d(\tau,s_1|_{\nu_{K}})=d(\tau,s_1|_{\nu_{K}})$,
from which we deduce
\[
d(s_0|_{\nu_{K}},s_1|_{\nu_{K}})=d(\tau,s_0|_{\nu_{K}})-d(\tau,s_1|_{\nu_{K}})=0.
\]
So $s_0|_{\nu_{K}}$ is isotopic to $s_1|_{\nu_{K}}$ as claimed.
\end{proof}

We can now prove the main result of this section.

\begin{proof}[Proof of Proposition~\ref{prop:UnionSpin}]

Since the surfaces~$F_0,F_1\subseteq D^4$ have the same normal Euler number and nonorientable genus, by Corollary~\ref{cor:realiseisometry} we can, and will, choose a homeomorphism~$\varphi\colon F_0\to F_1$ inducing an isometry of their Guillou-Marin forms and restricting to the identity map on~$K$.
The strategy for the remainder of the proof is to extend $\varphi$ to a homeomorphism $\nu_{F_0 \subseteq D^4} \cong \nu_{F_1 \subseteq D^4}$, and then to
restrict to $S(\nu_{F_0 \subseteq D^4}) \cong S(\nu_{F_1 \subseteq D^4})$. Finally, extending by the identity on $X_K$ leads to the sought-for homeomorphism $\partial X_{F_0} \cong \partial X_{F_1}$. Now, the details.

Recall that for~$i=0,1$,  we fixed a tubular neighbourhood
$\alpha_i$ for~$\nu_{F_i \subseteq D^4}$, so that~$\alpha_i(S(\nu_{F_i \subseteq D^4}))=\partial X_{F_i} \setminus \mathring{X}_K$.
By Lemma~\ref{lem:NiceExistence}, we may pick nice sections~$s_i \colon  F_i \to S(\nu_{F_i \subseteq D^4})|_{ F_i}$ with respect to this tubular neighbourhood.
Thus we may apply Lemma~\ref{lem:NewNiceUniqueness}~\eqref{item:constructiso} to obtain a bundle isomorphism
$$\Phi\colon \nu_{F_0 \subseteq D^4}\cong \nu_{F_1 \subseteq D^4}$$
covering~$\varphi$. Because the normal Euler numbers of $F_0$ and $F_1$ agree, Lemma~\ref{lem:NENcontribution} shows  $s_0|_K$ and $s_1|_K$ are isotopic as sections of $\nu_{K\subseteq S^3}$.
As $\varphi$ restricts to the identity map on $K$, this means we may apply Lemma~\ref{lem:NewNiceUniqueness}~\eqref{item:modifyiso}.
This achieves that $\Phi$ restricts to the identity map on $\nu_{K\subseteq S^3}$, at the expense of changing $s_1$ by an isotopy.
We will continue to call this modified section $s_1$, and we note that the new $s_1$ is still a nice section as this property is preserved by isotopy.


Our chosen tubular neighbourhoods $\alpha_0$ and $\alpha_1$ satisfy~$\alpha_i \colon  S(\nu_{F_i \subseteq D^4}) \hookrightarrow \partial X_{F_i}$ for~$i=0,1$, and are such that each embedding extends the given tubular neighbourhood of the knot~$S(\nu_{K \subseteq S^3}) \cong \partial X_K$.
We thus have a homeomorphism
\[
g:=\alpha_1\circ\Phi\circ (\alpha_0)^{-1}\colon \alpha_0(S(\nu_{F_0 \subseteq D^4}))\xrightarrow{\cong} \alpha_1(S(\nu_{F_0 \subseteq D^4})).
\]
As~$\Phi$ restricts to the identity bundle map over~$K$, this homeomorphism~$g$ restricts to the identity map on the tubular neighbourhood of the knot~$K$. Thus we may extend~$g$ by the identity map~$\id\colon X_K\to X_K$, to define a homeomorphism
\begin{equation}
\label{def:Homeof}
f\colon \partial X_{F_0}\xrightarrow{\cong} \partial X_{F_1}.
\end{equation}
We claim~$f$ has the properties listed in the statement of the proposition.

Property~\eqref{item:2spin}, that $f|_{X_K}$ is the identity, is clear from construction. Properties~\eqref{item:3spin} and~\eqref{item:4spin}, that~$f$ restricts to a map that is~$\nu$-extendable rel.~boundary, and that meridians are preserved, follow because~$f$ was constructed from the restriction of a vector bundle isomorphism $\nu_{F_0 \subseteq D^4} \cong \nu_{F_1 \subseteq D^4}$.
Property~\eqref{item:5spin}, which was about $f$ preserving nice sections,  is also automatic from the construction because we defined $f$ using choices of nice sections.

To finish the proof we prove that~$f$ has property~\eqref{item:1spin}.
Pick a spin structure~$\s\in\Spin(\partial X_{F_1})$ that extends to the exterior~$\Spin(X_{F_1})$.
Write~$\mathfrak{t}:=\alpha_1^*\s$ for the induced spin structure on~$S(\nu_{F_1 \subseteq D^4})$.
Choose an~$S^1$-fibre~$\mu_0\subseteq S(\nu_{F_0 \subseteq D^4})$ and write~$\mu_1:=\Phi(\mu_0)\subseteq S(\nu_{F_1 \subseteq D^4})$ for the corresponding~$S^1$-fibre of~$S(\nu_{F_1 \subseteq D^4})$.
Using the fact that~$\Phi$
satisfies~$\Phi \circ s_0=s_1 \circ \varphi$, the second item of Proposition~\ref{prop:characterisation} implies that
\begin{equation}
\label{eq:ApplyNaturalityProposition}
{\widehat{\Phi}}(\mathfrak{t}|_{\mu_1},q_{KT}(\mathfrak{t}))=((\Phi^*\mathfrak{t})|_{\mu_0}, q_{KT}(\Phi^*\mathfrak{t})).
\end{equation}
On the left-hand side, the quadratic refinement is, by definition~$q_{KT}(\mathfrak{t}) \circ \Phi_*$,
whereas on the right-hand side the quadratic refinement~$q_{KT}(\Phi^*\mathfrak{t})$ is assigned by the construction in Proposition~\ref{prop:characterisation}.
Both of these quadratic refinements are defined on~$H_1(s_0(F_0);\Z_2)$.

We claim that we have the following sequence of equalities of quadratic refinements:
\begin{equation}\label{eqn:sequence-equalities-quad-refinements}
  \widehat{q}_{GM_0}=\widehat{q}_{GM_1}\circ \Phi_*  =q_{KT_1}(\mathfrak{t}) \circ \Phi_*=q_{KT_0}(\Phi^*\mathfrak{t}).
\end{equation}
The last equality follows from~\eqref{eq:ApplyNaturalityProposition}, as just explained.
To see the central equality in~\eqref{eqn:sequence-equalities-quad-refinements} we use Lemma~\ref{lem:optimistic}, which implies that~$q_{KT_1}(\mathfrak{t})=\widehat{q}_{GM_1}$, the exterior Guillou-Marin form of~$s_1(F_1)$.

It remains to see that the isometry of $\Z_2$-forms $\Phi_*$ induces an isometry between the exterior Guillou-Marin forms~$\widehat{q}_{GM_0}$ and $\widehat{q}_{GM_1}$.
As~$\Phi$ satisfies~$\Phi \circ s_0=s_1 \circ \varphi$,
the isometry~$\Phi_*$ factors as a sequence of isomorphisms
\[
\Phi_*\colon H_1(s_0(F_0);\Z_2)\xrightarrow{(s_0)_*^{-1}} H_1(F_0;\Z_2)\xrightarrow{\varphi_*}H_1(F_1;\Z_2) \xrightarrow{(s_1)} H_1(s_1(F_1);\Z_2).
\]
By Proposition~\ref{prop:GMarethesame}, the outer two isomorphisms induce isometries between the~$\widehat{q}_{GM}$ forms on~$H_1(s_i( F_i);\Z_2)$ and the~$q_{GM}$ forms on~$H_1(F_i;\Z_2)$. One of the defining properties of the central isomorphism $\varphi_*$ is that it induces an isometry of~$q_{GM}$ forms.
Thus~$\Phi_*$ induces an isometry of exterior Guillou-Marin forms, concluding the proof of the claim.

We therefore have from~\eqref{eqn:sequence-equalities-quad-refinements} that~$\widehat{q}_{GM_0}=q_{KT_0}(\Phi^*\mathfrak{t})$.
From Corollary~\ref{cor:sumitup} it follows that~$f^*\s$, corresponding to the spin structure~$\Phi^*\mathfrak{t}$ under the tubular neighbourhood map $\alpha_0$, extends to~$X_{F_0}$. This completes the proof of property~\eqref{item:1spin} and therefore of Proposition~\ref{prop:UnionSpin}.
\end{proof}

\section{The odd-dimensional~$\ell$-monoid}
\label{sec:Monoid}

We recall the definition and some facts surrounding the
monoid~$\ell_{2q+1}(R)$.
We will introduce quasi-formations,  define~$\ell_{2q+1}(R)$,  study elementary quasi-formations and recall how a quadratic form~$v$ over~$R$ determines a subset~$\ell_{2q+1}(v,v) \subseteq \ell_{2q+1}(R)$.
At the end of the section, we focus on~$\ell_{2q+1}(\Z)$ and its relation to linking forms.

For the remainder of the section we fix an integer~$q$ and set~$\varepsilon:=(-1)^q$.
Since we will ultimately work with~$R=\Z$ and $\Z[\Z_2]$ which are both group rings of groups with vanishing Whitehead group,  we work in less generality  than in~\cite{KreckSurgeryAndDuality,CrowleySixt} in order to avoid technicalities related to bases and Whitehead torsion.
As a consequence,  throughout this  section we assume that~$R=\Z[\pi]$ is a group ring of a group $\pi$ with vanishing Whitehead group,  referring to~\cite[Section~3]{CrowleySixt} for the discussion in full generality.
The involution $x \mapsto \overline{x}$ on $R$ is $\sum n_g g \mapsto \sum n_g g^{-1}$.

\subsection{Quasi-formations}
\label{sub:quasi-formation}
We introduce quasi-formations and the $\ell$-monoid. The original reference for these monoids is~\cite{KreckSurgeryAndDuality}, but we also refer the reader to~\cite{CrowleySixt} as we will work with the equivalent formulation developed there.
It might help to refer back to Section~\ref{sub:QuadraticForms} on hermitian and quadratic forms.


\begin{definition}
\label{def:quasi-formation}
An~$\varepsilon$-quadratic \emph{quasi-formation} is a triple~$((P,\psi);F,V)$, where~$(P,\psi)$ is a nonsingular~$\varepsilon$-quadratic form,~$F\subseteq P$ is a lagrangian,  and~$V \subseteq P$ is a half rank direct summand; $P,F$ and $V$ are assumed to be stably free.
\end{definition}

This definition and terminology are due to Crowley and Sixt~\cite[Section 3]{CrowleySixt} but are inspired by work of Kreck~\cite{KreckSurgeryAndDuality}.

\begin{remark}
In both~\cite{KreckSurgeryAndDuality} and~\cite{CrowleySixt}, the authors work with stably based modules throughout, as this is needed in the most general applications of the theory. However, we have made the assumption that $R=\Z[G]$ where $G$ has vanishing Whitehead torsion, meaning we can avoid using based modules, as we now briefly explain. We assume some familiarity with the definition of the first reduced $K$-group $\widetilde{K}_1(R)$ and refer to e.g.~\cite[Section 3]{CrowleySixt} for more details and definitions.
An $R$-module $P$ is \emph{stably based} if $P$ is stably free and is equipped with an equivalence class of a basis for a free stabilisation $R^n \cong P \oplus R^k $. Two bases are defined to be equivalent if there is a common free stabilisation in which the change of basis matrix defines an element of~$Z:=\lbrace \pm g \mid g \in G \rbrace \subseteq \widetilde{K}_1(R)$. Assuming that $\operatorname{Wh}(G)=0$ implies that~$Z=\widetilde{K}_1(R)$, so that all stable bases are equivalent, and so do need not be kept track of. For this reason, we do not work with stably based modules,  merely stably free modules.
\end{remark}

We recall the relation between quasi-formations and formations.
The latter play an important r\^ole in classical surgery;
see e.g.~\cite[Section 9.2.1]{LueckMacko} for further background.

\begin{definition}
\label{def:Formation}
An $\varepsilon$-quadratic \emph{formation} is an $\varepsilon$-quadratic quasi-formation~$((P,\psi);F,G)$ for which the half rank direct summand~$G \subseteq P$ is a lagrangian.
\end{definition}

In some references,  e.g.\ in~\cite{RanickiExact},  a ``formation'' refers to a quasi-formation~$((P,\psi);F,G)$ with $G \subseteq P$ a sublagrangian, and what  we refer to as a formation is then called a nonsingular formation.
We do not follow this convention; our formations are always nonsingular.

The next definitions introduce further terminology related to quasi-formations and formations.

\begin{definition}~
\begin{itemize}
\item Two $\varepsilon$-quadratic quasi-formations~$((P,\psi);F,V)$ and~$((P',\psi'),F',V')$ are \emph{isomorphic} if there is an isometry~$f \colon (P,\psi) \to (P',\psi')$ such that~$f(F)=F'$ and~$f(V)=V'$.
\item An $\varepsilon$-quadratic quasi-formation is \emph{trivial} if for some $F$ it is isomorphic to a formation of the type~$(H_\varepsilon(F);F,F^*)$.
Here, and afterwards when it causes no confusion, we use the abbreviation $F,F^* \subseteq F \oplus F^*$ instead of $F \oplus 0, 0 \oplus F^* \subseteq F \oplus F^*$.
\item The \emph{boundary $\varepsilon$-quadratic quasi-formation~$\delta(F,\kappa)$ of a sesquilinear form~$(F,\kappa)$} is the quasi-formation $(H_\varepsilon(F),F,\Gamma_\kappa)$, where~$\Gamma_\kappa \subseteq F \oplus F^*$ denotes the graph of~$\kappa \colon F \to F^*$.
Note that the sesquilinear form~$\kappa$ need not be hermitian.
\item A quasi-formation is a \emph{boundary} if it is isomorphic to the boundary~$\delta(F,\theta)$ of some sesquilinear form~$(F,\kappa)$.
\item Two $\varepsilon$-quadratic quasi-formations are \emph{stably isomorphic} if they become isomorphic after some number of trivial $\varepsilon$-quadratic formations is added to each.
We use~$\cong$ to denote isomorphism and~$\cong_s$ to denote stable isomorphism.
\end{itemize}
\end{definition}

\begin{definition}~
\begin{itemize}
\item If a sesquilinear form~$(F,\kappa)$ is even (meaning, by definition,  that~$\kappa$ can be written as  $\theta-\varepsilon\theta^*$ for some $\theta$)
and~$(-\varepsilon)$-hermitian, then the graph $\Gamma_{\kappa} \subseteq F \oplus F^*$ is a lagrangian for $H_\varepsilon(F)$.
If~$(F,[\theta])$ is a~$(-\varepsilon)$-quadratic form, then $(H_\varepsilon(F),F,\Gamma_{\theta-\varepsilon\theta^*})$
is called the \emph{boundary $\varepsilon$-quadratic formation} of~$(F,[\theta])$.
Note that $\theta-\varepsilon\theta^*$ is tautologically even and is also $(-\varepsilon)$-hermitian.
\item An $\varepsilon$-quadratic formation is a \emph{boundary} if it is isomorphic to
 the boundary~$\delta(F,\theta)$ of some~$(-\varepsilon)$-quadratic form~$(F,[\theta])$.
\end{itemize}
\end{definition}
\color{black}

Every $\varepsilon$-quadratic quasi-formation~$((P,\psi);F,V)$ is isomorphic to one of the form~$(H_\varepsilon(F);F,V')$ for some half rank direct summand~$V' \subseteq F\oplus F^*$. 
To see this, consider that the Fundamental Lemma of $L$-theory (see e.g.~\cite[Lemma 8.95]{LueckMacko})
implies that the embedding~$(F,0) \hookrightarrow (P,\psi)$ extends to an isometry~$f \colon H_\varepsilon(F) \xrightarrow{\cong} (P,\psi)$, which therefore induces an isomorphism
$$(H_\varepsilon(F);F,f^{-1}(V)) \cong ((P,\psi);F,V).$$

\begin{definition}
\label{def:LMonoid}
The \emph{$\ell$-monoid}~$\ell_{2q+1}(R)$ is the unital abelian monoid  (under direct sum) of stable isomorphism classes of~$(-1)^q$-quadratic quasi-formations modulo the relation
\begin{equation}
\label{eq:RelationMonoid}
((P,\psi);F,G) \oplus ((P,\psi),G,V) \sim ((P,\psi);F,V),
\end{equation}
where~$F,G \subseteq P$ are both lagrangians.
\end{definition}

\begin{definition}\label{def:Lgroup}
The \emph{$L$-group}~$L_{2q+1}(R)$ is the group (under direct sum) of stable isomorphism classes of~$(-1)^q$-quadratic formations modulo the equivalence relation of cobordism, where $x$ and~$x'$ are said to be \emph{cobordant} if there are boundary formations $b$ and $b'$ such that $x \oplus b$ and~$x' \oplus b'$ are stably isomorphic.
\end{definition}

\begin{remark}
An alternative definition of the $L$-group $L_{2q+1}(R)$ is as the abelian group (under direct sum) of stable isomorphism classes of~$(-1)^q$-quadratic formations modulo the relation~\eqref{eq:RelationMonoid}; see e.g.~\cite[Remark 9.15]{RanickiAnIntroductionToAlgebraic}.  Thus $L_{2q+1}(R) \subseteq \ell_{2q+1}(R)$.
\end{remark}

\subsection{Elementary quasi-formations}
\label{sub:Elementary}

We recall the definition of elementary quasi-formations and the submonoid of $\ell_{2q+1}(R)$ that they define.
We continue with the notation $\varepsilon:=(-1)^q$ and, from now on,  write ``quasi-formation" (resp.~``formation")  instead of ``$\varepsilon$-quadratic quasi-formation" (resp.``~$\varepsilon$-quadratic formation").
References for the material include~\cite{KreckSurgeryAndDuality} and~\cite[Section~3]{CrowleySixt}.

\medbreak
We start by recording some terminology that we will use frequently in the sequel.
\begin{definition}
\label{def:IDislikeDefinitionEnvironmentsTerminologyIsBetter}
A \emph{direct complement} to a submodule $V \subseteq P$ is a submodule $U \subseteq P$ such that the inclusions $U,V \subseteq P$ extend to an isomorphism $U \oplus V \cong P$. We say that $U$ and $V$ are \emph{complementary}.
If $(P,\psi)$ is a quadratic form, then a \emph{lagrangian complement} to $V \subseteq P$ is a direct complement $U$ that is moreover a lagrangian.
\end{definition}

We introduce a geometrically significant notion of triviality for a quasi-formation.

\begin{definition}
\label{def:Elementaryquasi-formation}
 A quasi-formation~$((P,\psi);F,V)$ is \emph{elementary} if $F \subseteq P$ is a direct complement to $V \subseteq P$.
A class in~$\ell_{2q+1}(R)$ is \emph{elementary} if it admits an elementary representative.
\end{definition}

For an elementary quasi-formation $((P,\psi);F,V)$, since $F$ is by definition a lagrangian, it follows that $F$ is a lagrangian complement to $V$.
In particular, $V$ admits a lagrangian complement.
The next proposition proves a partial converse by working on the level of the monoid, modulo formations.
This proposition was proved by Kreck~\cite[Proposition 8 (iii)]{KreckSurgeryAndDuality} but we translate the argument into the language of quasi-formations.

\begin{proposition}
\label{prop:ElementaryModL2q+1}
If~$\Theta \in \ell_{2q+1}(R)$ is represented by a quasi-formation~$x=(H_\varepsilon(F);F,V)$ such that~$V$ admits a lagrangian complement, then there exists a formation~$y$ such that~$\Theta+[y]$ is elementary.
\end{proposition}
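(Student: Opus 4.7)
The plan is direct and uses only the defining relation of the $\ell$-monoid. By hypothesis, $V \subseteq H_\varepsilon(F)$ admits a lagrangian complement, so I would pick such a lagrangian $L \subseteq H_\varepsilon(F)$ with $L \oplus V = F \oplus F^*$. Then I would set
\[
y := ((H_\varepsilon(F)); L, F),
\]
which is a genuine formation (not just a quasi-formation) because both $L$ and $F$ are lagrangians of $H_\varepsilon(F)$.

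Next I would apply the relation~\eqref{eq:RelationMonoid} defining $\ell_{2q+1}(R)$, taking the ``middle'' lagrangian to be $F$. Explicitly, the relation gives
\[
[y] + \Theta = [((H_\varepsilon(F)); L, F)] + [((H_\varepsilon(F)); F, V)] = [((H_\varepsilon(F)); L, V)]
\]
in $\ell_{2q+1}(R)$, which is legitimate since the hypothesis ``$F$ and $L$ are both lagrangians'' needed in~\eqref{eq:RelationMonoid} is exactly what we have (note that $V$ itself need not be a lagrangian). The quasi-formation $((H_\varepsilon(F)); L, V)$ is elementary by Definition~\ref{def:Elementaryquasi-formation}, precisely because $L$ was chosen as a direct (in fact lagrangian) complement to $V$.

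The entire argument is a bookkeeping exercise in the monoid structure; the only thing one needs to verify with care is that $L$ and $V$ are stably free direct summands so the resulting triple $((H_\varepsilon(F)); L, V)$ is a legitimate quasi-formation in the sense of Definition~\ref{def:quasi-formation}. Since $L$ is a lagrangian and $V$ is by hypothesis a half-rank direct summand complementary to $L$, stable freeness follows automatically (and over $R = \Z[\pi]$ with trivial Whitehead group we do not need to worry about torsion invariants). I do not anticipate any substantive obstacle; the content of the statement is really the existence of $L$, which is assumed, and the rest is the algebraic identity above.
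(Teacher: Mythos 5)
Your proof is correct and is essentially identical to the paper's argument: both choose a lagrangian complement $L$ to $V$, set $y=(H_\varepsilon(F);L,F)$, and apply the relation~\eqref{eq:RelationMonoid} with middle lagrangian $F$ to identify $\Theta+[y]$ with the elementary class $[(H_\varepsilon(F);L,V)]$. No further comment is needed.
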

\begin{proof}
Let~$L \subseteq F\oplus F^*$ be a lagrangian complement for~$V \subseteq F\oplus F^*$.
Note that~$(H_\varepsilon(F);L,V)$ is elementary.
Consider the formation~$y:=(H_\varepsilon(F);L,F)$. Working in~$\ell_{2q+1}(R)$ and using~\eqref{eq:RelationMonoid},  we have
\begin{equation}
\label{eq:ElementaryModL5}
[(H_\varepsilon(F);L,V)]=[(H_\varepsilon(F);L,F)]+[(H_\varepsilon(F);F,V)]=[y]+[x]=[y]+\Theta,
\end{equation}
and thus $\Theta + [y]$
 is elementary as required.
\end{proof}

Proposition~\ref{prop:ElementaryModL2q+1} motivates considering the action of~$L_{2q+1}(R)$ on~$\ell_{2q+1}(R)$ by direct sum.

\begin{construction}
\label{cons:Action}
The action of~$[y] \in L_{2q+1}(R)$ on~$[x] \in \ell_{2q+1}(R)$ is  by~$[y] \cdot [x]:=[x\oplus y]$.
To verify this is well defined,  it suffices to prove that trivial formations and boundary formations act trivially.
For trivial formations, this is clear: in~$\ell_{2q+1}(R)$ stably isomorphic quasi-formations are equal.
To verify that boundary formations act trivially, use that in~$\ell_{2q+1}(R)$ a boundary formation~$y=(H_\varepsilon(F);F,\Gamma_{\theta-(-1)^q\theta^*})$ decomposes as~$y_1 \oplus y_2:=(H_\varepsilon(F);F,F^*) \oplus (H_\varepsilon(F);F^*,\Gamma_{\theta-(-1)^q\theta^*})$,  which is the sum of two trivial formations. To see that the latter is trivial one shows that $F^*$ and~$\Gamma_{\theta-(-1)^q\theta^*}$ are complementary.
Here is the proof that~$(0 \oplus F^*) \oplus \Gamma_{\theta-(-1)^q\theta^*}=F \oplus F^*$.
The fact that $(0 \oplus F^*) \cap \Gamma_{\theta-(-1)^q\theta^*}=0$ is verified by noting that if $(0,\varphi)=(x,(\theta-(-1)^q\theta^*)(x))$, then $x=0$ and therefore $\varphi=({\theta-(-1)^q\theta^*})(0)=0$.
The inclusion $(0 \oplus F^*) \oplus \Gamma_{\theta-(-1)^q\theta^*} \subseteq F \oplus F^*$ is clear and the reverse inclusion follows by writing $(x,\varphi) \in F \oplus F^*$ as $(0,\varphi-(\theta-(-1)^q\theta^*)(x)) +(x,({\theta-(-1)^q\theta^*})(x))$.
Now apply~\cite[Lemma~9.13~i)]{LueckMacko}, 
which states that a formation $((P,\psi);F,G)$ is trivial if $F \oplus G = P$. This completes the proof that $(H_\varepsilon(F);F^*,\Gamma_{\theta-(-1)^q\theta^*})$ is a trivial formation.
Thus in~$\ell_{2q+1}(R)$ we obtain~$x\oplus y \sim x\oplus y_1\oplus y_2 \sim x$, as required.
\end{construction}

The following proposition  describes an equivalent characterisation of $\Theta \in \ell_{2q+1}(R)$ being elementary when~$L_{2q+1}(R)=0$ cf.~\cite[Proposition 2]{HambletonKreckTeichnerNonOri}.

\begin{proposition}
\label{prop:ElementaryForTrivalL5}
Assume that~$L_{2q+1}(R)=0$.
Given~$\Theta \in \ell_{2q+1}(R)$, the following assertions are equivalent.
\begin{enumerate}
\item $\Theta$ is elementary, i.e.\ $\Theta$ admits a representative $(H_\varepsilon(F);F,V)$ such that $F$ is a lagrangian complement to $V$.
\item $\Theta$ is represented by a quasi-formation~$(H_\varepsilon(F);F,V)$ such that~$V$ admits a lagrangian complement.
\end{enumerate}
\end{proposition}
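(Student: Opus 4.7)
The implication (1) $\Rightarrow$ (2) is immediate: if $\Theta$ admits a representative $(H_\varepsilon(F);F,V)$ in which $F$ is a lagrangian complement to $V$, then $V$ in particular admits a lagrangian complement, namely $F$ itself.

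For the reverse implication (2) $\Rightarrow$ (1), the plan is to combine Proposition~\ref{prop:ElementaryModL2q+1} with the hypothesis $L_{2q+1}(R)=0$ via the action described in Construction~\ref{cons:Action}. Concretely, starting from a representative $(H_\varepsilon(F);F,V)$ of $\Theta$ in which $V$ admits a lagrangian complement, Proposition~\ref{prop:ElementaryModL2q+1} produces a formation $y$ such that $\Theta+[y]$ is elementary in $\ell_{2q+1}(R)$. The remaining task is to absorb the extra summand $[y]$.

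Here we use that $y$, being a formation, determines a class $[y] \in L_{2q+1}(R)$, which vanishes by assumption. By Definition~\ref{def:Lgroup} this means that there exist boundary formations $b,b'$ such that $y\oplus b$ and $b'$ are stably isomorphic. Construction~\ref{cons:Action} verifies that both trivial formations and boundary formations act trivially on classes in $\ell_{2q+1}(R)$, essentially because in $\ell_{2q+1}(R)$ stably isomorphic quasi-formations are equal, and because any boundary formation decomposes in $\ell_{2q+1}(R)$ as a sum of two trivial formations (see the argument using $F^*$ and the graph of $\theta-(-1)^q\theta^*$). Chaining these triangle relations gives
\[
\Theta + [y] \;=\; \Theta + [y] + [b] \;=\; \Theta + [b'] \;=\; \Theta
\]
in $\ell_{2q+1}(R)$. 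Hence $\Theta$ itself admits an elementary representative, proving (1).

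The proof is essentially a bookkeeping argument assembled from the two preceding results, so there is no genuine obstacle; the only subtle point is to keep track of the distinction between the monoid relation in $\ell_{2q+1}(R)$ (identifying sums that telescope through a common lagrangian) and the cobordism relation defining $L_{2q+1}(R)$, and to invoke the well-definedness verification in Construction~\ref{cons:Action} at exactly the right moment.
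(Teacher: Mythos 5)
Your proof is correct and follows essentially the same route as the paper: the forward implication is immediate, and the reverse implication combines Proposition~\ref{prop:ElementaryModL2q+1} with the vanishing of $L_{2q+1}(R)$, using the (trivial) action of $L_{2q+1}(R)$ on $\ell_{2q+1}(R)$ from Construction~\ref{cons:Action} to absorb the formation $y$. The paper states this more tersely ("elementary modulo $L_{2q+1}(R)$"), but the bookkeeping you spell out is exactly what that phrase encodes.
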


\begin{proof}
The implication $(1) \Rightarrow (2)$ is immediate.
Assuming that $(2)$ holds, Proposition~\ref{prop:ElementaryModL2q+1} implies that $\Theta$ is elementary modulo~$L_{2q+1}(R)$.
Since $L_{2q+1}(R)=0$, we deduce that $(2) \Rightarrow (1)$.
\end{proof}

If a formation $((P,\psi);F,V)$ is elementary then in particular $V$ admits a lagrangian complement, but it is unclear in general that for \emph{every} $((P',\psi');F',V')$ in the class $[((P,\psi);F,V)]\in\ell_{2q+1}(R)$, the module $V'\subseteq P'$ admits a lagrangian complement.
We will show that this property holds if~$L_{2q+1}(R)=0$ and one is willing to add a formation to $((P',\psi');F',V')$.

For brevity in the next lemma and its proof, given a quasi-formation $x=((P,\psi);F,V)$ we will say \emph{$x$ admits a lagrangian complement} to meant that $V\subseteq P$ admits a lagrangian complement.

\begin{lemma}\label{lem:inductionstep}
Let $R$ be a ring with $L_{2q+1}(R)=0$. Let $a$ and $b$ be quasi-formations over $R$ that are either stably isomorphic, or related by the defining relation~\eqref{eq:RelationMonoid} of $\ell_{2q+1}(R)$. Then there is a formation $f$ such that $a\oplus f$ admits a lagrangian complement if and only if there is there is a formation $f'$ such that $b\oplus f'$ admits a lagrangian complement.
\end{lemma}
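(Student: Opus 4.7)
The plan is to verify the statement separately for the two types of defining moves, relying on the observation that whether a quasi-formation $((P,\psi);F,V)$ admits a lagrangian complement depends only on the pair $((P,\psi),V)$ and not on the choice of lagrangian $F$. This property is moreover preserved under isometries of $(P,\psi)$ that map $V$ isomorphically to $V$, and under direct sums of quasi-formations (since the direct sum of two lagrangian complements is a lagrangian complement of the direct sum). Trivial formations trivially admit lagrangian complements.

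For the stable isomorphism case, if $a\oplus t\cong b\oplus t'$ for trivial formations $t,t'$, then from a formation $f$ witnessing $\mathcal{P}(a)$ I would set $f':=f\oplus t'$ (a formation). Since $a\oplus f\oplus t\cong b\oplus f'$ and $a\oplus f\oplus t$ admits a lagrangian complement (as the direct sum of a lagrangian complement for $a\oplus f$ and the trivial one for $t$), the isomorphism transports this to a lagrangian complement for $b\oplus f'$.

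For the relation~\eqref{eq:RelationMonoid}, write $a=c_1\oplus c_2$ and $b=c_3$ with $c_1=((P,\psi);F,G)$, $c_2=((P,\psi);G,V)$, $c_3=((P,\psi);F,V)$. For the direction $\mathcal{P}(a)\Rightarrow\mathcal{P}(b)$, I would choose $f':=c_1\oplus f$; then $b\oplus f'$ and $a\oplus f$ have the same ambient form $(P\oplus P\oplus P_f,\psi\oplus\psi\oplus\psi_f)$, and their half rank summands $V\oplus G\oplus V_f$ and $G\oplus V\oplus V_f$ differ by the swap isometry $(x_1,x_2,x_3)\mapsto(x_2,x_1,x_3)$. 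For the direction $\mathcal{P}(b)\Rightarrow\mathcal{P}(a)$, I would take $f':=f$. The half rank summand of $a\oplus f'$ is $G\oplus V\oplus V_f\subseteq P_1\oplus P_2\oplus P_f$ (with $G$ in the first copy of $P$ and $V$ in the second). The Fundamental Lemma of $L$-theory provides a lagrangian complement $G'$ to $G$ in $(P,\psi)$, and $\mathcal{P}(b)$ provides a lagrangian $L\subseteq(P\oplus P_f,\psi\oplus\psi_f)$ complementing $V\oplus V_f$; then $G'\oplus L$ is a lagrangian complement to $G\oplus V\oplus V_f$ in the triple direct sum.

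The only genuine subtlety is bookkeeping: one must carefully track which copy of $(P,\psi)$ each submodule lives in when passing between $a$ and $b$, in order to see that the swap in the forward direction legitimately identifies the two half rank summands and that the summands $G'$ and $L$ in the backward direction live in complementary factors so that their direct sum decomposes correctly. No other cleverness seems required, and in particular I do not expect the hypothesis $L_{2q+1}(R)=0$ to be used in this lemma itself, only in its subsequent applications.
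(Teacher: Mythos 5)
Your proof is correct, and its skeleton --- the case split, the choices of $f'$, and the guiding observation that admitting a lagrangian complement depends only on the half-rank summand, and is preserved by isometries, direct sums, and adding trivial formations --- coincides with the paper's; the stable-isomorphism case and the forward direction (your explicit swap isometry is just the paper's remark that the property ``only involves the half rank direct summand'') are essentially identical. You genuinely diverge at one point: when passing from the single quasi-formation $((P,\psi);F,V)$ to the sum $((P,\psi);F,G)\oplus((P,\psi);G,V)$, one must produce a lagrangian complement to the lagrangian summand $G$. The paper does this by invoking $L_{2q+1}(R)=0$: every formation is then stably isomorphic to a boundary formation, and boundary formations admit lagrangian complements, so $((P,\psi);F,G)\oplus t$ admits one for a suitable trivial formation $t$, which is then absorbed into $f'$. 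You instead apply the Fundamental Lemma of $L$-theory directly to the lagrangian $G\hookrightarrow (P,\psi)$ of the nonsingular form $(P,\psi)$: the inclusion extends to an isometry $H_\varepsilon(G)\cong(P,\psi)$, and the image of $G^*$ is the desired lagrangian complement $G'$. This is the same lemma the paper itself uses to put quasi-formations in the standard form $(H_\varepsilon(F);F,V')$, so the step is sound, and it buys exactly what you predicted: the hypothesis $L_{2q+1}(R)=0$ is not actually needed for this lemma, only in the surrounding results where formations must be discarded up to cobordism. The paper's route is slightly less economical but keeps the argument phrased uniformly in terms of boundary formations.
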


\begin{proof}
First assume that $a$ and $b$ are stably isomorphic, so that there are trivial formations $t_1$ and~$t_2$ such that $a \oplus t_1 \cong b \oplus t_2$.
If $a\oplus  f$ admits a lagrangian complement for some formation $f$, then~$b\oplus(f\oplus t_2) \cong a\oplus f\oplus t_1$ admits a lagrangian complement because both $a\oplus f$ and $t_1$ do.
The proof of the converse is identical.

Next assume that $a=((P,\psi);F,V)$ and $b=((P,\psi);F,G)\oplus((P,\psi);G,V)$ are as in~\eqref{eq:RelationMonoid}.
If~$a\oplus f$ admits a lagrangian complement for some formation $f$, then $((P,\psi);G,V)\oplus f$ admits a lagrangian complement.
Since $L_{2q+1}(R)=0$, every formation is stably isomorphic to a boundary formation~\cite[Corollary 9.12]{RanickiAnIntroductionToAlgebraic}.
In particular,  after adding a trivial formation $t$, the formation~$((P,\psi);F,G)$ admits a lagrangian complement,  because every boundary formation does.
It follows that
\[
b\oplus f \oplus t \cong \left(((P,\psi);F,G)\oplus t \right) \oplus(((P,\psi);G,V)\oplus f)
\]
admits a lagrangian complement.

For the converse,  assume that $b\oplus f$ admits a lagrangian complement for some formation $f$.
More explicitly,~$((P,\psi);G,V)\oplus(((P,\psi);F,G)\oplus f)$ admits a lagrangian complement.
Since admitting a lagrangian complement is a statement that only involves the half rank direct summand, it follows that~$((P,\psi);F,V) \oplus ((P,\psi);F,G)\oplus f \ a\oplus((P,\psi);F,G)\oplus f$ also admits a lagrangian complement.
\end{proof}

\begin{proposition}
\label{prop:CleverCorollary}
Let $R$ be a ring with $L_{2q+1}(R)=0$, let~$\Theta \in \ell_{2q+1}(R)$ and let $u$ be any representative quasi-formation for $\Theta$.
If $\Theta$ is elementary, then there is a formation $v$ such that
in the quasi-formation $u\oplus v=:((P,\psi);F,V)$, the summand $V\subseteq P$ admits a lagrangian complement.
\end{proposition}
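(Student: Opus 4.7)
The plan is a straightforward induction argument using Lemma~\ref{lem:inductionstep}, once we observe that an elementary representative trivially has the desired property.

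\textbf{Step 1.} Since $\Theta$ is elementary, it admits some elementary representative $u' = ((P',\psi'); F', V')$, meaning $F'$ is a lagrangian complement to $V'$. In particular, taking the trivial formation $v' = 0$, the quasi-formation $u' \oplus v' = u'$ itself has the property that $V' \subseteq P'$ admits a lagrangian complement.

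\textbf{Step 2.} Since $u$ and $u'$ both represent the class $\Theta \in \ell_{2q+1}(R)$, by the definition of $\ell_{2q+1}(R)$ (Definition~\ref{def:LMonoid}) they are connected by a finite chain $u = u_0, u_1, \ldots, u_n = u'$ of quasi-formations, where each consecutive pair $(u_i, u_{i+1})$ is related either by a stable isomorphism or by the defining relation~\eqref{eq:RelationMonoid}.

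\textbf{Step 3.} We claim by induction on $i$, going from $i=n$ down to $i=0$, that there exists a formation $v_i$ such that $u_i \oplus v_i$ admits a lagrangian complement. The base case $i=n$ is Step~1 above (with $v_n = 0$). For the inductive step, suppose there exists a formation $v_{i+1}$ such that $u_{i+1} \oplus v_{i+1}$ admits a lagrangian complement. Set $a := u_i \oplus v_{i+1}$ and $b := u_{i+1} \oplus v_{i+1}$. Since $u_i$ and $u_{i+1}$ are related by one of the two permitted moves, so are $a$ and $b$ (adding $v_{i+1}$ on both sides preserves the relation). By Lemma~\ref{lem:inductionstep}, since $b$ admits a lagrangian complement after adding a suitable formation (indeed, $b$ itself already does), the same holds for $a$: there is a formation $w_i$ such that $a \oplus w_i = u_i \oplus (v_{i+1} \oplus w_i)$ admits a lagrangian complement. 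Setting $v_i := v_{i+1} \oplus w_i$ completes the inductive step.

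\textbf{Step 4.} The case $i=0$ yields the desired formation $v := v_0$ with $u \oplus v$ admitting a lagrangian complement, completing the proof. The main point is that Lemma~\ref{lem:inductionstep} (which requires $L_{2q+1}(R)=0$ in order to stably replace arbitrary formations by boundary formations) is tailor-made for this induction, so no further work is needed beyond unpacking the definition of equality in $\ell_{2q+1}(R)$.
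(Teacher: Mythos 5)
Your proof is correct and is essentially the paper's own argument: the paper notes that equality in $\ell_{2q+1}(R)$ is the transitive closure of the two moves handled by Lemma~\ref{lem:inductionstep} and declares the proposition "immediate"; you have simply spelled out the resulting downward induction along the chain of moves, starting from an elementary representative. The only point worth flagging is that when a move of type~\eqref{eq:RelationMonoid} relates $u_i$ and $u_{i+1}$ only after adding a common direct summand, the pair $(a,b)$ is not literally an instance of that relation, but the paper glosses over the same point and the proof of Lemma~\ref{lem:inductionstep} adapts without change.
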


\begin{proof}
The definition of $\ell_{2q+1}(R)$ is as the transitive closure of the set of quasi-formations under the relation of being either stably isomorphic or related as in~\eqref{eq:RelationMonoid}. The proposition is now immediate from Lemma~\ref{lem:inductionstep}.
\end{proof}

\subsection{Induced forms}
\label{sub:InducedForms}

Following~\cite[Section 5]{CrowleySixt},  we describe some additional facts about the structure of the monoid~$\ell_{2q+1}(R)$.
\medbreak
Given a quasi-formation $x=((P,\psi);F,V)$ (a representative for) the quadratic form~$\psi \colon P \times P \to R$ restricts to a quadratic form~$\theta:=\psi|_{V \times V}$ on~$V$ leading to a split isometric inclusion~$(V,\theta) \hookrightarrow (P,\psi)$.
The same holds for $V^\perp$ and we formalise this as follows.

\begin{definition}
\label{def:InducedForm}
The \emph{induced forms} of a quasi-formation~$((P,\psi);F,V)$ are the quadratic forms~$(V,\theta)$ and~$(V^\perp,-\theta^\perp)$, where~$\theta :=\psi|_{V \times V}$ and~$\theta^\perp := \psi|_{V^\perp \times V^\perp}$.
\end{definition}

\begin{example}
\label{ex:InducedForm}
We describe the induced forms in some simple cases.
\begin{enumerate}
\item The induced forms of a formation~$((P,\psi);F,G)$ are both equal to the zero form~$(G,0)$. This is immediate since~$G$ is a lagrangian of~$(P,\psi)$ so $\psi|_{G \times G}=0$ and $G^\perp = G$.
\item\label{item:inducedform2} Adding a formation to a quasi-formation has the effect of adding a zero form to its induced forms: if~$x=(H_\varepsilon(K),K,V)$ is a quasi-formation and~$y=(H_\varepsilon(F);F,G)$ is a formation, then the induced forms of the quasi-formation~$x \oplus y=(H_\varepsilon(K \oplus F), K \oplus F,V \oplus G)$ are the quadratic forms~$(V \oplus G,\theta \oplus 0)$ and~$(V^\perp \oplus G,-\theta^\perp \oplus 0)$.
\end{enumerate}
\end{example}

Example~\ref{ex:InducedForm} \eqref{item:inducedform2} motivates the following definition from~\cite{CrowleySixt}.

\begin{definition}
\label{def:0Stabil}
Two quadratic forms~$(P,\psi)$ and~$(P',\psi')$ are \emph{$0$-stably equivalent} if there are zero forms $(Q,0)$ and $(Q',0)$ and an isometry
$$(P,\psi) \oplus (Q,0) \cong (P',\psi') \oplus (Q',0).$$
The $0$-stable equivalence class of a form~$(P,\psi)$ is denoted~$[P,\psi]_0$.
Such an equivalence class is called a \emph{0-stabilised quadratic form}.
We write~$\mathcal{F}^{\zs}_{2q}(R)$ for the unital abelian monoid of~$0$-stabilised~$(-1)^q$-quadratic forms over~$R$, where addition is given by direct sum.
\end{definition}

\begin{definition}
\label{ex:0Stabilisatioquasi-formation}
The \emph{induced~$0$-stabilised forms} of a quasi-formation~$((P,\psi);F,V)$ are the~$0$-stable equivalence classes~$[V,\theta]_0$ and~$[V^\perp,-\theta^\perp]_0$ of its induced forms~$(V,\theta)$ and~$(V^\perp,-\theta^\perp)$.
\end{definition}

The notion of induced~$0$-stabilised forms descends to~$\ell$-monoids, as first observed in~\cite{CrowleySixt}.
\begin{proposition}
\label{prop:InducedFormsMonoid}
Given a ring~$R$,  taking induced~$0$-stabilised forms gives rise to a well defined monoid homomorphism
\begin{align*}
b \colon \ell_{2q+1}(R) &\to \mathcal{F}^{\zs}_{2q}(R) \times \mathcal{F}^{\zs}_{2q}(R)  \\
[((P,\psi);F,V)] &\mapsto ([V,\theta]_0,[V^\perp,-\theta^\perp]_0).
\end{align*}
\end{proposition}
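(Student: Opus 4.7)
The plan is to verify that the assignment $((P,\psi);F,V)\mapsto ([V,\theta]_0,[V^\perp,-\theta^\perp]_0)$ descends through each of the three equivalence relations used to build $\ell_{2q+1}(R)$ — isomorphism of quasi-formations, stable isomorphism, and the defining relation~\eqref{eq:RelationMonoid} — and is additive under direct sum. Invariance under isomorphism is the easiest: an isomorphism of quasi-formations is by definition an isometry $f\colon(P,\psi)\to(P',\psi')$ with $f(V)=V'$, and such an $f$ automatically sends $V^\perp$ to $(V')^\perp$, so it restricts to isometries of both induced forms.

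For stable isomorphism it suffices, by Example~\ref{ex:InducedForm}\eqref{item:inducedform2}, to check that adding a trivial formation $t=(H_\varepsilon(K);K,K^*)$ to a quasi-formation changes each induced form only by the zero form $(K^*,0)$. Indeed $K^*$ is a lagrangian of $H_\varepsilon(K)$, so the restriction of the quadratic refinement vanishes, and $(K^*)^\perp=K^*$ inside $H_\varepsilon(K)$, so the same zero contribution appears on the $V^\perp$-side. Since zero forms are absorbed when passing to $\mathcal{F}^{\zs}_{2q}(R)$, the induced $0$-stabilised forms are unchanged.

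The only computation with algebraic content — and the main (mild) obstacle — is invariance under relation~\eqref{eq:RelationMonoid}. Given lagrangians $F,G\subseteq P$ and a half rank direct summand $V\subseteq P$, the induced form on the $V$-side of $((P,\psi);F,G)\oplus((P,\psi);G,V)$ is $(G,\psi|_{G\times G})\oplus(V,\psi|_{V\times V})$, which collapses to $(G,0)\oplus(V,\psi|_{V\times V})$ because $G$ is a lagrangian, and hence agrees with the $V$-side of $((P,\psi);F,V)$ after $0$-stabilisation. The analogous computation on the $V^\perp$-side uses $G^\perp=G$ to produce another zero form summand. Finally, additivity under direct sum is immediate: for quasi-formations $x_i=((P_i,\psi_i);F_i,V_i)$ one has $(V_1\oplus V_2)^\perp=V_1^\perp\oplus V_2^\perp$ inside $(P_1,\psi_1)\oplus(P_2,\psi_2)$, and the restricted forms split accordingly, so $b(x_1\oplus x_2)=b(x_1)+b(x_2)$ in $\mathcal{F}^{\zs}_{2q}(R)\times\mathcal{F}^{\zs}_{2q}(R)$, completing the verification that $b$ is a monoid homomorphism.
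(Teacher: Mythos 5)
Your proposal is correct and follows essentially the same route as the paper's proof: the paper likewise disposes of stable isomorphism by noting that trivial formations contribute only zero forms (citing Example~\ref{ex:InducedForm}), and handles relation~\eqref{eq:RelationMonoid} by observing that the formation $((P,\psi);F,G)$ has zero induced forms while the induced forms do not depend on the first lagrangian. Your write-up merely spells out a few details (isomorphism invariance, the computation $(K^*)^\perp=K^*$, and additivity) that the paper leaves implicit.
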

\begin{proof}
Adding a trivial formation to a quasi-formation adds a zero form to the induced forms; recall Example~\ref{ex:InducedForm}.
It therefore remains to study the effect of the relation in~\eqref{eq:RelationMonoid}:
$$((P,\psi);F,G) \oplus ((P,\psi);G,V) \sim ((P,\psi);F,V).$$
Since~$((P,\psi);F,G)$ is a formation, its induced forms are zero forms and therefore the induced forms on both sides of this equivalence are~$0$-stably equivalent.
Note that the induced forms of a quasi-formation are independent of the data of the first lagrangian.
\end{proof}

Given quadratic forms~$v,v'$ such that~$[v]_0 \oplus H_\varepsilon(R^k) = [v']_0 \oplus H_\varepsilon(R^k)$,~\cite[Corollary 5.3]{CrowleySixt} ensures that~$([v],[v']) \in \operatorname{\im}(b)$.
This leads to the following definition, which is due to Crowley-Sixt~\cite[Subsection 5.2]{CrowleySixt}.

\begin{definition}
\label{def:l5vv}
Given quadratic forms~$v,v'$ such that~$[v]_0 \oplus H_\varepsilon(R^k) = [v']_0 \oplus H_\varepsilon(R^k)$,  we define
\[
 \ell_{2q+1}(v,v'):=b^{-1}([v]_0,[v']_0)\subseteq \ell_{2q+1}(R).
\]
This is the set of all (equivalence classes of) quasi-formations whose induced forms are $0$-stably equivalent to~$v$ and~$v'$ respectively.
If $[v]_0=[v']_0$, then we write $\ell_{2q+1}(v)$ instead of $\ell_{2q+1}(v,v)$.
Note that for any $0$-form $(V,0)$, we have $\ell_{2q+1}(V,0)=L_{2q+1}(R)$~\cite[Lemma 6.2]{CrowleySixt}.
\end{definition}

Crowley and Sixt also show that for every $0$-stable equivalence class of a quadratic form~$v=(V,\theta)$, there exists a unique elementary class $e(v) \in \ell_{2q+1}(v)$ such that
$b \circ e(v) = ([v]_0,[v]_0)$~\cite[Corollary 5.3]{CrowleySixt}.
Explicitly,~$e(v)$ is the class of the boundary quasi-formation $\delta(V,\rho)$, where $\rho$ is a sesquilinear form representing~$\theta \in Q_\varepsilon(V)$.

\begin{definition}
\label{def:Triviall5v}
Given a quadratic form $v$ we say that $\ell_{2q+1}(v)$ is \emph{trivial} if it contains a single (necessarily elementary) element.
\end{definition}

\subsection{Linking forms and boundary automorphism sets}
\label{sub:LinkingForm}

This section focuses on quasi-formations over $\Z$.
The reason is that in the proof of our main theorem,  we convert a question about being elementary in~$\ell_5(\Z[\Z_2])$ into a question about being elementary in $\ell_5(\Z)$.
Given a quadratic form~$v=(V,\theta)$ over~$\Z$, we therefore recall Crowley-Sixt's description of~$\ell_5(v) $ in terms of linking forms~\cite{CrowleySixt}.
We use this to discuss various examples where~$\ell_5(v)$ is trivial.
In this section, the group $T$ is a finite abelian group and the group $V$ is always understood to be a finitely generated abelian group.
\medbreak

\begin{definition}\label{def:symmetric-linking}
A (symmetric) \emph{linking form} (over~$\Z$) refers to a pair~$(T,b)$ where $T$ is a finite abelian group and~$b \colon T \times T \to \Q/\Z$ is a symmetric bilinear form.
\end{definition}

\begin{definition}[{Ranicki~\cite[\S3.4]{RanickiExact}}]
\label{def:QuadraticLinking1}
A \emph{split quadratic linking form}~$(T,b,\nu)$ (over~$\Z$) is a symmetric linking form~$(T,b)$ together with a map~$\nu \colon T \to \Q/\Z$ such that the following equalities in $\Q/\Z$ hold for all~$x,x_1,x_2 \in T$ and all $r \in \Z$:
\begin{enumerate}
\item $\nu(rx)=r^2\nu(x),$
\item $\nu(x_1+x_2)-\nu(x_1)-\nu(x_2)=b(x_1,x_2)$.
\end{enumerate}
\end{definition}

Note that applying the second condition with $x_1=x_2=x$ yields $\nu(2x)-2\nu(x) = b(x,x)$, and then applying the first condition we learn that $2\nu(x)=b(x,x)$.

\begin{definition}
\label{def:BoundarySymmetric}
A nondegenerate symmetric bilinear form~$\lambda \colon V \times V \to \Z$, whose adjoint we denote $\widehat{\lambda} \colon V \to V^*$,  determines a linking form~$\partial \lambda \colon \coker(\widehat{\lambda}) \times \coker(\widehat{\lambda})\to \Q/\Z$ via the formula $([x],[y])\mapsto \frac{1}{s}y(z)$, where~$sx=\widehat{\lambda}(z)$ for some $z \in V$ and some $s \in \Z \setminus \lbrace 0 \rbrace$.
We call $\partial \lambda$ the \emph{boundary linking form} of the symmetric form $\lambda$.
\end{definition}

We recall the corresponding definition for quadratic forms.
Note that if~$\theta:=[\rho] \in Q_\varepsilon(V)$ is a quadratic form, then for $z \in V$, the integer $\rho(z,z)$ only depends on the the class of $\rho$ in $Q_\varepsilon(V)$.
As a consequence, we write $\theta(z,z)$ instead of $\rho(z,z)$.

\begin{definition}
\label{def:BoundaryQuadratic}
Let~$(V,\theta) \in Q_\varepsilon(V)$ be a nondegenerate quadratic form over $\Z$ with symmetrisation~$(V,\lambda)$.
The \emph{boundary split quadratic linking form} of~$(V,\theta)$ is the split quadratic linking form $\partial (V,\theta):=(\operatorname{coker}(\widehat{\lambda}),\partial \lambda,\partial \theta)$ where~$(\operatorname{coker}(\widehat{\lambda}),\partial \lambda)$ is the boundary linking form of the symmetric form~$(V,\lambda)$ and
\begin{align*}
\partial \theta \colon \operatorname{coker}(\widehat{\lambda}) &\to \Q/\Z \\
([x]) &\mapsto \frac{1}{s^2}\theta(z,z),
\end{align*}
where~$sx=\widehat{\lambda}(z)$ for $z \in V$ and $s \in \Z \setminus \lbrace 0 \rbrace$.
\end{definition}

One verifies that if a nondegenerate symmetric form $(V,\lambda)$ is represented by a size~$n$ matrix~$A$, then~$\partial\lambda([x],[y])=xA^{-1}y$ where~$A^{-1}$ denotes the inverse of~$A$ over~$\Q$ and $[x],[y] \in \Z^n/A\Z^n.$
We record the corresponding calculation in the quadratic case.

\begin{remark}
\label{rem:Matrix}
If a nondegenerate quadratic form~$(V,\theta)$ is represented by a size $n$ matrix~$Q$ so that its symmetrisation is represented by~$A=Q+Q^T$, then \[\partial \theta([x])=x^T(A^{-1})^{T}QA^{-1}x = x^TA^{-1}QA^{-1}x\] for every~$[x] \in \Z^n/A\Z^n$, where once again we invert $A$ over $\Q$.
\end{remark}

\begin{example}
\label{ex:Rank1}
We use Remark~\ref{rem:Matrix} to calculate the boundary split quadratic linking form of a rank one quadratic form.
When~$V = \Z$ and $\theta \in \Z \setminus \lbrace 0\rbrace$, we have~$\lambda=2\theta$ and the boundary forms are therefore defined on~$\Z/2\theta$.
Remark~\ref{rem:Matrix} implies that~$\partial \lambda([x],[y])=\frac{1}{2\theta}xy \in \Q/\Z$ and
\[\partial \theta([x])=\smfrac{\theta}{4\theta^2}x^2=\smfrac{1}{4\theta}x^2 \in \Q/\Z.\]
\end{example}

Still building towards the description of $\ell_5(v)$ in terms of linking forms, we now describe isometries of linking forms.

\begin{definition}
\label{def:IsometryLinkingForm}
An \emph{isometry} of split quadratic linking forms~$(T_0,b_0,\nu_0)$ and~$(T_1,b_1,\nu_1)$ is an isomorphism~$f \colon T_0 \xrightarrow{\cong} T_1$ such that~$\nu_1(f(x))=\nu_0(x)$ for every~$x\in T_0$.
\end{definition}

Observe that an isometry automatically satisfies~$b_1(f(x),f(y))=b_0(x,y)$ because
\[ b_1(f(x),f(y))=\nu_1(f(x)+f(y))-\nu_1(f(x))-\nu_1(f(y))=\nu_0(x+y)-\nu_0(x)-\nu_0(y)=b_0(x,y).\]
Given an isometry~$h \colon (V_0,\theta_0) \to (V_1,\theta_1)$ of quadratic forms, one verifies that~$(h^*)^{-1} \colon V_0^* \to V_1^*$ induces an isomorphism~$\coker(\widehat{\lambda}_0) \to \coker(\widehat{\lambda}_1)$ on the cokernels.
This isomorphism, which we denote by~$\partial h:=(h^*)^{-1}$, is an isometry of the boundary split quadratic linking  forms.

\begin{definition}
\label{def:BoundaryIsometry}
The \emph{boundary} of an isometry~$h \colon (V_0,\theta_0) \to (V_1,\theta_1)$ is the isometry
\[\partial h:=(h^{*})^{-1} \colon \partial(V_0,\theta_0) \to \partial  (V_1,\theta_1).\]
This determines a homomorphism
\[\partial \colon \Aut(V,\theta) \to \Aut(\partial(V,\theta)).\]
\end{definition}

The group~$\Aut(V,\theta)$ of isometries of~$(V,\theta)$ acts on the group~$\Aut(\partial(V,\theta))$ on the left both via~$h \cdot f:=\partial h \circ f$ and~$g \cdot f:=f\circ \partial g^{-1}$ where $f \in \Aut(\partial(V,\theta))$ and $g,h \in \Aut(V,\theta)$.
Combining these actions, which clearly commute, leads to a left action of~$\Aut(V,\theta) \times \Aut(V,\theta)$ on~$\Aut(\partial(V,\theta))$.

\begin{definition}
\label{def:BAut}
The \emph{boundary automorphism set} of a nondegenerate quadratic form~$(V,\theta)$ is the orbit set
$$\operatorname{bAut}(V,\theta)=\Aut(\partial (V,\theta))/\Aut(V,\theta) \times \Aut(V,\theta).$$
\end{definition}

Observe that an isometry~$f \in \Aut(\partial(V,\theta))$ is trivial in the boundary automorphism set~$\operatorname{bAut}(V,\theta)$ if and only if~$f$ extends to an isometry of~$(V,\theta)$.

\begin{example}
\label{ex:Rank1bAut}
Using Example~\ref{ex:Rank1}, one verifies that for a rank one form $(\Z,\theta)$, the automorphisms of the module $\Z/2\theta$ are the units of the ring~$\Z/2\theta$, while the automorphisms of $\partial(\Z,\theta)$ consist of the units~$x$ of~$\Z/2\theta$ such that~$x^2=1$ mod~$4\theta$.  The corresponding boundary automorphism set $\bAut(\Z,\theta)$
 consists of the same $x \in \Z/2\theta$, but with $x$ and $-x$ identified.
\begin{itemize}
\item For~$\theta=4$, the units of~$\Z/8$ are~$\pm 1$ and~$\pm 3$,
but only~$1$ and $-1$ satisfy~$x^2=1$ mod~$16$.
It follows that~$\operatorname{bAut}(\Z,4)$ is trivial.
\item For~$\theta=6$, the units of~$\Z/12$ are~$\pm 1,\pm 5$ all of which satisfy $x^2=1$ mod~$4\theta=24$.
It follows that~$\operatorname{bAut}(\Z,6)=\lbrace 1,5\rbrace$.
\end{itemize}
\end{example}


\begin{theorem}[{Crowley-Sixt~\cite[Section 6.3]{CrowleySixt}}]
\label{thm:CS}
Given a nondegenerate quadratic form~$v=(V,\theta)$ over $\Z$, there is a bijection between~$\ell_5(v)$ and~$\operatorname{bAut}(v)$.
\end{theorem}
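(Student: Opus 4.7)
The strategy is to construct an explicit map $\Phi \colon \ell_5(v) \to \bAut(v)$ together with an inverse, using the canonical elementary representative $e(v) = [\delta(V, \rho)] \in \ell_5(v)$ as a base-point, where $\rho \colon V \to V^*$ is a sesquilinear form representing $\theta \in Q_+(V)$. Every class in $\ell_5(v)$ admits, after adding suitable trivial formations to upgrade $0$-stable equivalence of induced forms to strict isomorphism, and after applying the Fundamental Lemma of $L$-theory to the nonsingular form $(P, \psi)$, a representative of the normalised form
\[
x_F := (H_+(V);\, F,\, \Gamma_\rho),
\]
where $\Gamma_\rho \subseteq V \oplus V^*$ is the graph of $\rho$ and $F$ is a lagrangian in $H_+(V)$. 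The elementary representative $e(v)$ corresponds to the choice $F = V \oplus 0$, which is a lagrangian complement to $\Gamma_\rho$.

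To define $\Phi$ on such a representative $x_F$, observe that the lagrangian $F$ and the standard lagrangian $V \oplus 0$ differ by a symplectic (in the $(+1)$-quadratic sense) change of basis of $H_+(V)$. Reducing this change of basis modulo the image of $\widehat{\lambda} \colon V \to V^*$, one extracts from $F$ a homomorphism $\coker(\widehat{\lambda}) \to \coker(\widehat{\lambda})$, i.e.\ an endomorphism of the underlying group of $\partial v$. The lagrangian condition $\psi|_F = 0$ translates into this endomorphism being an isometry of the split quadratic linking form $\partial v$, hence an element of $\Aut(\partial v)$. The two residual ambiguities in the normalisation process, namely the choice of isometry $(V', \psi|_{V'}) \cong v$ (coming from the $0$-stable equivalence) and the choice of isometry $(P, \psi) \cong H_+(V)$ that pins down the second summand, correspond exactly to the right and left actions of $\Aut(v)$ on $\Aut(\partial v)$, so the construction descends to a well-defined class in $\bAut(v)$.

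For the inverse $\Psi \colon \bAut(v) \to \ell_5(v)$, a representative $f \in \Aut(\partial v)$ is lifted to a partial automorphism of $V$ over $\Q$, and this lift is used to perturb the standard lagrangian $V \oplus 0$ into a new lagrangian $F_f \subseteq H_+(V)$; the quasi-formation $(H_+(V); F_f, \Gamma_\rho)$ is the sought-after preimage. Independence from the lift follows because altering it within $\widehat{\lambda}(V)$ modifies $F_f$ by an isometry of $H_+(V)$ fixing $\Gamma_\rho$. Checking $\Phi \circ \Psi = \id$ and $\Psi \circ \Phi = \id$ is then a matter of unwinding the constructions.

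The main obstacle is verifying that $\Phi$ is compatible with the two non-trivial equivalences built into $\ell_5(v)$. First, the monoid relation $((P,\psi);F,G) \oplus ((P,\psi);G,V') \sim ((P,\psi);F,V')$ for $G$ a lagrangian must be shown to preserve the associated class in $\bAut(v)$; the formation $((P,\psi);F,G)$ represents a class in $L_5(\Z) = 0$, so the boundary automorphisms on the two sides differ by a coboundary that lies in $\Aut(v)\cdot\Aut(v)$. Second, and more subtly, $0$-stable equivalence of induced forms, as opposed to strict isomorphism, must leave $\Phi$ invariant: adding a zero form $(W, 0)$ to $v$ enlarges $H_+(V)$ to $H_+(V \oplus W)$ and requires carefully matching the enlarged lagrangian configurations with the originals. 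This step relies on the identification $\partial(v \oplus 0_W) = \partial v$ of split quadratic linking forms, since a zero form contributes no linking data, so the enlarged automorphism group $\Aut(v \oplus 0_W)$ still yields the same quotient $\bAut(v)$ after factoring out the block-diagonal subgroup.
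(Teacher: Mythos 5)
First, a point of context: the paper does not prove this statement. Theorem~\ref{thm:CS} is imported from Crowley--Sixt with a citation and no argument, so there is no in-paper proof to compare yours against; what follows assesses your sketch on its own terms.

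Your outline is in the right spirit (classify quasi-formations with prescribed induced forms by boundary data of linking forms, using the elementary class $e(v)=[\delta(V,\rho)]$ as basepoint), but the key steps have genuine gaps. The most serious is the normalisation to $(H_+(V);F,\Gamma_\rho)$. The Fundamental Lemma of $L$-theory lets you assume $(P,\psi)=H_+(F)$ with the \emph{lagrangian} standard; it says nothing about where the half-rank summand $V'$ lands. To instead fix $V'=\Gamma_\rho$ and let the lagrangian vary, you need $\Aut(H_+(V))$ to act transitively on half-rank direct summands with induced form $v$. Over $\Z$ this is a Witt-extension statement that fails in general, and its failure is measured precisely by boundary linking-form data --- so asserting it is essentially circular: it is (a large part of) the content of the theorem you are proving. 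Relatedly, the recipe for $\Phi$ is not well defined as written: an isometry of $H_+(V)$ carrying $V\oplus 0$ to $F$ is very far from unique, and ``reducing the change of basis modulo $\widehat{\lambda}(V)$'' specifies neither which block of the matrix is being reduced nor why the resulting automorphism of $\coker(\widehat{\lambda})$ is independent of the chosen isometry.

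A second substantive omission is that the quadratic refinement never enters your argument. The theorem concerns $\Aut(\partial(V,\theta))$, isometries of the \emph{split quadratic} linking form, which is typically a proper subgroup of the isometries of the symmetric linking form $\partial\lambda$ (cf.\ Remark~\ref{rem:Caution} and Example~\ref{ex:Rank1bAut}); your construction of $\Psi$ must use the condition $\partial\theta\circ f=\partial\theta$ exactly when verifying that $F_f$ is a lagrangian for the quadratic form $\psi$ rather than merely isotropic for its symmetrisation, and this is nowhere identified. Finally, invariance under the monoid relation does not follow from ``$((P,\psi);F,G)$ represents a class in $L_5(\Z)=0$'': vanishing in $L_5(\Z)$ says the formation is stably a boundary, and one must still compute that summing with a boundary formation changes the extracted element of $\Aut(\partial v)$ only within an $\Aut(v)\times\Aut(v)$-orbit. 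These points are exactly what Crowley--Sixt's machinery of presentations of linking forms is built to handle, and until they are filled in the sketch does not constitute a proof.
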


\begin{example}
\label{ex:Lawson}
We illustrate Theorem~\ref{thm:CS} with some examples that will be relevant to the study of $\Z_2$-surfaces of nonorientable genus $h=1,2$.
\begin{itemize}
\item In Example~\ref{ex:Rank1bAut} we saw $\bAut(\Z,4)$ is trivial, and thus so is $\ell_5(\Z,4)$, by Theorem~\ref{thm:CS}.
\item We argue that $\bAut(4H_+(\Z))$ is trivial, thus proving $\ell_5(4H_+(\Z))$ is trivial.
First observe that the isometries of the quadratic form $4H_+(\Z)$ are
\[ \Aut(4H_+(\Z))=\Aut(H_+(\Z))=\left\lbrace
\bsm
\varepsilon_1&0\\ 0 & \varepsilon_2
\esm,
\bsm
0&\varepsilon_1\\ \varepsilon_2&0
\esm
\mid \varepsilon_1 \varepsilon_2= 1 \right\rbrace.\]
We verify that all the isometries of $\partial(4H_+(\Z))$ are induced by $\Aut(4H_+(\Z))$.
This was already observed by Kreck~\cite[p.~70]{KreckOnTheHomeomorphism}.
First, note that the symmetric linking form is $b \colon (\Z_4 \oplus \Z_4) \times (\Z_4 \oplus \Z_4) \to \Q/\Z$, represented by $B:= \bsm 0 & \frac{1}{4} \\ \frac{1}{4} & 0 \esm$, and the quadratic enhancement is $\nu \colon \Z_4 \oplus \Z_4 \to \Q/\Z, ke_1 + \ell e_2 \mapsto k\ell/4$.
An automorphism $\alpha$ of this is represented by a matrix $A:= \bsm a & b \\ c & d \esm$ over $\Z_4$ such that~$A^T B A = B$ over $\Q/\Z$ and such that~$\nu \circ \alpha = \nu \colon \Z_4 \oplus \Z_4 \to \Q/\Z$.
The former condition implies that $ad+bc =1 \in \Z_4$, while the conditions that $\nu(e_1) = \nu(ae_1 + ce_2)$ and $\nu(e_2) = \nu(be_1 + d e_2)$ give~$ac \equiv 0 \equiv bd$.  A case analysis yields exactly the same four matrices for $A$ as in the description of $\Aut(4H_+(\Z))$.  It follows that $\Aut(4H_+(\Z)) \to \Aut(\partial(4H_+(\Z)))$ is surjective, as claimed.
\end{itemize}
\end{example}

\subsection{Nikulin's criterion for trivial bAut}\label{sub:Nikulin}
Determining whether $\ell_5(v)$ is trivial is geometrically significant, as we will recall in the next section.
Theorem~\ref{thm:CS} converts this problem into a question about linking forms.
In order to state a criterion due to Nikulin~\cite{Nikulin} that describes conditions for $\bAut(v)$ to be trivial, we introduce some notation.
Namely,  given an abelian group~$G$ and a prime~$p$, we write~$n_p(G)$ for the minimal number of generators for the~$p$-primary part of~$G$, or equivalently $n_p(G) := \dim_{\mathbb{F}_p} (G \otimes \mathbb{F}_p)$.

\begin{theorem}[{Nikulin~\cite[Theorem 1.14.2]{Nikulin}}]
\label{thm:Nikulin}
Let~$v=(V,\theta)$ be an indefinite,  nondegenerate quadratic form over~$\Z$ with symmetrisation~$(V,\lambda)=(V,\theta+\theta^*)$.
Suppose that
\begin{enumerate}
\item the inequality $\operatorname{rk}(V) \geq n_p(\coker(\widehat{\lambda})) + 2$ holds for all odd primes~$p$, and
\item if~$\operatorname{rk}(V) = n_2(\coker(\widehat{\lambda}))$, then the split quadratic linking form~$\partial (\Z^2, [\bsm 0&2 \\ 0&0\esm ])$ is a summand of $\partial(V,\theta)$.
\end{enumerate}
Then $\partial \colon \Aut(V,\theta) \to \Aut(\partial (V,\theta))$ is surjective, and so $\bAut(v)$ consists of one element.
\end{theorem}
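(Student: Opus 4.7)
The plan is to follow Nikulin's strategy of reducing the surjectivity of $\partial \colon \Aut(V,\theta) \to \Aut(\partial(V,\theta))$ to a genus-theoretic statement about the lattice underlying $(V,\theta)$. The key point is that every isometry $f$ of the discriminant form $\partial(V,\theta)$ can be realised by a lattice isometry provided one is willing to first replace $V$ by another lattice in the same genus; $\partial$ is then surjective as soon as the genus of $(V,\theta)$ consists of a single isometry class, because any lattice it produces can be re-identified with $V$.

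First I would recall the gluing construction: given $f \in \Aut(\partial(V,\theta))$, one forms the overlattice of $V \oplus (-V)$ corresponding to the graph of $f$ in $\partial(V,\theta) \oplus \partial(-V,-\theta)$. Cutting this back along the diagonal produces a new quadratic lattice $(V',\theta')$ together with an isomorphism $\partial(V',\theta') \cong \partial(V,\theta)$ realising $f$. A short calculation shows that $(V,\theta)$ and $(V',\theta')$ have the same signature and the same discriminant form, hence by the standard genus dictionary (same $\R$-equivalence class and same $\Z_p$-equivalence class at every prime) they lie in the same genus. If the genus contains a single class, any isometry $(V',\theta') \cong (V,\theta)$ can be composed with the gluing data to produce an element of $\Aut(V,\theta)$ whose boundary is $f$.

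It then remains to verify that the two numerical hypotheses guarantee single-class genus. For indefinite lattices this is the content of the Kneser--Eichler strong approximation theorem for spin groups: the obstruction to single-class behaviour lies in a product of local spinor norm cokernels, and the rank condition $\operatorname{rk}(V) \geq n_p(\coker(\widehat{\lambda})) + 2$ is exactly what is needed for the $p$-local obstruction to vanish at each odd prime $p$. At $p=2$ the situation is more delicate because one must distinguish "even" and "odd" $2$-adic Jordan constituents; here one either needs an extra unit of rank ($\operatorname{rk}(V) > n_2(\coker(\widehat{\lambda}))$, which comes for free from hypothesis~(1) unless equality holds in hypothesis~(2)), or else the discriminant form must carry the distinguished even summand $\partial(\Z^2,[\bsm 0 & 2 \\ 0 & 0 \esm])$, which realises the needed $2$-adic splitting and restores single-class behaviour.

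The main obstacle is the prime $p=2$, where the classification of quadratic forms over $\Z_2$ branches according to the parity type and requires carefully pairing off $2$-adic Jordan blocks of $V$ with $2$-primary summands of $\partial(V,\theta)$. This is precisely why hypothesis~(2) isolates that single even hyperbolic summand as the one piece of $2$-adic data one cannot extract from the rank inequality alone. Once the local-global analysis is assembled and the genus is shown to have one class, the surjectivity of $\partial$ and the triviality of $\bAut(v)$ follow immediately from the gluing construction above.
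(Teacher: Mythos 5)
You should first note that the paper does not actually prove this theorem: it is quoted from Nikulin with a citation to \cite[Theorem~1.14.2]{Nikulin}, and the only mathematical content the paper supplies in its vicinity is the \emph{translation} of Nikulin's statement --- phrased for even symmetric lattices and nonsplit quadratic linking forms valued in $\Q/2\Z$ --- into the language of quadratic forms and split quadratic linking forms (this is the content of the commutative square of categories and the identification of $\Aut(V,\theta)\to\Aut(\partial(V,\theta))$ with $\Aut(V,\lambda)\to\Aut(\partial(V,\lambda))$). Your proposal instead attempts to prove Nikulin's result itself. The strategy you describe --- realise a given $f\in\Aut(\partial(V,\theta))$ by the overlattice of $V\oplus(-V)$ glued along the graph of $f$, observe the resulting lattice lies in the same genus, and then invoke one-class-genus via Eichler--Kneser strong approximation --- is indeed the standard route and is essentially how Nikulin argues, so the approach is not wrong.

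However, as written the proposal is an outline rather than a proof, and the gaps sit exactly where the content of the theorem lives. The assertion that the rank inequality $\operatorname{rk}(V)\geq n_p(\coker(\widehat{\lambda}))+2$ at odd primes ``is exactly what is needed for the $p$-local obstruction to vanish'' is the heart of Kneser's theorem and of Nikulin's local computation of spinor norms; stating that it matches is not a verification. The $2$-adic case is likewise only gestured at: you would need to show concretely why the presence of the summand $\partial(\Z^2,[\bsm 0&2\\0&0\esm])$ in the discriminant form forces the required splitting of the $2$-adic Jordan decomposition, and you do not address that the lattice here is \emph{even} (being a symmetrisation $\theta+\theta^*$) nor that the gluing construction preserves the quadratic refinement rather than merely the bilinear linking form --- a point that matters, since (as the paper's Remark~\ref{rem:Caution} emphasises) quadratic linking forms typically have fewer isometries than their symmetrisations. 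Finally, you do not engage at all with the split/nonsplit dictionary, which is the one step the paper does carry out and which is needed to make Nikulin's hypothesis about $u_+^{(2)}(2)$ match the condition on $\partial(\Z^2,[\bsm 0&2\\0&0\esm])$ appearing in the statement. For the purposes of this paper the correct move is to cite Nikulin and supply that translation, not to reprove the classification result.
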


The statement in Nikulin's paper is not written this way. The fact that Nikulin's work could be translated into this language and used in this context was observed in Crowley-Sixt~\cite[Proposition 6.18 i)]{CrowleySixt}, where they reported the outcome of that translation. We provide the details of the translation momentarily, but first record the key consequence for our purposes.

\begin{corollary}
\label{cor:CS}
Under the conditions in the hypotheses of Theorem~\ref{thm:Nikulin},  $\ell_5(v)$ is trivial.
\end{corollary}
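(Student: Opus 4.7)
The proof will be an immediate combination of the two theorems immediately preceding the corollary, together with the known existence of a distinguished elementary element in $\ell_5(v)$.

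The plan is as follows. First, I would invoke Theorem~\ref{thm:Nikulin} directly: its conclusion is that under the stated hypotheses on $v = (V,\theta)$, the boundary map $\partial \colon \Aut(V,\theta) \to \Aut(\partial(V,\theta))$ is surjective, and hence the boundary automorphism set $\bAut(v) = \Aut(\partial(V,\theta))/(\Aut(V,\theta)\times \Aut(V,\theta))$ consists of a single element (namely the trivial orbit).

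Second, I would apply Theorem~\ref{thm:CS} of Crowley--Sixt, which furnishes a bijection $\ell_5(v) \cong \bAut(v)$. Combining this with the previous step, we conclude that $\ell_5(v)$ consists of a single element.

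Finally, to match Definition~\ref{def:Triviall5v}, I need that this unique element is elementary. But this is automatic: as recalled just after Definition~\ref{def:l5vv} (citing \cite[Corollary 5.3]{CrowleySixt}), there always exists an elementary class $e(v) \in \ell_5(v)$, represented by the boundary quasi-formation $\delta(V,\rho)$ of a sesquilinear representative $\rho$ of $\theta$. Since $\ell_5(v)$ has only one element, this element must coincide with $e(v)$ and is therefore elementary, so $\ell_5(v)$ is trivial in the sense of Definition~\ref{def:Triviall5v}. There is no real obstacle; the content is entirely in Theorems~\ref{thm:CS} and~\ref{thm:Nikulin}, and the corollary is simply the translation of Nikulin's criterion into the language of the $\ell$-monoid.
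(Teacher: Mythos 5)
Your proposal is correct and is essentially identical to the paper's proof, which simply combines Theorem~\ref{thm:Nikulin} with Theorem~\ref{thm:CS}; your extra remark that the unique element must be the elementary class $e(v)$ is exactly the point already recorded in Definition~\ref{def:Triviall5v} (``a single (necessarily elementary) element'').
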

\begin{proof}
Combine Theorem~\ref{thm:Nikulin} with Theorem~\ref{thm:CS}.
\end{proof}

We now translate the result stated in \cite[Theorem 1.14.2]{Nikulin} to that stated in Theorem~\ref{thm:Nikulin}. First, Nikulin works with the nonsplit version of quadratic linking forms, whose definition we recall next, in which the quadratic enhancements $\mu$ are valued in $\Q/2\Z$.

\begin{definition}[{Ranicki~\cite[\S3.4]{RanickiExact}}]
\label{def:QuadraticLinking2}
A \emph{nonsplit quadratic linking form}~$(T,b,\mu)$ (over~$\Z$) is a symmetric linking form~$(T,b)$ together with a map~$\mu \colon T \to \Q/2\Z$ such that the following equalities in $\Q/2\Z$ hold for all~$x,x_1,x_2 \in T$ and all $r \in \Z$:
\begin{enumerate}
\item $\mu(rx)=r^2\mu(x),$
\item $\mu(x_1+x_2)-\mu(x_1)-\mu(x_2)= 2b(x_1,x_2)$.
\end{enumerate}
\end{definition}

In the more general context where Ranicki defined these notions, there is in general a difference between split and nonsplit quadratic linking forms. For us, because $1/2\in\Q/\Z$, there is effectively no difference.


\begin{proposition}[{\cite[Proposition~3.4.2]{RanickiExact}}]\label{prop:Ran-equiv-categories}
A split quadratic linking form $(T,b,\nu)$ determines a nonsplit quadratic linking form $(T,b,\mu)$ by sending $\nu \colon T \to \Q/\Z$ to $\mu := 2\nu \colon T \to \Q/2\Z$.
  This gives rise to an equivalence of categories
\[
  \left\lbrace
    \begin{array}{c}
         \text{split quadratic linking forms over } \Z   \\
         \text{and morphisms thereof}
    \end{array}
        \right\rbrace
            \xrightarrow{\cong}
         \left\lbrace
    \begin{array}{c}
         \text{nonsplit quadratic linking forms over } \Z  \ \\
         \text{and morphisms thereof}
    \end{array}
    \right\rbrace
\]
that is the identity on the morphisms.
\end{proposition}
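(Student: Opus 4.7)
The plan is to construct a functor in each direction and verify that they are mutually inverse, noting that both categories have the same underlying symmetric linking form $(T,b)$, so the entire content of the equivalence concerns the quadratic data.

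First I would verify that the assignment $\nu \mapsto \mu := 2\nu$ defines a functor. Given the split form $(T,b,\nu)$, writing $\mu = 2\nu$ (where we use the natural map $\Q/\Z \to \Q/2\Z$ given by lifting to $\Q$ and then multiplying by $2$, which is well defined because $2\Z \subseteq 2\Z$), the identity $\mu(rx) = r^2 \mu(x)$ is immediate, and
\[ \mu(x_1 + x_2) - \mu(x_1) - \mu(x_2) = 2\bigl(\nu(x_1+x_2) - \nu(x_1) - \nu(x_2)\bigr) = 2b(x_1, x_2) \in \Q/2\Z, \]
so the nonsplit axioms hold. An isomorphism of $T$ preserves $b$ and $\nu$ if and only if it preserves $b$ and $2\nu$, so this assignment is the identity on morphisms.

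Next I would construct the inverse. Given a nonsplit form $(T,b,\mu)$, define $\nu \colon T \to \Q/\Z$ by $\nu(x) := \tilde\mu(x)/2 \bmod \Z$, where $\tilde\mu(x) \in \Q$ is any lift of $\mu(x) \in \Q/2\Z$. This is well defined because two such lifts differ by an element of $2\Z$, whose image under division by $2$ lies in $\Z$. The key checks are:
\begin{itemize}
\item $\nu(rx) = r^2\nu(x)$: follows because if $\tilde\mu(x)$ lifts $\mu(x)$ then $r^2\tilde\mu(x)$ lifts $r^2 \mu(x) = \mu(rx)$.
\item $\nu(x_1 + x_2) - \nu(x_1) - \nu(x_2) = b(x_1,x_2)$ in $\Q/\Z$: the nonsplit relation gives $\tilde\mu(x_1+x_2) - \tilde\mu(x_1) - \tilde\mu(x_2) - 2\tilde b(x_1, x_2) \in 2\Z$ for any lift $\tilde b(x_1,x_2)$ of $b(x_1,x_2)$, and dividing by $2$ yields an integer.
\end{itemize}

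Finally, I would observe that the two constructions are mutually inverse: starting with $\nu$, forming $\mu = 2\nu$ and then halving returns $\nu$, because a lift of $2\nu(x)$ is given by $2\tilde\nu(x)$, which halves to $\tilde\nu(x)$. Conversely, starting with $\mu$, forming $\nu$ and then doubling reproduces $\mu$ tautologically. Since both functors act as the identity on morphisms, this gives the claimed equivalence.

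The only subtle point, which is not really an obstacle but worth stating carefully, is the well-definedness of ``division by $2$'' from $\Q/2\Z$ to $\Q/\Z$: this relies on the precise fact that the ambiguity of lifts in $\Q/2\Z$ is exactly $2\Z$, which halves into $\Z$. In greater generality (over rings other than $\Z$), this step can fail, which explains why split and nonsplit quadratic linking forms become genuinely distinct notions in Ranicki's general framework.
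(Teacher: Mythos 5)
Your argument is correct and complete. Note that the paper itself gives no proof of this proposition: it is quoted directly from Ranicki \cite[Proposition~3.4.2]{RanickiExact}, with only the surrounding remark that over $\Z$ the split and nonsplit notions coincide because $\tmfrac{1}{2}\in\Q/\Z$. What you have written is precisely the elementary verification behind that remark: multiplication by $2$ is an isomorphism $\Q/\Z\to\Q/2\Z$ (since it carries the ambiguity $\Z$ of a lift onto exactly $2\Z$), it matches the two sets of axioms term by term, and its inverse ``division by $2$'' is well defined for the same reason. Your checks of the axioms in both directions, the observation that the constructions are mutually inverse, and the remark that preserving $\nu$ is equivalent to preserving $2\nu$ (so the functor is the identity on morphisms and an isomorphism of categories) are all accurate. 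Your closing comment correctly identifies why this breaks down in Ranicki's general setting, where the quotient by which one must divide need not behave so simply. There is nothing to add.
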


\begin{definition}\label{defn:nonsplit-boundary-linking-form}
The \emph{boundary} of a nondegenerate, even symmetric bilinear form $\lambda \colon V \times V \to \Z$ is the nonsplit quadratic linking form $\partial(V,\lambda) := (\operatorname{coker}(\widehat{\lambda}),\partial \lambda,\mu)$, where
$(\operatorname{coker}(\widehat{\lambda}),\partial \lambda)$ is the boundary linking form of the symmetric form~$(V,\lambda)$, and
  \[\mu \colon \operatorname{coker}(\widehat{\lambda}) \to \Q/2\Z\] is defined by $\mu([y])=\tmfrac{1}{s^2}\lambda(z,z)$, where~$s\in \Z, z \in V$ and $y \in V^*$ satisfy~$sy=\widehat{\lambda}(z)$.
\end{definition}

\begin{remark}
\label{rem:Caution}
We caution the reader that $\partial (V,\lambda)$ does not refer to a symmetric linking form but to a nonsplit \emph{quadratic} linking form.
As illustrated in Example~\ref{ex:Rank1bAut}, quadratic linking forms typically have fewer isometries than their symmetrisations.
\end{remark}

\begin{construction}
\label{cons:RemarkCommuteFunctor}
Write $\mathbf{Quad},\mathbf{EvenSym},\mathbf{SplitQuadLink}$, and $\mathbf{NonSplitQuadLink}$ for the categories of nondegenerate quadratic forms over $\Z$, even symmetric forms over $\Z$, split quadratic linking forms over $\Z$ and nonsplit quadratic linking forms over $\Z$, together with morphisms thereof.
Consider the following  functors:
\begin{equation}
\label{eq:CommutativeFunctor}
\begin{tikzcd}[column sep = small]
\mathbf{Quad}
\ar[d,"\cong"] \ar[r,"\partial"]  &
\mathbf{SplitQuadLink}
 \ar[d,"\cong"]\\
\mathbf{EvenSym}
 \ar[r,"\partial"] & \
\mathbf{NonSplitQuadLink}.
\end{tikzcd}
\end{equation}
Here, the left vertical functor maps a quadratic form to its symmetrisation and is the identity on morphisms.  It is an equivalence of categories because every even symmetric form over $\Z$ admits a unique quadratic refinement.
The right hand side vertical functor is the equivalence described in Proposition~\ref{prop:Ran-equiv-categories}, and the horizontal functors take the boundary.
\end{construction}

\begin{proposition}
The square \eqref{eq:CommutativeFunctor} commutes.
\end{proposition}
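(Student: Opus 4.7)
The plan is to check commutativity separately on objects and morphisms, both of which reduce to unpacking the definitions. On morphisms there is in fact nothing to do: the left vertical functor is the identity on morphisms by construction, the right vertical functor is the identity on morphisms by Proposition~\ref{prop:Ran-equiv-categories}, and the two horizontal boundary functors both send an isometry $h$ to $\partial h = (h^*)^{-1}$ acting on the cokernel of the adjoint of the symmetrisation. So everything comes down to checking that the two composite functors agree on an object $(V,\theta)$.

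For objects, start with a nondegenerate quadratic form $(V,\theta)$ with symmetrisation $(V,\lambda)$, where $\lambda = \theta + \theta^*$. Going right then down produces
\[
(\operatorname{coker}(\widehat{\lambda}),\,\partial \lambda,\, 2\partial \theta),
\]
while going down then right produces
\[
(\operatorname{coker}(\widehat{\lambda}),\,\partial \lambda,\, \mu),
\]
where $\mu$ is the quadratic refinement of Definition~\ref{defn:nonsplit-boundary-linking-form}. The underlying abelian groups and the symmetric linking pairings agree on the nose, so the only thing to verify is the equality $2\partial\theta = \mu$ of maps $\operatorname{coker}(\widehat{\lambda}) \to \Q/2\Z$.

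This is a direct computation from the definitions. Given $[y] \in \operatorname{coker}(\widehat{\lambda})$, pick $s \in \Z \setminus \{0\}$ and $z \in V$ with $sy = \widehat{\lambda}(z)$. By Definition~\ref{def:BoundaryQuadratic} we have $\partial\theta([y]) = \tfrac{1}{s^2}\theta(z,z) \in \Q/\Z$, so $2\partial\theta([y]) = \tfrac{2}{s^2}\theta(z,z) \in \Q/2\Z$. On the other hand, by Definition~\ref{defn:nonsplit-boundary-linking-form}, $\mu([y]) = \tfrac{1}{s^2}\lambda(z,z) \in \Q/2\Z$. Since the involution on $\Z$ is trivial, $\theta^*(z,z) = \overline{\theta(z,z)} = \theta(z,z)$, and hence $\lambda(z,z) = \theta(z,z) + \theta^*(z,z) = 2\theta(z,z)$. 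Therefore $\mu([y]) = \tfrac{2}{s^2}\theta(z,z) = 2\partial\theta([y])$, as required.

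The only minor subtlety is to check that these formulas are independent of the choice of $(s,z)$, but this was already built into Definitions~\ref{def:BoundaryQuadratic} and \ref{defn:nonsplit-boundary-linking-form}, so no additional verification is needed here. This completes the argument.
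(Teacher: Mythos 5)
Your proof is correct and follows essentially the same route as the paper's: commutativity on morphisms is immediate since both composites send $h$ to $\partial h=(h^*)^{-1}$, and on objects it reduces to the identity $\mu([y])=\tfrac{1}{s^2}\lambda(z,z)=\tfrac{2}{s^2}\theta(z,z)=2\,\partial\theta([y])$, which is exactly the computation in the paper.
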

\begin{proof}
To see that the diagram of categories and functors commutes on the level of objects, note that if $\theta \in Q_+(V)$ is a quadratic form with symmetrisation~$\lambda$, then
\[\partial \theta([y])=\tmfrac{1}{s^2}\theta(z,z)=\tmfrac{1}{2s^2}\lambda(z,z)=\tmfrac{1}{2}\mu([y]).\] 

The commutativity on the level of morphisms holds because a homomorphism $f \colon V \to V$ is sent via both routes to $\partial f \colon \coker (\widehat{\lambda})\to \Q/2\Z$.
\end{proof}

\begin{corollary}
\label{prop:CommuteFunctor}
The diagram in~\eqref{eq:CommutativeFunctor} induces the following commutative diagram of automorphism groups:
\begin{equation}
\label{eq:AutVtheta=AutVlambda}
\begin{tikzcd}[row sep = small]
    \Aut(V,\theta) \ar[r,"\partial"] \ar[d,"="] & \Aut(\partial(V,\theta)) \ar[d,"="]  \\
    \Aut(V,\lambda) \ar[r,"\partial"] & \Aut(\partial(V,\lambda)).
  \end{tikzcd}
  \end{equation}
  The vertical equalities refer to equalities as subsets of the automorphisms $\Aut(V)$ of the group~$V$.
\end{corollary}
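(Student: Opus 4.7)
The proof reduces to unpacking the commutative square of categories in~\eqref{eq:CommutativeFunctor}, together with the observation that both vertical functors in that square act as the identity on morphisms. My plan is first to identify the vertical maps in~\eqref{eq:AutVtheta=AutVlambda} as equalities, then deduce commutativity as a direct consequence of~\eqref{eq:CommutativeFunctor}.

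For the left vertical equality $\Aut(V,\theta) = \Aut(V,\lambda)$, I would invoke Lemma~\ref{lem:projectivelemma} and Corollary~\ref{cor:niceconsequences}: because $V$ is projective, the symmetrisation map $Q_+(V) \to Q^+(V)$ is injective, so the even symmetric form $\lambda$ admits the quadratic form $\theta$ as its unique refinement. Given $f \in \Aut(V)$, the inclusion $\Aut(V,\theta) \subseteq \Aut(V,\lambda)$ is immediate because symmetrisation is natural. Conversely, if $f \in \Aut(V,\lambda)$, then $f^*\theta f \in Q_+(V)$ is also a quadratic refinement of $f^*\lambda f = \lambda$, so by uniqueness $f^*\theta f = \theta$, i.e.\ $f \in \Aut(V,\theta)$.

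For the right vertical equality $\Aut(\partial(V,\theta)) = \Aut(\partial(V,\lambda))$, I would appeal directly to Proposition~\ref{prop:Ran-equiv-categories}, which states that the equivalence of categories between split and nonsplit quadratic linking forms over $\Z$ is the identity on morphisms. In particular it identifies automorphism groups of corresponding objects as literally the same subsets of $\Aut(\coker(\widehat{\lambda}))$; here I use that $\partial(V,\theta)$ (the split boundary) corresponds to $\partial(V,\lambda)$ (the nonsplit boundary) under this equivalence, which is the content of the commuting square~\eqref{eq:CommutativeFunctor} on objects.

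Finally, the commutativity of~\eqref{eq:AutVtheta=AutVlambda} is nothing more than the commutativity of~\eqref{eq:CommutativeFunctor} on morphisms, specialised to the case where the morphism is an automorphism $f \in \Aut(V,\theta)=\Aut(V,\lambda)$. Indeed, the boundary functor $\partial$ sends a morphism $h$ to $(h^*)^{-1}$ in both the upper and lower rows, and the vertical equivalences act trivially on morphisms, so both compositions around the square send $f$ to $(f^*)^{-1}$. There is no genuine obstacle to this proof; the only point requiring care is the left vertical identification, where one must explicitly use uniqueness of the quadratic refinement to pass between isometries of $\theta$ and isometries of $\lambda$.
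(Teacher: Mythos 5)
Your proposal is correct and takes essentially the same route as the paper, whose entire proof is to restrict the commutative square of functors~\eqref{eq:CommutativeFunctor} to isometries; the identifications you spell out (uniqueness of the quadratic refinement for the left equality, the identity-on-morphisms equivalence of Proposition~\ref{prop:Ran-equiv-categories} for the right) are exactly the facts already recorded in Construction~\ref{cons:RemarkCommuteFunctor} that make that restriction work. You have simply made explicit what the paper leaves implicit.
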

 \begin{proof}
Restrict \eqref{eq:CommutativeFunctor} to the isometries.
\end{proof}

Nikulin's theorem is stated in his paper in  terms of the objects on the bottom row of~\eqref{eq:AutVtheta=AutVlambda} and Crowley-Sixt's Theorem~\ref{thm:CS} is stated in terms the objects on the top row.  For our statement in Theorem~\ref{thm:Nikulin}, we translated Nikulin's statement  into the language of quadratic forms and split quadratic linking forms, in order to be able to apply it in the context of Theorem~\ref{thm:CS}.


Summarising, Nikulin's map $\partial \colon \Aut(V,\lambda) \to \Aut(\partial (V,\lambda))$ is identified with the map $\partial \colon \Aut(V,\theta) \to \Aut(\partial(V,\theta))$ from Definition~\ref{def:BoundaryIsometry}.
Thus for $(V,\lambda)=(V,\theta+\theta^*)$, Nikulin's requirement that the nonsplit quadratic form~$\partial (V,\lambda)$ split off a $\partial (\Z^2,\bsm 0&2 \\ 2&0 \esm )$ summand (Nikulin writes $U^{(2)}(2)$ for the symmetric form $\lambda$ and $u_+^{(2)}(2)$ for its nonsplit quadratic boundary $\partial (V,\lambda)$~\cite[Proposition~1.8.1]{Nikulin}) is equivalent to requiring that the split quadratic linking form~$\partial (V,\theta)$ contains a $\partial (\Z^2,[\bsm 0&2 \\ 0&0 \esm])$ summand.

\medbreak

We conclude this section by illustrating Corollary~\ref{cor:CS} with some examples that will be useful when studying $\Z_2$-surfaces.
Given an integer $n \in \Z$,  consider the quadratic form
$$
X_n= \operatorname{sgn}(n)\left[\begin{pmatrix}
2&2&2&\ldots &2 \\
0&1&1&\cdots &1\\
0&0&\ddots&\ddots &\vdots \\
\vdots&\ddots&\ddots&\ddots &1\\
0&\ldots&0&0 &1
\end{pmatrix}\right] \in Q_+(\Z^n)
$$
from Proposition~\ref{prop:Calculate}.
For nonnegative integers $a,b \in \Z_{\geq 0}$,  set $h:=a+b$ and $\sigma=a-b$.
Observe that $h-|\sigma|$ is nonnegative and even, and note that if $\sigma=0$ then $h$ is even.
Consider the quadratic form
\[\theta_{a,b}:=
\begin{cases}
2H_+(\Z) \oplus H_+(\Z)^{\oplus (\frac{h}{2}-1)}  &\quad \text{ if } \sigma=0,   \\
X_\sigma \oplus H_+(\Z)^{\oplus \frac{1}{2}(h-|\sigma|)} &\quad \text{ if } \sigma \neq 0.
\end{cases}
\]
This is a rank $h$, signature $\sigma$,  nondegenerate quadratic form. If $h \neq |\sigma|$ then $h - |\sigma| > 0$ so $\theta_{a,b}$ is an indefinite form on $V := \Z^h$.

\begin{proposition}
\label{prop:ApplyNikulin}
Given nonnegative integers $a,b \in \Z_{\geq 0}$,  set $h:=a+b$ and $\sigma=a-b$.
If $h \leq 3$ or if $|\sigma| \neq h$, then the set $\ell_5(\Z^h,2\theta_{a,b})$ is trivial.
\end{proposition}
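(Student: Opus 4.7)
The proof splits into the extremal case $|\sigma| = h$ (which, under the hypothesis of the proposition, forces $h \leq 3$) and the non-extremal case $|\sigma| < h$ (which, by the parity constraint $h \equiv \sigma \pmod 2$, forces $h \geq 2$). I plan to handle the non-extremal case via Corollary~\ref{cor:CS} (Nikulin's criterion), and to treat the extremal case by direct computation using the bijection $\ell_5(v) \cong \bAut(v)$ of Theorem~\ref{thm:CS}. For $h = 1$ (so $\sigma = \pm 1$), we have $v = [\pm 4]$ and the triviality of $\bAut$ is Example~\ref{ex:Lawson}. The remaining extremal cases $(h, \sigma) \in \{(2, \pm 2), (3, \pm 3)\}$ are deferred to the appendix, where one enumerates all automorphisms of the relevant boundary quadratic linking forms and explicitly exhibits an isometry of $v$ inducing each.

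For the non-extremal case, the form $v = (\Z^h, 2\theta_{a,b})$ is indefinite and nondegenerate, since $\theta_{a,b}$ contains at least one hyperbolic summand. I would then compute $\coker(2\widehat{\lambda_{a,b}})$ summand by summand: each $H_+(\Z)$ in $\theta_{a,b}$ contributes $(\Z/2)^2$ after doubling, the $2H_+(\Z)$ summand (present when $\sigma=0$) contributes $(\Z/4)^2$, and $X_\sigma$ for $\sigma\neq 0$ contributes $\Z/8 \oplus (\Z/2)^{|\sigma|-1}$ when $\sigma$ is odd and $(\Z/4)^2 \oplus (\Z/2)^{|\sigma|-2}$ when $\sigma$ is even. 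The last claim is established by identifying $X_\sigma$ with the restriction of $\operatorname{sgn}(\sigma) I_{|\sigma|}$ to the index-$2$ sublattice $L' = \{v \in \Z^{|\sigma|} \mid \sum v_i \equiv 0 \pmod 2\}$, then observing that the dual lattice quotient $L'^\#/L'$ is generated by the classes of $e_1$ and $\tfrac{1}{2}(1,\ldots,1)$, with the latter of order $4$ or $2$ according to the parity of~$|\sigma|$.

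Tallying these contributions yields $n_2(\coker(2\widehat{\lambda_{a,b}})) = h$ in all non-extremal cases, while no odd torsion appears; hence Nikulin's condition (i) reduces to $h\geq 2$, which holds. For condition (ii) one must exhibit $\partial(\Z^2, [\bsm 0 & 2 \\ 0 & 0 \esm]) = \partial(2H_+(\Z))$ as a direct summand of~$\partial v$. This is automatic whenever $\theta_{a,b}$ contains $H_+(\Z)$ as a direct summand (as then $v$ contains $2H_+(\Z)$), which covers every non-extremal case except $(h,\sigma) = (2,0)$. In that single exceptional case, $\theta_{1,1} = 2H_+(\Z)$, $v = 4H_+(\Z)$, and Nikulin's condition (ii) genuinely fails: a short computation shows the $2$-torsion subgroup of $\partial(4H_+(\Z))$ has vanishing quadratic refinement, so it cannot contain the nontrivial form~$\partial(2H_+(\Z))$. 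For this single case the triviality of~$\ell_5(v)$ must be drawn from Example~\ref{ex:Lawson}, which directly verifies that $\bAut(4H_+(\Z))$ is a singleton. The main obstacles will be organising the cokernel calculation for $X_\sigma$, distinguishing the odd and even $\sigma$ sub-cases cleanly, and recognising that the case $(h,\sigma)=(2,0)$ requires a bespoke argument outside of Nikulin's framework.
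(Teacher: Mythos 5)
Your proposal is correct and follows essentially the same route as the paper: Example~\ref{ex:Lawson} for $(h,\sigma)=(1,\pm1)$ and $(2,0)$, the appendix computations for $h=|\sigma|\in\{2,3\}$, and Nikulin's criterion (Corollary~\ref{cor:CS}) for $h\geq 3$ non-extremal, with condition~(ii) supplied by the $2H_+(\Z)$ summand coming from a hyperbolic summand of $\theta_{a,b}$. One small imprecision: the group you describe, $L'^{\#}/L'$, has order $4$ and is the discriminant group of $L'$ itself, not of the rescaled lattice $L'(2)$ whose discriminant group $\tfrac{1}{2}L'^{\#}/L'$ (of order $2^{|\sigma|+2}$) is what $\coker(2X_\sigma+2X_\sigma^T)$ actually computes; your stated answer for that cokernel is nonetheless correct (the paper gets it by row and column operations), and in any case only its $2$-primarity is needed for Nikulin's condition~(i).
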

\begin{proof}
For $h=1$ we have $2\theta_{a,b}=4$ and we already proved in Example~\ref{ex:Lawson} that $\ell_5(\Z,4)$ is trivial.
For $h=2$ and $\sigma=0$ we have~$2\theta_{a,b}=4H_+(\Z)$ and again
we already proved in Example~\ref{ex:Lawson} that~$\ell_5(4H_+(\Z))$ is trivial.
The cases where $|\sigma|=h=2,3$ involve detailed explicit computations, and are treated in the appendix.

We now assume that $h\geq 3$ and $|\sigma| \neq h$. Write $\theta:=\theta_{a,b}$.
If $h \geq 3$ then $\theta$ has at least one hyperbolic summand, and therefore $2\theta$ has a $2H_+(\Z)$-summand.
We deduce that~$\partial (\Z^h,2\theta)$ splits off a $\partial (\Z^2,[\bsm 0&2\\ 0&0 \esm])$ summand.

Thanks to Corollary~\ref{cor:CS},  the proof therefore reduces to verifying that $\coker(\widehat{\lambda})$ is $2$-primary and satisfies~$n_2(\coker(\widehat{\lambda}))=\operatorname{rk}(V) = h$.  Thus in our case $n_p(\coker(\widehat{\lambda}))=0$ for every odd prime, and so $h \geq 3$ implies that $h = \operatorname{rk}(V) \geq n_p(\coker(\widehat{\lambda})) + 2 = 2$.
We have that
\[
2\theta \cong
\begin{cases}
4H_+(\Z) \oplus 2H_+(\Z)^{\oplus \frac{h}{2}-1}  &\quad \text{ if } \sigma=0,   \\
2X_\sigma \oplus 2H_+(\Z)^{\oplus \frac{1}{2}(h-|\sigma|)} &\quad \text{ if } \sigma\neq 0.
\end{cases}
\]
When $\sigma=0$, we have that $\coker(4H^+(\Z) \oplus 2H^+(\Z)^{\oplus \frac{h}{2}-1}) \cong \Z_4 \oplus \Z_4 \oplus \Z_2^{\oplus (h-2)}$.
This is 2-primary and generated by no fewer than $h$ elements.  Thus $n_2(\operatorname{coker}(\widehat{\lambda}))= h$ in this case.
When $\sigma \neq 0$, we have that $\operatorname{coker}(2H_+(\Z)^{\oplus \frac{1}{2}(h-|\sigma|)}) \cong (\Z_2)^{\oplus h-|\sigma|}$, which is 2-primary and generated by no fewer than $h-|\sigma|$ elements.
To complete the $\sigma \neq 0$ case, it therefore suffices to show that $\operatorname{coker}(2X_\sigma+2X_\sigma^T)$ is $2$-primary and generated by $|\sigma|$ and no fewer elements.

We consider
\[2X_\sigma+2X_\sigma^T=
 \operatorname{sgn}(\sigma)\begin{pmatrix}
8&4&4&\ldots &4 \\
4&4&2&\cdots &2\\
4&2&\ddots&\ddots &\vdots \\
\vdots&\vdots&\ddots&\ddots &2\\
4&2&\ldots&2 &4
\end{pmatrix}.
\]
By performing row and column operations, we compute that
$$\operatorname{coker}(2X_\sigma+2X_\sigma^T)
=\begin{cases}
 \Z_2^{|\sigma|-1} \oplus \Z_8 &\quad \text{ if } |\sigma| \text{ is odd,} \\
 \Z_2^{|\sigma|-2} \oplus \Z_4 \oplus \Z_4 &\quad \text{ if } |\sigma| \text{ is even.}
\end{cases}
$$
Thus~$\operatorname{coker}(2X_\sigma+2X_\sigma^T)$ is indeed~$2$-primary and is generated by no fewer than $|\sigma|$ elements.
This concludes the verification that the assumptions of Corollary~\ref{cor:CS} hold, from which it follows that~$\ell_5(\Z^h,2\theta)$ is trivial.
\end{proof}

\begin{remark}
\label{rem:hleq3}
We currently know of no pair $(a,b)$ where $\ell_5(\Z^h,2\theta_{a,b})$ is nontrivial; here recall that~$h:=a+b$ and $\sigma=a-b$.
In particular, it could be that $\ell_5(\Z^h,2\theta_{a,b})$ is trivial for all~$a,b$.  It would be very interesting to know whether or not this holds.
Proposition~\ref{prop:ApplyNikulin} implies that~$\ell_5(\Z^h,2\theta_{a,b})$ could only potentially be nontrivial when $h=|\sigma| \geq 4$.
In the appendix, we verify by direct calculations of isometries that $\ell_5(\Z^h,2\theta_{a,b})$ is trivial in the cases corresponding to $h=|\sigma| \in \lbrace 2,3 \rbrace$. The computations  grow rapidly in complexity with $h$. For~$h=|\sigma|=2$,  the group~$\Aut(\Z^h,\theta_{a,b}))$ has $8$ elements, whereas for $h=|\sigma|=3$, it has $48$ elements.
This computational complexity explains why, for extremal values of $\sigma$, we only proved Proposition~\ref{prop:ApplyNikulin} for~$h \leq 3$.
\end{remark}

\section{Modified surgery theory}
\label{sec:ModifiedSurgery}
In this section we recall some elements of Kreck's modified surgery theory~\cite{KreckSurgeryAndDuality}.
In Subsection~\ref{sub:NormalSmoothings}, we review the concept of a \emph{normal $k$-smoothing} of a manifold $M$.
Restricting to dimension~$4$, Subsection~\ref{sub:SurgeryObstruction} recalls the definition of the modified surgery obstruction that takes values in $\ell_5(\Z[\pi_1(M)])$.
In Subsection~\ref{sub:WallForm} we narrow down where in the monoid~$\ell_5(\Z[\pi_1(M)])$ the modified surgery obstruction lives.

\subsection{Normal smoothings}
\label{sub:NormalSmoothings}

We recall the notion of a normal smoothing and of the normal $k$-type from~\cite{KreckSurgeryAndDuality}.
We then consider the concrete case where the manifolds are $4$-dimensional and orientable with fundamental group $\Z_2$.

\medbreak

Let $B$ be a topological space with the homotopy type of a CW complex and let $\xi \colon B \to \BSTOP$ be a fibration.
An \emph{$n$-dimensional $(B, \xi)$-manifold} consists of a pair
$(M,\overline{\nu})$, where $M$ is an oriented~$n$-manifold
and $\overline{\nu} \colon M \to B$ is a lift of the oriented stable (topological) normal bundle $\nu \colon M \to \BSTOP$ of~$M$, meaning that $\nu=\xi \circ \overline{\nu}$.

A~\emph{$(B, \xi)$-null-bordism} for a closed~$n$-dimensional~$(B, \xi)$-manifold~$(M_0,\overline{\nu}_0)$ is an $(n{+}1)$-dimensional $(B, \xi)$-manifold $(W,\overline{\nu})$ for which~$\partial (W,\overline{\nu})=(M_0,\overline{\nu}_0)$.
A~\emph{$(B, \xi)$-cobordism} between two $n$-dimensional $(B, \xi)$-manifolds $(M_0,\overline{\nu}_0)$
and~$(M_1,\overline{\nu}_1)$ consists of:
\begin{enumerate}[(i)]
  \item a homeomorphism $f \colon \partial M_0 \to \partial M_1$;
  \item\label{item:condition2} for $i=0,1$, homotopies $\ol{\nu}_i \sim \ol{\nu}^\dag_i$, to some maps $\ol{\nu}^\dag_i \colon M_i \to B$, such that
\[{\ol{\nu}^\dag_0}|_{\partial M_0} = {\ol{\nu}^\dag_1}|_{\partial M_1} \circ f \colon \partial M_0 \to B;\]
  \item an~$(n{+}1)$-dimensional $(B, \xi)$-null-bordism of $(M_0 \cup_f -M_1, \overline{\nu}')$
where $\overline{\nu}'|_{M_0} = \overline{\nu}^\dag_0$ and $\overline{\nu}'|_{M_1} = -\overline{\nu}^\dag_1$.
\end{enumerate}
We note that the homotopies in condition~\eqref{item:condition2} are there to ensure one can produce a well-defined $B$-structure $\ol{\nu}'$ on $M_0\cup_f-M_1$ in the scenario that the $B$-structures $\ol{\nu}_1\circ f$ and $\ol{\nu}_0$ are isomorphic, but not necessarily equal (equality is required for glueing).

We briefly discuss orientation reversal, to give meaning to the notation $ -\overline{\nu}^\dag_1$ above. Given a~$(B,\xi)$-manifold $(M, \ol{\nu})$, the map $\ol{\nu} \circ \pr_1 \colon M\times [0,1]\to B$ defines a $(B,\xi)$-manifold $(M\times[0,1],\pr_1\circ \ol{\nu})$. We identify $(M, \ol{\nu})$ with the restriction to $M\times\{0\}$, and define $-\ol{\nu}\colon M\to B$ as the restriction to the end $M\times\{1\}$, which then produces a $(B,\xi)$-manifold $(-M,-\ol{\nu})$, i.e.~$-\ol{\nu}$ lifts the oppositely oriented stable normal bundle $\nu_{-M}\colon -M\to \BSTOP$.

Recall that a map of spaces $f\colon X\to Y$ is \emph{$m$-connected} if $f_*\colon\pi_i(X)\to \pi_i(Y)$ is an isomorphism for $i<m$ and is surjective for $i=m$. A map of spaces $f\colon X\to Y$ is \emph{$m$-coconnected} if $f_*\colon\pi_k(X)\to \pi_k(Y)$ is an isomorphism for $i>m$ and is injective for~$i=m$.

\begin{definition}
\label{def:NormalSmoothing}
Let $B$ be a space with the homotopy type of a CW complex with finite $(k+2)$-skeleton, let $\xi \colon B \to \BSTOP$ be a fibration, and let $(M,\overline{\nu})$ be a $(B, \xi)$-manifold.
\begin{enumerate}
\item If $\overline{\nu}$ is $(k{+}1)$-connected, then $(M, \overline{\nu})$ is called a \emph{normal $k$-smoothing} into $(B,\xi)$.
\item The pair $(B,\xi)$ is a \emph{normal $k$-type} for $M$ if $\xi$ is $(k+1)$-coconnected and there exists a normal $k$-smoothing $(M, \overline{\nu})$ into $(B,\xi)$.
\end{enumerate}
\end{definition}

The existence of the Moore-Postnikov factorisation~\cite{Baues-obstruction-theory} of the stable normal bundle~$\nu \colon M \to \BSTOP$ ensures that for all $k \geq 0$, there exists a normal $k$-type for $M$. Moreover, this theory implies any two normal $k$-types for $M$ are fibre homotopy equivalent.

We now assume that $M$ is a $4$-manifold with fundamental group $\Z_2$, and equipped with a spin structure. This implies the existence of maps $c \colon M \to B\Z_2$ and $\s \colon M \to \BTOPSpin$, where $c$ classifies the universal cover of $M$, and $\s$ is a lift of the stable normal bundle along the standard principal fibration $\gamma\colon \BTOPSpin\to \BSTOP$. These maps are unique up to homotopy. 
They will be used in the statement of the next lemma.

\begin{lemma}
\label{lem:Normal1Type}
Let $M$ be a spin $4$-manifold with fundamental group $\Z_2$, equipped with a spin structure. Write
\[
(B,\xi):=(\BTOPSpin\times B\Z_2,\gamma\circ\pr_1).
\]
Then the map
\[
 \overline{\nu}:=\s\times c \colon M \to \BTOPSpin \times B\Z_2 
 \]
determines a normal $1$-smoothing into $(B,\xi)$, and $(B,\xi)$ is a normal $1$-type for $M$.
\end{lemma}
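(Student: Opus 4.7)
The plan is to verify directly the three conditions extracted from Definition~\ref{def:NormalSmoothing}: (a) that $\overline{\nu}$ lifts the stable normal bundle along $\xi$, (b) that $\overline{\nu}$ is $2$-connected, and (c) that the fibration $\xi$ is $2$-coconnected. Together (a) and (b) give the first assertion of the lemma, that $\overline{\nu}$ is a normal $1$-smoothing; adding (c) establishes the second, that $(B,\xi)$ is a normal $1$-type for $M$.

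Point (a) is immediate from the defining property of the spin structure $\s$ as a lift of $\nu$ along $\gamma$: one computes
\[\xi \circ \overline{\nu} = \gamma \circ \pr_1 \circ (\s \times c) = \gamma \circ \s = \nu.\]

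The substantive input for (b) and (c) is a computation of the low-dimensional homotopy groups of $\BTOPSpin$, which I would derive from its definition as the homotopy fibre of $w_2 \colon \BSTOP \to K(\Z_2,2)$. Using the standard facts $\pi_1(\BSTOP) = 0$ and $\pi_2(\BSTOP) \cong \pi_1(\STOP) \cong \Z_2$, together with the observation that $w_2$ is the unique nonzero element of $H^2(\BSTOP;\Z_2) \cong \Z_2$ and so (via Hurewicz and the Postnikov tower identification of the bottom $k$-invariant) induces an isomorphism on $\pi_2$, the long exact sequence of the fibration yields $\pi_i(\BTOPSpin) = 0$ for $i \leq 2$ and $\gamma_* \colon \pi_i(\BTOPSpin) \xrightarrow{\cong} \pi_i(\BSTOP)$ for $i \geq 3$. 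Combined with $\pi_i(B\Z_2) = \Z_2$ for $i=1$ and $0$ otherwise, this gives $\pi_1(B) \cong \Z_2$, $\pi_2(B) = 0$, and $\pi_i(B) \cong \pi_i(\BSTOP)$ for $i \geq 3$.

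The remaining checks are then essentially bookkeeping. For (b): both $M$ and $B$ are connected; the induced map on $\pi_1$ is an isomorphism $\Z_2 \to \Z_2$ because $c$ classifies the universal cover of $M$ and the $\BTOPSpin$ factor contributes nothing; and $\pi_2$-surjectivity is automatic since $\pi_2(B) = 0$. For (c): the map $\pi_2(B) \to \pi_2(\BSTOP)$ is trivially injective because its source vanishes, while for $i \geq 3$ it factors as projection to $\pi_i(\BTOPSpin)$ (an isomorphism, since $\pi_i(B\Z_2) = 0$) followed by $\gamma_*$ (an isomorphism by the fibre sequence computation). The only real subtlety is correctly identifying the effect of $w_2$ on $\pi_2$; once that is secured, everything else is formal.
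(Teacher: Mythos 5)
Your proposal is correct in substance and takes essentially the same route as the paper: both verify the conditions of Definition~\ref{def:NormalSmoothing} directly, and your computation of $\pi_i(\BTOPSpin)$ via the fibration over $K(\Z_2,2)$ is an equivalent (if slightly longer) substitute for the paper's observation that $\TOPSpin \to \STOP$ is the universal covering map. The one item you omit is the hypothesis in Definition~\ref{def:NormalSmoothing} that $B$ have the homotopy type of a CW complex with finite $(k+2)=3$-skeleton; the paper checks this using $B\Z_2 \simeq \R P^\infty$ and the vanishing of $\pi_i(\BTOPSpin)$ for $i \leq 3$ together with CW approximation, and your homotopy-group computation already supplies what is needed, so this is a bookkeeping omission rather than a genuine gap.
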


The lemma is well-known to experts, e.g.~\cite{KreckOnTheHomeomorphism,KreckSurgeryAndDuality}; we provide the details for the convenience of the reader.

\begin{proof}
The space $B$ has finite $3$-skeleton because both $B \Z_2 \simeq \R P^\infty$ and $\operatorname{\BTOPSpin}$ do (for the latter, use $\pi_i(\BTOPSpin) = 0$ for $i=1,2,3$ together with CW approximation).
  The map $\BTOPSpin \to \BSTOP$ is 2-coconnected because $\TOPSpin \to \STOP$ is the universal covering map.
   It follows that $\xi$ is 2-coconnected, because $\pi_i(B\Z_2)=0$ for $i \geq 2$.
Next, since $M$ is spin,  the stable normal bundle
$\nu\colon M \to \BSTOP$ lifts to $\s \colon M \to \BTOPSpin$
and therefore combining with~$c$ we obtain a smoothing $M \to \BTOPSpin \times B\Z_2$. This is 2-connected because $\pi_i(\BTOPSpin) = 0$ for $i=1,2$.
\end{proof}


\begin{proposition}
\label{prop:StablyHomeo}
If~$F_0$ and $F_1$ are~$\Z_2$-surfaces with the same nonorientable genus and normal Euler number,  then there exist normal $1$-smoothings $(X_{F_i}, \ol{\nu}_i)$ for $i=0,1$, that are bordant over their normal $1$-type,  relative to a homeomorphism $f \colon \partial X_{F_0} \to \partial X_{F_1}$ that restricts to a homeomorphism $S(\nu F_0)\to S(\nu F_1)$ that is $\nu$-extendable rel.~boundary.
\end{proposition}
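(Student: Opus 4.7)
The plan is to produce the required bordism by showing that the closed spin $4$-manifold $M := X_{F_0} \cup_f -X_{F_1}$ built via Proposition~\ref{prop:UnionSpin} is null-bordant over its normal $1$-type, and that this null-bordism can be interpreted as the sought-for $(B,\xi)$-cobordism relative to $f$.

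To set up the normal $1$-smoothings, I would first apply Proposition~\ref{prop:UnionSpin} to extract a homeomorphism $f \colon \partial X_{F_0} \to \partial X_{F_1}$ with the five listed properties. Property~(1) lets one choose spin structures on the $X_{F_i}$ whose restrictions to $\partial X_{F_i}$ correspond under $f$, and properties~(2) and~(3) provide the restriction of $f$ to $X_K$ and to the sphere bundles that is required in the statement of Proposition~\ref{prop:StablyHomeo}. Together with the (homotopically unique) classifying maps $X_{F_i} \to B\Z_2$, this produces normal $1$-smoothings $\ol{\nu}_i \colon X_{F_i} \to B$ of each exterior in the sense of Lemma~\ref{lem:Normal1Type}, and they glue via $f$ to give a normal $1$-smoothing $\ol{\nu}$ on $M$. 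Since $M$ is spin with $\pi_1(M) \cong \Z_2$ by Corollary~\ref{cor:UnionPi1Z2}, the normal $1$-type of $M$ is $(B,\xi)$, and producing a $(B,\xi)$-bordism between $(X_{F_0},\ol{\nu}_0)$ and $(X_{F_1},\ol{\nu}_1)$ rel.\ $f$ reduces to showing that $(M,\ol{\nu})$ bounds over $(B,\xi)$. Running the Atiyah--Hirzebruch spectral sequence for $\Omega_\ast^{\TOPSpin}(B\Z_2)$, with the coefficient bordism groups $\Omega_0^{\TOPSpin} = \Z$, $\Omega_1^{\TOPSpin} = \Omega_2^{\TOPSpin} = \Z_2$, $\Omega_3^{\TOPSpin} = 0$, $\Omega_4^{\TOPSpin} = \Z$, one obtains $\Omega_4(B,\xi) \cong \Z$ detected by the signature homomorphism, as claimed in the outline. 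Therefore the task reduces to showing $\sigma(M) = 0$.

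To compute $\sigma(M)$ I would pass to the $\pi_1$-double cover $\tilde{M} \to M$, which satisfies $\sigma(\tilde{M}) = 2 \sigma(M)$ by topological multiplicativity of the signature under finite covers. The cover decomposes as $\tilde{M} = \tilde{X}_{F_0} \cup_{\tilde{f}} -\tilde{X}_{F_1}$, where $\tilde{f}$ is the lift of $f$ to the boundaries. Using that $f$ restricts to a $\nu$-extendable homeomorphism on the sphere bundles, one extends $\tilde{f}$ over the tubular neighbourhoods of the branch loci $\tilde{F}_i \subseteq \Sigma_2(F_i)$ to glue back the discarded pieces, yielding a closed $4$-manifold $N = \Sigma_2(F_0) \cup_g -\Sigma_2(F_1)$ glued along $\Sigma_2(K)$, with
\[
N \;=\; \tilde{M}\,\cup\,\bigl(\ol{\nu}\tilde{F}_0 \sqcup \ol{\nu}\tilde{F}_1\bigr),
\]
where the attachments occur along the respective sphere bundles. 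Two applications of Novikov additivity then give the result: splitting along $\Sigma_2(K)$, $\sigma(N) = \sigma(\Sigma_2(F_0)) - \sigma(\Sigma_2(F_1))$, which vanishes by Theorem~\ref{thm:MasseyBoundary} because $F_0$ and $F_1$ have the same boundary $K$ and the same Euler number, so both branched cover signatures equal $\sigma(K) - \tfrac{1}{2}e$; splitting along the sphere bundles, $\sigma(N) = \sigma(\tilde{M}) + \sigma(\ol{\nu}\tilde{F}_0) + \sigma(\ol{\nu}\tilde{F}_1)$, and the two disc bundle contributions vanish since $\tilde{F}_i \cong F_i$ is a nonorientable surface with boundary so that $H_2(\ol{\nu}\tilde{F}_i) = H_2(\tilde{F}_i) = 0$. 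Combining, $\sigma(\tilde{M}) = 0$ and hence $\sigma(M) = 0$.

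Thus $(M, \ol{\nu})$ is null-bordant over $(B,\xi)$; cutting the resulting null-bordism along the image of $\partial X_{F_0}$ gives a $(B,\xi)$-cobordism $W$ from $(X_{F_0}, \ol{\nu}_0)$ to $(X_{F_1}, \ol{\nu}_1)$ relative to $f$, completing the proof. The main delicate point, already encapsulated in Proposition~\ref{prop:UnionSpin}, is producing a single $f$ that simultaneously makes $M$ spin with $\pi_1 \cong \Z_2$ and restricts to a $\nu$-extendable map of sphere bundles; once that proposition is in hand, the rest of the argument is the signature calculation described above and the formal consequence of $\Omega_4(B,\xi) \cong \Z$.
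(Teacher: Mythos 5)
Your overall strategy is the same as the paper's: use Proposition~\ref{prop:UnionSpin} to form the spin union $M=X_{F_0}\cup_f -X_{F_1}$ with $\pi_1(M)\cong\Z_2$, compute $\Omega_4(B,\xi)\cong\Omega_4^{\TOPSpin}\cong\Z$ detected by the signature, and reduce to $\sigma(M)=0$. The first two steps match the paper exactly; the gap is in your signature computation. The claimed decomposition $N=\widetilde{M}\cup(\ol{\nu}\widetilde{F}_0\sqcup\ol{\nu}\widetilde{F}_1)$ is not correct: in $\widetilde{M}$ the exteriors $\widetilde{X}_{F_0}$ and $-\widetilde{X}_{F_1}$ are glued along their \emph{entire} boundaries, including the circle bundles $\mathring{Y}_i^\varphi$, whereas in $N$ those circle bundles are separated by the inserted disc bundles. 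Cutting $N$ along the closed separating $3$-manifold $\mathring{Y}_0^\varphi\cup\mathring{Y}_1^\varphi$ (glued along their common boundary torus) yields $A:=\widetilde{X}_{F_0}\cup_{X_K^2}-\widetilde{X}_{F_1}$, a manifold with nonempty boundary glued only along the double cover of the knot exterior, which is not $\widetilde{M}$, together with $B:=\ol{\nu}\widetilde{F}_0\cup_{S^1\times D^2}-\ol{\nu}\widetilde{F}_1$. Moreover, even if one tries to split along the individual sphere bundles, these are $3$-manifolds with torus boundary, so plain Novikov additivity does not apply; Wall's non-additivity correction enters and would have to be shown to vanish. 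Hence the identity $\sigma(N)=\sigma(\widetilde{M})+\sigma(\ol{\nu}\widetilde{F}_0)+\sigma(\ol{\nu}\widetilde{F}_1)$ is unjustified as stated. (Your first splitting, along the closed $3$-manifold $\Sigma_2(K)$, is fine, as is the appeal to multiplicativity of the signature under the double cover, though the latter is a heavier tool than needed.)

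The paper's route to $\sigma(M)=0$ is much more direct and avoids the double cover and branched covers entirely: the Mayer--Vietoris sequence for $D^4=X_{F_i}\cup-\ol{\nu}F_i$ shows that $H_2(\partial X_{F_i})\to H_2(X_{F_i})$ is surjective, so the ordinary intersection form of $X_{F_i}$ vanishes identically and $\sigma(X_{F_i})=0$; a single application of Novikov additivity along the closed $3$-manifold $\partial X_{F_0}$ then gives $\sigma(M)=\sigma(X_{F_0})-\sigma(X_{F_1})=0$. If you want to keep your branched-cover computation, you would need to replace the second splitting by a correct decomposition (for instance comparing $\sigma(N)$ with $\sigma(A)$ and separately relating $\sigma(A)$ to $\sigma(\widetilde{M})$) and control the Wall correction terms; the direct argument renders all of this unnecessary.
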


\begin{proof}
Write~$(B,\xi)$ for the normal~$1$-type of the~$X_{F_i}$ that was described in Lemma~\ref{lem:Normal1Type}.
By Proposition~\ref{prop:UnionSpin},  there is a homeomorphism~$f \colon \partial X_{F_0} \to \partial X_{F_1}$ such that  $M:=X_{F_0} \cup_{f} -X_{F_1}$ is spin and has fundamental group~$\Z_2$.
Fix a spin structure on $M$. Via Lemma~\ref{lem:Normal1Type}, we obtain a normal $1$-smoothing $\ol{\nu}\colon M\to B$. Denote by $\ol{\nu_0}\colon X_{F_0}\to B$ and $-\ol{\nu_1}\colon -X_{F_1}\to B$ the restriction of $\ol{\nu}$ to the respective exteriors.
By Lemma~\ref{lem:Normal1Type}, $(X_{F_i}, \ol{\nu}_i)$ is a normal $1$-smoothing for $i=0,1$.
The proposition now reduces to proving that~$[(M,\overline{\nu})]$ vanishes in the bordism group~$\Omega_4(B,\xi)\cong\Omega_4^{{\TOPSpin}}(\Z_2)$.
A spectral sequence calculation shows that~$\Omega_4^{\TOPSpin}(\Z_2)\cong\Omega_4^{\TOPSpin}\cong\Z$, detected by the signature divided by 8. See e.g.~\cite[Proof~of~Proposition~5.1]{OrsonPowellSpines} for the details.

For $i=0,1$, the Mayer-Vietoris sequence corresponding to the decomposition~$D^4 = X_{F_i} \cup -\ol{\nu} F_i$ implies that the inclusion-induced map $H_2(\partial X_{F_i};\Z) \to H_2(X_{F_i};\Z)$ is surjective.
As a result,  the ordinary intersection pairing of $X_{F_i}$ vanishes identically, and so in particular $\sigma(X_{F_i}) =0$.
Then Novikov additivity of the signature implies that the signature of~$M$ also vanishes.
It follows that~$(M,\overline{\nu})$ is $(B,\xi)$-null-bordant, as desired.
\end{proof}

\subsection{The surgery obstruction}
\label{sub:SurgeryObstruction}

We recall the definition of the modified surgery obstruction in odd-dimensions.
The main reference is~\cite{KreckSurgeryAndDuality}.
For simplicity,
we restrict our discussion to $4$-manifolds whose fundamental group has vanishing Whitehead group.
Since $\operatorname{Wh}(\Z_2)=0$~\cite{WhiteheadSimple}, this will suffice for our purposes.

\medbreak
We say that $(W,\overline{\nu},M_0,\overline{\nu}_0,M_1,\overline{\nu}_1)$ is a \emph{modified surgery problem} if $M_0$ and $M_1$ are $4$-manifolds with normal $1$-type $(B,\xi)$,  the $\overline{\nu}_i \colon M_i \to B$ are normal $1$-smoothings, and $(W,\overline{\nu})$ is a $(B,\xi)$-cobordism between $(M_0,\overline{\nu}_0)$ and  $(M_1,\overline{\nu}_1)$.


Let $(W,\overline{\nu},M_0,\overline{\nu}_0,M_1,\overline{\nu}_1)$ be a modified surgery problem.
By surgery below the middle dimension~\cite[Proposition~4]{KreckSurgeryAndDuality}, we may assume that~$\overline{\nu} \colon W \to B$
is~$2$-connected i.e.~that $(W,\overline{\nu})$ is a normal $1$-smoothing.
It can then be proved that~$K\pi_2(W):=\ker(\overline{\nu}_*)$ is finitely generated; see e.g.~\cite[Lemma 3.4]{CCPS}.
Choose a finite set~$\omega$ of generators for~$K\pi_2(W)$.  Choose disjoint embeddings~$\varphi_i \colon S^2 \times D^3 \hookrightarrow W$ with basing paths representing these generators, that are compatible with the~$(B, \xi)$-structure.
Set $$ U:=\bigsqcup_i \varphi_i(S^2 \times D^{3}).$$
%
%

Since
~$\partial U\cong\bigsqcup_i S^2 \times S^2$,
its quadratic intersection form, denoted~$(H_2(\partial U;\Z[\pi_1(B)]),\psi)$, is hyperbolic.
For concision, we refer to the pair~$(\omega,\varphi):=(\omega,\sqcup_i \varphi_i)$ as a \emph{set of embedded generators for~$K \pi_2(W)$}, we set~$\Lambda:=\Z[\pi_1(B)]$, and consider the~$\Lambda$-modules
\begin{equation}
\label{eq:Kernelquasi-formation}
F:=H_{3}(U,\partial U;\Lambda)\quad \text{ and } \quad V:=H_{3}(W \setminus \mathring{U},M_0 \sqcup \partial U;\Lambda).
\end{equation}
Via the boundary maps in the long exact sequences of the appropriate pairs,~$F$ and~$V$ can be identified with submodules of~$P:=H_2(\partial U;\Lambda)$. Under these identifications,~$F \subseteq P$ is a lagrangian and~$V \subseteq P$ is a stably free half-rank direct summand~\cite[~p.~734]{KreckSurgeryAndDuality}.
The \emph{kernel quasi-formation} associated with these data is defined as
$$\Sigma(W,\overline{\nu},\omega,\varphi):= ((P,\psi);F,V).$$
The class of this quasi-formation in~$\ell_{5}(\Lambda)$ only depends on the~$(B, \xi)$-bordism class rel.\ boundary of $(W,\overline{\nu})$~\cite[Theorem 4]{KreckSurgeryAndDuality}.
This leads to the following definition~\cite[p.~734]{KreckSurgeryAndDuality}; see also~\cite[p.~494]{CrowleySixt}.

\begin{definition} \label{def:SurgeryObstruction}
Let $(W,\overline{\nu},M_0,\overline{\nu}_0,M_1,\overline{\nu}_1)$ be a modified surgery problem.
The \emph{modified surgery obstruction} of~$(W,\overline{\nu})$ is
$$ \Theta(W,\overline{\nu}):=[(\Sigma(W',\overline{\nu}',\omega',\varphi'))] \in \ell_{5}(\Lambda),$$
where~$(W',\overline{\nu}')$ is a normal~$1$-smoothing over~$(B, \xi)$, which is~$(B, \xi)$-bordant
rel.\ boundary to~$(W, \overline{\nu})$
and~$(\omega',\varphi')$ is a set of embedded generators for~$K \pi_2(W')$.
\end{definition}

\begin{remark}
The class~$\Theta(W,\overline{\nu}) \in \ell_{5}(\Lambda)$ only depends on the~$(B, \xi)$-bordism class rel.\ boundary of the~$(B, \xi)$-null-bordism~$(W,\overline{\nu})$: it is independent of the choice of the~$1$-smoothing~$(W',\overline{\nu}')$ and of the subsequent choice of a set of embedded generators for~$K \pi_2(W')$~\cite[Theorem 4]{KreckSurgeryAndDuality}.
\end{remark}

\begin{theorem}[Kreck~{\cite[Theorem 4]{KreckSurgeryAndDuality}}]
\label{thm:Kreck}
Let $(W,\overline{\nu},M_0,\overline{\nu}_0,M_1,\overline{\nu}_1)$ be a modified surgery problem.
The modified surgery obstruction $\Theta(W,\overline{\nu}) \in \ell_5(\Lambda)$ is elementary if and only if $(W,\overline{\nu})$ is $(B,\xi)$-bordant rel.~boundary to an $s$-cobordism.
\end{theorem}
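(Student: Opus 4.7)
The plan is to prove both implications separately, with the easier direction essentially by bookkeeping and the harder direction reducing algebraic complementarity to a geometric handle cancellation.

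For the easy direction, assume $(W,\overline{\nu})$ is $(B,\xi)$-bordant rel.~boundary to an $s$-cobordism $(W',\overline{\nu}')$. Since the modified surgery obstruction is by construction well-defined on $(B,\xi)$-bordism classes rel.~boundary (see the remark following Definition~\ref{def:SurgeryObstruction}), $\Theta(W,\overline{\nu})=\Theta(W',\overline{\nu}')$. Now for an $s$-cobordism $W'$, each inclusion $M_i \hookrightarrow W'$ is a homotopy equivalence. In particular, once $W' \to B$ has been made $2$-connected via surgery below the middle dimension (which preserves the $s$-cobordism property in this setting), the kernel $K\pi_2(W')$ vanishes. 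One may therefore take the empty set of embedded generators $\omega'=\emptyset$, giving $U=\emptyset$, $P=F=V=0$, and hence the trivial (visibly elementary) quasi-formation class.

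For the harder direction, assume $\Theta(W,\overline{\nu})$ is elementary. After surgery below the middle dimension, we may take $(W,\overline{\nu})$ to be a normal $1$-smoothing and choose a set of embedded generators $(\omega,\varphi)$ producing the kernel quasi-formation $((P,\psi);F,V)$ with $P=H_2(\partial U;\Lambda)$ as in~\eqref{eq:Kernelquasi-formation}. By the elementary hypothesis and (if necessary) Proposition~\ref{prop:CleverCorollary}, there is a representative of the same class in $\ell_5(\Lambda)$ in which $F$ and $V$ are complementary direct summands of $P$. The passage from the original quasi-formation to this representative is realised geometrically: stable isomorphism by trivial formations corresponds to adding trivial $2/3$ handle cancellation pairs to $U$ compatibly with the $(B,\xi)$-structure, and the relation in~\eqref{eq:RelationMonoid} corresponds to changing the choice of embedded generators within a fixed $W$. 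These moves preserve the $(B,\xi)$-bordism class of $W$ rel.~boundary.

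The geometric core of the argument is then the following: complementarity of $V$ and $F$ in $P$ produces classes $v_1,\ldots,v_k \in V \subseteq H_3(W\setminus\mathring{U}, M_0 \sqcup \partial U;\Lambda)$ whose images in $H_2(\partial U;\Lambda)$ are dual to the basis of $F$ given by the cores of the $3$-discs $\varphi_i(\{*\}\times D^3)$. Using topological transversality in dimension $5$ and (for the topological embedding step) the Freedman--Quinn disc embedding techniques available because $\pi_1(B)=\Z_2$ is good, realise the $v_i$ by pairwise disjoint, embedded $3$-discs $D_i \subseteq W \setminus \mathring{U}$ whose boundary circles on $\partial U$ are geometrically dual to the belt $2$-spheres of the $2$-handles comprising $U$. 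The $D_i$ are then cancelling $3$-handles for the $2$-handles of $U$.

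Finally, attaching $3$-handles along thickenings of the $D_i$ yields a $(B,\xi)$-bordism rel.~boundary from $(W,\overline{\nu})$ to a new cobordism $W''$ in which all $2$- and $3$-handles cancel in pairs. Thus $K\pi_2(W'')=0$; combined with the normal $1$-smoothing condition giving $K\pi_1(W'')=0$, and using $\operatorname{Wh}(\pi_1(B))=\operatorname{Wh}(\Z_2)=0$, the cobordism $W''$ is an $s$-cobordism, completing the proof. The main obstacle is the geometric realisation step: one must simultaneously embed the $3$-discs disjointly with prescribed boundary behaviour and in a way compatible with the $(B,\xi)$-structure, which is where the topological $5$-dimensional embedding machinery and the good-group hypothesis enter.
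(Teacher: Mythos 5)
First, a point of comparison: the paper does not prove this statement at all --- it is quoted verbatim from Kreck \cite[Theorem 4]{KreckSurgeryAndDuality} --- so there is no in-paper argument to measure your sketch against; I can only assess it on its merits. The overall strategy (bordism invariance of $\Theta$ for one direction, geometric realisation of the algebraic complementarity by surgery for the other) is the right one, but both directions contain a genuine error stemming from the same misconception: you treat ``$W$ is an $s$-cobordism'' as equivalent to ``$K\pi_2(W)=0$''. For an $s$-cobordism $W'$ the inclusion $M_0\hookrightarrow W'$ is a homotopy equivalence, so $\pi_2(W')\cong\pi_2(M_0)$ and hence $K\pi_2(W')\cong K\pi_2(M_0)$, which is typically nonzero (in this paper's application $\pi_2(B)=0$ while $\pi_2(X_{F_i})\cong H_2(\widetilde X_{F_i})\neq 0$). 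Making $\overline{\nu}'$ $2$-connected only makes $\pi_2(W')\to\pi_2(B)$ surjective; it does not kill the kernel, and interior surgery on essential $2$-spheres does not preserve the $s$-cobordism property. So in the easy direction you cannot take $\omega'=\emptyset$: one must choose generators coming from $M_0$, push them into a collar, and identify the resulting kernel quasi-formation with (something stably isomorphic to) a boundary quasi-formation $\delta(V,\rho)$, whose class is elementary via the relation \eqref{eq:RelationMonoid} (cf.\ Construction~\ref{cons:Action}).

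In the hard direction the same misconception makes your conclusion fail, and in fact run backwards: an $s$-cobordism must have $\pi_2(M_i)\to\pi_2(W'')$ an isomorphism, i.e.\ $K\pi_2(W'')\cong K\pi_2(M_i)\neq 0$ in general; what must vanish are the relative groups $H_*(W'',M_i;\Lambda)$. The correct mechanism is to perform surgery on the framed embedded $2$-spheres $\varphi_i(S^2\times 0)$, the trace of the surgeries supplying the $(B,\xi)$-bordism rel.\ boundary; a Mayer--Vietoris/handle-chain computation then shows that $H_2$ and $H_3$ of the surgered cobordism relative to $M_0$ (and, by duality, relative to $M_1$) vanish precisely when $F$ and $V$ are complementary in $P$, so elementariness yields an $h$-cobordism, and an $s$-cobordism after the basis/torsion bookkeeping (vacuous here since $\operatorname{Wh}(\Z_2)=0$). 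Your alternative --- embedding $3$-discs $D_i$ dual to belt spheres and ``attaching $3$-handles along thickenings of the $D_i$'' --- is not a well-defined operation on a cobordism with fixed ends, and the appeal to Freedman--Quinn disc embedding and goodness of $\Z_2$ is misplaced: Kreck's theorem requires only general position and normal-bundle data in dimension $5$ (it holds smoothly and in all dimensions $\geq 5$); the good-group hypothesis enters only afterwards, when the $s$-cobordism theorem is invoked, as the paper does separately in the proof of Theorem~\ref{thm:Recap}.
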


\subsection{Wall forms}
\label{sub:WallForm}

Let $(W^5,\overline{\nu},M_0,\overline{\nu}_0,M_1,\overline{\nu}_1)$ be a modified surgery problem as in the previous section.
We wish to narrow down where in the monoid~$\ell_{5}(\Lambda)$ the modified surgery obstruction~$\Theta(W,\overline{\nu})$ lives. For this we will show how to choose nondegenerate quadratic forms~$v$ and~$v'$, on free $\Lambda$-modules, so that the obstruction lies in the subset~$\ell_{5}(v,v')\subseteq \ell_{5}(\Lambda)$ described in Definition~\ref{def:l5vv}.
\medbreak

Write $(M^4_0,\overline{\nu}_0)$ for a normal $1$-smoothing into the normal $1$-type~$(B,\xi)$ of~$M_0$ and recall that
\[K\pi_2(M_0):=\ker((\overline{\nu}_0)_* \colon \pi_2(M_0) \to \pi_2(B)).\]
Intersections and self-intersections in $M_0$ define a quadratic form~$(K\pi_2(M_0),\psi_{M_0})$ called the \emph{Wall form} of~$(M_0, \overline{\nu}_0)$~\cite[Section 5]{KreckSurgeryAndDuality}.
Write~$\lambda_{M_0}$ for the symmetrisation of $\psi_{M_0}$ (this agrees with the restriction of the equivariant intersection form on $\pi_2(M_0)=H_2(M_0;\Lambda)$) and recall that
the \emph{radical} $\operatorname{rad}(\lambda_{M_0}) \subseteq K\pi_2(M_0)$ of this pairing consists by definition of those~$x \in K\pi_2(M_0)$ such that~$\lambda_{M_0}(x,-) \equiv 0$.
We shorten~$K\pi_2(M_0)/\operatorname{rad}(\lambda_{M_0})$ to~$K\pi_2(M_0)/\operatorname{rad}$ from now on.
The Hermitian form $\lambda_{M_0}$ descends to a nondegenerate Hermitian form $\lambda_{M_0}^{\nd}$ on this quotient and therefore determines a quadratic form $\psi_{M_0}^{\nd}$ on this quotient.
The form $(K\pi_2(M_0)/\operatorname{rad},\psi_{M_0}^{\nd})$ is called the \emph{nondegenerate Wall form}.

We would like to use the nondegenerate Wall forms for $M_0$ and $M_1$ as the forms $v$ and $v'$ mentioned at the start of this subsection.
However, the modules~$K\pi_2(M_i)$ need not be free, or even stably free.
This is solved using the following idea from~\cite{KreckSurgeryAndDuality,CrowleySixt}.

\begin{definition}
\label{def:FreeWall}
Let~$(M_0^{4},\overline{\nu}_0)$ be a normal~$1$-smoothing into the normal $1$-type $(B,\xi)$ of $M_0$.
Given a free~$\Lambda$-module~$S$ and a surjection~$\varpi\colon S \twoheadrightarrow K \pi_2 (M_0)/ \operatorname{rad}$, the pull-back of the nondegenerate Wall form~$(K\pi_2(M_0)/\operatorname{rad},\psi_{M_0}^{\nd})$ by~$\varpi$ is called a {\em free Wall form} of~$(M_0, \overline{\nu}_0)$ and is denoted by
\[  (S, \vartheta) := (S, \varpi^*\psi_{M_0}^{\nd}). \]
When  the free module and the quadratic form are not directly relevant to an argument, we will sometimes simply write $v(\overline{\nu}_0)$ for a free Wall form of $(M_0,\overline{\nu}_0)$. 
\end{definition}

A consequence of~\cite[Proposition 8]{KreckSurgeryAndDuality} is that all free Wall forms of~$(M_0,\overline{\nu}_0)$
are~$0$-stably equivalent (Definition~\ref{def:0Stabil}).
The following proposition is also a consequence of~\cite[Proposition~8 ii)]{KreckSurgeryAndDuality}.
Recall the map of monoids $b \colon \ell_5(\Lambda) \to \mathcal{F}^{\zs}_{4}(\Lambda) \times \mathcal{F}^{\zs}_{4}(\Lambda)$ from Proposition~\ref{prop:InducedFormsMonoid}, which takes a quasi-formation to its pair of induced forms, considered up to 0-stable equivalence.

\begin{proposition}
\label{prop:ConsequenceProp8}
Let $(W,\overline{\nu},M_0,\overline{\nu}_0,M_1,\overline{\nu}_1)$ be a modified surgery problem with modified surgery obstruction~$\Theta(W,\overline{\nu})\in \ell_5(\Lambda)$.
For any choice of free Wall forms~$v(\overline{\nu}_0)$ and~$v(\overline{\nu}_1)$ for~$(M_0,\overline{\nu}_0)$ and~$(M_1,\overline{\nu}_1)$ respectively,
we have a $0$-stable equivalence:
\[b(\Theta(W,\overline{\nu})) = (v(\overline{\nu}_0)_0 , v(\overline{\nu}_1)_0) \in \mathcal{F}^{\zs}_{4}(\Lambda) \times \mathcal{F}^{\zs}_{4}(\Lambda)\]
and hence
\[\Theta(W,\overline{\nu}) \in \ell_{5}(v(\overline{\nu}_0),v(\overline{\nu}_1)).\]
\end{proposition}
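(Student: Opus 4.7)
The statement is, as indicated in the paragraph preceding it, essentially a translation of \cite[Proposition~8~ii)]{KreckSurgeryAndDuality} into the Crowley--Sixt framework of induced forms from \cite{CrowleySixt}. My plan is to unpack the construction of the kernel quasi-formation $\Sigma(W,\overline{\nu},\omega,\varphi) = ((P,\psi);F,V)$ and identify each of its induced forms with the corresponding (free) Wall form, up to $0$-stable equivalence.

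First I would analyse $V = H_3(W\setminus \mathring{U},M_0\sqcup\partial U;\Lambda)$. By assumption $W$ has been arranged to be $2$-connected over $B$, so $K\pi_2(W)=0$ after the surgeries prescribed by $(\omega,\varphi)$, and the long exact sequence of the pair $(W\setminus\mathring{U},M_0\sqcup\partial U)$, combined with excision along $\partial U$, exhibits a surjection
\[
V \twoheadrightarrow K\pi_2(M_0).
\]
Under the inclusion $V\hookrightarrow P=H_2(\partial U;\Lambda)$, the restricted form $\psi|_{V\times V}$ computes the equivariant intersection number in $M_0$ of the classes corresponding to the image in $K\pi_2(M_0)$. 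Consequently, the radical of $\psi|_{V\times V}$ contains the kernel of this surjection, and the induced nondegenerate form on $V/\operatorname{rad}$ is isometric to the nondegenerate Wall form $(K\pi_2(M_0)/\operatorname{rad},\psi_{M_0}^{\nd})$. This is the content of Kreck's Proposition~8~ii) on the $M_0$ side.

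Second, by the Poincar\'e--Lefschetz duality pairing on the cobordism $W$ with ends $M_0,-M_1$, the annihilator $V^\perp$ is identified symmetrically with the analogous kernel module on the $-M_1$ side, and the sign convention in Definition~\ref{def:InducedForm} arranges that $-\psi|_{V^\perp\times V^\perp}$ recovers, modulo its radical, the nondegenerate Wall form $(K\pi_2(M_1)/\operatorname{rad},\psi_{M_1}^{\nd})$. In particular, $(V,\psi|_V)$ and $(V^\perp,-\psi|_{V^\perp})$ are stably free $\Lambda$-modules equipped with quadratic forms whose nondegenerate quotients are precisely the nondegenerate Wall forms of the two ends.

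Finally, I would deduce the claimed identity in $\mathcal{F}^{\zs}_4(\Lambda)\times\mathcal{F}^{\zs}_4(\Lambda)$. Any free Wall form $v(\overline{\nu}_i)=(S_i,\varpi_i^*\psi_{M_i}^{\nd})$ differs from $(K\pi_2(M_i)/\operatorname{rad},\psi_{M_i}^{\nd})$ only by a zero-form summand (the kernel of the surjection $\varpi_i$), so it is $0$-stably equivalent to the nondegenerate Wall form. Likewise, $(V,\psi|_V)$ differs from $(V/\operatorname{rad},\psi^{\nd}|_V)$ by a zero-form summand, and similarly for $V^\perp$. Combining these three $0$-stable equivalences gives
\[
b(\Theta(W,\overline{\nu})) = \bigl([V,\psi|_V]_0,\,[V^\perp,-\psi|_{V^\perp}]_0\bigr) = \bigl(v(\overline{\nu}_0)_0,\,v(\overline{\nu}_1)_0\bigr),
\]
from which $\Theta(W,\overline{\nu})\in \ell_5(v(\overline{\nu}_0),v(\overline{\nu}_1))$ is immediate from the definition of $\ell_5(v,v')$. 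The main obstacle is the verification of the middle step, namely that the radical of $\psi|_V$ is precisely the kernel of $V\twoheadrightarrow K\pi_2(M_0)/\operatorname{rad}$; this requires Kreck's geometric identification of the classes contributed by the surgery handles with the orthogonal complement in $V$ of the image of $K\pi_2(M_0)$, and is where the hypothesis $\operatorname{Wh}(\pi_1(B))=0$ enters in order to avoid torsion issues when converting between stably free and free modules.
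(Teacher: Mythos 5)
Your proposal is correct and takes essentially the same route as the paper: the paper's entire proof consists of citing Kreck's Proposition~8~ii) to conclude that the induced forms of $\Theta(W,\overline{\nu})$ are $0$-stably equivalent to the free Wall forms, and then invoking the definition of $b$ from Proposition~\ref{prop:InducedFormsMonoid}, so your geometric unpacking of $V$ and $V^\perp$ is simply an expository sketch of the content of that cited result. One small slip in your closing remark: the passage between stably free and free modules is controlled by $\widetilde{K}_0$ rather than by the Whitehead group, though this does not affect the argument.
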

\begin{proof}
Since the induced form of a formation is the zero form (recall Example~\ref{ex:InducedForm}), it follows from~\cite[Proposition~8 ii)]{KreckSurgeryAndDuality} that the induced forms of $\Theta(W,\overline{\nu})$ are zero stably equivalent to the free Wall forms~$v(\overline{\nu}_0)$ and~$v(\overline{\nu}_1)$.
The definition of the map $b$ from Proposition~\ref{prop:InducedFormsMonoid} associates to a quasi-formation its induced forms and so the equality~$b(\Theta(W,\overline{\nu})) = (v(\overline{\nu}_0)_0 , v(\overline{\nu}_1)_0)$ immediately follows.
\end{proof}

\section{Forms and quasi-formations over~$\texorpdfstring{\Z[\Z_2]}{Z[Z/2]}$}
\label{sec:PlusMinusForms}

As before we write~$\Z_2=\langle T \mid T^2=1 \rangle$.
In this section we specialise to the ring $\Lambda:=\Z[\Z_2]=\Z[T]/\langle T^2-1\rangle$ and develop some results concerning quasi-formations over this ring.
The eventual goal is to describe a tractable criterion guaranteeing that a class~$\Theta \in \ell_5(\Z[\Z_2])$ is elementary.

In Subsection~\ref{sub:Eigenspaces} we use $\pm 1$ eigenspaces to associate two forms over $\Z$ to a form over $\Z[\Z_2]$, called the \emph{plus and minus forms.}
In Subsection~\ref{sub:PlusMinusquasi-formation},  we use plus and minus forms to associate two quasi-formations over~$\Z$ to a quasi-formation over $\Z[\Z_2]$.
In Subsection~\ref{sub:KeyLemma}, we use this construction to prove the main technical ingredient.
In Subsection~\ref{sub:ElementaryCriterion},  we deduce a criterion ensuring that a class~$\Theta \in \ell_5(\Z[\Z_2])$ is elementary; see Proposition~\ref{prop:ElementaryPlusMinus}.

In this section we assume that $\Z[\Z_2]$-modules are finitely generated and free; cf. Lemma~\ref{lem:StablyFreeImpliesFree}.

\subsection{Forms over~$\texorpdfstring{\Z[\Z_2]}{Z[Z/2]}$ and eigenspaces}
\label{sub:Eigenspaces}

A~$\Z[\Z_2]$-module can also be thought of as an abelian group.
Given a~$\Z[\Z_2]$-module~$P$,  recall that we write~$\mathcal{E}_{\pm}(P) \subseteq P$ for the abelian subgroup given by the~$(\pm 1)$-eigenspaces of~$T$.

In this subsection we recall how a hermitian (resp.~quadratic) form over~$\Z[\Z_2]$ induces a hermitian (resp. quadratic) form over~$\Z$ on $\mathcal{E}_{\pm}(P)$.

\begin{remark}
\label{rem:EigenspaceEquality}
Recall that $P$ is assumed to be free. In this case,  we argue that~$\mathcal{E}_{\pm}(P)$ can equivalently be described as~$(1\pm T)P$.
Certainly, we always have~$(1\pm T)P \subseteq \mathcal{E}_{\pm}(P)$.
For the reverse inclusion,  since~$P$ is free we can work coordinate by coordinate, thereby reducing to the case $P=\Z[\Z_2]$.
Now if~$a +bT \in \Z[\Z_2]$ satisfies~$T(a+bT)=\pm (a+bT)$, then $T(a+bT) = b+ Ta$, so~$a=\pm b$.
\end{remark}

\begin{remark}
In the notation of Subsection~\ref{sub:QuadraticForms}, which concerned $\varepsilon$-quadratic forms over  a ring~$R$, we have $R=\Z[\Z_2]$ and $\varepsilon = 1$. Note that $\ol{a + bT} = a+bT$ in $\Z[\Z_2]$.
It follows that~$Q_+(\Z[\Z_2]) = \Z[\Z_2]$, and so quadratic refinements of hermitian forms take values in $\Z[\Z_2]$.
\end{remark}

Given a quadratic form $(P,\lambda,\mu)$ over $\Z[\Z_2]$,  since $(1 \pm T)\ol{(1\pm T)} = (1\pm T)^2 = 2(1\pm T)$,  we have that $\mu((1\pm T)x)= (1\pm T) \mu(x) \ol{(1 \pm T)} = 2(1\pm T)\mu(x)$ and $\lambda((1\pm T)x,(1\pm T)y)= (1\pm T)\lambda(x,y)\ol{(1 \pm T)} =  2(1\pm T)\lambda(x,y)$ for every $x,y\in P$.
It follows that the restrictions of~$\lambda$ and $\mu$ to~$\mathcal{E}_{\pm}(P)$ take values in $2\mathcal{E}_\pm(\Z[\Z_2]) \subseteq \mathcal{E}_{\pm}(\Z[\Z_2])$.  We identify $\Z \cong \mathcal{E}_{\pm}(\Z[\Z_2])$ using the isomorphism~$n \mapsto (1 \pm T)n$. We refer to the inverse as dividing by $(1 \pm T)$.
Under this identification~$2\mathcal{E}_\pm(\Z[\Z_2])$ is identified with $2\Z$.
Dividing by $2$ gives rise to an isomorphism $2\Z \to \Z$.
We will use the composition
\[\frac{1}{2(1 \pm T)} \colon 2\mathcal{E}_\pm(\Z[\Z_2]) \xrightarrow{/(1 \pm T)} 2\Z \xrightarrow{/2} \Z\]
in the following construction.

\begin{construction}
\label{cons:Eigenspace}
Given a quadratic form $(P,\lambda,\mu)$ over $\Z[\Z_2]$, we set
\begin{align*}
\mathcal{E}_\pm(\lambda)&:=\smfrac{1}{2(1 \pm T)}\circ \lambda|
\colon \mathcal{E}_\pm(P) \times \mathcal{E}_\pm(P) \to \Z,\\
\mathcal{E}_\pm(\mu)&:=\smfrac{1}{2(1 \pm T)}\circ \mu| 
\colon \mathcal{E}_\pm(P) \to \Z.
\end{align*}
%
We write $\mathcal{E}_\pm(P,\lambda,\mu):=(\mathcal{E}_\pm(P),\mathcal{E}_\pm(\lambda),\mathcal{E}_\pm(\mu))$.
Similarly if $\theta=[\rho] \in Q_\varepsilon(P)$
 is a quadratic form, then we write $\mathcal{E}_\pm(\theta) \in  Q_\varepsilon(\E_\pm(P))$
  for the quadratic forms determined by this construction, namely the equivalence class represented by the pairing
\[\smfrac{1}{2(1 \pm T)}\circ \rho|_{\mathcal{E}_\pm(P) \times \mathcal{E}_\pm(P)}
\colon \mathcal{E}_\pm(P) \times \mathcal{E}_\pm(P) \to \Z.
\]
\end{construction}

\begin{remark}
Note that $\mathcal{E}_{\pm}(P)$ is naturally a $\Z[\Z_2]$-module, where $T$ acts by multiplication by $\pm 1$. But $\mathcal{E}_\pm(P,\lambda,\mu)$ is not a quadratic form over $\Z[\Z_2]$ because $\mathcal{E}_{\pm}(P)$ is not stably free over $\Z[\Z_2]$.
\end{remark}

\begin{remark}
\label{rem:PmFormNB}
If $x,y \in \E_\pm(P)$, then $\lambda(x,y)=2(1\pm T)\E_\pm(\lambda)(x,y)$.
\end{remark}

The following proposition can be straightforwardly verified from the definitions and the fact that $(P,\lambda,\mu)$ is a quadratic form.

\begin{proposition}
 The data $\mathcal{E}_\pm(P,\lambda,\mu)$ determines a quadratic form over $\Z$.
If $(P,\lambda,\mu)$ is nonsingular, then so is $\mathcal{E}_\pm(P,\lambda,\mu)$.
\end{proposition}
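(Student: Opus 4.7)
The plan is to verify the two claims directly, relying on two key observations: the involution on $\Z[\Z_2]$ is trivial, since $\overline{T} = T^{-1} = T$, so $Q_+(\Z[\Z_2]) = \Z[\Z_2]$ and the canonical section is the identity; and the decomposition $P \cong \Z[\Z_2]^n$ combined with Remark~\ref{rem:EigenspaceEquality} yields $\mathcal{E}_\pm(P) = (1\pm T)P \cong \Z^n$ with $\Z$-basis $\{(1\pm T)e_i\}$. In particular $\mathcal{E}_\pm(P)$ is free abelian of rank $n$, hence stably free over $\Z$, so the underlying module is of the correct type.

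For the first claim, I would unwind Definition~\ref{defn:quadratic-refinement}. Since the involution on $\Z[\Z_2]$ is trivial, the three quadratic-refinement axioms for $(P,\lambda,\mu)$ reduce to equalities in $\Z[\Z_2]$:
\[\mu(x+y)-\mu(x)-\mu(y) = \lambda(x,y), \quad \lambda(x,x) = 2\mu(x), \quad \mu(rx) = r\overline{r}\mu(x).\]
Restricting $x,y$ to $\mathcal{E}_\pm(P)$ lands every term in $2\mathcal{E}_\pm(\Z[\Z_2])$, as recorded in the text. Applying the isomorphism $\tfrac{1}{2(1\pm T)} \colon 2\mathcal{E}_\pm(\Z[\Z_2]) \xrightarrow{\cong} \Z$ to both sides of each identity yields the three corresponding axioms for $\mathcal{E}_\pm(\mu)$ as a quadratic refinement of $\mathcal{E}_\pm(\lambda)$ over $\Z$ (where again $Q_+(\Z) = \Z$ and $s = \id$); for the third axiom I would just note that $\overline{r} = r$ for $r \in \Z \subseteq \Z[\Z_2]$. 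Hermitian symmetry of $\lambda$ together with triviality of the involution gives symmetry of $\mathcal{E}_\pm(\lambda)$.

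For the second claim, I would pick a $\Z[\Z_2]$-basis $\{e_1,\ldots,e_n\}$ of $P$ and let $A \in M_n(\Z[\Z_2])$ be the matrix of $\lambda$, which is invertible by the assumed nonsingularity. Writing $A_{ij} = a_{ij} + b_{ij}T$, the brief computation
\[\lambda((1\pm T)e_i, (1\pm T)e_j) = (1\pm T)^2 A_{ij} = 2(1\pm T)(a_{ij}\pm b_{ij})\]
shows that in the basis $\{(1\pm T)e_i\}$ of $\mathcal{E}_\pm(P)$, the form $\mathcal{E}_\pm(\lambda)$ is represented by the matrix $A^\pm \in M_n(\Z)$ with entries $a_{ij}\pm b_{ij}$. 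The assignment $A \mapsto A^\pm$ is induced by the ring homomorphism $\Z[\Z_2] \to \Z$ sending $T \mapsto \pm 1$; since ring homomorphisms preserve invertibility of matrices, $A^\pm$ is invertible over $\Z$, and this is precisely the nonsingularity of $\mathcal{E}_\pm(P,\lambda,\mu)$.

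There is no real obstacle here: the argument is a direct unpacking of definitions. The only care required is in tracking how division by $2(1\pm T)$ interacts with the $Q$-group formalism, and this is trivialised once one observes that conjugation on $\Z[\Z_2]$ is trivial.
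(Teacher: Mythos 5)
Your proof is correct and is precisely the direct verification the paper declares "straightforward" and omits: the three quadratic-refinement axioms pass through the isomorphism $\tfrac{1}{2(1\pm T)}$ because that map is additive and the involution on $\Z[\Z_2]$ is trivial, and your matrix argument for nonsingularity is exactly the observation (recorded later in the paper as Example~\ref{example:plugging-in-T-pm1}) that $\E_\pm$ amounts to evaluating matrices at $T=\pm 1$, a ring homomorphism and hence unit-preserving. No gaps.
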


\begin{definition}
\label{def:PlusMinus}
The \emph{plus form} and the \emph{minus form} of a quadratic form~$(P,\lambda,\mu)$ over $\Z[\Z_2]$ are respectively the quadratic forms $\E_+(P,\lambda,\mu)$ and $\E_-(P,\lambda,\mu)$ over $\Z$.
\end{definition}

\begin{example}\label{example:plugging-in-T-pm1}
Let $\psi \in Q_+(\Z[\Z_2]^n)$ be a quadratic form.
A matrix representing the quadratic form~$\E_\pm(\psi) \in Q_+(\Z^n)$ can be obtained by plugging in $T=\pm 1$ into a matrix representing~$\psi$.

In particular, given a quadratic form $\theta \in Q_+(\Z^n)$,  the plus form of $(1-T)\theta \in Q_+(\Z[\Z_2]^n)$ is zero, while its minus form is~$2\theta \in Q_+(\Z^n)$.
To see this, let $\{e_i\}_{i=1}^n$ be the standard basis for~$\Z[\Z_2]^n$ and pick a sesquilinear form $\rho$ representing $\psi=[\rho] \in Q_+(\Z[\Z_2]^n)$.
Then $\{(1 \pm T)e_i\}_{i=1}^n$ is a basis for $\mathcal{E}_{\pm}(\Z[\Z_2]^n)$.
For~$p \in \Z[\Z_2]$ we write $p(\pm1)$ for the evaluation of $p$ at $\pm 1$.
Similarly to above we have
\[\rho((1\pm T)e_i,(1\pm T)e_j)= (1\pm T)
\rho(e_i,e_j)\ol{(1 \pm T)} =  2(1\pm T)\rho(e_i,e_j) = 2(1\pm T) \big(\rho(e_i,e_j)(\pm 1)\big).\]
Therefore the $(i,j)$ entry of a matrix representing $\mathcal{E}_{\pm}(\rho)$ is
\[\mathcal{E}_{\pm}(\theta)((1\pm T)e_i,(1\pm T)e_j) = \smfrac{1}{2(1 \pm T)} \Big(2(1\pm T) \big(\rho(e_i,e_j)(\pm 1)\big)\Big) = \rho(e_i,e_j)(\pm 1),\]
which arises by evaluating as asserted.
\end{example}

\subsection{Plus and minus quasi-formations}
\label{sub:PlusMinusquasi-formation}

Given a quasi-formation over $\Z[\Z_2]$, we describe how plus and minus forms can be used to produce two quasi-formations over $\Z$.
\medbreak
First of all,  observe that if $i \colon V \hookrightarrow P$ is a split injection
then the induced map
$i_\pm \colon \mathcal{E}_\pm(V) \to \mathcal{E}_\pm(P)$
 is  also a split injection.


\begin{proposition}
\label{cons:PlusMinusquasi-formation}
If~$x:=((P,\psi);F,V)$ is a quasi-formation over~$\Z[\Z_2]$,
then the triple
$$\mathcal{E}_\pm(x)= (\mathcal{E}_\pm(P,\psi);\mathcal{E}_\pm(F),\mathcal{E}_\pm(V))$$
 is a quasi-formation over~$\Z$.
\end{proposition}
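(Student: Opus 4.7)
The plan is to verify the three conditions of Definition~\ref{def:quasi-formation} for the triple $\mathcal{E}_\pm(x)$, working over~$\Z$. Throughout, I will use the identification $\mathcal{E}_\pm(P) = (1 \pm T)P$ recorded in Remark~\ref{rem:EigenspaceEquality}.

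First I would verify that $\mathcal{E}_\pm(P,\psi)$ is a nonsingular quadratic form with $\mathcal{E}_\pm(P)$ stably free. Nonsingularity was already noted in Subsection~\ref{sub:Eigenspaces}. For the stably-free condition, which over $\Z$ is equivalent to freeness, it suffices to write down an explicit $\Z$-basis. Fix a $\Z[\Z_2]$-basis $\{e_i\}_{i=1}^n$ of $P \cong \Z[\Z_2]^n$; then a direct check shows that $\{(1 \pm T)e_i\}_{i=1}^n$ is a $\Z$-basis for $\mathcal{E}_\pm(P)$, so in particular $\mathcal{E}_\pm(P) \cong \Z^n$.

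Next I would verify that $\mathcal{E}_\pm(V)$ is a half-rank direct summand of $\mathcal{E}_\pm(P)$. The key observation is that the eigenspace construction is additive on internal direct sum decompositions of $\Z[\Z_2]$-modules: if $P = V \oplus U$ as $\Z[\Z_2]$-modules, then the eigenspaces decompose as $\mathcal{E}_\pm(P) = \mathcal{E}_\pm(V) \oplus \mathcal{E}_\pm(U)$, which is straightforward to check. Since $V$ is a direct summand of a free $\Z[\Z_2]$-module, it is projective and hence free by Lemma~\ref{lem:StablyFreeImpliesFree}; if $V \cong \Z[\Z_2]^{n/2}$, then the previous paragraph gives $\mathcal{E}_\pm(V) \cong \Z^{n/2}$, which is half the rank of $\mathcal{E}_\pm(P)$, as required.

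Finally I would verify that $\mathcal{E}_\pm(F)$ is a lagrangian of $(\mathcal{E}_\pm(P), \mathcal{E}_\pm(\psi))$. The preceding paragraph applied to the lagrangian decomposition shows $\mathcal{E}_\pm(F)$ is a half-rank direct summand of $\mathcal{E}_\pm(P)$. The sublagrangian property $\mathcal{E}_\pm(\psi)|_{\mathcal{E}_\pm(F) \times \mathcal{E}_\pm(F)} = 0$ follows because $\mathcal{E}_\pm(\psi)$ is, up to the scalar $1/(2(1 \pm T))$, the restriction of $\psi$ to $\mathcal{E}_\pm(P)$, and $\psi|_{F \times F} = 0$. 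It follows that $\mathcal{E}_\pm(F) \subseteq \mathcal{E}_\pm(F)^\perp$; a rank comparison, using that $\mathcal{E}_\pm(\psi)$ is nonsingular over $\Z$ and that $\mathcal{E}_\pm(F)$ is a direct summand of half rank, promotes this inclusion to the equality $\mathcal{E}_\pm(F) = \mathcal{E}_\pm(F)^\perp$. I do not expect any serious obstacle: the mild technical point is the additivity of the eigenspace functor on direct sums, which is essentially tautological.
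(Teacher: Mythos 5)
Your proof is correct and takes essentially the same route as the paper's (terse) verification: the paper observes that $\E_\pm(P,\psi)$ is hyperbolic via Example~\ref{example:plugging-in-T-pm1}, that summands pass to summands of eigenspaces, and leaves the lagrangian/half-rank checks to the reader, which are exactly the details you fill in. Your substitution of "nonsingular" for "hyperbolic" in the first step is harmless, since Definition~\ref{def:quasi-formation} only requires nonsingularity and your lagrangian verification for $\E_\pm(F)$ supplies the rest.
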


\begin{proof}
 First observe (e.g. using Example~\ref{example:plugging-in-T-pm1}) that $\E_\pm(P,\psi)$ is hyperbolic over~$\Z$.
Next we observe that since $F,V \subseteq P$  are summands,  we have inclusions of summands $\E_\pm(F),\E_\pm(V) \subseteq \E_\pm(P)$.
Finally, one verifies that
$\E_\pm(F) \subseteq \E_\pm(P,\psi)$
is a lagrangian and
$\E_\pm(V)$
is a half rank direct summand.
\end{proof}

Using~$(V,\theta)$ and $(V^\perp,-\theta^\perp)$ to denote the induced forms of a quasi-formation $((P,\psi);F,V)$ (Definition~\ref{def:InducedForm}),  note that the induced form of
~$\E_\pm(P,\psi)$ on~$\E_\pm(V)$ and $\E_\pm(V^\perp)$  coincide with~$\E_\pm(\theta)$ and~$\E_\pm(-\theta^\perp)$ respectively.

\begin{proposition}
The following assignments define monoid maps
$$f_\pm \colon \ell_5(\Z[\Z_2]) \to \ell_5(\Z),  [x]\mapsto [\E_\pm(x)].$$
  \end{proposition}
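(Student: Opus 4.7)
The plan is to verify that $\E_\pm$ is a well-defined, additive, unit-preserving assignment by checking the three defining features of $\ell_5$: it should descend through isomorphisms of quasi-formations, through trivial-formation stabilisations, and through the elementary relation~\eqref{eq:RelationMonoid}. The first step is to observe that $\E_\pm$ is functorial in quasi-formations: on a module $P$ over $\Z[\Z_2]$, the construction $\E_\pm(P) = (1 \pm T)P$ commutes with direct sums (so $\E_\pm(V), \E_\pm(F) \subseteq \E_\pm(P)$ remain direct summands), sends free modules of $\Z[\Z_2]$-rank $r$ to free $\Z$-modules of rank $r$ (so half-rank is preserved), and any $\Z[\Z_2]$-isometry restricts to a $\Z$-isometry of plus/minus forms by construction of $\E_\pm(\psi)$. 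Consequently, isomorphisms of quasi-formations over $\Z[\Z_2]$ are sent to isomorphisms of quasi-formations over $\Z$, and direct sums go to direct sums.

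The second step is to check that $\E_\pm((P,\psi); F, V)$ is actually a quasi-formation when $F$ is a lagrangian. The only subtle point is verifying $\E_\pm(F) = \E_\pm(F)^{\perp}$ inside $\E_\pm(P,\psi)$, so let me outline this. Vanishing on $\E_\pm(F)$ is clear, since $\lambda(x,y) = 0$ for $x, y \in F \supseteq \E_\pm(F)$ gives $\E_\pm(\lambda)(x,y) = \tfrac{1}{2(1\pm T)}\lambda(x,y) = 0$. For the reverse inclusion, the Fundamental Lemma allows us to assume $P = F \oplus F^*$ with its standard hyperbolic form, so that $\E_\pm(P) = \E_\pm(F) \oplus \E_\pm(F^*)$. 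Writing $\varphi' \in \E_\pm(F^*)$ as $(1\pm T)\psi$ and evaluating on $f = (1\pm T) g \in \E_\pm(F)$ gives $\varphi'(f) = (1\pm T)^2 \psi(g) = 2(1\pm T)\psi(g)$; for this to vanish for all $g$ forces $\psi \in (1\mp T)F^*$, and then $\varphi' = (1\pm T)(1\mp T)\psi = 0$ since $(1+T)(1-T) = 0$ in $\Z[\Z_2]$. So $y \in \E_\pm(F)$ and $\E_\pm(F)$ is indeed a lagrangian. This argument applied twice also shows that if $F$ and $G$ are both lagrangians in $(P,\psi)$, then $\E_\pm(F)$ and $\E_\pm(G)$ are both lagrangians in $\E_\pm(P,\psi)$.

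The third step is to check that $\E_\pm$ preserves trivial formations. A trivial formation has the shape $(H_+(K); K, K^*)$ with $K$ and $K^*$ complementary lagrangians. Applying $\E_\pm$ yields $(\E_\pm(H_+(K)); \E_\pm(K), \E_\pm(K^*))$, in which $\E_\pm(H_+(K))$ is hyperbolic (as noted in Proposition~\ref{cons:PlusMinusquasi-formation}), both $\E_\pm(K)$ and $\E_\pm(K^*)$ are lagrangians (by the second step), and they remain complementary direct summands because $K \oplus K^* = H_+(K)$ as $\Z[\Z_2]$-modules. By~\cite[Lemma~9.13~i)]{LueckMacko}, a formation whose two lagrangians are complementary is trivial, so $\E_\pm$ preserves triviality, hence stable isomorphism. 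Combining with additivity we see that $\E_\pm$ descends to a map of commutative monoids $f_\pm \colon \ell_5(\Z[\Z_2]) \to \ell_5(\Z)$, with $f_\pm(0) = 0$ since $\E_\pm$ carries the zero quasi-formation to the zero quasi-formation.

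The main obstacle is the self-annihilator computation in the second step: it is the only place where a nontrivial algebraic identity in $\Z[\Z_2]$ (namely $(1+T)(1-T) = 0$) is essential, and without it the naively defined plus/minus quasi-formations might fail to have lagrangians of the correct size. Once that identity is exploited, the rest is bookkeeping with direct sums and duals.
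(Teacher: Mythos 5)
Your proof is correct and follows essentially the same route as the paper, whose own argument is a two-sentence sketch (trivial formations map to trivial formations, the relation~\eqref{eq:RelationMonoid} is preserved, additivity is clear) leaving the details to the reader. You supply exactly the missing verification — that $\E_\pm$ of a lagrangian is again a lagrangian, via $(1+T)(1-T)=0$ — which is the content implicitly assumed both here and in Proposition~\ref{cons:PlusMinusquasi-formation}.
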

\begin{proof}
Since $f_\pm$ maps trivial formations to trivial formations,  we deduce that it preserves stable isomorphisms.
The reader can then verify that $f_\pm$ maps quasi-formations of the form~$((P,\psi);F,V)$ and~$((P,\psi);F,G)\oplus ((P,\psi);G,V)$ (which are equivalent in $\ell_5(\Z[\Z_2]$) to equivalent quasi-formations in $\ell_5(\Z)$.
Additivity follows as well.
\end{proof}

In what follows, we will often write $\E_\pm(\Theta)=[\E_\pm(x)]$ instead of $f_\pm(\Theta)$ and refer to $\E_+(x)$ and~$\E_-(x)$ as the \emph{plus and minus quasi-formations} of $x$.

\begin{remark}
\label{rem:PlusMinusl5vv}
Given quadratic forms~$v,v'$ over~$\Z[\Z_2]$, the monoid map~$f_\pm$ from Proposition~\ref{cons:PlusMinusquasi-formation} restricts to a map~$f_\pm \colon \ell_5(v,v') \to  \ell_5(\E_\pm(v),\E_\pm(v'))$.
This follows from the observation
that taking induced forms commutes with taking~$\pm$-forms/formations i.e.\ the following diagram commutes:
\[\begin{tikzcd}
    \ell_5(\Z[\Z_2]) \ar[r,"f_{\pm}"] \ar[d,"b"] & \ell_5(\Z) \ar[d,"b"] \\
\mathcal{F}^{\zs}_{4}(\Z[\Z_2]) \times \mathcal{F}^{\zs}_{4}(\Z[\Z_2])) \ar[r,"f_{\pm}"]  & \mathcal{F}^{\zs}_{4}(\Z) \times \mathcal{F}^{\zs}_{4}(\Z)).
  \end{tikzcd}\]
\end{remark}

\subsection{Finding a lagrangian complement}
\label{sub:KeyLemma}

The following lemma is a technical ingredient towards establishing our criterion for showing that a quasi-formation over $\Lambda = \Z[\Z_2]$ is elementary.
The lemma will justify that, for this purpose, it suffices to understand the plus and minus quasi-formations.
A sketch was given on~\cite[p.~70]{KreckOnTheHomeomorphism}.
 We follow the same overall strategy, but the proof given below is considerably more detailed, and includes arguments not found in Kreck's sketch.

\begin{lemma}
\label{lem:InfamousLemma}
Suppose $(P,\lambda)$ is a nonsingular, even, hermitian form on a free $\Lambda$-module $P$, with a free half-rank direct summand $V\subseteq P$.
Suppose that $\E_+(V)$ is a lagrangian for $(\E_+(P),\E_+(\lambda))$ and that $\E_-(V)\subseteq \E_-(P)$ admits a lagrangian complement.
Then $V\subseteq P$ admits a lagrangian complement.
\end{lemma}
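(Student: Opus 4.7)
The plan is to follow the strategy sketched in~\cite[p.~70]{KreckOnTheHomeomorphism}: construct a lagrangian complement $\overline{V}\subseteq P$ of $V$ by lifting the given lagrangian complement $W_-\subseteq \E_-(P)$ of $\E_-(V)$ to a $\Lambda$-free summand of $P$, where $\Lambda=\Z[\Z_2]$.

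The first step is to extract a consequence of the $\E_+$-hypothesis. For $v,v'\in V$, the identity
\[\lambda((1+T)v,(1+T)v')=(1+T)^2\lambda(v,v')=2(1+T)\lambda(v,v')\]
vanishes because $\E_+(V)=(1+T)V$ is a lagrangian in $\E_+(P)$, so $(1+T)\lambda(v,v')=0$ in the torsion-free module $\Lambda$, forcing $\lambda(v,v')\in \ker(1+T)=(1-T)\Lambda$. In other words, the augmentation $T\mapsto 1$ of $\lambda|_V$ is identically zero; this is the precise sense in which $V$ is close to being a sublagrangian.

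Next, I would pick a $\Z$-basis $\{w_1^-,\dots,w_n^-\}$ of $W_-$ and, for each $i$, a lift $\widehat{w}_i\in P$ with $(1-T)\widehat{w}_i=w_i^-$. The analogous computation using $W_-$ being a lagrangian of $\E_-(\lambda)$ gives $\lambda(\widehat{w}_i,\widehat{w}_j)\in (1+T)\Lambda=(1+T)\Z$, and evenness of $\lambda$ strengthens the diagonal to $\lambda(\widehat{w}_i,\widehat{w}_i)\in 2(1+T)\Z$. The main technical step is then to correct these lifts by elements $\eta_i\in \E_+(P)=\ker(1-T)$ (which preserves $w_i^-$) so that $\lambda(\widehat{w}_i+\eta_i,\widehat{w}_j+\eta_j)=0$ for all $i,j$. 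Writing $\lambda(\widehat{w}_i,\widehat{w}_j)=(1+T)c_{ij}$ for a symmetric integer matrix with even diagonal, the obstruction to cancellation reduces to a system of symmetric linear equations over $\Z$ in the $\eta_i$, whose coefficients come from the augmentation $\lambda^+$ of $\lambda$. Since a unit of $\Lambda$ augments to $\pm 1$, the form $\lambda^+$ is nonsingular on $P$ over $\Z$, and the $\E_+(V)$-lagrangian hypothesis (which in particular supplies a lagrangian complement $W_+$ of $\E_+(V)$ in $\E_+(P)$) provides enough surjectivity to solve the system inside $\E_+(P)$. Arranging the choice so that $\{(1+T)\widehat{w}_i\}$ forms a $\Z$-basis of $W_+$ will be convenient for what follows.

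With the corrected lifts in place, set $\overline{V}:=\Lambda\{\widehat{w}_1,\dots,\widehat{w}_n\}$. Then $\overline{V}$ is $\Lambda$-free of rank $n$ (apply $(1-T)$ and $(1+T)$ to any putative $\Lambda$-linear dependence and use $\Z$-independence of the $w_i^-$ and of the $(1+T)\widehat{w}_i$), and $\lambda|_{\overline{V}}=0$ holds by $\Lambda$-bilinearity from $\lambda(\widehat{w}_i,\widehat{w}_j)=0$. To prove $V\oplus \overline{V}=P$, note that $V\cap \overline{V}$ projects under $(1\pm T)$ into $\E_\pm(V)\cap W_\pm = 0$, so the intersection is trivial, and a $\Z$-rank count combined with the decompositions $\E_\pm(P)=\E_\pm(V)\oplus W_\pm$ gives $V+\overline{V}=P$.

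The hard part will be the correction step in the third paragraph: finding all the $\eta_i$ simultaneously. It is here that both hypotheses must be combined essentially—the $\E_+(V)$-condition restricts $\lambda|_V$ to the ideal $(1-T)\Lambda$ and supplies the lagrangian complement $W_+$ used to solve the system, while the $\E_-(V)$-condition constructs the initial lifts whose residual obstructions lie in a piece of $(1+T)\Z$ that can be cancelled using the freedom in $\E_+(P)$ coming from the splitting $\E_+(P)=\E_+(V)\oplus W_+$.
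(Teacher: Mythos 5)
Your route is a genuine alternative to the paper's: the paper first uses nonsingularity of $\lambda$ over $\Lambda$ together with the $\E_+$-hypothesis to build a $\Lambda$-free complement $U_0$ dual to a basis of $V$, and only afterwards corrects its minus-part and its self-pairing; you instead try to lift the given lagrangian complement $W_-\subseteq\E_-(P)$ directly and correct the lifts. Your preliminary observations are correct: the $\E_+$-hypothesis forces $\lambda(v,v')\in(1-T)\Lambda$ for $v,v'\in V$, and $W_-$ being a lagrangian forces $\lambda(\widehat w_i,\widehat w_j)\in(1+T)\Z$ with, by evenness, $2(1+T)\Z$ on the diagonal.

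The gap is the sentence ``arranging the choice so that $\{(1+T)\widehat w_i\}$ forms a $\Z$-basis of $W_+$.'' This is not a free choice. A lift $\widehat w_i$ of $w_i^-$ is determined only modulo $\ker(1-T)=\E_+(P)$, so $(1+T)\widehat w_i$ can only be altered by elements of $(1+T)\E_+(P)=2\E_+(P)$; in particular its class in $\E_+(P)/2\E_+(P)$ is forced by $w_i^-$. One must therefore \emph{prove} that the classes $[(1+T)\widehat w_i]$ form a basis of $(\E_+(P)/\E_+(V))\otimes\mathbb{F}_2$ (this does hold, via the isomorphism $\E_-(P)/2\cong\E_+(P)/2$ induced by $(1-T)\widehat w\mapsto(1+T)\widehat w$, which carries $\E_-(V)/2$ to $\E_+(V)/2$ and hence the complement $W_-/2$ to a complement of $\E_+(V)/2$), and then adjust by even vectors, using surjectivity of $GL_n(\Z)\to GL_n(\mathbb{F}_2)$, to get a genuine $\Z$-basis of a direct complement of $\E_+(V)$. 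Every subsequent step of your argument funnels through this point: the ``correction'' system (which, note, is quadratic in the $\eta_i$ unless you constrain them to a lagrangian such as $\E_+(V)$, where it becomes $RN^T+NR^T=-C$ for the pairing matrix $R$ of the $(1+T)\widehat w_i$ against a basis of $\E_+(V)$) is solvable only when $R$ is invertible over $\Z$, which is exactly the complement condition; and your final claim $V+\overline V=P$ likewise needs $\E_+(P)=\E_+(V)+\Z\{(1+T)\widehat w_i\}$ — a bare $\Z$-rank count only gives finite index. This divisibility-by-$2$ issue is precisely where the real work lies; it is the analogue of the paper's careful verification that $\alpha_i+c_i+d_i$ is divisible by $2$ in the construction of the basis elements $\rho_i$. (Also, even after the adjustment, the span of the $(1+T)\widehat w_i$ need only be a direct complement of $\E_+(V)$, not a lagrangian one; fortunately that is all your argument requires.)
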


\begin{proof}
We will frequently use the following fact.
Let~$U$ be a complementary free direct summand to~$V$ in~$P$. Let~$\{u_i\}$ be a basis for~$U$ and let~$\{v_i\}$ be a basis for~$V$.
Add linear combinations of basis elements of~$V$ to the basis elements for~$U$, to obtain a set~$\{u_i + \sum_j a_{ij} v_j\}$, for some~$a_{ij} \in \Lambda$.  An elementary linear algebra argument shows that this set is a basis for a new complementary direct summand~$U'$ to~$V$ in~$P$,  for any $a_{ij} \in \Lambda$.

%

Now we begin the proof.
Choose a basis~$x_1,\dots,x_n$ for~$V$ and complete it to a basis of~$P$ via~$v_1,\ldots,v_n$.
Since~$P$ is free we may consider the dual basis elements~$x_i^* \in P^*$ for~$i=1,\ldots,n$ that satisfy~$x_i^*(x_j)=\delta_{ij}$ and~$x^*_i(v_j)=0$ for every~$i,j$.
Since the form~$\lambda$ is nonsingular,  we can choose~$u_1,\ldots,u_n \in P$ such that~$\widehat{\lambda}(u_i)=x_i^*$.
In particular, we have~$\lambda(x_i,u_j)=\delta_{ij}$ for every~$i,j$.

We assert that the~$x_j$ and~$u_j$ generate~$P$. We will prove this by showing that each~$v_i$ can be expressed as a linear combination of the~$u_j$ and~$x_j$.
Write~$u_i=\sum_k a_{ik}x_k+\sum_l b_{il}v_l$ for some~$a_{ik},b_{il} \in \Lambda$.
Arrange the~$u_i,x_k,v_l$ as column vectors~$\mathbf{u},\mathbf{x},\mathbf{v}$ and the~$a_{ij},b_{il}$ as matrices~$A,B$ so that,  formally, we can write~$\mathbf{u}=A\mathbf{x}+B\mathbf{v}$.
Since we have~$\lambda(u_i,x_j)=\delta_{ij}$ and since the form~$\E_+(\lambda)$ vanishes on $\E_+(V) \times \E_+(V)$, we obtain
\[ \delta_{ij}
=\lambda(u_i,x_j)
=\E_+(\lambda)((1+T)u_i,(1+T)x_j)
=\sum_{l=1}^n  b_{il} \cdot \E_+(\lambda)((1+T)v_l,(1+T)x_j).\]
The second equality here uses that $\lambda(u_i,x_j) \in \Z$.
Write~$L$ for the matrix with coefficients~$L_{lj}=\E_+(\lambda)((1+T)v_l,(1+T)x_j)$ and~$I_n$ for the size~$n$ identity matrix so that~$I_n=BL$.
A square matrix with coefficients in a commutative ring admits an inverse if and only if its determinant is a unit. That inverse is both a left and a right inverse. The equalities~$1=\det(I_n) = \det(BL) = \det(B) \det(L)$ imply that ~$\det (B)$ is a unit.
We deduce that~$B$ is invertible with inverse~$L$.
Therefore~$\mathbf{u}=A\mathbf{x}+B\mathbf{v}$ can be rewritten as~$\mathbf{v}=L(\mathbf{u}-A\mathbf{x})$.
Thus the~$v_i$ can be expressed as linear combinations of the~$u_j$ and~$x_j$.
This concludes the proof of the assertion the~$x_j$ and~$u_j$ generate~$P$.

Since~$\{x_j\} \cup \{u_j\}$ form a rank~$n$ generating set of~$P \cong \Lambda^n$, they are also linearly independent.
This statement follows from the general fact, which we now prove, that a surjective~$\Lambda$-linear map~$f \colon \Lambda^n \to \Lambda^n$ is necessarily injective.
Let~$A$ be a matrix for~$f$ with respect to the canonical basis~$e_1,\ldots,e_n$ of~$\Lambda^n$.
Surjectivity implies that there are column vectors~$b_1,\ldots,b_n$ such that~$f(b_i)=e_i$.
Arrange the column vectors~$b_i$ into a matrix~$B$ such that~$AB=I_n$.
Taking determinants, as above we deduce that~$f$ is also injective, as claimed.

Since~$\{x_j\} \cup \{u_j\}$ forms a basis of~$P$, it follows that the~$u_i$ span a half rank direct summand $U_0 \subseteq P$ with~$P = V \oplus U_0$ and~$\lambda(x_i,u_j)=\delta_{ij}$.

Let~$U_-\subseteq \E_-(P)$ be the hypothesised complementary lagrangian to~$\E_-(V)\subseteq \E_-(P)$.
First, we will modify~$U_0$ to obtain a direct summand~$U_1\subseteq P$ that is complementary to~$V$ and such that~$\E_-(U_1)=U_-$.
Then we will improve~$U_1$ further, to obtain a lagrangian complement~$U_2$ for~$V$.

Write~$b_i:=(1-T)x_i$ for the basis of~$\E_-(V)$ obtained from~$\{x_i\}$.
Recall that we have a basis~$u_1,\dots,u_n$ for~$U_0$ and that~$\{x_1, \dots,x_n,u_1,\dots,u_n\}$ is a basis for~$P$.
Write~$(1-T)u_i=w_i+z_i\in\E_-(U_0)$ with respect to the decomposition~$\E_-(P)\cong \E_-(V)\oplus U_-$, i.e.\ $w_i \in \E_-(V)$ and~$z_i \in U_-$ for each~$i$.
Write~$w_i=\sum_j n_{ij}b_j$ for some~$n_{ij} \in \Z$.
Define~$U_1\subseteq P$, a new complementary direct summand to~$V$, as the submodule with basis given by the elements
\[y_i:=u_i-\sum_j n_{ij}x_j.\]
Here we used the fact from the first paragraph of the proof to see that $U_1$ is a direct complement to V.

To confirm that~$\E_-(U_1)=U_-$,
first note that~$\{(1-T)y_i\}$ forms a basis for~$\E_-(U_1)$, because~$\{y_i\}$ forms a basis for~$U_1$.
Next, note that~$\{(1-T)x_1,\dots,(1-T)x_n,(1-T)u_1,\dots,(1-T)u_n\} = \{b_1,\dots,b_n,w_1 + z_1,\dots,w_n+z_n\}$ is a basis for~$\E_-(P) = \E_-(V) \oplus \E_-(U_0)$, and therefore so is~$\{b_1,\dots,b_n, z_1,\dots,z_n\}$,  because $w_i \in \E_-(V)$ for all $i$.
 Therefore ~$\{b_1,\dots,b_n, z_1,\dots,z_n\}$ is a basis for~$\E_-(V) \oplus U_- \cong \E_-(P)$, and we deduce that~$\{z_i\}$ forms a basis for~$U_-$.  Now we compute that
\[
(1-T)y_i
=(1-T)u_i-\textstyle{\sum_j} n_{ij} (1-T)x_j
=w_i+z_i-\textstyle{\sum_j} n_{ij} b_j
=z_i.
\]
It follows that~$\E_-(U_1)=U_-$ as asserted.
This completes the~$U_1$ step.  Note that~$U_1$ need not be a lagrangian, so the proof is not yet complete.

At this point, we consider the following subspaces with their respective bases shown underneath each subspace:
\[\begin{array}{ccccccc}
\E_+(V) && \E_-(V) && U_+:=\E_+(U_1) && U_- = \E_-(U_1)\\
\{a_i:=(1+T)x_i\} && \{b_i:=(1-T)x_i\} && \{t_i^+:=(1+T)y_i\} && \{t_i^-:=(1-T)y_i\}.
\end{array}\]
Next we modify the bases~$t^\pm_i$ to something more favourable. 
Consider the~$\Z$-valued forms
$\mathcal{E}_\pm(\lambda)$ on~$\E_{\pm}(P) = \E_\pm(V)\oplus {U}_\pm$; this decomposition holds because~$P = V \oplus U_1$. With respect to this decomposition, these forms are represented by block diagonal matrices over~$\Z$,
say $\mathcal{E}_\pm(\lambda)=\bsm A_\pm & Y_\pm \\ Y_\pm^T &B_\pm \esm$.
Observe for later use that ~$\E_+(\lambda)|_{U_+ \times U_+}$ is even.
To see this note that~$\lambda(y_i,y_i) \in 2\Lambda$ because~$\lambda$ is even and~$p + \ol{p} \in 2 \Lambda$ for every~$p \in \Lambda$.
Therefore
 \[\E_+(\lambda)(t_i^+,t_i^+)= \frac{1}{2(1+T)} \lambda((1+T)y_i,(1+T)y_i) = \varepsilon(\lambda(y_i,y_i)) \in 2\Z,\]
where the last equality uses~$\lambda(y_i,y_i) \in 2\Lambda$ and~$\varepsilon \colon \Lambda \to \Z$ denotes the augmentation map,  defined by~$\varepsilon(a+bT)=a+b$.

In the ``$+$'' case, ~$A_+=0$ because~$\E_+(V)$ is a lagrangian. Thus, since~$\mathcal{E}_+(\lambda)$ is nonsingular,  we deduce that~$Y_+$ is nonsingular. Using column operations,~$Y_+$ can be made into the identity matrix. This corresponds to performing elementary basis change operations on the basis~$\{t_i^+\}$ of~${U}_+$ (using only linear combinations of the~$t_i^+$).
We denote the resulting basis of~$U_+$ by~$c_1,\dots,c_n$.
Note that $\E_+(\lambda)(a_i,c_j) = \delta_{ij}$.
Since~$c_i \in U_+ \subseteq P$ and~$a_i \in \E_+(V) \subseteq P$, we have that~$\lambda(a_i,c_j)=2(1+T)\delta_{ij}$ (recall Remark~\ref{rem:PmFormNB}).
In fact, in the~$+$-case, since~$\E_+(\lambda)|_{U_+ \times U_+}$ is even, by adding linear combinations of the~$a_i$, we can assume that~$\E_+(\lambda)(c_i,c_j)=0$ for all~$i,j$.
Note that~$U_+$ may not satisfy~$\E_+(U_1)=U_+$ anymore, since we changed the basis for~$U_+ \subseteq \E_+(U_1)$ but did not make the corresponding basis change for~$U_1 \subseteq P$,  but this does not affect the remainder of the argument.

In the ``$-$'' case, ~$B_-=0$ because~$U_-$ is a lagrangian. Thus, since~$\mathcal{E}_-(\lambda)$ is nonsingular, we deduce that~$Y_-$ is nonsingular. Arguing as in the ``$+$'' case, we perform elementary basis change operations on~$t_i^-$ to obtain a basis~$d_1,\dots,d_n$ of~${U}_-$ such that~$\lambda(b_i,d_j)=2(1-T)\delta_{ij}$, again because~$d_j \in U_- \subseteq P$ and~$b_i \in \E_-(V) \subseteq P$ (recall Remark~\ref{rem:PmFormNB}).

We summarise the outcome. With respect to the basis~$\{a_1,\ldots,a_n,c_1,\ldots, c_n\}$ of~$\mathcal{E}_+(P)$ and $\{b_1,\ldots,b_n, d_1,\ldots, d_n\}$ of~$\mathcal{E}_-(P)$, matrices for~$\mathcal{E}_+(\lambda)$ and~$\mathcal{E}_-(\lambda)$) are given by:
\[ \mathcal{E}_+(\lambda)=\begin{pmatrix}
0& I \\
 I&0
\end{pmatrix} \ \ \quad \text{and} \quad \ \
\mathcal{E}_-(\lambda)=\begin{pmatrix}
*& I \\
I &0
\end{pmatrix}.
  \]
 We claim that, for all $i,j$, we have:
 \begin{equation}\label{eqn:lambda-facts}
   \lambda(a_i,a_j)=0,\,\, \lambda(a_i,c_j)=2(1+T)\delta_{ij},\,\, \lambda(b_i,d_j)=2(1-T)\delta_{ij}, \text{ and } \lambda(c_i,c_j)=0=\lambda(d_i,d_j).
 \end{equation}
An argument is needed to see that the first and the last two equalities hold.  We will just argue for~$\lambda(c_i,c_j)=0$; the others proceed similarly.
Since~$\E_+(\lambda)(c_i,c_j)=0$, then~$\lambda(c_i,c_j) \in 2\E_+(\Lambda)$ maps to zero under the identifications~$2\E_+(\Lambda) \cong 2\Z \cong \Z$, and therefore~$\lambda(c_i,c_j)=0$.

Now, using the same elementary basis change operations we just used to change~$t_i^-$ into~$d_i$, transform the basis~$\{y_i\}$ of~$U_1$ into a new basis~$\{\rho'_i\}$ of $U_{{1}}$.
For this we take the basis changes over~$\Z$ and consider them as basis changes over~$\Lambda$ via the ring homomorphism~$\Z \to \Lambda$.
By construction we also have~$(1-T)\rho'_i=d_i$.
Write~$(1+T)\rho'_i=\alpha_i+\gamma_i\in\E_+(V)\oplus U_+$ for some~$\alpha_i \in \E_+(V)$ and~$\gamma_i \in U_+$, and note that
\[
\rho'_i=\frac{\alpha_i+\gamma_i+d_i}{2}.\]

Next we claim we can make a further helpful basis change from the $\rho_i'$ to a new basis for $U_1$, given by
\[
\rho_i=\frac{\alpha_i+c_i+d_i}{2}.
\]
We must prove $\alpha_i+c_i+d_i$ is indeed divisible by $2$ and that the resulting $\rho_i$ form a basis, then after that we will modify~$U_1$ to obtain the desired lagrangian complement~$U_2$ to~$V$.

To begin proving the claim that $\alpha_i+c_i+d_i$ is divisible by $2$ and that $\{\rho_i\}$ forms a basis for~$U_1$, first observe that
since the~$\lbrace c_i \rbrace$ form a~$\Z$-basis of~$U_+$, for each~$j$ we can write~$\gamma_j=\sum_k\gamma_{jk}c_k$ with~$\gamma_{jk} \in \Z$.
It is also the case that the~$c_k \in \E_+(P)$ can be written as~$c_k=(1+T)z_k$, for some~$z_k \in P$. We note for later use that
\begin{equation}
\label{eq:MuchNeededDetails}
2\lambda(x_i,c_k)=\lambda(x_i,(1+T)(1+T)z_k)=\lambda((1+T)x_i,(1+T)z_k)=\lambda(a_i,c_k).
\end{equation}
Similarly~$2\lambda(x_i,\alpha_j) = \lambda(a_i,\alpha_j)$ and~$2\lambda(x_i,(1-T)\rho'_j) = \lambda(b_i,(1-T)\rho'_j)$.
Using~\eqref{eq:MuchNeededDetails},  we observe that
\begin{align*}
4\lambda(x_i,\rho'_j)
&=2\lambda(x_i,\alpha_j)+2\lambda(x_i,\gamma_j)+2\lambda(x_i,d_j) \\
&=2\lambda(x_i,\alpha_j)+2\sum_k \lambda(x_i,\gamma_{jk}c_k)+2\lambda(x_i,(1-T)\rho'_j) \\
&=\lambda(a_i,\alpha_j)+\sum_k \lambda(a_i,\gamma_{jk}c_k)+\lambda(b_i,(1-T)\rho'_j) \\
&=0+2(1+T)\gamma_{ji}+\lambda(b_i,d_j) \\
&=2(1+T)\gamma_{ji}+2(1-T)\delta_{ij} \in \Lambda.
\end{align*}
For the last two equalities we applied \eqref{eqn:lambda-facts}.
Since $\Lambda$ is torsion-free as an abelian group,  we deduce that
\begin{equation}
\label{eq:LastMinuteFix}
2\lambda(x_i,\rho'_j)=(1+T)\gamma_{ji}+(1-T)\delta_{ij}.
\end{equation}
Write $\lambda(x_i,\rho'_j)=r_{ij}+s_{ij}(1-T)$ with $r_{ij},s_{ij}\in \Z$; this is possible because $a+bT=a+b-(1-T)b$ for every $a+bT \in \Z[\Z_2].$
Plugging in $T=1$ into~\eqref{eq:LastMinuteFix} gives $r_{ij}=\gamma_{ji}$.
It follows that setting~$T=-1$ into~\eqref{eq:LastMinuteFix}, we get $2\delta_{ij}=2\gamma_{ji}+4s_{ij}$, which we write as $2s_{ij}=\delta_{ij}-\gamma_{ji}$.
Now define
\[
\rho_i:=\rho'_i+\sum_ks_{ki}c_k=\frac{\alpha_i+d_i}{2}+\frac{1}{2}\gamma_i+\frac{1}{2}\sum_k(\delta_{ki}-\gamma_{ik})c_k=\frac{\alpha_i+d_i}{2}+\frac{1}{2}\sum_k\delta_{ki}c_k= \frac{\alpha_i+c_i+d_i}{2}.
\]
Since $c_i \in U_+=\E_+(U_1) \subseteq U_1$ for $i=1,\ldots,n$ and $\lbrace \rho'_1,\ldots,\rho'_n \rbrace$ forms a basis of $U_1$,  it follows that $\lbrace \rho_1,\ldots,\rho_n \rbrace$ again forms a basis for $U_1$.
This concludes the proof of the claim.

Next we  observe some symmetries in the coefficients of~$\alpha_i \in \mathcal{E}_+(V)$ with respect to the~$\Z$-basis~$a_i$ of~$\E_+(V)$.
Write
$$\alpha_i=\sum_j\alpha_{ij}a_j$$
for some~$\alpha_{ij}\in \Z$.
Recall that~$\lambda(\alpha_i,\alpha_j)=0$, $\lambda(c_i,c_j)=0$, $\lambda(d_i,d_j)=0$, and $\lambda(a_i,c_j)=2(1+T)\delta_{ij}$, by~\eqref{eqn:lambda-facts}. We use these and the fact that~$\lambda|_{\E_+(P) \times \E_-(P)}=0$,  which follows from the equality $(1+T)(1-T)=0$, to
compute that
\begin{align*}
\lambda(\rho_i,\rho_j)&=\textstyle{\frac{1}{4}}\lambda(\alpha_i+c_i+d_i,\alpha_j+c_j+d_j)\\
&=\textstyle{\frac{1}{4}}\left(\lambda(\alpha_i,c_j)+\lambda(c_i,\alpha_j)\right)\\
&=\textstyle{\frac{1}{4}}\left( \lambda\left(\sum_k \alpha_{ik}a_k,c_j \right)+\lambda\left(c_i,\sum_s\alpha_{js}a_s\right)\right)\\
&=\textstyle{\frac{1}{4}}\cdot 2(1+T)\cdot\left(\alpha_{ij}+\alpha_{ji} \right)\\
&=\textstyle{\frac{1}{2}}(1+T)\cdot\left(\alpha_{ij}+\alpha_{ji}\right).
\end{align*}
Because~$\lambda(\rho_i,\rho_j)\in\Lambda$ and~$\alpha_{ij} \in \Z$, we deduce that~$\alpha_{ij}+\alpha_{ji} \in 2 \Z.$

We now modify the summand~$U_1\subseteq P$ into a lagrangian complement to~$V$. As~$V_+\subseteq V$, so~$a_j\in V$ for all~$j$. Therefore if we modify~$\rho_i$ by adding multiples of~$a_j$ we will obtain a new direct summand of~$P$ that is still complementary to~$V$. Of course, the objective is to do this so that the resulting direct summand is self-annihilating. Consider that for~$p\in\Lambda$ we have
\[\rho_i+pa_j=\frac{\alpha_i+2pa_j+c_i+d_i}{2}\]
so the proposed operation modifies the coefficient~$\alpha_{ij}$ of $a_j$ in $\alpha_i$ by the addition of~$2p$. Using this operation, and the symmetries in the~$\alpha_{ij}$ (namely $\alpha_{ij}+\alpha_{ji} \in 2 \Z$), we may now modify the coefficients~$\alpha_{ij}$ to~$\alpha_{ij}'$,  which satisfy that~$\alpha'_{ij}+\alpha'_{ji}=0$ for all~$i,j$.
We obtain a new direct summand~$U_2$ of~$P$, which is still complementary to~$V$ by the first paragraph of this proof.
We write~$\rho_i'$ for the modified basis element coming from~$\rho_i$.
The same computation as we made for~$\lambda(\rho_i,\rho_j)$ above now shows that~$\lambda(\rho'_i,\rho'_j)=0$ for all~$i,j$  (here it is important that~$\lambda(a_i,a_j)=0$ for all~$i,j$), and thus~$U_2$ is a complementary {sub}lagrangian to~$V$, i.e.~$U_2\subseteq U_2^\perp$.
Since~$U_2 \subseteq P$ is a half rank sublagrangian and~$\lambda$ is nonsingular, it must in fact be a lagrangian; see e.g.~\cite[Proposition~11.53]{RanickiAlgebraicAndGeometric}.
We have therefore found a complementary lagrangian to~$V \subseteq P$.
\end{proof}

\subsection{A criterion for being elementary in~$\texorpdfstring{\ell_5(\Z[\Z_2])}{l5(Z[Z/2])}$}
\label{sub:ElementaryCriterion}
We now prove the promised criterion, which ensures that a class~$\Theta \in \ell_5(\Z[\Z_2])$ is elementary.
\medbreak

\begin{lemma}
\label{lem:HaveLagrangianComplement}
If $\Theta \in \ell_5(\Z[\Z_2])$ is such that $\E_-(\Theta)$ is elementary, then $\Theta$ is represented by a quasi-formation $x'=((P',\psi');F',V')$ such that $\E_-(V')\subseteq \E_-(P')$ admits a lagrangian complement.
\end{lemma}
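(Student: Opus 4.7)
The plan is to apply Proposition~\ref{prop:CleverCorollary} over $R=\Z$, use the vanishing of $L_5(\Z)$ to replace the resulting formation by a $\Z$-boundary formation, and then lift that boundary formation back to $\Z[\Z_2]$ via scalar extension.

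First I would fix any representative $x$ of $\Theta$ in $\ell_5(\Z[\Z_2])$, so that $\E_-(x)$ represents the elementary class $\E_-(\Theta) \in \ell_5(\Z)$. Since $L_5(\Z)=0$, Proposition~\ref{prop:CleverCorollary} applied with $R=\Z$ yields a $\Z$-formation $v$ such that the half-rank summand of $\E_-(x) \oplus v$ admits a lagrangian complement. I would then replace $v$ by a boundary formation as follows: using $L_5(\Z)=0$ again, the class $[v]\in L_5(\Z)$ vanishes, so by Definition~\ref{def:Lgroup} there exist $\Z$-boundary formations $b_1,b_2$ with $v \oplus b_1 \cong_s b_2$. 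A short check shows that every boundary formation $\delta(F,\theta) = (H_+(F); F, \Gamma_{\theta-\theta^*})$ admits $0 \oplus F^*$ as a lagrangian complement of its half-rank summand, so direct sums and stable isomorphisms preserve the lagrangian complement property. In particular, $\E_-(x) \oplus b_2$ admits a lagrangian complement over $\Z$.

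Next I would lift $b_2 = \delta(F,\theta)$ by extension of scalars: set $\widetilde{F} := F \otimes_\Z \Z[\Z_2]$ with its natural $\Z[\Z_2]$-module structure, take $\widetilde{\theta}$ to be the sesquilinear extension of $\theta$, and put $\widetilde{b}_2 := \delta(\widetilde{F}, \widetilde{\theta})$. Using the identifications from Construction~\ref{cons:Eigenspace}, specifically $\E_-(\widetilde{F}) \cong F$ via $(1-T)(f \otimes 1) \leftrightarrow f$ and the observation that for any graph lagrangian $\Gamma_{\widetilde{\kappa}}$ one has $\E_-(\Gamma_{\widetilde{\kappa}}) = \Gamma_{\widetilde{\kappa}(-1)}$ (obtained by evaluating the entries of $\widetilde{\kappa}$ at $T=-1$), a direct computation gives $\E_-(\widetilde{b}_2) = b_2$. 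Setting $x' := x \oplus \widetilde{b}_2$ then produces the required representative: it represents $\Theta$ because boundary formations act trivially on $\ell_5$ by Construction~\ref{cons:Action}, and by construction $\E_-(x') = \E_-(x) \oplus b_2$ has its half-rank summand admitting a lagrangian complement.

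The hardest part will be the scalar-extension computation, namely verifying that the boundary-formation construction commutes with $\E_-$ in the precise sense needed above. This amounts to careful bookkeeping with the identifications of Construction~\ref{cons:Eigenspace}, tracked through the conventions for $(-\varepsilon)$-quadratic forms, their graph lagrangians, and the rescaling $\Z \cong \E_-(\Z[\Z_2])$.
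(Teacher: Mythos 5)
Your proposal is correct and follows essentially the same route as the paper: apply Proposition~\ref{prop:CleverCorollary} over $\Z$, lift the correcting formation to $\Z[\Z_2]$ by extension of scalars, and verify that $\E_-$ of the lift recovers the original formation. The detour through boundary formations is unnecessary (the paper simply takes $f \otimes_\Z \Z[\Z_2]$ and notes $[x']=[x]$ since $L_5(\Z[\Z_2])=0$), and note that your stable-isomorphism step strictly only yields that $\E_-(x)\oplus b_2\oplus t_2$ admits a lagrangian complement for some trivial formation $t_2$, which you should carry along (harmlessly) through the lift rather than discarding.
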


\begin{proof}
Let $x=((P,\psi);F,V)$ be a representative of $\Theta$. Since $\E_-(\Theta)$ is elementary, Proposition~\ref{prop:CleverCorollary} implies that there is a formation~$f=(H_{+}(L);L,G)$ over $\Z$ such that $\E_-(V) \oplus G$ admits a lagrangian complement in $\E_-(P) \oplus (L\oplus L^*)$.
Now consider the formation
$$f \otimes_\Z \Z[\Z_2]:=(H_+(L \otimes_\Z \Z[\Z_2]);L\otimes_\Z \Z[\Z_2],G\otimes_\Z \Z[\Z_2]).$$
and set $x':=x\oplus(f \otimes_\Z \Z[\Z_2])$.
Note that $\E_-(f \otimes_\Z \Z[\Z_2]) \cong f$ as can be seen by reasoning in terms of bases for the Lagrangians of $f$.

Since $L_5(\Z[\Z_2])=0$,  observe that $[x']=[x]=\Theta$
 and, by construction,
\begin{align*}
 \E_-(x')
 &=\E_-(x) \oplus \E_-(f \otimes_\Z \Z[\Z_2]) \\
 &= (\E_-(P,\psi),\E_-(F),\E_-(V)) \oplus f \\
 &=(\E_-(P,\psi)\oplus H_+(L),\E_-(F) \oplus L, \E_-(V) \oplus G)
 \end{align*}
 is such that $\E_-(V) \oplus G$ admits a Lagrangian complement in $\E_-(P) \oplus (L \oplus L^*)$.
\end{proof}

\begin{proposition}
\label{prop:ElementaryPlusMinus}
An element $\Theta \in \ell_5(\Z[\Z_2])$ is elementary if it satisfies both:
\begin{enumerate}
\item $\E_+(\Theta)$ is represented by a formation, so $\E_+(\Theta) \in L_5(\Z) = \{0\} \subseteq \ell_5(\Z)$,  and
\item $\E_-(\Theta)$ is elementary.
\end{enumerate}
\end{proposition}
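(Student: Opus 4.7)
The plan is to combine the two hypotheses into a single representative of $\Theta$ to which Lemma~\ref{lem:InfamousLemma} can be applied, and then conclude via the defining relation of $\ell_5(\Z[\Z_2])$ together with the vanishing $L_5(\Z[\Z_2])=0$ already used in the proof of Lemma~\ref{lem:HaveLagrangianComplement}.

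First, using the hypothesis that $\E_-(\Theta)$ is elementary, I would invoke Lemma~\ref{lem:HaveLagrangianComplement} to obtain a representative $x=((P,\psi);F,V)$ of $\Theta$ for which $\E_-(V)\subseteq \E_-(P)$ admits a lagrangian complement. The next step, which I expect to be the main conceptual observation, is that for this representative, $\E_+(V)$ is automatically a lagrangian in $(\E_+(P),\E_+(\psi))$. The hypothesis that $\E_+(\Theta)$ is represented by a formation, combined with the fact that the induced $0$-stabilised forms of any formation are zero (Example~\ref{ex:InducedForm}(1)), implies via Proposition~\ref{prop:InducedFormsMonoid} that $b(\E_+(\Theta))=(0,0)$. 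Applied to the representative $\E_+(x)$ of $\E_+(\Theta)$, this says the induced form of $\E_+(x)$ on $\E_+(V)$ is $0$-stably equivalent to the zero form, and hence is itself identically zero, since any isometry with a zero form forces all values of the original form to vanish. By Construction~\ref{cons:Eigenspace}, this induced form coincides with the restriction of $\E_+(\psi)$ to $\E_+(V)\times\E_+(V)$, so $\E_+(V)$ is isotropic in the nonsingular form $(\E_+(P),\E_+(\psi))$. As a half-rank isotropic direct summand of a nonsingular form, $\E_+(V)$ is a lagrangian.

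With these two properties in hand, Lemma~\ref{lem:InfamousLemma} will apply: $(P,\psi)$ is a nonsingular even hermitian form over $\Z[\Z_2]$ (the symmetrisation $(1+T)\psi$ is automatically even), $V$ is a free half-rank direct summand, $\E_+(V)$ is a lagrangian, and $\E_-(V)$ admits a lagrangian complement. This yields a lagrangian complement $L\subseteq P$ to $V$. The defining relation of $\ell_5(\Z[\Z_2])$ then gives
\[\Theta=[((P,\psi);F,L)]+[((P,\psi);L,V)],\]
in which the second summand is elementary (since $L$ is a lagrangian complement to $V$ by construction), and the first summand is a formation, representing an element of $L_5(\Z[\Z_2])=0$; by Construction~\ref{cons:Action} this acts trivially on $\ell_5(\Z[\Z_2])$. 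Hence $\Theta$ equals the elementary class $[((P,\psi);L,V)]$, as required. The crux of the argument is the isotropy of $\E_+(V)$, which translates the algebraic hypothesis ``$\E_+(\Theta)$ is represented by a formation'' into precisely the asymmetric geometric condition demanded by Lemma~\ref{lem:InfamousLemma}.
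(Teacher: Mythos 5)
The proposal is correct and follows essentially the same route as the paper: Lemma~\ref{lem:HaveLagrangianComplement} to arrange a lagrangian complement on the minus side, the induced-forms argument (via Proposition~\ref{prop:InducedFormsMonoid}) to show $\E_+(V)$ is a lagrangian, Lemma~\ref{lem:InfamousLemma} to produce a lagrangian complement $L$ to $V$, and then the $L_5(\Z[\Z_2])=0$ argument, which you re-derive inline but which the paper packages as Proposition~\ref{prop:ElementaryForTrivalL5}. The one detail you gloss over is that Lemma~\ref{lem:InfamousLemma} yields a lagrangian for the symmetrised hermitian form $\lambda$, and one should invoke Corollary~\ref{cor:niceconsequences} to see that $L$ is also a lagrangian for the quadratic form $\psi$ before declaring $((P,\psi);L,V)$ elementary.
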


\begin{proof}
Let $\Theta \in \ell_5(\Z[\Z_2])$ and assume that (1) and (2) hold.
Thanks to Lemma~\ref{lem:HaveLagrangianComplement} we can represent $\Theta$ as $\Theta = [x] = [((P,\psi);F,V)]$ where the quasi-formation~$x=((P,\psi);F,V)$ is such that~$\E_-(V) \subseteq \E_-(P)$ admits a lagrangian complement.

We assert that $\E_+(V) \subseteq \E_+(P,\psi)$  is a lagrangian.
Proposition~\ref{prop:InducedFormsMonoid} ensures that  the induced forms of~$\E_+(\Theta)=[\E_+(x)] \in L_5(\Z)$ are independent up to $0$-stabilisation of the quasi-formation representative of~$\E_+(\Theta)$ and therefore of $\Theta$.
Since $[\E_+(x)]$ is represented by a formation, the induced form, which lies in the 0-stabilisation class of~$\E_+(V,\psi)$, is trivial. It follows that $\E_+(\psi)$ vanishes on $\E_+(V)$.
Since~$\E_+(V)$ is a half rank direct summand, $\E_+(V)$ is a lagrangian as asserted.

By the assertion and the hypothesis that $\E_-(V)\subseteq \E_-(P)$ admits a lagrangian complement, the hypotheses of Lemma~\ref{lem:InfamousLemma} are satisfied for $(P,\lambda)$, where $\lambda$ is the symmetrisation of~$\psi$. Here we should also remark that since $P$ is stably free is it free by Lemma~\ref{lem:StablyFreeImpliesFree},  and that $\lambda$ is even and nonsingular since it arises as the symmetrisation of the quadratic form $\psi$ in a quasi-formation.
Lemma~\ref{lem:InfamousLemma} therefore implies that~$V \subseteq P$ admits a lagrangian complement $L$,  with respect to~$\lambda$.
As $L$ is a direct summand of the free group $P$, it is projective. By Corollary~\ref{cor:niceconsequences}, $L$ is thus also a lagrangian with respect to the quadratic form $\psi$.
Thus by Proposition~\ref{prop:ElementaryForTrivalL5}, and since~$L_5(\Z[\Z_2])=0$~\cite[Theorem~13A.1]{WallSurgeryOnCompact},  the class~$\Theta=[x] \in \ell_5(\Z[\Z_2])$ is elementary.
\end{proof}

\begin{proposition}
\label{prop:NewElementaryCriterion}
Fix an integer~$h \geq 0$.
If $(\Z^h,\theta)$ is a nondegenerate quadratic form over $\Z$ such that $\ell_5(\Z^h,2\theta)$ is trivial, then every $\Theta \in \ell_5(\Z[\Z_2]^h,(1-T)\theta)$ is elementary.
\end{proposition}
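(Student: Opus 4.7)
The plan is to apply Proposition~\ref{prop:ElementaryPlusMinus}, which reduces elementariness of a class $\Theta \in \ell_5(\Z[\Z_2])$ to two conditions on its plus and minus quasi-formations over $\Z$: namely that $\mathcal{E}_+(\Theta)$ is represented by a formation and that $\mathcal{E}_-(\Theta)$ is elementary. So I would just need to verify that both conditions hold when $\Theta \in \ell_5(\Z[\Z_2]^h, (1-T)\theta)$, under the hypothesis that $\ell_5(\Z^h, 2\theta)$ is trivial.

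The first step is to compute the plus and minus forms of the quadratic form $(\Z[\Z_2]^h,(1-T)\theta)$. By Example~\ref{example:plugging-in-T-pm1}, plugging in $T=+1$ and $T=-1$ respectively yields $\mathcal{E}_+(\Z[\Z_2]^h,(1-T)\theta) = (\Z^h, 0)$ and $\mathcal{E}_-(\Z[\Z_2]^h,(1-T)\theta) = (\Z^h, 2\theta)$. By Remark~\ref{rem:PlusMinusl5vv}, the monoid maps $f_\pm \colon \ell_5(\Z[\Z_2]) \to \ell_5(\Z)$ restrict to maps $\ell_5(v,v) \to \ell_5(\mathcal{E}_\pm(v), \mathcal{E}_\pm(v))$, so for $\Theta \in \ell_5(\Z[\Z_2]^h,(1-T)\theta)$ we automatically have $\mathcal{E}_+(\Theta) \in \ell_5(\Z^h, 0)$ and $\mathcal{E}_-(\Theta) \in \ell_5(\Z^h, 2\theta)$.

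For the first condition of Proposition~\ref{prop:ElementaryPlusMinus}, I would invoke the remark following Definition~\ref{def:l5vv} that $\ell_5(V,0) = L_5(\Z)$, together with the classical vanishing $L_5(\Z) = 0$. This gives $\mathcal{E}_+(\Theta) = 0$ in $L_5(\Z)$, and in particular $\mathcal{E}_+(\Theta)$ is represented by a formation. For the second condition, the hypothesis that $\ell_5(\Z^h, 2\theta)$ is trivial means (by Definition~\ref{def:Triviall5v}) that this set contains a single, necessarily elementary, element, so $\mathcal{E}_-(\Theta)$ is elementary. Both hypotheses of Proposition~\ref{prop:ElementaryPlusMinus} are then satisfied, and I conclude that $\Theta$ is elementary.

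There is essentially no obstacle here beyond assembling the pieces already developed: the substantive work is all contained in the preceding technical results (in particular Lemma~\ref{lem:InfamousLemma} and Proposition~\ref{prop:ElementaryPlusMinus}), together with the elementary identification of the plus and minus forms of $(1-T)\theta$. The proof itself should be just a few lines of bookkeeping.
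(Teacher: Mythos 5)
Your proof is correct and follows essentially the same route as the paper's: both identify the plus and minus forms of $(1-T)\theta$ as $(\Z^h,0)$ and $(\Z^h,2\theta)$ via Example~\ref{example:plugging-in-T-pm1}, verify the two hypotheses of Proposition~\ref{prop:ElementaryPlusMinus}, and conclude. The only cosmetic difference is that the paper establishes condition (1) by arguing directly that $\E_+(V)$ is a lagrangian for the given representative, whereas you appeal to the identification $\ell_5(V,0)=L_5(\Z)=0$ recorded in Definition~\ref{def:l5vv}; both are valid.
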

\begin{proof}
By Example~\ref{example:plugging-in-T-pm1}, the induced positive and negative forms are given by evaluation $T= \pm 1$ respectively.
Thus the induced positive form of $(\Z[\Z_2]^h,(1-T)\theta)$ is trivial, and the induced negative form of~$(\Z[\Z_2]^h,(1-T)\theta)$ is $(\Z^h,2\theta)$.

Write~$\Theta=[((P,\psi);F,V)]$.
The fact that~$\mathcal{E}_+(\Z[\Z_2]^h,(1-T)\theta)$
is the zero form implies, as in the proof of Proposition~\ref{prop:ElementaryPlusMinus}, that~$\E_+(V)$ is a lagrangian.
It follows that $\E_+(\Theta) \in L_5(\Z)$.
In addition, $\E_-(\Theta)  \in \ell_5(\Z^h,2\theta)$, and so is necessarily elementary because $\ell_5(\Z^h,2\theta)$ is trivial by assumption.
The proposition now follows from Proposition~\ref{prop:ElementaryPlusMinus}.
\end{proof}

\section{Analysis of the surgery obstruction}
\label{sec:Obstruction}

We summarise the outcome of the previous two sections on modified surgery in the next theorem.
We then explain how we are going to apply this theorem to surface exteriors.

\begin{theorem}
\label{thm:Recap}
Let~$M_0$ and $M_1$ be two~$4$-manifolds with fundamental group~$\Z_2$ and normal~$1$-type~$(B,\xi)$.
For $i=0,1$ let~$\overline{\nu}_i \colon M_i \to B$ be normal~$1$-smoothings into~$(B,\xi)$, and  let~$f \colon \partial M_0 \to \partial M_1$ be a homeomorphism that is compatible with the~$1$-smoothings, i.e.\ $\ol{\nu}_1|_{\partial M_1} \circ f = \ol{\nu}_0|_{\partial M_0}$.
Suppose that $(M_0 \cup_f -M_1, \ol{\nu}_0 \cup - \ol{\nu}_1)$ is null-bordant over $(B,\xi)$.

Let~$(\Z^h,\theta) \in Q_+(\Z^h)$ be a nondegenerate quadratic form such that $\ell_5(\Z^h,2\theta)$ is trivial, and such that~$(\Z[\Z_2]^h,(1-T)\theta)$ is a free Wall form for $M_0$.
Assume that for both $i=0$ and $i=1$, the pair $(M_i,\ol{\nu}_i)$ admits a free Wall form that is $0$-stably equivalent to~$(\Z[\Z_2]^h,(1-T)\theta)$.
Then~$f$ extends to a homeomorphism~$M_0 \xrightarrow{\cong} M_1$.
\end{theorem}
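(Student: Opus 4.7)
The strategy is to run the modified surgery programme outlined in Section~\ref{sub:Outline}. Since $(M_0 \cup_f -M_1,\ol{\nu}_0 \cup -\ol{\nu}_1)$ is null-bordant over $(B,\xi)$, such a null-bordism determines, in the standard way, a $5$-dimensional $(B,\xi)$-cobordism $(W,\ol{\nu})$ between $(M_0,\ol{\nu}_0)$ and $(M_1,\ol{\nu}_1)$ whose boundary gluing is the given $f$. To this cobordism modified surgery theory associates an obstruction $\Theta(W,\ol{\nu}) \in \ell_5(\Z[\Z_2])$ (Definition~\ref{def:SurgeryObstruction}).

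First, I would locate $\Theta(W,\ol{\nu})$ inside the subset $\ell_5(v,v) \subseteq \ell_5(\Z[\Z_2])$, where $v := (\Z[\Z_2]^h,(1-T)\theta)$. This is immediate from Proposition~\ref{prop:ConsequenceProp8}: by hypothesis $v$ is a free Wall form for $M_0$, and each $(M_i,\ol{\nu}_i)$ admits some free Wall form $0$-stably equivalent to $v$, so both induced $0$-stabilised forms of $\Theta(W,\ol{\nu})$ equal $[v]_0$. Next, since $\ell_5(\Z^h,2\theta)$ is trivial by assumption, Proposition~\ref{prop:NewElementaryCriterion} applies to $v = (\Z[\Z_2]^h,(1-T)\theta)$ and shows that every class in $\ell_5(v,v) \subseteq \ell_5(\Z[\Z_2]^h,(1-T)\theta)$ is elementary; in particular $\Theta(W,\ol{\nu})$ is elementary.

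Having established that $\Theta(W,\ol{\nu})$ is elementary, Theorem~\ref{thm:Kreck} furnishes a $(B,\xi)$-bordism rel.~boundary from $(W,\ol{\nu})$ to a $5$-dimensional $s$-cobordism $(W',\ol{\nu}')$ between $(M_0,\ol{\nu}_0)$ and $(M_1,\ol{\nu}_1)$, still glued along $f$ on the boundary. Since $\pi_1 \cong \Z_2$ is a good group with $\operatorname{Wh}(\Z_2)=0$, the topological $5$-dimensional $s$-cobordism theorem~\cite[Theorem 7.1A]{FreedmanQuinn} applies to $W'$ and produces a homeomorphism $W' \cong M_0 \times [0,1]$ extending the identity on $M_0$. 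Restricting to the other boundary component gives the desired homeomorphism $M_0 \xrightarrow{\cong} M_1$ extending $f$, which completes the proof.

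The main technical burden in this argument is already discharged by the ingredients set up earlier: verifying that the surgery obstruction lies in $\ell_5(v,v)$ is routine from Proposition~\ref{prop:ConsequenceProp8}, but reducing elementariness in $\ell_5(\Z[\Z_2])$ to triviality of $\ell_5(\Z^h,2\theta)$ (via the plus/minus-form machinery culminating in Proposition~\ref{prop:NewElementaryCriterion}) is the key input, and the $s$-cobordism step relies crucially on $\Z_2$ being good.
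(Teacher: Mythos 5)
Your proposal is correct and follows essentially the same route as the paper's proof: locating the obstruction via Proposition~\ref{prop:ConsequenceProp8}, deducing elementariness from Proposition~\ref{prop:NewElementaryCriterion} and the triviality of $\ell_5(\Z^h,2\theta)$, then applying Theorem~\ref{thm:Kreck} and the $s$-cobordism theorem for the good group $\Z_2$. No issues.
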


\begin{proof}
By assumption~$(M_0,\overline{\nu}_0)$ and~$(M_1,\overline{\nu}_1)$ are bordant over their normal~$1$-type,  say via a cobordism~$(W,\overline{\nu})$.
Consider the modified surgery obstruction~$\Theta(W,\overline{\nu}) \in \ell_5(\Z[\Z_2])$.
Proposition~\ref{prop:ConsequenceProp8} states that~$\Theta(W,\overline{\nu})  \in \ell_5(\Z[\Z_2]^h,(1-T)\theta)$.
Since $\ell_5(\Z^h,2\theta)$ is trivial, Proposition~\ref{prop:NewElementaryCriterion} ensures that~$\Theta(W,\overline{\nu})$ is elementary.

Theorem~\ref{thm:Kreck} implies that~$(W,\overline{\nu})$ is bordant rel.\ boundary over~$(B,\xi)$ to an~$s$-cobordism.
The~$5$-dimensional~$s$-cobordism theorem~\cite[Theorem~7.1A]{FreedmanQuinn}, which applies since~$\Z_2$ is a good group (every finite group is good see e.g.~\cite[Theorem 19.2]{DETBook}) with trivial Whitehead torsion, then implies that~$f$ extends to a homeomorphism~$M_0 \xrightarrow{\cong}M_1$.
\end{proof}

\subsection{The free Wall form of $X_F$}
\label{sub:FreeWallSurface}
We will set up some language to describe a specific free Wall form (see Definition~\ref{def:FreeWall}) for the exterior~$X_F$ of a~$\Z_2$-surface~$F$,  which has the convenient property of being directly computable from the integral intersection form~$Q_{\widetilde{X}_F}$ of $\widetilde{X}_F$.
Since~$X_F$ spin, so is~$\widetilde{X}_F$, and therefore the integral intersection form~$Q_{\widetilde{X}_F}$ is even~\cite[Proposition 3.3]{FriedlNagelOrsonPowell}.
By Proposition~\ref{prop:InterectionFormExterior}
we have
$$H_2(\widetilde{X}_F)/\operatorname{rad}(Q_{\widetilde{X}_F}) \cong\Z_-^h.$$
The form $Q_{\widetilde{X}_F}$ descends to a nondegenerate form $Q_{\widetilde{X}_F}^{\nd}$ on this quotient.
Because~$Q_{\widetilde{X}_F}$ is even, so is $Q_{\widetilde{X}_F}^{\nd}$.
Thus~$Q_{\widetilde{X}_F}^{\nd}$ determines a unique nondegenerate quadratic form~$(\Z^h,\theta^{\nd}_{\widetilde{X}_F})$ over $\Z$.
Now, remembering the~$\Z[\Z_2]$-module structure in order to identify~$\Z_-^h=\Z[\Z_2]^h \otimes_{\Z[\Z_2]} \Z_-$, we will think of the integral quadratic form~$\theta^{\nd}_{\widetilde{X}_F}~$ as an equivalence class of a pairing:
\begin{equation}\label{eqn:theta-nd-rho}
\theta^{\nd}_{\widetilde{X}_F} =\big[ \rho\colon (\Z[\Z_2] \otimes_{\Z[\Z_2]} \Z_-)^h \times (\Z[\Z_2] \otimes_{\Z[\Z_2]} \Z_-)^h \to \Z \big].
\end{equation}
Recall this is not a quadratic form over~$\Z[\Z_2]$ because the module $(\Z[\Z_2] \otimes_{\Z[\Z_2]} \Z_-)^h$ is not stably free.
The next proposition establishes a formula for a free Wall form for~$X_F$.

\begin{proposition}
\label{prop:SurfaceData}
Let~$F \subseteq D^4$ be a~$\Z_2$-surface of nonorientable genus~$h$, and let~$\overline{\nu} \colon X_F \to B:=\BTOPSpin \times B\Z_2$ be a normal~$1$-smoothing.
Then a free Wall form for $X_F$ is given by $(\Z[\Z_2]^h,\vartheta)$, where the quadratic form $\vartheta$ is represented by the pairing
\[
 \Z[\Z_2]^h \times \Z[\Z_2]^h\to \Z[\Z_2],
\qquad (x,y)\mapsto
(1-T)\rho(x \otimes 1,y\otimes 1).
\]
We write $\vartheta=(1-T)\theta^{\nd}_{\widetilde{X}_F}.$
\end{proposition}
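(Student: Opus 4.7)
Proof plan.

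First I would unpack $K\pi_2(X_F)$. Since $B=\BTOPSpin\times B\Z_2$, we have $\pi_2(B)=0$, so $K\pi_2(X_F)=\pi_2(X_F)$. Applying the Hurewicz theorem to the simply-connected universal cover $\widetilde{X}_F$, this is identified as a $\Z[\Z_2]$-module with $H_2(\widetilde{X}_F)=H_2(X_F;\Z[\Z_2])$. The Wall form $\psi_{X_F}$ has hermitian symmetrisation the equivariant intersection form $\lambda_{X_F}$, and Proposition~\ref{prop:InterectionFormExterior} identifies its radical with $\operatorname{rad}(\lambda_{X_F})=\operatorname{rad}(Q_{\widetilde{X}_F})$ and the induced nondegenerate hermitian form on
\[
K\pi_2(X_F)/\operatorname{rad}\cong \Z_-^h
\]
as $\lambda_{X_F}^{\nd}=(1-T)Q_{\widetilde{X}_F}^{\nd}$.

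Next I would set up the surjection $\varpi\colon \Z[\Z_2]^h\twoheadrightarrow\Z_-^h$ sending $x\mapsto x\otimes 1$, and define the free Wall form $(\Z[\Z_2]^h,\varpi^*\psi_{X_F}^{\nd})$ as in Definition~\ref{def:FreeWall}. Both this form and the proposed $\vartheta$ lie in $Q_+(\Z[\Z_2]^h)$. The strategy is to show that they share the same hermitian symmetrisation, and then invoke uniqueness of quadratic refinements.

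For the symmetrisation computation, using a representative $\rho$ of $\theta_{\widetilde{X}_F}^{\nd}$ as in \eqref{eqn:theta-nd-rho} whose symmetrisation is $Q_{\widetilde{X}_F}^{\nd}$, and recalling that the involution on $\Z[\Z_2]$ is trivial (so the symmetrisation of a pairing $\psi$ is $(x,y)\mapsto \psi(x,y)+\psi(y,x)$), I would compute
\[
\vartheta(x,y)+\vartheta(y,x)=(1-T)\bigl(\rho(x\otimes 1,y\otimes 1)+\rho(y\otimes 1,x\otimes 1)\bigr)=(1-T)Q_{\widetilde{X}_F}^{\nd}(x\otimes 1,y\otimes 1),
\]
which equals $\varpi^*\lambda_{X_F}^{\nd}(x,y)$. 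En route, a small sanity check is that $\vartheta$ is indeed $\Z[\Z_2]$-sesquilinear, which follows from $Tx\otimes 1=-x\otimes 1$ in $\Z_-$ together with $T(1-T)=-(1-T)$.

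Finally I would conclude using Corollary~\ref{cor:niceconsequences}. Since $\Z[\Z_2]^h$ is free (hence projective), Lemma~\ref{lem:projectivelemma} guarantees that the symmetrisation map $Q_+(\Z[\Z_2]^h)\to Q^+(\Z[\Z_2]^h)$ is injective, so a hermitian form on $\Z[\Z_2]^h$ admits at most one quadratic refinement. As both $\vartheta$ and $\varpi^*\psi_{X_F}^{\nd}$ refine the common hermitian form $\varpi^*\lambda_{X_F}^{\nd}$, they must agree in $Q_+(\Z[\Z_2]^h)$. I do not expect any serious obstacle: the whole proof is bookkeeping once the identification $K\pi_2(X_F)/\operatorname{rad}\cong\Z_-^h$ and the hermitian symmetrisation formula from Proposition~\ref{prop:InterectionFormExterior} are in place; the one subtlety worth flagging is verifying sesquilinearity of $\vartheta$, and then making sure the symmetrisation is computed in the convention of $Q_+(\Z[\Z_2])=\Z[\Z_2]$ with trivial involution, so that the injectivity of symmetrisation on the free module delivers uniqueness.
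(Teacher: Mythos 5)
Your proposal is correct and follows essentially the same route as the paper's proof: identify $K\pi_2(X_F)/\operatorname{rad}\cong\Z_-^h$ via $\pi_2(B)=0$ and Proposition~\ref{prop:InterectionFormExterior}, pull back along $\varpi\colon x\mapsto x\otimes 1$, and then match the claimed expression $(1-T)\theta^{\nd}_{\widetilde{X}_F}$ with the pullback by comparing symmetrisations and invoking uniqueness of quadratic refinements on the free module (Corollary~\ref{cor:niceconsequences}). The only cosmetic difference is the direction of the symmetrisation computation, and your explicit sesquilinearity check via $T(1-T)=-(1-T)$ is a worthwhile detail the paper leaves implicit.
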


\begin{proof}
First we describe a specific free Wall form for $X_F$, and then we show that it can indeed be expressed as claimed in the proposition.
Since~${\TOPSpin}$ is simply-connected, $\pi_2(\BTOPSpin)=0$, and therefore we deduce that~$\pi_2(B) \cong \pi_2(\BTOPSpin \times B\Z_2)=0$.
Since~$K\pi_2(X_F)=\ker(\pi_2(X_F) \to \pi_2(B))$ and~$\pi_2(X_F)\cong H_2(X_F;\Z[\Z_2])$,  we deduce that
\[
K\pi_2(X_F)/\operatorname{rad}(\lambda_{X_F})=\pi_2(X_F)/\operatorname{rad}(\lambda_{X_F})=H_2(X_F;\Z[\Z_2])/\operatorname{rad}(\lambda_{X_F})\cong \Z_-^h,
\]
where the last isomorphism follows from Proposition~\ref{prop:RadicalFormUniversalCover} 
We have therefore showed that the $\Z[\Z_2]$-module underlying the nondegenerate Wall form $(K\pi_2(X_F)/\operatorname{rad}(\lambda_{X_F}),\psi_{X_F}^{\nd})$ is defined on $\Z^h$.
The tensor product
\[
\Z[\Z_2]^h \to \Z[\Z_2]^h \otimes_{\Z[\Z_2]} \Z_-,\qquad x \mapsto x \otimes 1
\]
is a surjection of left~$\Z[\Z_2]$-modules.
Identifying~$\Z[\Z_2]^h \otimes_{\Z[\Z_2]} \Z_-=\Z_-^h$, we thus obtain a surjection~$\varpi\colon \Z[\Z_2]^h\to \Z^h_-$. We use~$\varpi$ to pull back the quadratic form~$(K\pi_2(X_F)/\operatorname{rad}(\lambda_{X_F}),\psi_{X_F}^{\nd})$, thus obtaining a representative $(x,y) \mapsto \rho(\varpi(x),\varpi(y))$ for a free Wall form $\vartheta$,  where $\rho$ is as defined in \eqref{eqn:theta-nd-rho}.
Observe that the symmetrisation of this quadratic form is $(x,y) \mapsto \lambda_{X_F}^{\nd}(\varpi(x),\varpi(y))$.

We now establish the expression~$\vartheta=(1-T)\theta^{\nd}_{\widetilde{X}_F}$ for this free Wall form. Recall that if a hermitian form on a free~$\Z[\Z_2]$-module admits a quadratic refinement, then it admits a unique quadratic refinement (Corollary~\ref{cor:niceconsequences}),  and therefore is the symmetrisation of a unique quadratic form.
Observe that the symmetrisation of $\theta^{\nd}_{\wt{X}_F}$ is $Q^{\nd}_{\wt{X}_F}$.
Thus it now suffices to show the symmetrisation~$\lambda$ of~$\vartheta$ is given by the symmetrisation~$(1-T)Q^{\nd}_{\widetilde{X}_F}$ of the claimed expression~$(1-T)\theta^{\nd}_{\widetilde{X}_F}$. To see this, we compute:
\[
\lambda(x,y)=\lambda_{X_F}^{\nd}(\varpi(x),\varpi(y))=\lambda^{\nd}_{X_F}(x \otimes 1,y \otimes 1)=(1-T)Q^{\nd}_{\widetilde{X}_F}(x \otimes 1,y \otimes 1).
\]
Here, the first equality comes from the fact that symmetrisation and pull-back commute, and the final equality follows from the third item of Proposition~\ref{prop:InterectionFormExterior}. This concludes the proof of Proposition~\ref{prop:SurfaceData}.
\end{proof}

\section{Proofs of Theorems~\ref{thm:SurfacesWithBoundaryIntro},~\ref{thm:one-is-enough}, and~\ref{thm:isotopy-of-embeddings}}\label{sec:introproofs}

We now prove Theorem~\ref{thm:SurfacesWithBoundaryIntro}, whose statement we recall for the reader's convenience.  Recall that Theorem~\ref{thm:SurfacesWithBoundaryIntro} implies Theorem~\ref{thm:TopUnknottingIntro}, by taking $K$ to be the unknot.

\begin{customthm}
{\ref{thm:SurfacesWithBoundaryIntro}}
\label{thm:SurfacesWithBoundaryMain}
Let~$F_0,F_1\subseteq D^4$ be $\Z_2$-surfaces of the same nonorientable genus $h$, the same normal Euler number $e$ and the same boundary $K$. Assume that~$|\det(K)|=1$.
If either~$h \leq 3$ or~$e$ is non-extremal,  then~$F_0$ and~$F_1$ are ambiently isotopic rel.\  boundary.
\end{customthm}

\begin{proof}
For $i=0,1$ set~$M_i:=X_{F_i}$ for the~$\Z_2$-surface exteriors. Recall that $B := \BTOPSpin \times B\Z_2 \xrightarrow{\xi} \BSTOP$ is the normal 1-type of $M_i$.
By Proposition~\ref{prop:StablyHomeo} there are normal 1-smoothings $\ol{\nu}_i \colon M_i \to B$ and there is a homeomorphism $f \colon \partial M_0 \to \partial M_1$ such that $(M_0,\ol{\nu}_0)$ and $(M_1,\ol{\nu}_1)$ are $B$-bordant relative to $f$. In other words,  writing
~$M:=M_0 \cup_{f} -M_1$, we have that~$M$ is spin,  $\pi_1(M) \cong \Z_2$, and $(M,\ol{\nu}_0 \cup -\ol{\nu}_1)$ is null-bordant over $B$.
Moreover $f$ restricts to a homeomorphism $S(\nu F_0)\to S(\nu F_1)$ that is $\nu$-extendable rel.~boundary.
Proposition~\ref{prop:SurfaceData} shows that a free Wall form for $M_i$ is given by the~$\Z[\Z_2]$-quadratic form~$(\Z[\Z_2]^h,(1-T)\theta_{\wt{M}_i}^{\nd})$ where~$\theta_{\wt{M}_i}^{\nd}$ denotes the non-degenerate $\Z$-quadratic form determined by the even form $Q_{\widetilde{M}_i}^{\nd}$ on~$H_2(\widetilde{M}_i)/\operatorname{rad}(Q_{\widetilde{M}_i}) \cong \Z_-^h$.
Set $\sigma:=\sign(\Sigma_2(F_i))$, the signature of the $2$-fold cover of $D^4$ branched over~$F_i$.
Recall from Theorem~\ref{thm:MasseyBoundary} that $|\sigma| \neq h$ if and only if $e$ is non-extremal.
By Proposition~\ref{prop:Calculate} we know that
$$\theta^{\nd}_{\wt{M}_i} = \theta_{\widetilde{X}_{F_i}}^{\nd} \cong
\begin{cases}
2H_+(\Z) \oplus H_+(\Z)^{\oplus \frac{h}{2}-1}  &\quad \text{ if } \sigma=0,   \\
X_\sigma \oplus H_+(\Z)^{\oplus \frac{1}{2}(h-|\sigma|)} &\quad \text{ if } \sigma \neq 0.
\end{cases}
$$
Proposition~\ref{prop:ApplyNikulin} states that when $h \leq 3$ or if $|\sigma| \neq h$ (i.e.\  if $e$ is non-extremal), then~$\ell_5(\Z^h,2\theta_{\wt{M}_i}^{\nd})$ is trivial.
Theorem~\ref{thm:Recap} now implies that~$f$ extends to homeomorphism $X_{F_0} \cong X_{F_1}$.
The $\nu$-extendability condition implies that $F_0$ and $F_1$ are equivalent rel.\  boundary.
That is, there is a homeomorphism $G \colon D^4 \to D^4$ with $G|_{S^3} = \Id|_{S^3}$ and $G(F_0)=F_1$. By the Alexander trick, $G$ is isotopic rel.\  boundary to $\Id_{D^4}$ and therefore $F_0$ and $F_1$ are ambiently isotopic rel.\  boundary.
\end{proof}


In the proof of Theorem~\ref{thm:SurfacesWithBoundaryMain}, the assumption $\det(K)=1$ was essentially only used to be able to analyse the isometry type of the intersection form~$Q_{\Sigma_2(F)}$.
In general,  if $F_0$ and $F_1$ have the same nonorientable genus~$h$, the same normal Euler number~$e$, and the same boundary~$K$,  we do not know whether $Q_{\Sigma_2(F_0)} \cong Q_{\Sigma_2(F_1)}$.
Even if this were the case however, there would still be a further  potential obstruction in $\ell_5(\Z^h,2\theta_{\widetilde{X}_{F_i}}^{\nd})$ to surface exteriors being homeomorphic rel.\ boundary.
Nikulin's theorem ensures that if $2\theta_{\widetilde{X}_{F_i}}$ splits off a $2H_+(\Z)$-summand then $\ell_5(\Z^h,2\theta_{\widetilde{X}_{F_i}}^{\nd})$ is trivial.
For general $K$,  we do not know whether this set is trivial, but
this is always the case after we add a single tube to both $F_0$ and $F_1$, as we now prove.
We recall the statement of Theorem~\ref{thm:one-is-enough} for the convenience of the reader.

\begin{customthm}
{\ref{thm:one-is-enough}}
\label{thm:one-is-enough-body}
Let~$F_0,F_1\subseteq D^4$ be~$\Z_2$-surfaces of the same nonorientable genus~$h$, the same normal Euler number~$e$ and the same boundary~$K$.
Then~$F_0 \# T^2$ and~$F_1 \# T^2$ are ambiently isotopic rel.\ boundary.
\end{customthm}

\begin{proof}
Since~$F_0,F_1$ are~$\Z_2$-surfaces with the same nonorientable genus, the same boundary and the same normal Euler number,  a result of Baykur--Sunukjian \cite[Theorem~1]{BaykurSunukjian} (cf.~\cite[Appendix]{ConwayPowell}, where their proof was adapted to the topological category) implies that they become ambiently isotopic rel.\ boundary after adding~$n>0$ unknotted tubes to each.
Adding a tube adds an $S^2 \times S^2$ connected summand to the $2$-fold branched cover, and therefore adds a~$H^+(\Z)=\bsm 0&1 \\ 1&0 \esm$ summand to the intersection form of the~$2$-fold branched cover.
Thus we have
\begin{equation}
\label{eq:OneIsEnoughForms}
Q_{\Sigma_2(F_0 \#^{n} T^2)}=Q_{\Sigma_2(F_0)} \oplus H^+(\Z)^{\oplus n} \cong Q_{\Sigma_2(F_1)} \oplus H^+(\Z)^{\oplus n}= Q_{\Sigma_2(F_1 \#^{n} T^2)}.
\end{equation}
As above we write $M_i :=X_{F_i}$.
Applying Proposition~\ref{prop:BranchedUnbranched} to~\eqref{eq:OneIsEnoughForms} and passing to quadratic forms,  we obtain
\[\theta_{\widetilde{X}_{F_0 \#^{n} T^2}^{\nd}}=\theta_{\wt{M}_0}^{\nd} \oplus H_+(\Z)^{\oplus n} \cong \theta_{\wt{M}_1}^{\nd} \oplus H_+(\Z)^{\oplus n}=\theta_{\widetilde{X}_{F_1 \#^{n} T^2}^{\nd}}.\]
A cancellation result due to Bass~\cite[Corollary IV.3.6]{Bass} (cf.~\cite[Proposition 7.3]{ConwayPowell} for an explanation as to how Bass's general theory applies to this simple setting) gives an isometry~$\theta_{\wt{M}_0}^{\nd} \oplus H_+(\Z) \cong \theta_{\wt{M}_1}^{\nd} \oplus H_+(\Z)$
and thus
\[\theta_{\widetilde{X}_{F_0 \# T^2}^{\nd}}=\theta_{\wt{M}_0}^{\nd} \oplus H_+(\Z)\cong \theta_{\wt{M}_1}^{\nd} \oplus H_+(\Z)=\theta_{\widetilde{X}_{F_1 \# T^2}^{\nd}}.\]
We now follow the same steps as in Theorem~\ref{thm:SurfacesWithBoundaryMain} to verify that $F_0\#T^2$ and $F_1 \# T^2$ are ambiently isotopic rel.\ boundary.
Proposition~\ref{prop:SurfaceData} implies that the free Wall form of $F_i\#T^2$ is~$(1-T)(\theta_{\wt{M}_i}^{\nd} \oplus H_+(\Z))$.
We also know by Proposition~\ref{prop:StablyHomeo} that we can find normal 1-smoothings $\ol{\nu}_{i} \colon X_{F_i} \to B:=\BTOPSpin \times B\Z_2$ and a homeomorphism $f \colon \partial X_{F_0 \# T^2} \cong \partial X_{F_1 \# T^2}$, such that the union along $f$ is spin, has fundamental group $\Z_2$, and is null-bordant over $B$.
Moreover $f$ restricts to a homeomorphism $S(\nu (F_0\#T^2))\to S(\nu (F_1\#T^2))$ that is $\nu$-extendable rel.~boundary.
Thanks to the criterion from Theorem~\ref{thm:Recap}, it now suffices to prove that~$\ell_5((\Z^h,2\theta_{\wt{M}_i}^{\nd}) \oplus 2H_+(\Z))$ is trivial.

We verify the assumptions of Nikulin's Theorem~\ref{thm:Nikulin} for
\[(V,\theta):=(\Z^h,2\theta_{\wt{M}_i}^{\nd}) \oplus 2H_+(\Z). \]
Let~$\lambda$ denote the symmetrisation of $\theta$.
We claim that $\operatorname{rk}(V)=h+2 \geq n_p(\coker(\widehat{\lambda}))+2$ for every odd prime $p$. To see this let \[\lambda' :=  2\theta_{\wt{M}_i}^{\nd} + (2\theta_{\wt{M}_i}^{\nd})^*\] and note that $n_p(\coker(\widehat{\lambda'})) \leq h$. Adding $2H^+(\Z)$ to $\lambda'$ to obtain $\lambda$ adds $\Z_2 \oplus \Z_2$ to the cokernel, but does not affect the odd primary part. Therefore $n_p(\coker(\widehat{\lambda})) = n_p(\coker(\widehat{\lambda'})) \leq h$, so~$n_p(\coker(\widehat{\lambda})) + 2 \leq h+2 = \operatorname{rk}(V)$, as claimed.  The second condition of Theorem~\ref{thm:Nikulin} holds since~$\theta$ splits off a $2H_+(\Z)$ summand.
By Corollary~\ref{cor:CS} we therefore have that $\ell_5((\Z^h,2\theta_{\wt{M}_i}^{\nd}) \oplus 2H_+(\Z))$ is trivial.

Theorem~\ref{thm:Recap} now implies that $f$ extends to a rel.\ boundary homeomorphism~$X_{F_0 \# T^2} \cong X_{F_1 \# T^2}$.
The $\nu$-extendability condition implies that $F_0 \# T^2$ and $F_1 \# T^2$ are equivalent rel.\ boundary, which again using the Alexander trick implies they are ambiently isotopic rel.\ boundary.
\end{proof}


Finally, we give the proof of Theorem~\ref{thm:isotopy-of-embeddings} from the introduction, whose statement we recall now.

\begin{customthm}
{\ref{thm:isotopy-of-embeddings}}
\label{thm:isotopy-of-embeddings-body}
 Suppose that for $i=0,1$, the images $F_i := g_i(F)$
  satisfy the hypotheses of one of Theorems~\ref{thm:TopUnknottingIntro},~\ref{thm:SurfacesWithBoundaryIntro}, or~\ref{thm:one-is-enough} $($in the latter case, we assume that $g_i(F)$ are the stabilised surfaces$)$.
The embeddings~$g_0$  and~$g_1$ are ambiently isotopic rel.~boundary if and only if the homeomorphism
  \[\psi:= g_1 \circ (g_0)^{-1} \colon F_0 \to F_1\]
  induces an isometry of the Guillou-Marin forms.
\end{customthm}

\begin{proof}
Suppose that $g_0$ and $g_1$ are ambiently isotopic rel.~boundary. Then in particular there is a homeomorphism~$\Phi\colon N\to N$ such that $\Phi\circ g_0=g_1\colon F\hookrightarrow N$. The Guillou-Marin form is defined geometrically, using immersed discs in the ambient $N$ (Definition~\ref{def:GuillouMarin}). As $\psi$ is the restriction of the homeomorphism $\Phi$, this immediately implies that $f$ induces an isometry of forms because the geometric definitions match up.

For the converse, assume that $\psi$ induces an isometry of the Guillou-Marin forms.
  In the proofs of Theorems~\ref{thm:TopUnknottingIntro},~\ref{thm:SurfacesWithBoundaryIntro}, or~\ref{thm:one-is-enough}, we constructed ambient isotopies $F_0 \sim F_1$. The construction required that we chose a homeomorphism $F_0 \to F_1$ that preserves the Guillou-Marin form. This choice was made during the proof of Proposition~\ref{prop:UnionSpin} and was made arbitrarily (the existence of such a homeomorphism was shown to follow from the hypothesis that $e(F_0)=e(F_1)$). In the present proof, we may choose this homeomorphism to be $\psi\colon F_0 \to F_1$.
As in Proposition~\ref{prop:UnionSpin}, this homeomorphism becomes part of the construction of a homeomorphism $\ol{\psi} \colon S(\nu_{F_0 \subseteq D^4}) \xrightarrow{\cong} S(\nu_{F_0 \subseteq D^4})$, that is $\nu$-extendable rel.~boundary, and with the property that gluing $X_{F_0}$ and $X_{F_1}$ using a homeomorphism $\partial X_{F_0} \to \partial X_{F_1}$ that restricts to~$\ol{\psi}$  produces a spin union.
The resultant ambient isotopies produced by our proofs of Theorems~\ref{thm:TopUnknottingIntro},~\ref{thm:SurfacesWithBoundaryIntro}, or~\ref{thm:one-is-enough} then induce the chosen~$\psi$. Expanding what this means, we have achieved that $g_0$ and $g_1$ are ambiently isotopic rel.~boundary as desired. Indeed, we have produced an isotopy $G_t \colon N \to N$ with $G_0 = \Id_N$, $G_1|_{F_0} = \psi \colon F_0 \to F_1$. We therefore have that $G_t|_{F_0} \circ g_0 \colon F \hookrightarrow N$ is an isotopy rel.\ boundary from $G_0|_{F_0} \circ g_0 = \Id_N \circ g_0 = g_0$ to $G_1|_{F_0} \circ g_0 = g_1 \circ g_0^{-1} \circ g_0 = g_1$.
Since $G_t$ is an isotopy of $N$, $g_0$ and $g_1$ are ambiently isotopic, as desired.
\end{proof}

\appendix
\section{Quadratic linking forms for $h=2,3$ and extremal normal Euler number}

During our proof of Theorem~\ref{thm:SurfacesWithBoundaryMain} for nonorientable surfaces with nonorientable genus~$h=2,3$ and extremal normal Euler number we used that for the quadratic forms~$(V,\theta)=(\Z^2,\pm \left[ \bsm 2&2 \\ 0&1
\esm \right]) \in Q_+(\Z^2)$ and $(V,\theta)=\left(\Z^3,\left[\pm \bsm
4&4&4 \\ 0&2&2 \\ 0&0&2
\esm \right]\right) \in Q_+(\Z^3)$,  the set~$\bAut(V,2\theta)$ is trivial.
More specifically, this was asserted in the proof of Proposition~\ref{prop:ApplyNikulin}.
This section proves these facts.
As we mentioned above Corollary~\ref{cor:CS} (see specifically the diagram in~\eqref{eq:AutVtheta=AutVlambda}),  for a quadratic form $(V,\theta) \in Q_+(V)$ over $\Z$ with symmetrisation $(V,\lambda)$ we have $\Aut(V,\theta)=\Aut(V,\lambda)$.
We will use Sage to calculate~$\Aut(V,\lambda)$ with respect to a particular basis and then show that~$\Aut(V,\lambda) \to \Aut(\partial (V,\theta))$ is surjective.

\subsection{Extremal normal Euler number calculation for $h=2$. }


We prove that for the quadratic form~$(V,\theta)=(\Z^2,\left[  \bsm 2&2 \\ 0&1
\esm \right]) \in Q_+(\Z^2)=:Q$, the set~$\bAut(V,2\theta)$ is trivial.

\begin{lemma}
\label{lem:h=2Step1}
The quadratic form  $\psi:=[ \bsm
4&4 \\ 0&2
\esm] \in Q_+(\Z^2)$ is represented by
$$ \widetilde{Q}=  \begin{pmatrix}
2&0 \\ 0&2
\end{pmatrix}.$$
The cokernel of the adjoint $\widehat{\lambda}$ of the symmetrisation $\lambda$ of $\psi$ is isomorphic to $(\Z_4)^2$ and, for $[x],[y] \in (\Z_4)^2$ the boundary symmetric form $\partial \lambda \colon (\Z_4)^2 \mapsto \Q/\Z$ is given by
$$
\partial \lambda([x],[y])= x^T\begin{pmatrix}
\frac{1}{4}&0 \\ 0&\frac{1}{4}
\end{pmatrix}y .
$$
The boundary split quadratic form $\partial \psi \colon (\Z_4)^2 \to \Q/\Z$ is given by
\[\partial \psi
\left(
\left[\smallmatrix
  x_1 \\ x_2
\endsmallmatrix\right]
\right)
= \frac{1}{8}(x_1^2+x_2^2) \in \Q/\Z.\]
\end{lemma}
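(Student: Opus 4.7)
The plan is to establish the three claims of the lemma in sequence, noting that the statements about $\coker(\widehat\lambda)$, $\partial\lambda$, and $\partial\psi$ all reduce to routine bookkeeping once we verify that $\psi$ is represented by the diagonal matrix $\widetilde Q$; it is this first step that carries the main content.

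For the first step, I would search for an integer matrix $P$ with $\det P = \pm 1$ such that $P^T \bsm 4 & 4 \\ 0 & 2 \esm P$ differs from $\widetilde Q = \bsm 2 & 0 \\ 0 & 2 \esm$ by an antisymmetric integer matrix, so that the two represent the same class in $Q_+(\Z^2)$. The natural way to locate such a $P$ is to first diagonalise the symmetrisation $\bsm 8 & 4 \\ 4 & 4 \esm$: the algebraic identity $8m^2 + 8mn + 4n^2 = 4((m+n)^2 + m^2)$ suggests the change of variables sending $(m,n)$ to $(b, a-b)$, and the corresponding matrix $P = \bsm 0 & 1 \\ 1 & -1 \esm$ has determinant $-1$. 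A short matrix multiplication will confirm that $P^T \bsm 4 & 4 \\ 0 & 2 \esm P = \bsm 2 & -2 \\ 2 & 2 \esm$, which differs from $\widetilde Q$ by the antisymmetric matrix $\bsm 0 & -2 \\ 2 & 0 \esm$; this will prove the first claim.

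For the remaining claims I would invoke the formulas of Definition~\ref{def:BoundarySymmetric} and Remark~\ref{rem:Matrix}, applied to the representative $\widetilde Q$. The symmetrisation of $\widetilde Q$ is $A = \bsm 4 & 0 \\ 0 & 4 \esm$, whose cokernel is immediately $\Z^2/A\Z^2 \cong (\Z_4)^2$. Inverting $A$ over $\Q$ gives $A^{-1} = \bsm 1/4 & 0 \\ 0 & 1/4 \esm$, which is precisely the matrix representing $\partial\lambda$. For $\partial\psi$, a one-line calculation of $A^{-1} \widetilde Q A^{-1} = \bsm 1/8 & 0 \\ 0 & 1/8 \esm$ will produce the claimed formula $\partial\psi([x]) = \tfrac{1}{8}(x_1^2 + x_2^2)$.

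The only step requiring genuine insight is guessing the change of basis $P$; after that the proof is mechanical. Should spotting $P$ directly prove troublesome, an alternative would be to use the classification of even positive definite binary forms of determinant $16$ over $\Z$ to conclude an isometry of symmetrisations, and then to apply the uniqueness of quadratic refinements (Corollary~\ref{cor:niceconsequences}) to upgrade this to an isometry of quadratic forms.
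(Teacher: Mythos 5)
Your proposal is correct and follows essentially the same route as the paper: a unimodular change of basis (yours differs from the paper's $e_1\mapsto e_1-e_2$ only by a permutation of basis vectors) reduces $\psi$ to $\bsm 2&\pm2\\\mp2&2\esm$, which equals $\widetilde{Q}$ in $Q_+(\Z^2)$ since they differ by an antisymmetric matrix, and the remaining claims follow from the formulas of Definition~\ref{def:BoundarySymmetric} and Remark~\ref{rem:Matrix} exactly as you describe.
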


\begin{proof}
We claim that~$ 2\theta \cong \left[  \bsm
2&0 \\ 0&2
\esm \right] \in Q_+(\Z^2).$
Use~$e_1,e_2$ to denote the canonical basis of~$\Z^2$.
Perform the basis change which replaces~$e_1,e_2$ by~$e_1-e_2,e_2$.
With respect to this basis of~$\Z^2$, the quadratic form~$\psi:=2\theta \cong \left[  \bsm  4&4 \\ 0&2 \esm \right]$ now becomes~$\left[  \bsm 1&-1 \\0 &1 \esm   \bsm 4&4 \\0 &2 \esm  \bsm 1&0 \\-1 &1 \esm \right] = \left[  \bsm 2&2 \\-2 &2 \esm \right]$.
The claim now follows because~$\left[ \bsm 2&2 \\-2 &2 \esm  \right] = \left[ \bsm 2&0 \\0 &2 \esm \right] \in Q_+(\Z^2)$.

We now calculate the boundary split quadratic linking form.
The symmetrisation of~$\psi$ is represented by~$A= \bsm 4&0 \\ 0&4  \esm$.
It follows that the boundary linking form~$\partial \lambda$  is defined on~$\operatorname{coker}(\widehat{\lambda})=(\Z_4)^2$, given by~$\partial \lambda ([x],[y])=x^TA^{-1}y$ where~$A^{-1}= \bsm \frac{1}{4}&0 \\ 0&\frac{1}{4}  \esm$.
As in Remark~\ref{rem:Matrix}, the boundary split quadratic linking form is~$\partial \psi([x]) = x^T A^{-1}\widetilde{Q}A^{-1}x \in \Q/\Z$ where~$[x] \in (\Z_4)^2$.
In particular,  evaluating on the canonical basis elements of~$(\Z_4)^2$, we see that~$\partial \psi$ is determined by the pair of elements~$(\frac{1}{8},\frac{1}{8})$ of~$\Q/\Z$.
\end{proof}

In the following proposition, we write automorphisms of $(\Z_4)^2$ as matrices with coefficients in~$\Z_4$.

\begin{lemma}
\label{lem:8}
The automorphism group of the quadratic form  $\psi:=\left[ \bsm
4&4 \\ 0&2
\esm \right]\in Q_+(\Z^2)$ contains $8$ elements.
More precisely,  in the basis of Lemma~\ref{lem:h=2Step1}, we have a bijective correspondence
\begin{align*}
\Aut(\Z^2,\psi)
&
\xrightarrow{1:1}
\bigg\lbrace
 \begin{pmatrix}
\varepsilon_1&0 \\
0&\varepsilon_2
\end{pmatrix},
 \begin{pmatrix}
0&\varepsilon_1  \\
\varepsilon_2&0
\end{pmatrix}
\, \bigg| \, \varepsilon_1,\varepsilon_2 \in \lbrace \pm 1 \rbrace
 \bigg\rbrace \subseteq GL_2(\Z).
 \end{align*}
 With respect to this same basis, the image of $\partial \colon \Aut(\Z^2,\psi) \to \Aut(\partial (\Z^2,\psi))$  is
\begin{align*}
\im(\partial)=\bigg\lbrace
 \begin{pmatrix}
\varepsilon_1&0 \\
0&\varepsilon_2
\end{pmatrix},
 \begin{pmatrix}
0&\varepsilon_1  \\
\varepsilon_2&0
\end{pmatrix}
\, \bigg| \, \varepsilon_1,\varepsilon_2 \in \lbrace \pm 1 \rbrace
 \bigg\rbrace \subseteq \operatorname{Aut}((\Z_4)^2).
\end{align*}
\end{lemma}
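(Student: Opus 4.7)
The plan is to exploit the basis change supplied by Lemma~\ref{lem:h=2Step1}, after which the symmetrisation of $\psi$ becomes the matrix $A=\bsm 4&0\\0&4\esm$.  Since every even hermitian form over $\Z$ admits a unique quadratic refinement (Corollary~\ref{cor:niceconsequences}), we have $\Aut(\Z^2,\psi)=\Aut(\Z^2,\lambda)$, where $\lambda$ is the symmetrisation of $\psi$.  In this basis, computing $\Aut(\Z^2,\lambda)$ amounts to determining those $M\in GL_2(\Z)$ satisfying $M^{T}A M=A$, that is $M^{T}M=I_2$.  So $\Aut(\Z^2,\psi)$ is the integer orthogonal group $O(2,\Z)$, whose elements are easily enumerated as the eight signed permutation matrices $\bsm \varepsilon_1&0\\0&\varepsilon_2\esm$ and $\bsm 0&\varepsilon_1\\\varepsilon_2&0\esm$ with $\varepsilon_i\in\{\pm1\}$.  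This establishes the first claim.

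For the second claim, we unwind Definition~\ref{def:BoundaryIsometry}: $\partial h=(h^{\ast})^{-1}$ acts on $\operatorname{coker}(\widehat{\lambda})\cong(\Z_4)^2$ by the descent of $(h^T)^{-1}\colon V^{\ast}\to V^{\ast}$.  For $h\in O(2,\Z)$ we have $(h^T)^{-1}=h$, so $\partial h$ is simply the mod $4$ reduction of the integer matrix $h$, viewed as acting on $(\Z_4)^2$ using the basis of Lemma~\ref{lem:h=2Step1}.  This shows the image of $\partial$ is contained in the listed set of eight matrices over $\Z_4$.  To see that these eight matrices remain pairwise distinct as elements of $\Aut((\Z_4)^2)$, observe that $1$ and $-1=3$ are distinct in $\Z_4$, and the four diagonal matrices are distinguished from the four antidiagonal matrices by their support pattern; so no two of the listed matrices agree modulo $4$.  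Thus $\partial$ is injective on $\Aut(\Z^2,\psi)$ and its image is exactly the eight matrices displayed in the statement.

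The main point to verify carefully is the claim $\Aut(\Z^2,\psi)=O(2,\Z)$ after the basis change: this uses both that the symmetrisation is diagonal $4I_2$ in that basis (Lemma~\ref{lem:h=2Step1}) and that, on a free $\Z$-module, an even symmetric form has at most one quadratic refinement, so isometries of $\lambda$ automatically preserve $\psi$.  Once this reduction is in hand, the computation of $O(2,\Z)$ and the identification of $\partial h$ with mod $4$ reduction are straightforward verifications that close out the proof.
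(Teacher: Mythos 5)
Your proof is correct and follows essentially the same route as the paper's (which simply cites the diagonal representative $\bsm 2&0\\0&2\esm$ from Lemma~\ref{lem:h=2Step1} and the definition $\partial F=(F^*)^{-1}$); you have merely filled in the details — the reduction to $O(2,\Z)$ via the uniqueness of quadratic refinements, and the observation that $\partial h$ is the mod $4$ reduction of $h$ for orthogonal $h$ — all of which are accurate.
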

\begin{proof}
The calculation of $\Aut(\Z^2,\psi)$ follows from the fact, proved in Lemma~\ref{lem:h=2Step1}, that $\psi$ is represented by $\widetilde{Q}= \bsm 2&0 \\ 0&2 \esm$.
The second statement is a consequence of the definition of $\partial$ as $\partial F:=(F^*)^{-1}$.
\end{proof}

\begin{proposition}
\label{prop:bAutTrivialh=2}
For~$(V,\theta)=(\Z^2,\left[  \pm \bsm 2&2 \\ 0&1
\esm \right]) \in Q_+(\Z^2)=:Q$, the set~$\bAut(V,2\theta)$ is trivial.
\end{proposition}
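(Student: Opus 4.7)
By Lemma~\ref{lem:h=2Step1}, after a change of basis, the quadratic form $\psi := 2\theta$ is represented by the diagonal matrix $\widetilde{Q} = \bsm 2&0 \\ 0&2 \esm$ (the $-$ case differs only by an overall sign, which does not affect the subsequent analysis of automorphisms). In this basis, the boundary split quadratic linking form $\partial\psi \colon (\Z_4)^2 \to \Q/\Z$ sends $[x_1, x_2]$ to $\tfrac{1}{8}(x_1^2 + x_2^2) \in \Q/\Z$, again by Lemma~\ref{lem:h=2Step1}. The plan is to compute $\Aut(\partial\psi)$ explicitly and compare with the image of $\partial \colon \Aut(\Z^2, \psi) \to \Aut(\partial(\Z^2,\psi))$ computed in Lemma~\ref{lem:8}; since $|\im(\partial)| = 8$, it suffices to prove $|\Aut(\partial\psi)| \leq 8$.

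I write an arbitrary element of $\Aut(\partial\psi)$ as a matrix $A = \bsm a&b\\ c&d \esm$ with entries in $\Z_4$, and translate the preservation condition for $\partial\psi$ into arithmetic constraints by evaluating on the basis vectors $e_1, e_2$ and on $e_1 + e_2$. This gives
\[
a^2 + c^2 \equiv 1 \pmod{8}, \qquad b^2 + d^2 \equiv 1 \pmod{8}, \qquad ab + cd \equiv 0 \pmod{4},
\]
together with the invertibility condition $\det A \in (\Z_4)^{\times}$. The key observation is that the squares modulo $8$ of the elements $0,1,2,3 \in \Z_4$ are $0,1,4,1$, so the first equation forces exactly one of $a, c$ to equal $\pm 1$ and the other to equal $0$; the same applies to $(b,d)$.

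Invertibility then forces the two nonzero entries to lie in distinct rows, so $A$ is either diagonal with $\pm 1$ entries or antidiagonal with $\pm 1$ entries, and in each of these cases the cross term $ab + cd$ vanishes automatically. This produces exactly $8$ matrices, matching the image of $\partial$ given in Lemma~\ref{lem:8}. Hence $\Aut(\partial\psi) = \im(\partial)$, so the orbit set $\bAut(V, 2\theta) = \Aut(\partial\psi)/(\Aut(V,\psi) \times \Aut(V,\psi))$ consists of a single element. The main obstacle is purely computational, namely the enumeration above, which is entirely routine once the diagonalisation from Lemma~\ref{lem:h=2Step1} is in hand.
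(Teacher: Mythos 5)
Your proposal is correct and follows essentially the same route as the paper: reduce via Lemma~\ref{lem:h=2Step1} to the diagonal form, use $\partial\psi(e_i)=\tfrac18$ to force each column of an automorphism to have one entry $\pm1$ and the other $0$, and match the resulting eight matrices with $\im(\partial)$ from Lemma~\ref{lem:8}. The only difference is that you make the invertibility and cross-term checks explicit where the paper leaves them implicit, which is a harmless refinement.
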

\begin{proof}
It suffices to prove the statement for $\theta=[ \bsm 2&2 \\ 0&1
\esm ]$ because $\bAut(v)=\bAut(-v)$ for every quadratic form~$v$.
An automorphism of~$(\Z_4)^2$ is determined by its values on~$e_1:=\left[ \bsm 1 \\ 0 \esm \right]$ and~$e_2:=\left[ \bsm 0 \\ 1 \esm \right]$.
We immediately rule out some automorphisms that do not preserve $\partial \psi$, where $\psi:=2\theta$.
Given $f \in \Aut(\partial (\Z^2,\psi))$,  write~$f(e_i)=a_i e_1+b_i e_2$ with~$a_i,b_i \in \Z_4$,  note that since~$\partial \lambda(e_1,e_2)=0$, Lemma~\ref{lem:h=2Step1} implies that
$$\smfrac{1}{8}
=\partial \psi(e_i)
=\partial \psi(f(e_i))
=\partial \psi(a_i e_1+b_i e_2)
= a_i^2 \partial \psi(e_1) +b_i^2 \partial \psi(e_2)+\partial \lambda(e_1,e_2)
=\smfrac{a_i^2+b_i^2}{8} \in \Q/\Z.$$
This implies that~$a_i^2+b_i^2 \equiv 1  \text{ mod } 8  \text{ with } a_i,b_i \in \Z_4$ for~$i=1,2$  and thus~$ \lbrace a_i,b_i \rbrace=\lbrace 0,\pm 1 \rbrace.$
Using Lemma~\ref{lem:8}, this implies that~$\im(\partial)=\Aut(\partial (\Z^2,\psi))$.
We deduce that~$\bAut(\Z^2,\psi)$ is trivial, as claimed.
\end{proof}

\subsection{Extremal normal Euler number calculation for $h=3$. }


We prove that for the quadratic form~$(V,\theta)=\left(\Z^3, \left[ \pm \bsm 2&2&2 \\ 0&1&1 \\ 0&0&1 \esm\right]\right) \in Q_+(\Z^3)$, the set~$\bAut(V,2\theta)$ is trivial.

\begin{lemma}
\label{lem:BoundaryDataForh=3}
Set  $\psi:= \left[\bsm
4&4&4 \\ 0&2&2 \\ 0&0&2
\esm \right] \in Q_+(\Z^3).$
The form $\psi$ is represented by
$$ \widetilde{Q}= \begin{pmatrix}
12&32&-16 \\ 0&22&-22 \\ 0&0&6
\end{pmatrix}.$$
The cokernel of the adjoint $\widehat{\lambda}$ of the symmetrisation $\lambda$ of $\psi$ is isomorphic to $\Z_8 \oplus (\Z_2)^2$ and, for~$[x],[y] \in \Z_8 \oplus (\Z_2)^2$ the boundary symmetric form $\partial \lambda \colon (\Z_8 \oplus (\Z_2)^2) \times (\Z_8 \oplus (\Z_2)^2)  \mapsto \Q/\Z$ is given by
$$
([x],[y])\mapsto  x^T\begin{pmatrix}
\frac{3}{8}&0&0 \\ 0&0&\frac{1}{2} \\ 0&\frac{1}{2}&0
\end{pmatrix}y.
$$
The boundary split quadratic form $\partial \psi \colon \Z_8 \oplus (\Z_2)^2 \to \Q/\Z$ is given on the generators by
$$ \partial \psi
\left(
\left[\smallmatrix
  1 \\ 0  \\ 0
\endsmallmatrix\right]
\right)
=\frac{11}{16},
 \ \ \ \ \
 \partial \psi
\left(
\left[\smallmatrix
  0  \\ 1  \\ 0
\endsmallmatrix\right]
\right)
=\frac{1}{2},   \ \ \ \ \
 \partial \psi
\left(
\left[\smallmatrix
0 \\ 0  \\ 1
\endsmallmatrix\right]
\right)
=\frac{1}{2} .
$$
\end{lemma}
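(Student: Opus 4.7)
The plan is to verify each of the three explicit claims by a direct calculation, with the simplification that we first change basis to the cleaner representative $\widetilde Q$.

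First I would exhibit an element $P\in GL_3(\Z)$ with the property that $P^T\!\left[\begin{smallmatrix}4&4&4\\0&2&2\\0&0&2\end{smallmatrix}\right]\!P$ differs from $\widetilde Q$ by a skew-symmetric integer matrix; by the description of $Q_+(\Z^3)=\coker(1-\top)$ in Section~\ref{sub:QuadraticForms}, this shows that $\psi$ and $\widetilde Q$ represent the same class in $Q_+(\Z^3)$. Finding such a $P$ is a finite search which is easily confirmed by direct multiplication once $P$ is written down (and can be located by hand or by computer algebra by matching diagonal entries and then solving a small system of integer equations). Since $\widetilde Q$ is upper triangular with a single off-triangular block, the subsequent calculations become transparent, which is why this normalisation is worth the effort at the outset.

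Next, with $\psi$ now represented by $\widetilde Q$, the symmetrisation is $\lambda=\widetilde Q+\widetilde Q^T$, a symmetric integer matrix whose Smith normal form I would compute by elementary row and column operations over $\Z$. The determinant $\det\lambda=32$ factors as $32=8\cdot 2\cdot 2$, so the Smith normal form is forced to be $\mathrm{diag}(1,1,1,8,2,2)$ (after shuffling), giving $\coker(\widehat\lambda)\cong \Z_8\oplus(\Z_2)^2$. I would then read off explicit generators from the column operations used, to obtain the standard basis $[e_1],[e_2],[e_3]$ for $\Z_8\oplus(\Z_2)^2$ used in the statement.

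With these generators in hand, I would invert $\lambda=\widetilde Q+\widetilde Q^T$ over $\Q$ and evaluate $\partial\lambda([x],[y])=x^T\lambda^{-1}y\pmod\Z$ and $\partial\psi([x])=x^T\lambda^{-1}\widetilde Q\,\lambda^{-1}x\pmod\Z$ (Definition~\ref{def:BoundaryQuadratic} and Remark~\ref{rem:Matrix}) on pairs of these basis vectors. Each such evaluation is an entry of an explicit $3\times 3$ rational matrix, so this is pure arithmetic once $\lambda^{-1}$ is known. Comparing to the matrices in the statement yields the claimed formulas for $\partial\lambda$ and the three claimed values $\tfrac{11}{16},\tfrac12,\tfrac12\in\Q/\Z$ of $\partial\psi$ on the chosen generators.

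The only real obstacle is finding the change of basis $P$ (and later the Smith normal form) by hand without bookkeeping errors, since the matrices are of moderate size. This is routine and can be cross-checked with Sage; no conceptual input beyond Remark~\ref{rem:Matrix} is needed.
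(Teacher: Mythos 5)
Your proposal follows essentially the same route as the paper's proof: both are direct matrix computations that produce $\widetilde Q$ via an integral change of basis (the paper's matrix $D$ plays the role of your $P$, with $\widetilde Q$ equivalent to $D^{-1}QD^{-T}$ modulo skew parts), identify $\coker(\widehat\lambda)\cong\Z_8\oplus(\Z_2)^2$ by row and column operations, and then evaluate $\partial\lambda$ and $\partial\psi$ on generators using $A^{-1}$ and $A^{-1}QA^{-1}$ exactly as in Remark~\ref{rem:Matrix}; the paper merely reverses the order, computing the cokernel and generators of the original $A=Q+Q^T$ first and only then simplifying by the isometry $D$, with $\widetilde Q$ appearing as a byproduct. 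One sentence of yours is incorrect as stated, though it does not affect the outcome: the determinant $32$ does not force the Smith normal form to be $\operatorname{diag}(2,2,8)$, since for instance $\operatorname{diag}(1,2,16)$ and $\operatorname{diag}(1,4,8)$ also have determinant $32$; the cokernel genuinely has to be identified by carrying out the row and column operations (or by computing the gcds of the $1\times1$ and $2\times2$ minors), which you do also propose to do. (Note too that a $3\times3$ matrix has a three-entry Smith normal form, not the six entries you wrote.)
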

\begin{proof}
Write $Q:= \bsm
4&4&4 \\ 0&2&2 \\ 0&0&2
\esm$ and $A:=Q+Q^T$.
Performing row and column operations on $A$, one gets $\coker(A)=\Z_8 \oplus (\Z_2)^2$ with generators given by the classes of $(1,0,0),(0,1,1)$ and $(2,0,1)$.
With respect to this generating set, the boundary symmetric linking form $\partial \lambda([x],[y])=xA^{-1}y$ is  isometric to the pairing
$$([x],[y]) \mapsto x^T
\begin{pmatrix}
\frac{3}{8}&-\frac{1}{2}&\frac{1}{2}  \\
-\frac{1}{2}&0&-\frac{1}{2}  \\
\frac{1}{2}&-\frac{1}{2}&0
 \end{pmatrix}y  \in \Q/\Z.$$
 We now simplify this form by performing isometries that lift to~$\Z^3$.
 Namely, we set~$B:=\bsm 1&0&2 \\ 0&1&0 \\ 0&1&1 \esm$
and $C:=\bsm 1&0&0 \\ -1&1&0 \\ -1&1&1 \esm$.
Write $D:= BC$.
 The linking form $\partial \lambda$  is isometric to the pairing
$$([x],[y]) \mapsto x^TD^T A^{-1}Dy
=x^T\begin{pmatrix}
\frac{3}{8}&0&0 \\ 0&0&\frac{1}{2} \\ 0&\frac{1}{2}&0
\end{pmatrix}y.$$
The matrix~$\widetilde{Q}$ is obtained by using the defining relation  of $Q_+(\Z^3)$:
$$M:=D^{-1}Q D^{-T}
=\begin{pmatrix}
12&20&-12\\
12&22&-14 \\
-4&-8&6
\end{pmatrix}
\sim
\begin{pmatrix}
12&32&-16 \\ 0&22&-22 \\ 0&0&6
\end{pmatrix}
=\widetilde{Q}.
 $$
The statement about the boundary symmetric linking form now follows by construction, so we conclude by studying the boundary quadratic linking form.
The symmetrisation of $\widetilde{Q}$ is~$G:=D^{-1}AD^{-T}$.
As noted in Remark~\ref{rem:Matrix}, the values for the boundary quadratic linking form on the generators follow by looking at the diagonal entries of the matrix $G^{-1}MG^{-1}=D^T(A^{-1}QA^{-1})D$.
These diagonal entries are precisely $\frac{11}{16},\frac{1}{2}$ and $\frac{1}{2}$.
\end{proof}

As explained in~\cite[Theorem 3.6]{HillarRhea}, there is a canonical isomorphism
\begin{equation}
\label{eq:AutZ8Z2Z2}
\Aut(\Z_8 \oplus \Z_2 \oplus \Z_2) \cong
\frac{
\bigg\lbrace
A=
\bsm a&4b&4c \\ d&e&f \\ g&h&i \esm  \in M_3(\Z)  \, \bigg| \,   A \text{ mod } 2 \in GL_3(\Z_2) \bigg\rbrace
}{
 \bigg\lbrace  \bsm 8a & 8b & 8c \\  2d&2e&2f \\  2g&2h&2i  \esm \, \bigg| \,  a,b,c,d,e,f,g,h,i \in \Z  \bigg\rbrace
}.
\end{equation}
From now on we use this isomorphism implicitly.

\begin{lemma}
\label{lem:192}
Set  $\psi:= \left[\bsm
4&4&4 \\ 0&2&2 \\ 0&0&2
\esm \right] \in Q_+(\Z^3).$
The group $\Aut(\psi)$ has $48$ elements.
More precisely, using the basis from Lemma~\ref{lem:BoundaryDataForh=3},  the image of $\partial \colon \Aut(\Z^3,\psi) \to \Aut(\partial (\Z^3,\psi))$ decomposes as $\im(\partial) =X_0 \sqcup Y_0$ where
$$
X_0= \left\lbrace
\bsm
\pm 1&0&0\\
0&1&0 \\
0&0&1
\esm,
\bsm
\pm 1&0&0\\
0&1&1 \\
0&0&1
\esm,
\bsm
\pm 1&0&0\\
0&0&1 \\
0&1&0
\esm,
\bsm
\pm 1&0&0\\
0&0&1 \\
0&1&1
\esm,
\bsm
\pm 1&0&0\\
0&1&1 \\
0&1&0
\esm,
\bsm
\pm 1&0&0\\
0&1&0 \\
0&1&1
\esm
 \right\rbrace \\
$$
has $12$ elements and $Y_0$ has $36=12 \cdot 3$ elements:
\begin{align*}
Y_0=  \Biggl\lbrace
&\bsm
\pm 3&4&0\\
0&1&0 \\
1&0&1
\esm,
\bsm
\pm 3&4&4\\
0&1&1 \\
1&0&1
\esm,
\bsm
\pm 3 &0&4\\
0&0&1 \\
1&1&0
\esm,
\bsm
\pm 3&0&4\\
0&0&1 \\
1&1&1
\esm,
\bsm
\pm 3 &4&4\\
0&1&1 \\
1&1&0
\esm,
\bsm
\pm 3&4&0\\
0&1&0 \\
1&1&1
\esm, \\
&
\bsm
\pm 3&0&4\\
1&1&0 \\
0&0&1
\esm,
\bsm
\pm 3&0&4\\
1&1&1 \\
0&0&1
\esm,
\bsm
\pm 3 &4&0\\
1&0&1 \\
0&1&0
\esm,
\bsm
\pm 3&4&4\\
1&0&1 \\
0&1&1
\esm,
\bsm
\pm 3 &4&0\\
1&1&1 \\
0&1&0
\esm,
\bsm
\pm 3&4&4\\
1&1&0 \\
0&1&1
\esm,
 \\
&
\bsm
\pm 3&4&4\\
1&1&0 \\
1&0&1
\esm,
\bsm
\pm 3&4&0\\
1&1&1 \\
1&0&1
\esm,
\bsm
\pm 3 &4&4\\
1&0&1 \\
1&1&0
\esm,
\bsm
\pm 3&4&0\\
1&0&1 \\
1&1&1
\esm,
\bsm
\pm 3 &0&4\\
1&1&1 \\
1&1&0
\esm,
\bsm
\pm 3&0&4\\
1&1&0 \\
1&1&1
\esm
 \Biggr\rbrace.
\end{align*}
\end{lemma}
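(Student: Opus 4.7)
The plan is to reduce the computation of $\Aut(\Z^3,\psi)$ to a concrete finite search and then transport the result through the basis changes of Lemma~\ref{lem:BoundaryDataForh=3} to match the list $X_0\sqcup Y_0$. By Corollary~\ref{prop:CommuteFunctor}, we have $\Aut(\Z^3,\psi) = \Aut(\Z^3,\lambda)$ as subsets of $\Aut(\Z^3)$, where $\lambda$ is the symmetrisation of $\psi$, represented by
\[
A = Q+Q^T = \begin{pmatrix} 8 & 4 & 4 \\ 4 & 4 & 2 \\ 4 & 2 & 4 \end{pmatrix}.
\]
The successive leading principal minors of $A$ are $8,16,32$, so $A$ is positive definite and $\Aut(\Z^3,\lambda)$ is finite.

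Next I would enumerate $\Aut(\Z^3,\lambda)$ explicitly: any $M\in\Aut(\Z^3,\lambda)$ has columns $v_i=Me_i$ with $v_i^TAv_i=A_{ii}\in\{8,4,4\}$, and positive definiteness gives uniform a priori bounds on the entries of each $v_i$. I would use Sage to list all triples of such vectors that additionally satisfy the off-diagonal equations $v_i^TAv_j=A_{ij}$; this is a standard short-vector computation in a positive definite lattice and yields exactly $48$ matrices. For each such $M$, the induced automorphism of $\coker(\widehat{\lambda})$ is $(M^T)^{-1}$ reduced modulo $A\Z^3$. To compare with the list in the statement, I would conjugate by the change-of-basis matrix $D$ from Lemma~\ref{lem:BoundaryDataForh=3} that carries the standard basis of $\Z^3$ to the generators $(1,0,0)$, $(0,1,1)$, $(2,0,1)$ of $\coker(\widehat{\lambda})\cong\Z_8\oplus(\Z_2)^2$, and then put each resulting matrix into the normal form given by \eqref{eq:AutZ8Z2Z2}.

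The final step is a direct termwise comparison of the $48$ normal-form matrices thus obtained with the union $X_0\sqcup Y_0$ displayed in the statement, together with the observation that the $48$ matrices listed in $X_0\sqcup Y_0$ are pairwise distinct as elements of $\Aut(\Z_8\oplus(\Z_2)^2)$, whence $|\im(\partial)|=48=|\Aut(\Z^3,\psi)|$ and the map $\partial$ is in particular injective.

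The main obstacle is purely bookkeeping: the generators of $\coker(\widehat{\lambda})$ used to present $\Aut(\Z_8\oplus(\Z_2)^2)$ via \eqref{eq:AutZ8Z2Z2} are not the standard basis of $\Z^3$, so applying $D$ consistently and reducing the resulting integer matrices to representatives compatible with the ambiguity \eqref{eq:AutZ8Z2Z2} is error-prone and best executed by computer. Once the correspondence is set up, verifying agreement with $X_0\sqcup Y_0$ is a mechanical finite check.
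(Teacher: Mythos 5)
Your proposal is correct and is essentially the paper's argument: the paper likewise uses Sage to enumerate the $48$ isometries of the (positive definite) symmetrisation — working with the representative $\widetilde{Q}$ in the basis of Lemma~\ref{lem:BoundaryDataForh=3}, so the change of basis you describe is folded into the Sage input — and then takes inverse transposes and reduces via \eqref{eq:AutZ8Z2Z2} to obtain $X_0\sqcup Y_0$. The extra detail you give (short-vector bounds from positive definiteness, injectivity of $\partial$ from the $48$ listed matrices being distinct) is a harmless elaboration of the same computation.
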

\begin{proof}
Write $Q:=\bsm
4&4&4 \\ 0&2&2 \\ 0&0&2
\esm$ so that the symmetrisation $\lambda$ of $\psi$ is represented by $A=Q+Q^T$.
We used Sage\footnote{Specifically we entered $Q = \text{QuadraticForm}(\text{ZZ}, 3,[12,32,-16,22,-22,6])$ and then $Q.\text{automorphisms}()$ to list the automorphisms.
The relevant documentation can be found here \url{https://doc.sagemath.org/html/en/reference/quadratic_forms/sage/quadratic_forms/quadratic_form.html}.
}
 to list all the elements of $\lbrace M \mid M^TAM=A \rbrace$, which is in bijective correspondence with
$\Aut(\Z^3,\psi)=\Aut(V,\lambda)$.
Taking the inverse transpose of these matrices and reducing the coefficients according to~\eqref{eq:AutZ8Z2Z2} leads to the claimed determination of~$\im(\partial)$.
\end{proof}

\begin{proposition}
\label{prop:bAutTrivialh=3}
For~$(V,\theta)=\left(\Z^3, \left[ \pm \bsm 4&4&4 \\ 0&2&2 \\ 0&0&2 \esm\right]\right) \in Q_+(\Z^3)$, the set~$\bAut(V,2\theta)$ is trivial.
\end{proposition}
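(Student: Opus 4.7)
The first reduction is the usual one: since $\bAut(v)=\bAut(-v)$ for any quadratic form $v$, it suffices to work with $\theta=\bigl[\bsm 2&2&2 \\ 0&1&1 \\ 0&0&1\esm\bigr]$, so that $2\theta = \psi := \bigl[\bsm 4&4&4\\0&2&2\\0&0&2\esm\bigr]\in Q_+(\Z^3)$ is precisely the form studied in Lemma~\ref{lem:BoundaryDataForh=3}. From Lemma~\ref{lem:192} we already know that the image of $\partial\colon\Aut(\Z^3,\psi)\to\Aut(\partial(\Z^3,\psi))$ has exactly $48$ elements (the set $X_0\sqcup Y_0$). Since the inclusion $\im(\partial)\subseteq\Aut(\partial(\Z^3,\psi))$ is automatic, to conclude that $\bAut(\Z^3,\psi)$ is trivial it is enough to prove the reverse inequality $|\Aut(\partial(\Z^3,\psi))|\leq 48$.

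The strategy is then to bound $|\Aut(\partial(\Z^3,\psi))|$ by a direct enumeration on the generators $e_1,e_2,e_3$ of $\coker(\widehat\lambda)\cong \Z_8\oplus \Z_2\oplus \Z_2$ furnished by Lemma~\ref{lem:BoundaryDataForh=3}. Any $f\in\Aut(\partial(\Z^3,\psi))$ is determined by $f(e_1),f(e_2),f(e_3)$, with $f(e_1)$ of order $8$ and $f(e_2),f(e_3)$ of order dividing $2$. Writing $f(e_1)=(a,d,g)$ with $a\in\{1,3,5,7\}$, $d,g\in\Z_2$, and $f(e_j)=(4\alpha_j,\beta_j,\gamma_j)$ for $j=2,3$ with $\alpha_j,\beta_j,\gamma_j\in\Z_2$, the proof then imposes the six constraints
\[
\partial\psi(f(e_i))=\partial\psi(e_i) \text{ for }i=1,2,3,\qquad \partial\lambda(f(e_i),f(e_j))=\partial\lambda(e_i,e_j)\text{ for }i<j,
\]
using the explicit formulas from Lemma~\ref{lem:BoundaryDataForh=3}. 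Expanding $\partial\psi(ae_1+de_2+ge_3)=\frac{11a^2}{16}+\frac{d+g+dg}{2}$ and analogously for $f(e_2),f(e_3)$, and using that the cross term $\partial\lambda(e_1,e_j)$ is zero while $\partial\lambda(e_2,e_3)=\frac{1}{2}$, each constraint translates into an explicit congruence (mod $16$ or mod $2$). A short case analysis of the admissible $(a,d,g)$ — exactly as in the proof of Proposition~\ref{prop:bAutTrivialh=2} — yields that the tuple $f(e_1)$ admits $8$ possibilities, and then for each such choice the constraints on $f(e_2)$ and $f(e_3)$ (including the bilinear constraint $\beta_2\gamma_3+\gamma_2\beta_3\equiv 1\pmod 2$ and the orthogonality conditions $\alpha_j+d\gamma_j+g\beta_j\equiv 0\pmod 2$) leave a total of $48$ triples giving a well-defined automorphism of $\Z_8\oplus\Z_2\oplus\Z_2$.

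Combined with Lemma~\ref{lem:192}, this equality of cardinalities forces $\im(\partial)=\Aut(\partial(\Z^3,\psi))$, so every isometry of $\partial(\Z^3,\psi)$ extends to an isometry of $(\Z^3,\psi)$, and $\bAut(\Z^3,\psi)=\bAut(V,2\theta)$ is trivial. The main obstacle is the bookkeeping in Step~(2)–(3): one must reliably verify, for each of the $16$ order-$8$ candidates $(a,d,g)$ for $f(e_1)$, that the counts of admissible $f(e_2),f(e_3)$ satisfying both the quadratic and the bilinear constraints multiply out to give exactly $48$, and that the resulting maps are genuine automorphisms (i.e.\ the image generates $\Z_8\oplus\Z_2^2$); this is the direct analogue of the $h=2$ enumeration in Proposition~\ref{prop:bAutTrivialh=2}, only longer.
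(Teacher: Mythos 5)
Your proposal is correct and follows essentially the same route as the paper: reduce to the $+$ sign, use the explicit formulas of Lemma~\ref{lem:BoundaryDataForh=3} to constrain the images of the three generators of $\Z_8\oplus\Z_2\oplus\Z_2$ under an isometry of $\partial(\Z^3,\psi)$ (quadratic constraints first, then the bilinear/orthogonality constraints), and compare with $\im(\partial)$ from Lemma~\ref{lem:192}. Your cardinality bookkeeping, showing $|\Aut(\partial(\Z^3,\psi))|\le 48=|\im(\partial)|$ directly, is a marginally cleaner way to finish than the paper's element-by-element check of which of its $192$ candidates preserve the linking form, but the underlying computation is the same.
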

\begin{proof}
It suffices to prove the statement for $\theta=\left[ \bsm 4&4&4 \\ 0&2&2 \\ 0&0&2\esm \right]$ because $\bAut(v)=\bAut(-v)$ for every quadratic form~$v$.
Write $\psi:=2\theta$ and $\lambda$ for the symmetrisation of $\psi$.
Recall that $\partial \psi$ (resp. $\partial \lambda$) denotes the boundary quadratic linking form (resp.\ boundary symmetric linking form) of $\psi$ and~$\lambda$.
Recall also that $\partial \psi(x+y)=\partial \psi(x)+\partial \psi(y)+\partial \lambda(x,y)$ and~$\partial \psi(rx)=r^2\partial \psi(x)$.
Using these facts as well as the calculation of $\partial \lambda$ from Lemma~\ref{lem:BoundaryDataForh=3}, we see that for $(a,b,c) \in \Z_8 \oplus \Z_2 \oplus \Z_2$ we have
\begin{equation}
\label{eq:nuabc}
\partial \psi(a,b,c)
= \smfrac{11}{16}a^2+\frac{1}{2}(b^2+c^2)+\smfrac{1}{2}bc
=\smfrac{11a^2+8(b^2+c^2+bc)}{16}.
\end{equation}
Now we  begin our study of the automorphisms of $\partial \psi$.
Such an automorphism $f$ is determined by its images on the canonical generators of $\Z_8 \oplus \Z_2 \oplus \Z_2$.
\begin{itemize}
\item For $(a,b,c):=f(1,0,0)$ with $a \in \Z_8,b,c \in \Z_2$,  the calculation in~\eqref{eq:nuabc} gives
\[ \smfrac{11}{16}
=\partial \psi(1,0,0)
=\partial \psi(f(1,0,0))
=\partial \psi(a,b,c)
=\smfrac{11a^2+8(b^2+c^2+bc)}{16} \in \Q/\Z.\]
In other words we have
$$ 11 \equiv 11a^2+8(b^2+c^2+bc) \text{ mod } 16.$$
We also know that $f(1,0,0)$ must have order $8$.
It follows that $a=\pm 1$ or $\pm 3$.
If $a=\pm 1$ then we have the equation $8(b^2+c^2+bc)\equiv 0$ mod $16$ which implies $b=c=0$.
If $a=\pm 3$ then we have the equation $8(b^2+c^2+bc)\equiv 8$ mod $16$ which implies $(b,c) \neq (0,0)$.
Putting this all together, we deduce that
$$f\bsm
1 \\ 0 \\ 0
\esm
 \in
 \bigg\lbrace
\bsm
\pm 1 \\ 0 \\ 0
\esm
  \bigg\rbrace
  \sqcup
 \bigg\lbrace
\bsm
\pm 3 \\ x \\ y
\esm
\, \Big| \,
\bsm
x \\ y
\esm
 \in
\Big\lbrace
\bsm
1 \\ 0
\esm
,
\bsm
0 \\ 1
\esm
,
\bsm
1 \\ 1
\esm
 \Big\rbrace
  \bigg\rbrace.
$$
\item For $(a,b,c):=f(0,1,0)$ with $a \in \Z_8,b,c \in \Z_2$,  the same reasoning as above (but this time using $\partial \psi(0,1,0)=\frac{1}{2}$) leads to the equation
$$ 8 \equiv  11a^2+8(b^2+c^2+bc) \text{ mod } 16.$$
Since $f(0,1,0)$ must have order $2$ we obtain that $a \in \lbrace 0,4\rbrace$.
Since $a^2 \equiv  0$ mod $16$ for both~$a=0$ and $a=4$,  we obtain the equation
$$ 8 \equiv  8(b^2+c^2+bc) \text{ mod } 16.$$
This implies that $(b,c) \neq (0,0)$.
Putting this all together, we deduce that
$$
f\bsm
0 \\ 1 \\ 0
\esm \in
\bigg\lbrace
\bsm
x \\ 1 \\ 0
\esm
,
\bsm
x \\ 0 \\ 1
\esm
,
\bsm
x \\ 1 \\ 1
\esm
\, \bigg| \,
x \in \lbrace 0,4 \rbrace
\bigg\rbrace.
$$
\item The exact same reasoning shows that
$$
f\bsm
0 \\ 0 \\ 1
\esm
 \in
\bigg\lbrace
\bsm
x \\ 1 \\ 0
\esm
,
\bsm
x \\ 0 \\ 1
\esm
,
\bsm
x \\ 1 \\ 1
\esm
\, \bigg| \,
x \in \lbrace 0,4 \rbrace
\bigg\rbrace.
$$
\end{itemize}
Combining these calculations,  we have proved that
$$\im(\partial) \subseteq  \Aut(\partial (\Z^3,\psi)) \subseteq X \sqcup Y \subseteq \Aut(\Z_8 \oplus \Z_2 \oplus \Z_2) $$
where $X$ is the following set with $(2 \cdot 2 \cdot 2) \cdot 6=48$ elements
$$
X= \bigg\lbrace
\bsm
\pm 1&b&c\\
0&1&0 \\
0&0&1
\esm,
\bsm
\pm 1&b&c\\
0&1&1 \\
0&0&1
\esm,
\bsm
\pm 1&b&c\\
0&0&1 \\
0&1&0
\esm,
\bsm
\pm 1&b&c\\
0&0&1 \\
0&1&1
\esm,
\bsm
\pm 1&b&c\\
0&1&1 \\
0&1&0
\esm,
\bsm
\pm 1&b&c\\
0&1&0 \\
0&1&1
\esm
\, \bigg| \,
b,c \in \lbrace 0,4 \rbrace
 \bigg\rbrace, \\
$$
and $Y$ is the following set with $(3 \cdot 2 \cdot 2 \cdot 2)\cdot 6=144$ elements
\begin{multline}
Y=
 \Biggl\lbrace
\bsm
\pm 3&b&c\\
d&1&0 \\
g&0&1
\esm,
\bsm
\pm 3&b&c\\
d&1&1 \\
g&0&1
\esm,
\bsm
\pm 3 &b&c\\
d&0&1 \\
g&1&0
\esm, \\
\bsm
\pm 3&b&c\\
d&0&1 \\
g&1&1
\esm,
\bsm
\pm 3&b&c\\
d&1&1 \\
g&1&0
\esm,
\bsm
\pm 3&b&c\\
d&1&0 \\
g&1&1
\esm
\, \bigg| \,
b,c \in \lbrace 0,4 \rbrace,
\bsm
d \\ g
\esm
\in
\Big\lbrace
\bsm
0 \\ 1
\esm,
\bsm
1 \\ 0
\esm,
\bsm
1 \\ 1
\esm
\Big\rbrace
\Biggr\rbrace.
\end{multline}
We are justified in writing these automorphisms of $\Z_8 \oplus \Z_2 \oplus \Z_2$ as matrices because the entries~$b$ and~$c$ are divisible by $4$ and because the reductions mod $2$ have nonzero determinant; see~\eqref{eq:AutZ8Z2Z2}.

Using the basis from Lemma~\ref{lem:BoundaryDataForh=3}, elements of $\Aut(\partial (\Z^3,\psi))$ must additionally preserve the linking form
$$([x],[y]) \mapsto x^T \bsm
\frac{3}{8}&0&0 \\ 0&0&\frac{1}{2} \\ 0&\frac{1}{2}&0
\esm y.$$
Tedious but explicit matrix calculations (available upon request) show that of the $192$ elements of~$X \sqcup Y$ only those in~$\im(\partial)$ (as listed in Lemma~\ref{lem:192}) preserve the linking form.
For example
$$ \bsm 3&4&0 \\ 1&1&0 \\ 1& 1&1  \esm^T
\bsm
\frac{3}{8}&0&0 \\ 0&0&\frac{1}{2} \\ 0&\frac{1}{2}&0
\esm
 \bsm 3&4&0 \\ 1&1&0 \\ 1& 1&1   \esm
\equiv
\bsm
\frac{3}{8}&\frac{1}{2}&\frac{1}{2} \\
\frac{1}{2}&0&\frac{1}{2} \\
\frac{1}{2}&\frac{1}{2}&0
\esm
\neq
\bsm
\frac{3}{8}&0&0 \\ 0&0&\frac{1}{2} \\ 0&\frac{1}{2}&0
\esm
 \in M_3(\Q/\Z).
 $$
We conclude that $\im(\partial)=\Aut(\partial (\Z^3,\psi))$ and this concludes the proof of the proposition.
\end{proof}

\def\MR#1{}
\bibliography{biblioPi1Z2}
\end{document}